\newcommand{\comment}[1]{}
\providecommand{\U}[1]{\protect\rule{.1in}{.1in}}
\numberwithin{equation}{section}
\def\Div{{\rm Div}}
\def\Bl{{\rm Bl}}
\def\rk{{\rm rk}}
\theoremstyle{plain}
\newtheorem{thm}{Theorem}[section]
\newtheorem{lemma}[thm]{Lemma}
\newtheorem{prop}[thm]{Proposition}
\newtheorem{cor}[thm]{Corollary}
\newtheorem{con}[thm]{Conjecture}
\theoremstyle{definition}
\newtheorem{defn}[thm]{Definition}
\newtheorem{ex}[thm]{Example}
\newtheorem{rem}[thm]{Remark}
\newtheorem{claim}[thm]{Claim}
\newtheorem{observation}[thm]{Observation}
\newtheorem{step}[]{Step}
\newtheorem{notation}[thm]{Notation}
\begin{document}


\title[Characterization of fiberwise bimeromorphism and specialization]{Characterization of fiberwise bimeromorphism and specialization of bimeromorphic types I: locally Moishezon case}

\author[Jian Chen]{Jian Chen}
\author[Sheng Rao]{Sheng Rao}
\author[I-Hsun Tsai]{I-Hsun Tsai}
	
\address{Jian Chen, School of Mathematics and Statistics, Central China Normal University,
		Wuhan 430079, People's Republic of China}
\email{jian-chen@whu.edu.cn}
	
\address{Sheng Rao, School of Mathematics and Statistics, Wuhan  University,
		Wuhan 430072, People's Republic of China}
\email{likeanyone@whu.edu.cn}
	
\address{I-Hsun Tsai, Department of Mathematics, National Taiwan University, Taipei 10617,
		Taiwan}
\email{ihtsai@math.ntu.edu.tw}

\thanks{The first two authors are partially supported by NSFC (Grant No. 12271412, W2441003) and Hubei Provincial Innovation Research Group Project (Grant No. 2025AFA044). The third author would like to thank Taiwan's Ministry of Education for financial support.}
	
\date{\today}
\subjclass[2020]{Primary 14D15; Secondary  32S45, 14E05, 14E08, 14D06, 14C05}
\keywords{Deformations in algebraic geometry; Modifications; resolution of singularities, Rational and birational maps,  Rationality questions in algebraic geometry, Fibrations, degenerations in algebraic geometry, Parametrization (Chow and Hilbert schemes).}
	
\begin{abstract}
Inspired by the recent works of M. Kontsevich--Y. Tschinkel and J. Nicaise--J. C. Ottem on specialization of birational types for smooth families (in the scheme category) and J. Koll{\'a}r's work on fiberwise bimeromorphism, we focus on characterizing the fiberwise bimeromorphism and utilizing the characterization to investigate the specialization of bimeromorphic types for non-smooth families in the complex analytic setting.  We provide several criteria for a bimeromorphic map between two families over the same base to be fiberwise bimeromorphic. By combining these criteria with the relative Barlet cycle space theoretic argument motivated by D. Mumford--U. Persson, K. Timmerscheidt and T. de Fernex--D. Fusi, we establish the specialization of bimeromorphic types for locally Moishezon families with fibers having only canonical singularities and being of non-negative Kodaira dimension.  These specialization results can easily lead to criteria for locally strongly bimeromorphic isotriviality. Throughout this paper, we unveil the connections among the four classical topics in bimeromorphic geometry: the deformation behavior of plurigenera (or even $1$-genus), fiberwise bimeromorphism, specialization of bimeromorphic types, and the bimeromorphic version of the deformation rigidity.	
\end{abstract}

\maketitle
	
\setcounter{tocdepth}{1}
\tableofcontents
	
\section{Introduction}\label{s introduction}
	
The specialization of rationality and other related concepts (such as stable rationality, unirationality, uniruledness, etc.) are classical and significant topics of algebraic geometry, rapidly developing in recent years. Among them is M. Kontsevich--Y. Tschinkel's remarkable work \cite[Theorems 1 and 16]{KT19}, which fully addresses the specialization of rationality for smooth families (or the pair $(\text{total space}, \text{special fiber})$  has $B$-rational singularities) with arbitrary-dimensional fibers, in the scheme category.   In fact, they have achieved even more on the specialization of birational types in the case of smooth families, as shown in Theorem \ref{introduction-KT}. Note that these specialization questions can be regarded as the bimeromorphic counterpart to the non-deformability (rigidity) questions, which have been widely studied by Y.-T. Siu \cite{Si89,Si92}, J.-M. Hwang--N. Mok \cite{HM98, HM02, HM05}, Q. Li \cite{L22},  Y. Chen--B. Fu--Q. Li \cite{CFL23},  M. Li--X. Liu \cite{LL24}, and others.  
\begin{thm}[{\cite[Theorem 1]{KT19}}]\label{introduction-KT}
Let $\pi: X \rightarrow B$ and $\pi^{\prime}: X^{\prime} \rightarrow B$ be smooth proper morphisms to a smooth connected curve $B$ over a field of characteristic zero.  Suppose that the generic fibers of $\pi$ and $\pi^{\prime}$ are birational over the function field of $B$, then for every closed point $b \in B$ the fibers of $\pi$ and $\pi^{\prime}$ over $b \in B$ are birational over the residue field at $b$.	In particular, if the generic fiber of $\pi$ is rational,  then every fiber of $\pi$ is rational. 
\end{thm}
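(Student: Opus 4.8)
The plan is to run the argument of Kontsevich--Tschinkel, whose engine is a \emph{specialization homomorphism} between Burnside groups of fields; the external inputs are Hironaka's resolution of singularities, the semistable reduction theorem, and both the absolute and the relative/toroidal forms of the weak factorization theorem. \emph{Reductions.} Fixing a closed point $b\in B$ with residue field $\kappa=\kappa(b)$, I would shrink $B$ and complete the local ring, so that $B=\mathrm{Spec}\,R$ with $R$ a complete DVR of fraction field $K$ and residue field $\kappa$, and the given $K$-birational map $X_\eta\dashrightarrow X'_\eta$ survives. Here $X_\eta,X'_\eta$ are the (smooth proper, $n$-dimensional) generic fibers and $X_0,X'_0$ the smooth proper special fibers over $\kappa$ (irreducible, after the routine reduction to geometrically integral fibers, since the number of geometric components is locally constant on the connected curve $B$); the goal is to produce a $\kappa$-birational map $X_0\dashrightarrow X'_0$.

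\emph{The Burnside group.} For a characteristic-zero field $L$ I would introduce the graded Burnside group $\mathrm{Burn}_n(L)$: the free abelian group on isomorphism classes of smooth projective $L$-varieties of dimension $\le n$, each decorated with the incidence and normal-bundle data of a simple normal crossings decomposition, modulo the relations induced by blow-ups along smooth centers. The structural fact to establish first is that the degree-$n$ part of $\mathrm{Burn}_n(L)$ is \emph{canonically} the free abelian group on $L$-birational equivalence classes of $n$-dimensional varieties: a smooth-center blow-up of an $n$-fold produces exceptional contributions of dimension $<n$, so in top degree every blow-up relation reads $[\mathrm{Bl}_ZY]^{(n)}=[Y]^{(n)}$ and identifies only birational things, while by the absolute weak factorization theorem these relations already generate all identifications forced by birational equivalence. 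In particular the hypothesis yields $[X_\eta]=[X'_\eta]$ in $\mathrm{Burn}_n(K)$, and conversely $[X_0]=[X'_0]$ in the degree-$n$ part of $\mathrm{Burn}_n(\kappa)$ will force $X_0\sim_\kappa X'_0$.

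\emph{The specialization map, and the crux.} The core step is to build a homomorphism $\rho_R:\mathrm{Burn}_n(K)\to\mathrm{Burn}_n(\kappa)$ with the property that whenever a smooth projective $K$-variety $V$ admits a regular proper $R$-model with smooth irreducible special fiber $\mathcal V_0$, one has $\rho_R([V])=[\mathcal V_0]$. On a generator $[V]$ one defines $\rho_R$ by choosing, via Hironaka and semistable reduction and possibly after a ramified base change $R\hookrightarrow R'$, a regular proper model with reduced snc special fiber $\sum_{i\in I}D_i$, and setting
\[
\rho_R([V]) \;=\; \sum_{\emptyset\ne J\subseteq I}(-1)^{|J|-1}\,\bigl[\,\widehat D_J\to D_J\,\bigr],
\]
where $D_J:=\bigcap_{i\in J}D_i$ and $\widehat D_J$ is the stratum with its induced normal-crossings decoration, followed by the trace/Galois-descent device of Kontsevich--Tschinkel to return from $\kappa(R')$ to $\kappa$. \textbf{I expect well-definedness of $\rho_R$ to be the main obstacle}: one must show the right-hand side is independent of the snc model, using that any two regular snc models over a common base are linked by a chain of blow-ups and blow-downs whose smooth centers lie in the special fiber (this is where the relative/toroidal weak factorization theorem of Abramovich--Karu and Abramovich--Temkin enters), and then checking by a delicate local computation that each such elementary modification alters the alternating sum over strata by \emph{exactly} one blow-up relation in $\mathrm{Burn}_n(\kappa)$, hence by $0$. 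Propagating the normal-bundle decorations through blow-ups of deep strata, and eliminating components of multiplicity $>1$ via the descent from $R'$, are where the real work sits.

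\emph{Conclusion.} Applying $\rho_R$ to $[X_\eta]=[X'_\eta]$ and using that $X\to\mathrm{Spec}\,R$ and $X'\to\mathrm{Spec}\,R$ are themselves regular proper models with irreducible special fibers, I obtain $[X_0]=\rho_R([X_\eta])=\rho_R([X'_\eta])=[X'_0]$ in $\mathrm{Burn}_n(\kappa)$; extracting the degree-$n$ part gives the desired $\kappa$-birational map $X_0\dashrightarrow X'_0$. For the last assertion, $K$-rationality of $X_\eta$ means $[X_\eta]=[\mathbb P^n_K]$, and the tautological model $\mathbb P^n_R$ gives $\rho_R([\mathbb P^n_K])=[\mathbb P^n_\kappa]$, whence $[X_0]=[\mathbb P^n_\kappa]$, i.e.\ $X_0$ is $\kappa$-rational. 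One could instead run the same machine with Nicaise--Ottem's refined motivic volume in place of $\mathrm{Burn}_n$; the crux, well-definedness of the specialization map via weak factorization, is unchanged. $\square$
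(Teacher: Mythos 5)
You should first note that the paper contains no proof of Theorem \ref{introduction-KT} at all: it is quoted from Kontsevich--Tschinkel \cite{KT19} purely as background, and the only comment the authors make on its proof is the one-sentence remark after Theorem \ref{intro-NO}, namely that the argument constructs a specialization map from an algebraic structure attached to the function field to the one attached to the residue field. So there is no in-paper argument to compare against; what your proposal reconstructs is the original proof in \cite{KT19}, and at the level of skeleton it is the right one: reduce to a complete DVR, encode birational types in a Burnside-type group, define a specialization homomorphism via snc models obtained from resolution and semistable reduction, and prove well-definedness by (toroidal/relative) weak factorization --- which is indeed where the real work lies. Two cautions on the details as you wrote them: the displayed alternating sum $\sum_{\emptyset\ne J\subseteq I}(-1)^{|J|-1}[\widehat D_J\to D_J]$ is the shape of the motivic-volume specialization of Nicaise--Ottem \cite{NO21} rather than the Burnside-group map of \cite{KT19}, which carries no signs and instead assigns to the open strata $D_J^\circ$ the birational classes of $D_J^\circ\times\mathbb{G}_m^{|J|-1}$, handling multiplicities through a finite base change and descent; and $\mathrm{Burn}_n(L)$ in \cite{KT19} is simply the free abelian group on birational types (finitely generated extensions of $L$ of transcendence degree $\le n$), not a group of varieties decorated with normal-bundle data modulo blow-up relations, so the ``structural fact'' you propose to establish first is built into the definition rather than a theorem. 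Neither slip destroys the strategy, but the well-definedness computation you rightly single out as the crux would have to be carried out for the correct formula. Finally, be aware that the authors' own contribution deliberately avoids this route: because no analogous specialization morphism seems available in the complex analytic category, their analytic counterparts (Theorems \ref{fiberbime taka mois-intro} and \ref{specialization-base 1 dim-singular family-intro}) are proved instead by combining fiberwise-bimeromorphism criteria, governed by the deformation behavior of plurigenera via Takayama's extension theorems, with the Mumford--Persson base-change idea and Timmerscheidt's relative Barlet cycle space argument; your Burnside-theoretic approach and theirs are genuinely different methods, with yours tied to the scheme category and smoothness, and theirs trading the algebraic formalism for geometric control of the special fiber in possibly singular, non-algebraic families.
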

	
Then J. Nicaise--J. C. Ottem \cite[Theorem 4.1.1]{NO21} studied the specialization of geometric (stable) birational types and described the structure of the geometric (stable) birational locus for proper smooth families in the scheme category. 
	
\begin{thm}[{\cite[Theorem 4.1.1]{NO21}}]\label{intro-NO}
Let $S$ be a Noetherian scheme of characteristic zero, and let $\mathscr{X} \rightarrow$ $S$ and $\mathscr{Y} \rightarrow S$ be smooth and proper $S$-schemes. For every point $s$ of $S$, we fix a geometric point $\bar{s}$ supported at $s$. Then the subset 
$$
\begin{aligned}
S_{\mathrm{bir}}(\mathscr{X}, \mathscr{Y})  =\left\{s \in S \mid \mathscr{X} \times_S \bar{s}\sim_{\mathrm{bir}} \mathscr{Y} \times_S \bar{s}\right\}
\end{aligned}
$$
is countable unions of closed subsets of $S$. 
\end{thm}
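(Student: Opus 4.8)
The plan is to deduce the statement from two facts: $(i)$ the locus $S_{\mathrm{bir}}(\mathscr{X},\mathscr{Y})$ is a countable union of \emph{constructible} subsets of $S$, proved by parametrizing birational maps between geometric fibers via the closures of their graphs inside a relative Hilbert scheme of $\mathscr{X}\times_S\mathscr{Y}$; and $(ii)$ this locus is stable under specialization, which is exactly where Theorem~\ref{introduction-KT} enters. A point-set lemma then upgrades $(i)+(ii)$ to the desired conclusion. As a preliminary reduction, by absolute Noetherian approximation we write the Noetherian (hence quasi-compact and quasi-separated) scheme $S$ as a cofiltered limit of finite-type $\mathbb{Q}$-schemes with affine transition maps and descend $\mathscr{X}\to S$, $\mathscr{Y}\to S$ to smooth proper families over some finite-type level $S_0$; since $S_{\mathrm{bir}}(\mathscr{X},\mathscr{Y})$ is the preimage under $S\to S_0$ of the analogous locus for the descended families (birational equivalence of varieties over an algebraically closed field being insensitive to enlarging that field) and the preimage of a countable union of closed subsets is again one, we may assume $S$ is of finite type over a field $k$ of characteristic zero.

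For $(i)$, put $W:=\mathscr{X}\times_S\mathscr{Y}$, which is proper over $S$, and let $\mathcal{H}:=\mathrm{Hilb}(W/S)$ be the relative Hilbert functor, represented by an algebraic space locally of finite presentation over $S$ (a scheme when $W\to S$ is projective; the relative Barlet cycle space is the analogue in the analytic category). Write $\mathcal{H}=\bigcup_{n\ge 1}\mathcal{H}_n$ as an increasing exhaustion by open quasi-compact, hence finite-type over $S$, subspaces, the countability of such an exhaustion being clear when $W\to S$ is projective and following in general from Chow's lemma. With $\mathcal{Z}_n\subseteq W\times_S\mathcal{H}_n$ the universal subscheme, let $B_n\subseteq\mathcal{H}_n$ be the set of points $u$ at which the geometric fiber $(\mathcal{Z}_n)_{\bar u}$ is integral and both projections $(\mathcal{Z}_n)_{\bar u}\to\mathscr{X}_{\bar u}$ and $(\mathcal{Z}_n)_{\bar u}\to\mathscr{Y}_{\bar u}$ are birational. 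Geometric integrality of the fibers of $\mathcal{Z}_n\to\mathcal{H}_n$ is constructible (EGA~IV, \S 9), and on that constructible locus the conditions that the two projections be dominant and of degree one are constructible as well, by generic flatness together with the semicontinuity of fiber dimension and of degrees of generically finite morphisms; hence $B_n$ is constructible, and by Chevalley's theorem its image $C_n$ under $\mathcal{H}_n\to S$ is constructible in $S$. Now a birational map $\varphi\colon\mathscr{X}_{\bar s}\dashrightarrow\mathscr{Y}_{\bar s}$ produces the point of $\mathcal{H}$ over $s$ given by the closure $\overline{\Gamma_\varphi}\subseteq\mathscr{X}_{\bar s}\times\mathscr{Y}_{\bar s}=W_{\bar s}$ of its graph, which lies in the corresponding $B_n$; conversely such a point yields a chain of birational maps $\mathscr{X}_{\bar s}\sim_{\mathrm{bir}}(\mathcal{Z}_n)_{\bar u}\sim_{\mathrm{bir}}\mathscr{Y}_{\bar s}$. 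Therefore
\[
S_{\mathrm{bir}}(\mathscr{X},\mathscr{Y})=\bigcup_{n\ge 1}C_n
\]
is a countable union of constructible subsets of $S$.

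For $(ii)$, let $s\rightsquigarrow s_0$ be a specialization in $S$ with $s\in S_{\mathrm{bir}}(\mathscr{X},\mathscr{Y})$ and $s\ne s_0$. The geometric birational equivalence $\mathscr{X}_{\bar s}\sim_{\mathrm{bir}}\mathscr{Y}_{\bar s}$ is defined over some finite extension $L$ of $\kappa(s)$. A standard manipulation — adjoin to the local domain $\mathcal{O}_{S,s_0}/\mathfrak{p}_s$ (whose fraction field is $\kappa(s)$ and residue field $\kappa(s_0)$) finitely many elements generating $L$, localize at a maximal ideal, and dominate the result by a discrete valuation ring of $L$ — produces a discrete valuation ring $R$, essentially of finite type over $k$ and hence excellent, with fraction field $L$ and a morphism $\mathrm{Spec}\,R\to S$ sending the generic point to $s$ and the closed point to $s_0$. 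Then $\mathscr{X}\times_S\mathrm{Spec}\,R$ and $\mathscr{Y}\times_S\mathrm{Spec}\,R$ are smooth and proper over the trait $\mathrm{Spec}\,R$ with birational generic fibers (as $L$ is a field of definition of the equivalence), so by the local (valuative) form of Theorem~\ref{introduction-KT} their special fibers are birational over the residue field of $R$. Since birational equivalence between $\kappa(s_0)$-varieties over an extension field descends to $\overline{\kappa(s_0)}$ (spread the equivalence out over a finite-type base and evaluate at a geometric point), we obtain $\mathscr{X}_{\bar s_0}\sim_{\mathrm{bir}}\mathscr{Y}_{\bar s_0}$, i.e.\ $s_0\in S_{\mathrm{bir}}(\mathscr{X},\mathscr{Y})$.

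Finally, combining $(i)$ and $(ii)$: each $C_n$ is constructible in the Noetherian scheme $S$, so $\overline{C_n}$ is a finite union of irreducible closed sets, each of which is the closure of a point of $C_n$; that point lies in $S_{\mathrm{bir}}(\mathscr{X},\mathscr{Y})$, so by $(ii)$ its closure does too. Hence
\[
S_{\mathrm{bir}}(\mathscr{X},\mathscr{Y})=\bigcup_{n\ge 1}\overline{C_n}
\]
is a countable union of closed subsets of $S$, as desired. I expect the main obstacle to be the Hilbert-scheme input $(i)$: one must check carefully that ``both projections from the universal subscheme are birational on geometric fibers'' cuts out a constructible subset — which rests on the constructibility of geometric integrality of fibers together with control of the degree of a morphism in a family — and that the relevant Hilbert scheme exists and is exhausted by countably many finite-type pieces when $W\to S$ is merely proper rather than projective. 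By contrast, once $(i)$ is in hand, Theorem~\ref{introduction-KT} makes the passage from ``constructible'' to ``closed'' essentially formal.
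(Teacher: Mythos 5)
The paper does not prove Theorem~\ref{intro-NO} at all: it is quoted verbatim from Nicaise--Ottem [NO21, Theorem 4.1.1] as background in the introduction, so there is no internal proof to compare yours against. What you have written is essentially a reconstruction of the standard argument (and close in spirit to the one in [NO21] itself): (i) the birationality locus is a countable union of constructible sets, obtained by parametrizing graph closures of birational maps inside a relative Hilbert space of $\mathscr{X}\times_S\mathscr{Y}$ and using constructibility of geometric integrality and of the degrees of the two projections; (ii) the locus is stable under specialization by the Kontsevich--Tschinkel theorem; and the final point-set step (irreducible components of $\overline{C_n}$ are closures of points of $C_n$, hence contained in the locus by (ii)) is correct as you state it. The overall architecture is sound.

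Two points deserve more care than your sketch gives them, and you have correctly flagged the first yourself. First, when $\mathscr{X}\times_S\mathscr{Y}\to S$ is merely proper, the representability of the Hilbert functor by an algebraic space is fine, but the claim that it is exhausted by \emph{countably many} quasi-compact opens ``by Chow's lemma'' is not a proof; this is exactly the kind of boundedness statement that needs an argument (e.g.\ comparing with a projective modification and controlling how graph closures transform), and it is where a referee would push back. Second, your step (ii) invokes a ``local (valuative) form'' of Theorem~\ref{introduction-KT}, but the statement quoted in this paper is only the version over a smooth connected curve over a field; what you actually need is the specialization theorem over a discrete valuation ring of equal characteristic zero with possibly non-closed residue field. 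That version is available in [KT19] (and is reproved in [NO21]), so the gap is one of citation rather than substance, but as written your proof does not follow from the theorem in the form you reference, and reducing the DVR case to the curve case is not automatic. With those two points repaired by appeal to the literature, your argument goes through; as a comparison exercise, note simply that this paper takes Theorem~\ref{intro-NO} as a black box and its own contribution lies in the analytic analogues (Sections~\ref{section-moishezon} and~\ref{s defor limit}), where the Hilbert-scheme step is replaced by the relative Barlet cycle space and the specialization input is replaced by the fiberwise-bimeromorphism criteria.
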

	
Loosely speaking, the approaches of both Theorems \ref{introduction-KT} and \ref{intro-NO} are to construct some specialization map of certain algebraic structure associated with the function field to that associated with the residue field at the closed point of the base space. However, it seems difficult to construct an analogous specialization morphism in the complex analytic setting.  It is natural to seek an alternative method, different from \cite{KT19}, to address the specialization question of bimeromorphic types in the complex analytic setting. This serves as the primary motivation for the present work. Generally speaking, by \emph{specialization (or degeneration) question of bimeromorphic types}, we refer to: Given two deformation families over the same base space, with fibers (over a general point of the base) that are bimeromorphically equivalent, under what conditions are all the fibers over a special point of the base bimeromorphically equivalent?
	
Recently, J. Koll{\'a}r  studied a fiberwise bimeromorphism as in \cite[Definition 26]{Kol22} (or  Definition \ref{fiberbimedef}) in the complex analytic category, possibly initiated in \cite{KMo92}. He then formulated a profound \cite[Conjecture 5]{Kol22} (or just Conjecture \ref{conj5}) and presented an intriguing result on a fiberwise bimeromorphism in \cite[Theorem 28]{Kol22}.  
	
In this paper, we aim to address the specialization question of bimeromorphic types in the complex analytic setting. This is done by constructing a global bimeromorphic map between the total spaces of the two families and then demonstrating that this map is fiberwise bimeromorphic (when each fiber of both families under consideration is irreducible).
	
As is well known, bimeromorphic transformations of the total space of a family are common operations in bimeromorphic geometry. So, other than playing an important role in the study of specialization problems,  investigating the conditions under which a bimeromorphic map between two families over the same base is fiberwise bimeromorphic is also a natural and fundamental question, with its own independent significance and interest.
	
In addressing the specialization question of bimeromorphic types, we first examine the conditions under which a bimeromorphic map between two families is fiberwise bimeromorphic, as defined in Definition \ref{fiberbimedef}. We then apply this result, along with the relative Barlet cycle space theory, to study the specialization (also known as degeneration) of bimeromorphic types, as well as the specialization of certain fixed bimeromorphic structures for locally Moishezon families. 
	
In this paper, we obtain several fiberwise bimeromorphism results for families whose fibers may be reducible, and several specialization results of bimeromorphic types for singular families. Furthermore,  we explore in this paper the relationships among four classical topics in bimeromorphic geometry: the deformation behavior of plurigenera (or even $1$-genus) implies fiberwise bimeromorphism, which in turn implies the specialization of a fixed bimeromorphic structure. Moreover, the specialization of a fixed bimeromorphic structure is equivalent, at the methodological level, to the specialization of bimeromorphic types.
	
We emphasize that, in contrast to \cite[Theorems 1, 16]{KT19}, some of our main theorems in this paper do not require the families involved to be smooth, and they allow for reducible fibers. Furthermore, the total spaces, base spaces, and fibers are not required to be algebraic (i.e., complex analytic spaces associated with schemes). More specifically, in Sections \ref{section-moishezon} and \ref{s defor limit}, we focus on families whose fibers are Moishezon (which correspond to algebraic spaces).  {Note that we also establish several results on the fiberwise bimeromorphism and  specialization of bimeromorphic types for certain non-smooth locally Fujiki families, under different settings (\cite{CRT26}).} Additionally, in all the results of this paper, each bimeromorphism between fibers that we obtain is induced by  a global bimeromorphic map (e.g., Definition \ref{fiberbimedef}) between the total spaces of the two families (possibly after shrinking the base or taking a base change). 
It's worth noting that for any two families  $f: X\to S$ and $g: Y\to S$, the condition that 
$(X_t)_{\rm{red}}$ is bimeromorphic to $(Y_t)_{\rm{red}}$ for any $t\in S$ is quite different from  $f$ being fiberwise bimeromorphic to $g$ (e.g., Example \ref{Hirze example}).   {Regrettably,  as  illustrated by Example \ref{Hirze example}, our method to  study the specialization of bimeromorphic types via fiberwise bimeromorphisms cannot, in general,  handle  families whose fibers have  Kodaira dimension $-\infty$.}   

\subsection{Conventions for concepts and notations}\label{ss-1.1}
	
Before stating the main results in this paper, we first fix the necessary notions and notations. Unless otherwise stated, throughout this paper, all complex analytic spaces are assumed to be $\mathbb{C}$-ringed spaces whose underlying topological space is Hausdorff, and which are glued together by local models (e.g., \cite{Fs76}). 
Except for the term ``irreducible", other topological notions are with respect to the Hausdorff complex topology. The definitions of proper modifications and meromorphic maps are based on Remmert's definitions (see Section \ref{ss prelima} for details). Denote the reduction of a complex analytic space $X$ by $X_{\textrm{red}}$. We use the terms ``morphism" and ``holomorphic map" interchangeably, and we often denote the finite base change of a complex analytic space (say $\mathcal{U}$) by this notation itself  (i.e., $\mathcal{U}$)  when using the semi-stable reduction   theorem (e.g., Footnote \ref{localversion-basechange}).

{A \emph{fiber} of a morphism $f: X \to Y$ over $y \in Y$,  denoted by $X_y:=f^{-1}(y)$, will always mean the  standard analytic fiber,
 i.e., the analytic subspace of $X$ defined by the ideal   sheaf $\operatorname{Im}\left(f^*\left(m_y\right) \rightarrow \mathcal{O}_X\right)$. In particular, the smooth locus of a fiber may be empty.}
A \emph{family} will refer to a proper surjective morphism from a {normal} complex analytic space to a smooth connected curve \footnote{In many occasions, we can reduce questions related to families to cases where the base is a curve. We subsequently normalize the base and consider the corresponding base change. Therefore, we always assume that the base of a family is a smooth curve.} (not necessarily compact) with connected (but possibly reducible) complex analytic spaces as its fibers.   We also assume that the general fiber of a family is irreducible.   The ``general fiber" of a family refers to the one over  any point of a nonempty analytic Zariski open subset of the base.  \footnote{{Note that, by \cite[Lemma 1.4]{Fu78/79} and \cite[Proposition 3]{Fu82}, the assumption that the general fiber of a family is irreducible is not unduly restrictive. }}  It follows from Lemmata \ref{connectedness of total space} and \ref{flatovercurve} that,    in the present paper, a family  is flat and the total space of a family is irreducible.  
 Note that the fibers of a family are compact and thus automatically countable at infinity; the total space of a family is also automatically countable at infinity by the properness condition and the fact that a smooth curve is either (compact) projective or Stein. 
	
Particularly in Sections \ref{section-fib bime for map kodaira} and \ref{section-moishezon}, we make frequent use of:
\begin{notation}\label{notation 1}
Let  
$$\pi_1: \mathcal{X} \rightarrow S \text{ and } \pi_2: \mathcal{Y} \rightarrow S$$
be two families (in particular, both $\mathcal{X}$ and $\mathcal{Y}$ are normal).   For any $t\in S$, we denote by 
$X_t:=\pi_1^{-1}(t)$ and  $Y_t:=\pi_2^{-1}(t)$  the fibers of $\pi_1$ and $\pi_2$, respectively. Let $f:\mathcal{X} \dashrightarrow \mathcal{Y}$ be a bimeromorphic map over $S$ (i.e., $\pi_1=\pi_2\circ f$) \footnote{In the remainder of this paper, we use the term ``over $S$" to denote the condition $\pi_1=\pi_2 \circ f$. However, we occasionally use the phrase ``over $S$ such that $\pi_1=\pi_2 \circ f$" interchangeably.}.		
Note that both $\mathcal{X}$ and $\mathcal{Y}$ are (globally) irreducible by Lemma \ref{connectedness of total space}.   
\end{notation}	

\subsection{Summary of the main results on fiberwise bimeromorphism}
We begin with a preliminary analysis to explore the challenges in determining when a general bimeromorphic map between two families over the same base is fiberwise bimeromorphic. Consider a given family $\mathcal{X} \to S$ with reduced and irreducible fibers. By blowing up $\mathcal{X}$ along a suitable center, we obtain a new family $\mathcal{Y} \to S$ 
such that the special fiber $Y_s$ contains both the strict transform $\mu^{-1}_*X_s$ of $X_s$ and an exceptional divisor $E$, where the blow up is denoted by $\mu: \mathcal{Y} \to \mathcal{X}$.
If it is possible to blow down $\mathcal{Y}$  to contract $\mu^{-1}_*X_s$ via a bimeromorphic map $\mathcal{Z} \dashrightarrow \mathcal{Y}$, then one produces another family $\mathcal{Z} \to S$, where the fiber $Z_s$ over $s$ corresponds to the strict transform of $E$.
	
A notable complication arises: $Z_s$ may fail to be bimeromorphic to $X_s$. To address this, it is natural to analyze two primary obstacles that may prevent a general bimeromorphic map between two families over the same base from being fiberwise bimeromorphic: the ramification divisor and the indeterminacy locus.
	
An observation regarding fiberwise bimeromorphism is as follows. Broadly speaking, it demonstrates that the ramification divisor does not pose an obstacle for a bimeromorphic morphism (not just a bimeromorphic map) between two families over the same base to be fiberwise bimeromorphic:

\begin{thm}[=Theorem \ref{new-fiberbime-irredu-singular}+Corollary \ref{morphism-cor-25}]\label{intro-fiberbime-irredu-singular}
Let $\mathcal{X}$, $\mathcal{Y}$ and $S$ be complex analytic spaces.
Assume that $\mathcal{X}$ is reduced (not necessarily normal) and irreducible, $\mathcal{Y}$  is normal and irreducible, and $S$ is irreducible (not necessarily $1$-dimensional).	
Assume further that both $\pi_1: \mathcal{X} \to  S$ and $\pi_2: \mathcal{Y} \to  S$ are proper surjective holomorphic maps whose fibers are denoted by $X_t:=(\pi_1)^{-1}(t)$ and $Y_t:=(\pi_2)^{-1}(t)$, respectively.

Suppose that there is a bimeromorphic morphism (not just a bimeromorphic map) $f: \mathcal{X} \to \mathcal{Y}$ over $S$.
For some $t\in S$, let $D_t$ be an irreducible component of $Y_t$ such that $D_t$ is of codimension $1$ in $\mathcal{Y}$. Then there exists an irreducible component $C_t$ (equipped with the reduced structure) of $X_t$ that is bimeromorphic to $D_t$, induced by $f$.  In particular, when both $\pi_1$ and $\pi_2$ are families with irreducible fibers, 
$f$ is fiberwise bimeromorphic in the sense of Definition \ref{fiberbimedef}.
\end{thm}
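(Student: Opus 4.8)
The plan is to realize the sought-for component $C_t$ as the strict transform of $D_t$ under $f$, and then to use a dimension count to see that this strict transform must be an entire irreducible component of $X_t$.

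First I would use that a bimeromorphic morphism $f\colon\mathcal X\to\mathcal Y$ is a proper modification in Remmert's sense (cf.\ Section~\ref{ss prelima}): there is a nowhere-dense analytic subset $Z\subsetneq\mathcal Y$ such that $f$ restricts to a biholomorphism $\mathcal X\setminus f^{-1}(Z)\xrightarrow{\;\sim\;}\mathcal Y\setminus Z$, and, crucially, since $\mathcal Y$ is normal one has $\operatorname{codim}_{\mathcal Y}Z\ge 2$. Because $D_t$ has codimension $1$ in $\mathcal Y$ it is therefore not contained in $Z$, so $D_t^{\circ}:=D_t\setminus Z$ is a nonempty, dense, irreducible (analytic-Zariski) open subset of $D_t$. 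Set $C_t:=\overline{f^{-1}(D_t^{\circ})}$, the closure being taken in $\mathcal X$. Since $f^{-1}(D_t^{\circ})$ is biholomorphic to $D_t^{\circ}$, the set $C_t$ is irreducible with $\dim C_t=\dim D_t$, and $f|_{C_t}\colon C_t\to D_t$ is proper, satisfies $f(C_t)=D_t$ (this image is closed, irreducible, and contains the dense set $D_t^{\circ}$), and is an isomorphism over $D_t^{\circ}$; hence $f|_{C_t}$ is a bimeromorphic morphism, so $C_t$ is bimeromorphic to $D_t$, induced by $f$.

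Next I would locate $C_t$ inside $X_t$ and verify maximality. From $\pi_1=\pi_2\circ f$ and $D_t\subseteq Y_t=\pi_2^{-1}(t)$ we get $\pi_1(C_t)=\pi_2(f(C_t))=\pi_2(D_t)=\{t\}$, so $C_t\subseteq X_t$. We may assume $\dim S\ge 1$, for otherwise $\mathcal Y=Y_t$ is irreducible and the codimension-$1$ hypothesis on $D_t$ is vacuous. Then $\pi_1$ is nonconstant, so $X_t$ is a proper closed analytic subset of the irreducible space $\mathcal X$, whence every irreducible component of $X_t$ has dimension at most $\dim\mathcal X-1$. On the other hand $\dim C_t=\dim D_t=\dim\mathcal Y-1=\dim\mathcal X-1$, using $\dim\mathcal X=\dim\mathcal Y$ (as $f$ is bimeromorphic). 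Thus $C_t$ is an irreducible analytic subset of $X_t$ of the maximal possible dimension, hence an irreducible component of $X_t$, bimeromorphic to $D_t$ via $f$; this proves the first assertion.

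For the ``in particular'' statement, suppose both $\pi_1$ and $\pi_2$ are families with irreducible fibers. Then $S$ is a smooth connected curve, the families are flat by Lemma~\ref{flatovercurve}, so each $Y_t$ is a pure-codimension-one (Cartier) divisor in $\mathcal Y$; being irreducible, $Y_t$ itself is an admissible choice of $D_t$. By the first part some irreducible component $C_t$ of $X_t$ is bimeromorphic to $Y_t$ via $f$, and since $X_t$ is irreducible $C_t=X_t$; concretely $f|_{X_t}\colon X_t\to Y_t$ is proper, surjects onto $Y_t$, and is an isomorphism over $Y_t\setminus Z$ (dense in $Y_t$ because $\operatorname{codim}_{\mathcal Y}Z\ge 2$), so $f$ restricts to a bimeromorphic map $X_t\dashrightarrow Y_t$ for every $t\in S$, which by Definition~\ref{fiberbimedef} says exactly that $f$ is fiberwise bimeromorphic. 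I expect the only genuinely load-bearing point to be the inequality $\operatorname{codim}_{\mathcal Y}Z\ge 2$ — it is what forces $D_t\not\subseteq Z$ so that the strict transform $C_t$ is even defined, and it is precisely where the normality of $\mathcal Y$ enters; correspondingly, the non-normality permitted for $\mathcal X$ is the reason one can only extract a single component $C_t$ of $X_t$ rather than claim all of $X_t$ is bimeromorphic to $D_t$, while everything else is dimension bookkeeping.
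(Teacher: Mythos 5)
Your argument is correct, and it turns on exactly the same pivot as the paper's proof of Theorem \ref{new-fiberbime-irredu-singular}: normality of $\mathcal{Y}$ forces the locus over which $f$ fails to be an isomorphism to have codimension at least $2$, so the codimension-one component $D_t$ meets the isomorphism locus in a dense open set. The difference lies in how the component is produced and in what you take for granted. The paper does not quote the ``$\operatorname{codim}Z\ge 2$'' fact; it derives it inside the proof, by combining the fiber-dimension estimate of \cite[(1.17) + Theorem 1.19]{PR94} (positive-dimensional fibers lie over a codimension-$\ge 2$ set), connectedness of fibers of a proper modification over a normal base (\cite[Corollary 1.12]{Ue75}), and the bijective-onto-normal criterion of \cite[p. 166]{GR84}; you assert this as known (it is classical, but note it is not actually recorded in Section \ref{ss prelima}, and the reference \cite[p. 215]{GR84} used elsewhere in the paper covers only the smooth case). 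Then, to locate the component: the paper uses surjectivity of $f$, Remmert's proper mapping theorem and irreducibility of $D_t$ to find an irreducible component $C_t$ of $X_t$ with $f(C_t)=D_t$, and checks bimeromorphy by thinness of $C_t\cap f^{-1}(V)$ and $D_t\cap V$; you instead take the strict transform $C_t=\overline{f^{-1}(D_t\setminus Z)}$ and prove it is a \emph{whole} component of $X_t$ by the dimension count $\dim C_t=\dim\mathcal{Y}-1=\dim\mathcal{X}-1$ against the bound $\dim X_t\le\dim\mathcal{X}-1$. Your count is sound (after your reduction to $\dim S\ge 1$, which the paper never needs because its $C_t$ is a component by construction), but it carries a little extra bookkeeping that you pass over quickly: analyticity and irreducibility of the closure $\overline{f^{-1}(D_t\setminus Z)}$ (it is the union of the components of $f^{-1}(D_t)$ not contained in $f^{-1}(Z)$), and pure-dimensionality of the irreducible space $\mathcal{X}$; also, calling $Y_t$ a Cartier divisor in the ``in particular'' step is more than is needed --- the paper gets codimension one directly from flatness (Lemma \ref{flatovercurve}) and the dimension formula for flat morphisms. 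Net effect: your route buys a canonical description of $C_t$ as the strict transform of $D_t$, at the price of a dimension argument and the case split on $\dim S$; the paper's route is slightly more economical and is what makes the absolute variant (Corollary \ref{Kol-abso}) immediate.
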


For addressing the issue of indeterminacy, Lemma \ref{elimination of inde}, which deals with the elimination of indeterminacy, serves as a broadly applicable tool to desingularize a bimeromorphic map into a bimeromorphic morphism. However, during the desingularization process, exceptional divisors arise, and some of these exceptional divisors may correspond to the $C_t$ described in Theorem \ref{intro-fiberbime-irredu-singular}. This makes it impossible to guarantee that some component of the original fiber is bimeromorphic to the $D_t$ in Theorem \ref{intro-fiberbime-irredu-singular}.	
	
\textit{How can we overcome this difficulty?} Our {first strategy} is to leverage desingularization theory to analyze the nature of these exceptional divisors during the process of eliminating indeterminacy. In certain cases, it is possible to control the compact exceptional divisors to ensure that they remain uniruled  at each step of the blow-ups used to resolve the indeterminacy. As a result, we obtain a complex analytic analogue of Artin--Matsusaka--Mumford's specialization {\cite[Theorem 1.1]{mm64}} (= Theorem \ref{amm} here).    
\begin{thm}[{=Corollary \ref{smoothcase uniruled}}]\label{smoothcase uniruled-intro}
With Notation \ref{notation 1}, assume that the total space $\mathcal{X}$ is smooth and let $D$ be an irreducible component of $(Y_s)_{\textrm{red}}$ for some $s \in S$ such that $D$ is not uniruled. Then there exists an irreducible component $C$ of $(X_s)_{\textrm{red}}$ such that $C$ is bimeromorphic to $D$, induced by $f$. In particular, if each fiber of $\pi_1$ and $\pi_2$ is irreducible and $Y_t$ is not uniruled for each $t \in S$, then $f$ is fiberwise bimeromorphic in the sense of Definition \ref{fiberbimedef}.
\end{thm}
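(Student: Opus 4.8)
\emph{Sketch of the intended proof.} The strategy is to resolve the indeterminacy of $f$ by blowing up $\mathcal{X}$ along smooth centers, apply Theorem~\ref{intro-fiberbime-irredu-singular} upstairs, and then use the uniruledness hypothesis to discard the unwanted exceptional components.

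First I would replace $f$ by a bimeromorphic morphism. Since $\mathcal{X}$ is smooth, Lemma~\ref{elimination of inde} (applied so that, $\mathcal{X}$ being smooth, the modification is a finite composition of blow-ups along smooth centers, each of codimension $\geq 2$, a blow-up along a smooth divisor being an isomorphism) provides a proper bimeromorphic morphism $\mu\colon\mathcal{W}\to\mathcal{X}$ with $g:=f\circ\mu\colon\mathcal{W}\to\mathcal{Y}$ holomorphic. Then $g$ is a bimeromorphic morphism over $S$ with respect to $\pi_1\circ\mu=\pi_2\circ g$, and $\mathcal{W}$ is smooth, hence normal and irreducible. Because $\mathcal{X}$ is smooth, the non-isomorphism locus $B\subset\mathcal{X}$ of $\mu$ has codimension $\geq 2$; writing $W_s:=(\pi_1\circ\mu)^{-1}(s)$, whose support is the divisor $\mu^{-1}(X_s)$, I would record that every irreducible component $C$ of $(X_s)_{\mathrm{red}}$ (which has codimension $1$ in $\mathcal{X}$, hence is not contained in $B$) has a strict transform $\mu_*^{-1}C\subset\mathcal{W}$ with $\mu\colon\mu_*^{-1}C\to C$ bimeromorphic, and that every irreducible component of $(W_s)_{\mathrm{red}}$ is either such a strict transform or a prime $\mu$-exceptional divisor $E$ with $\mu(E)\subseteq B\cap X_s$, so with $\pi_1(\mu(E))=\{s\}$.

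Next I would apply Theorem~\ref{intro-fiberbime-irredu-singular} to $g\colon\mathcal{W}\to\mathcal{Y}$ over $S$: this is legitimate since $\mathcal{W}$ is normal and irreducible, $\mathcal{Y}$ is normal and irreducible, and $D$ --- being an irreducible component of the fiber $Y_s$ of the family $\pi_2$ over a curve, hence of the Cartier divisor $Y_s$ in the irreducible space $\mathcal{Y}$ --- automatically has codimension $1$ in $\mathcal{Y}$. The theorem yields an irreducible component $C'$ of $(W_s)_{\mathrm{red}}$, with its reduced structure, that is bimeromorphic to $D$ and induced by $g$. It remains to show $C'$ cannot be a prime $\mu$-exceptional divisor. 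If it were, then tracing it to the stage $\mathcal{X}_j=\mathrm{Bl}_{Z_j}\mathcal{X}_{j-1}$ at which it is created, its image there is the projectivized normal bundle $\mathbb{P}(N_{Z_j/\mathcal{X}_{j-1}})$, a projective bundle with fibers of dimension $\geq 1$ (as $\mathrm{codim}\,Z_j\geq 2$ and $Z_j$, $\mathcal{X}_{j-1}$ are smooth), hence uniruled; at each later blow-up the center has codimension $\geq 2$ and so cannot contain this prime divisor, so its strict transform is a proper modification of it and remains uniruled, uniruledness being a bimeromorphic invariant. Thus $C'$ would be uniruled, forcing $D$ to be uniruled, contrary to hypothesis. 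Hence $C'=\mu_*^{-1}C$ for some irreducible component $C$ of $(X_s)_{\mathrm{red}}$, and $C\sim_{\mathrm{bir}}\mu_*^{-1}C=C'\sim_{\mathrm{bir}}D$, the bimeromorphism being induced by $f=g\circ\mu^{-1}$. For the last assertion, applying this with $s$ ranging over $S$ and $D=(Y_t)_{\mathrm{red}}$ (the only component of $Y_t$) gives $(X_t)_{\mathrm{red}}\sim_{\mathrm{bir}}(Y_t)_{\mathrm{red}}$ induced by $f$ for every $t$, which by Definition~\ref{fiberbimedef} means $f$ is fiberwise bimeromorphic.

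The crux is the last point: one must be able to resolve the indeterminacy of $f$ purely by blow-ups along \emph{smooth} centers of codimension $\geq 2$, so that the freshly created exceptional divisors are projective bundles (hence uniruled), and one must ensure that this uniruledness is preserved under all subsequent strict transforms. This ``keeping the compact exceptional divisors uniruled at each step'' is precisely where the smoothness of $\mathcal{X}$ is used --- to invoke Hironaka in the form of successive blow-ups along smooth centers --- rather than mere normality, and it is the heart of the argument behind Theorem~\ref{smoothcase uniruled-intro}.
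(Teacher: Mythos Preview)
Your proposal is correct and follows essentially the same route as the paper: the paper deduces Corollary~\ref{smoothcase uniruled} from Theorem~\ref{fiberbime global-D kappa geq 0} by taking $\sigma_2=\mathrm{id}$ and letting $\sigma_1$ be the elimination of indeterminacy of $f$ via blow-ups along smooth centers, exactly as you do, and the proof of Theorem~\ref{fiberbime global-D kappa geq 0} then applies Theorem~\ref{new-fiberbime-irredu-singular} upstairs and rules out the exceptional components by their uniruledness. The only cosmetic difference is that you argue the corollary directly rather than passing through the general statement of Theorem~\ref{fiberbime global-D kappa geq 0}.
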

	
Note that in Theorem \ref{smoothcase uniruled-intro}, the total space of $\pi_1$ is assumed to be smooth.  However, in many situations one must deal with families whose total spaces are singular, a phenomenon that occurs frequently in birational geometry. In such a setting, it is generally difficult to control irreducible exceptional divisors arising from blow-ups so as to ensure that they are uniruled (e.g.,  \cite[Proposition 3.3]{HK15}).

To address such situations, our another strategy is to utilize the deformation behavior of plurigenera (or even the $\ell$-genus for any fixed $\ell \in \mathbb{N}^+$) of families to rule out the possibility that some irreducible exceptional divisor (arising from the elimination of indeterminacy) is ``the $C_t$ presented in Theorem \ref{intro-fiberbime-irredu-singular}, which is bimeromorphic to $D_t$ in Theorem \ref{intro-fiberbime-irredu-singular}".

\begin{thm}[{= Theorem \ref{fiberbime}}]\label{intro-fiberbime}
With Notation \ref{notation 1}, the following conditions hold for some base point $0 \in S$:
\begin{enumerate}[\rm{(}1\rm{)}]
\item\label{intro-fiberbime1}
$\kappa(X_0) \geq 0$;
\item\label{intro-fiberbime2}
Lower semi-continuity: 
For any semi-stable reduction $\mathcal{Z} \to \mathcal{X}$ 
over an open neighborhood $\Delta$ of $0$ whose local model is the unit disk in $\mathbb{C}$, followed by some blowups, such that the fiber $Z_t$ of $\mathcal{Z} \to \Delta$ over $t \in \Delta$ is a proper modification of $X_t$ for $t$ $(\neq 0)$ near $0$, any smooth divisor (not necessarily connected) $Z^{\prime}$ contained in $Z_0$ satisfies $P_m(Z^{\prime}) \le P_m(Z_t)$ for any $m \in \mathbb{N}^+$ and $t$ near $0$;
\item\label{intro-fiberbime4}
Upper semi-continuity: There exists an irreducible component $D$ of $(Y_0)_{\textrm{red}}$ with $P_m(D) \ge P_m(Y_t)$ for $t$ $(\neq 0)$ near $0$ and any $m \in \mathbb{N}^+$.
\end{enumerate}
Then we have the results:
\begin{enumerate}[\rm{(}i\rm{)}]
			\item \label{intro-mainresult1}
			There exists an  irreducible component $C$ of $(X_0)_{\textrm{red}}$ such that $\kappa(C) \geq 0$ and $C$ is bimeromorphic to $D$, induced by $f$;
			\item \label{intro-mainresult2}
			For any other irreducible component $C^{\prime}$ of $(X_0)_{\textrm{red}}$, $\kappa(C^{\prime}) = -\infty$. In particular,  $C$ is the unique irreducible component that is bimeromorphic to $D$;
			\item \label{intro-mainresult3}
			Any other irreducible component $D^{\prime}$ of $(Y_0)_{\textrm{red}}$ cannot satisfy the condition \eqref{fiberbime4} for $D$.
\end{enumerate}
In particular, if each fiber of $\pi_1$ and $\pi_2$ is (globally) irreducible and each point of $S$ satisfies the conditions identical to \eqref{intro-fiberbime1} \eqref{intro-fiberbime2} \eqref{fiberbime4} for $0\in S$, then $f$ is fiberwise bimeromorphic in the sense of Definition \ref{fiberbimedef}.
\end{thm}

As a result, thanks to  the lower semi-continuity and invariance of plurigenera by S. Takayama \cite{Tk07}, we obtain the following fiberwise bimeromorphism result for locally Moishezon families. 
\begin{thm}[{=Theorem \ref{fiberbime taka mois}}]\label{fiberbime taka mois-intro} With Notation \ref{notation 1}, assume that $\pi_1$ is locally Moishezon. Furthermore, for some $s \in S$, suppose the Kodaira dimension $\kappa(X_s) \geq 0$ (see Definition \ref{reducible genera}), and let $D_s$ be an irreducible component of $(Y_s)_{\textrm{red}}$ such that for any  
$m\in \mathbb{N}^+$, the $m$-genera satisfy $P_m(D_s) \geq P_m(Y_t)$ for any $t$ near $s$ (e.g., $Y_s$ has only canonical singularities and $D_s:=Y_s$).   Then, one has the results:
\begin{enumerate}[\rm{(}i\rm{)}] 
\item There exists an irreducible component $C_s$ of $(X_s)_{\textrm{red}}$ such that $\kappa(C_s) \geq 0$ and $C_s$ is bimeromorphic to $D_s$, as induced by $f$;
\item For any other irreducible component $C^{\prime}_s$ of $(X_s)_{\textrm{red}}$, $\kappa(C^{\prime}_s) = -\infty$. In particular,  $C_s$ is the unique irreducible component that is bimeromorphic to $D_s$.
\end{enumerate}
In particular, if for any $t \in S$, both $X_t$ and $Y_t$ are irreducible, $\kappa(X_t) \geq 0$, and $(Y_t)_{\textrm{red}}$ satisfies the condition for $D_s$, then   $f$ is fiberwise bimeromorphic in the sense of Definition \ref{fiberbimedef}. 
\end{thm}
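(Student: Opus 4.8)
The plan is to reduce the statement to the case of a bimeromorphic \emph{morphism} over $S$, where Theorem~\ref{intro-fiberbime-irredu-singular} applies directly, and then to use the deformation behaviour of plurigenera to transport the resulting fiberwise statement back to $\mathcal{X}$. First I would eliminate the indeterminacy of $f$: by Lemma~\ref{elimination of inde}, followed by normalisation (and, if convenient, a semistable reduction, i.e.\ a base change and a resolution, which by the conventions of the paper leaves $\kappa(X_s)$, the various $\kappa(X_t)$ and the plurigenus hypothesis on $D_s$ unchanged), one obtains a family $\pi_3\colon\mathcal{W}\to S$ with $\mathcal{W}$ normal and irreducible --- one may even take $\mathcal{W}$ smooth and $W_s$ a reduced simple normal crossing divisor --- together with bimeromorphic morphisms $\mu\colon\mathcal{W}\to\mathcal{X}$ and $g:=f\circ\mu\colon\mathcal{W}\to\mathcal{Y}$ over $S$; since $\mu$ is a modification, $\pi_3$ is again locally Moishezon. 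As $\pi_1,\pi_2,\pi_3$ are flat over the smooth curve $S$ (Lemma~\ref{flatovercurve}), their fibres over $s$ are hypersurfaces in normal complex spaces, so every irreducible component of $(X_s)_{\mathrm{red}}$, $(Y_s)_{\mathrm{red}}$ and $(W_s)_{\mathrm{red}}$ has codimension exactly $1$ in $\mathcal{X}$, $\mathcal{Y}$ and $\mathcal{W}$ respectively --- precisely what is needed to apply Theorem~\ref{intro-fiberbime-irredu-singular} to these fibres.

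Next, Theorem~\ref{intro-fiberbime-irredu-singular} applied to $g$ and the codimension-$1$ component $D_s$ of $Y_s$ produces an irreducible component $\widetilde C_s$ of $(W_s)_{\mathrm{red}}$ that is bimeromorphic to $D_s$, induced by $g$. Bimeromorphic invariance of the $m$-genus, together with $W_t\sim_{\mathrm{bir}}X_t\sim_{\mathrm{bir}}Y_t$ for general $t$ (the restrictions of $\mu$ and $f$ to general fibres being bimeromorphic), converts the hypothesis on $D_s$ into $P_m(\widetilde C_s)=P_m(D_s)\ge P_m(Y_t)=P_m(W_t)$ for all $m\ge1$ and $t$ near $s$. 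On the other hand, Takayama's lower semi-continuity of plurigenera \cite{Tk07}, applied to the locally Moishezon family $\pi_3$ (reducing to the projective case over a neighbourhood of $s$ by a further modification if necessary), gives $\sum_i P_m(W^i)\le P_m(W_t)$ for $t$ near $s$, the sum ranging over the irreducible components $W^i$ of $(W_s)_{\mathrm{red}}$. Comparing the two inequalities forces, for every $m\ge1$, both $P_m(\widetilde C_s)=P_m(W_t)$ and $P_m(W^i)=0$ for every $W^i\ne\widetilde C_s$; in particular every irreducible component of $(W_s)_{\mathrm{red}}$ other than $\widetilde C_s$ has Kodaira dimension $-\infty$, whereas $\widetilde C_s$ satisfies $\kappa(\widetilde C_s)=\kappa(W_t)$.

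Now $\kappa(X_s)\ge0$ provides (by Definition~\ref{reducible genera}) an irreducible component $C^{\ast}$ of $(X_s)_{\mathrm{red}}$ with $\kappa(C^{\ast})\ge0$; it has codimension $1$ in $\mathcal{X}$, so by Theorem~\ref{intro-fiberbime-irredu-singular} applied to $\mu$ it is bimeromorphic, via $\mu$, to some component of $(W_s)_{\mathrm{red}}$, which then has Kodaira dimension $\ge0$ and hence must be $\widetilde C_s$. Therefore $\widetilde C_s$ is \emph{not} $\mu$-exceptional: it descends bimeromorphically (via $\mu$) to $C_s:=C^{\ast}$, and composing with $g$ shows $C_s$ is bimeromorphic to $D_s$, induced by $f$, with $\kappa(C_s)=\kappa(D_s)=\kappa(C^{\ast})\ge0$ --- this is (i). For (ii), any other irreducible component $C'_s$ of $(X_s)_{\mathrm{red}}$ is, again by Theorem~\ref{intro-fiberbime-irredu-singular} for $\mu$, bimeromorphic via $\mu$ to some component of $(W_s)_{\mathrm{red}}$; that component cannot be $\widetilde C_s$, for otherwise $\mu$ would restrict to bimeromorphisms of $\widetilde C_s$ onto both $C_s$ and $C'_s$, forcing $C_s=C'_s$; hence $\kappa(C'_s)=-\infty$, and then $C_s$ is the unique component bimeromorphic to $D_s$ since $\kappa(D_s)\ge0$. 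For (iii), a second irreducible component $D'_s$ of $(Y_s)_{\mathrm{red}}$ satisfying the same inequality $P_m(D'_s)\ge P_m(Y_t)$ would by Theorem~\ref{intro-fiberbime-irredu-singular} be bimeromorphic via $g$ to some component $W^k$ of $(W_s)_{\mathrm{red}}$ with $P_m(W^k)=P_m(D'_s)\ge P_m(W_t)$; as $\kappa(W_t)=\kappa(\widetilde C_s)\ge0$, this $W^k$ has a positive plurigenus and hence equals $\widetilde C_s$, so $D_s=D'_s$. Finally, in the global situation where every $X_t$ and $Y_t$ is irreducible with $\kappa(X_t)\ge0$ and $(Y_t)_{\mathrm{red}}$ meeting the hypothesis, applying (i) at each $t\in S$ identifies $(X_t)_{\mathrm{red}}$ with $(Y_t)_{\mathrm{red}}$ bimeromorphically and induced by $f$, so $f$ is fiberwise bimeromorphic in the sense of Definition~\ref{fiberbimedef}.

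The step I expect to be the main obstacle is the appeal to Takayama's semi-continuity: using \cite{Tk07} for a \emph{locally Moishezon} family --- rather than a smooth projective one --- whose special fibre may be non-reduced, reducible and singular, and in the sharp form $\sum_i P_m(W^i)\le P_m(W_t)$. Making this rigorous requires carrying out the reduction (semistable reduction plus resolution) to a smooth total space with reduced central fibre, while genuinely checking that both the locally Moishezon property and the plurigenus hypothesis on $D_s$ survive base change and modification, and carefully matching the reducible-fibre $m$-genus of Definition~\ref{reducible genera} with what \cite{Tk07} literally supplies; by comparison, the bookkeeping of the phrase ``induced by $f$'' through the modifications $\mu$ and $g$, although it must be done with care, is routine.
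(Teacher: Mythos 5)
Your argument is correct, and it reaches the conclusion by a route that is organized differently from the paper's proof of Theorem \ref{fiberbime taka mois}. The paper first desingularizes both total spaces, eliminates the indeterminacy of $f$, and then invokes the local Chow Lemma \ref{local version of chow} to produce a single modification $\mathcal{U}$ dominating both families, so that the strict transform of $D_s$ and the strict transforms of all components of $X_s$ sit in one central fiber; a single application of Takayama (Lemma \ref{Taka-m-extension} or \ref{Taka lsc-preliminary}) to $\mathcal{U}\to S$ then forces the identification (Claim \ref{6.6}). You avoid the Chow lemma and the preliminary desingularizations entirely: on the one elimination-of-indeterminacy model $\mathcal{W}$ you apply Theorem \ref{new-fiberbime-irredu-singular} twice --- once to $g:\mathcal{W}\to\mathcal{Y}$ to locate a component $\widetilde C_s$ of $(W_s)_{\mathrm{red}}$ bimeromorphic to $D_s$ (giving the equality $P_m(\widetilde C_s)=P_m(D_s)$ directly, where the paper's elementary variant in Subsection \ref{long proof} has to use \cite[Lemma 6.3]{Ue75}), and once to $\mu:\mathcal{W}\to\mathcal{X}$ to match components of $(X_s)_{\mathrm{red}}$ with components of $(W_s)_{\mathrm{red}}$ --- and you run the same Takayama-versus-hypothesis plurigenera comparison on $\mathcal{W}\to S$; the flatness-over-a-curve codimension-one observation makes both applications of Theorem \ref{new-fiberbime-irredu-singular} legitimate, and the derivation of (i)--(iii) from ``$\widetilde C_s$ is the unique component of $(W_s)_{\mathrm{red}}$ with a nonzero plurigenus'' is sound. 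Your approach is leaner and is in spirit the ``elementary approach'' of Subsection \ref{long proof} adapted to the Moishezon setting, while the paper's Chow-lemma route makes $C_s$ explicit as a strict transform and runs verbatim parallel to the proof of Theorem \ref{fiberbime}. Two remarks: the obstacle you flag at the end is not actually one, since \cite[Theorem 1.2]{Tk07} (quoted as Lemma \ref{Taka lsc-preliminary}) is stated precisely for Moishezon families with arbitrary (non-reduced, reducible, singular) reference fiber and already gives $\sum_i P_m(W^i)\le P_m(W_t)$ --- this is exactly how the paper uses it --- so no semistable reduction is needed; and the optional base change you mention should in fact be dropped, since it would replace $\mathcal{X}$ and $\mathcal{Y}$ by base-changed spaces that need no longer be normal (universal normality is not assumed in this theorem) and would break the ``over $S$'' structure your two applications of Theorem \ref{new-fiberbime-irredu-singular} rely on.
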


Note that  the local Moishezonness of $\pi_1$ in Theorem \ref{fiberbime taka mois-intro}  is crucial because it enables a desirable extendability property of pluricanonical forms in a suitable setting. This property was established by Takayama  \cite[Theorem 3.1]{Tk07} (= Lemma \ref{Taka-m-extension}).  For the specific roles this extendability plays, refer to the proofs of
 Theorems \ref{fiberbime taka mois} and \ref{fiberbime}.

The condition in Theorem \ref{fiberbime taka mois-intro} is optimal in the sense: the assumption $\kappa(X_s) \geq 0$ cannot generally be omitted, as shown by Example \ref{Hirze example} and the counterexample in \cite[Section 6]{T82}. This is further supported by combining Theorem \ref{specialization-base 1 dim-singular family-intro} with \cite[Theorem 1.1]{T16}, where B. Totaro presented examples of flat projective families with  fibers having terminal
singularities in which the general fiber is rational, but the special fiber is not rational.  
	
Based on the validity of the specialization result for rationality in Theorem \ref{introduction-KT} for smooth families within the scheme category, a natural question arises: With Notation \ref{notation 1}, assume both $\pi_1$ and $\pi_2$ are locally Moishezon or even more general (e.g., locally Fujiki)  smooth families.  
Which natural conditions (in the analytic category) would imply that $f$ is fiberwise bimeromorphic, if the condition $\kappa(X_s) \geq 0$ is no longer assumed?
 However, providing a natural sufficient condition for this problem might be very difficult, as shown by \cite[Theorem 1]{HPT18} and Example \ref{Hirze example} from  H.-Y. Lin \cite{Lin25}. In fact,   B. Hassett--A. Pirutka--Y. Tschinkel \cite[Theorem 1]{HPT18} constructed a smooth family of complex  projective $4$-folds ${X} \to B$  such that the set of points $b \in B$ such that $X_b$ is rational is dense (with respect to  the ordinary complex topology) in $B$, however a very general fiber is even not  stably rational.

\subsection{The main result on specialization  of  bimeromorphic types}
Building on our primary motivation for this paper, we now introduce several results on the specialization of bimeromorphic types. With the above results on fiberwise bimeromorphism in hand, and with our main approach for the specialization,   the task reduces to constructing a global bimeromorphic map between the total spaces of the two families involved, under the condition that the fibers (over general points of the base) of these families are bimeromorphically equivalent.
	
However, a significant challenge arises: noting the fundamental fact that the union of a collection of analytic subsets may not itself be an analytic subset, it becomes philosophically implausible to expect that a collection of bimeromorphic maps between fibers of the two families can be glued into a global bimeromorphic map (over the base) between the total spaces.
	
Fortunately, the insightful observations of D. Mumford \footnote{{Below \cite[Proposition 2.6.1]{Ps77}, in page 61, Persson wrote, ``This observation (pointed out to me by Mumford)...".}} and U. Persson \cite[Lemma 3.1.1, Proposition 3.1.2]{Ps77} provide a way forward. By shrinking the base and performing an appropriate base change, it becomes feasible to construct the desired global bimeromorphic map between the total spaces of the (base-changed) two families. Moreover, this idea has been applied in different contexts by K. Timmerscheidt \cite[Theorem 1]{T82} and T. de Fernex--D. Fusi \cite[Theorem 3.1]{dFF13}.
	
By combining our results (Theorem  \ref{fiberbime taka mois} or Corollary \ref{fibe bime canonical singu-kodaira geq 0}) on fiberwise bimeromorphism with the 
 relative Barlet cycle space theory, we establish several results concerning the specialization of bimeromorphic types.  The main result among them is as follows. 	
	
\begin{thm}[{=Special case of Theorem \ref{specialization-base 1 dim-singular family}}]\label{specialization-base 1 dim-singular family-intro}		
{Let  $\pi_1:\mathcal{X}\to S$ and  $\pi_2:\mathcal{Y}\to S$ be two   locally Moishezon families such that  each fiber of $\pi_1$ and $\pi_2$ has only canonical singularities. Assume that  each fiber of $\pi_1$ is    of non-negative Kodaira dimension. 
Set
$$U_b:= \{t\in S:  X_t \text{ is bimeromorphic to   } Y_t \}.$$
Then $U_b$ is either an at most countable union of proper analytic subsets of $S$ or the whole of $S$. }
\end{thm}

Note that both \cite[Theorem 1]{HPT18} and Theorem \ref{introduction-KT}   deal with smooth families.  Rationality of fibers over a dense (with respect to  the ordinary complex topology) subset of the base is assumed in the former, whereas rationality of the generic fiber is assumed in the latter. In the former case, rationality is not preserved under ``specialization"  while in the latter, rationality is  preserved under specialization. Thus, the deformation closedness of a certain property $(\rm P)$ seems to be very subtle: For a morphism $f:X\to S$ to a curve $S$, if fibers over a subset $U\subseteq S$ satisfy property $(\rm P)$, 
then the size (e.g., the property of being Zariski open, dense, uncountable) of $U$ in $S$ may significantly affect whether  property $(\rm P)$ is preserved  or not under ``specialization".

From the argument of Theorem \ref{specialization-base 1 dim-singular family-intro}, one readily obtains  Corollary \ref{cor-strongly bime}, which gives a criterion for the locally strongly bimeromorphic isotriviality (Definition \ref{def-strongly bime}). Corollary \ref{cor-strongly bime} may  provide a different perspective on F. A. Bogomolov--C. B\"ohning--H.-C. Graf von Bothmer \cite[Theorem 1.1]{BBG16}.
	
Note that, for a long time, exploring and revealing the connections between various topics in birational geometry has been an intriguing theme. For example, the arguments of Y.-T. Siu \cite{Si98} \cite{Si02} and S. Takayama \cite{Tk07} demonstrated the  relationship:
\begin{center}
global positivity of a deformation family $\Rightarrow$ deformation behavior of plurigenera.    
\end{center}
	
Via the arguments and results in this paper,  we also reveal the relationships among four classical topics in bimeromorphic geometry: 
\begin{align*}
&\ \text{deformation behavior of plurigenera (or even $1$-genus)}\\
\Longrightarrow   
&\ \text{fiberwise bimeromorphism} 
\\
\Longrightarrow 
&\ \text{specialization of a fixed bimeromorphic structure}\\
\underset{\scalebox{0.5}{\(\begin{array}{c}\text{method} \\ \text{level}\end{array}\)}}{\Longleftrightarrow}&\ \text{specialization  of bimeromorphic types.}
\end{align*}
	
\subsection{Organization of the paper}
The remainder of this paper is organized as follows:  
	
In Section \ref{pre}, we present several foundational results that are frequently used in the proofs of the theorems in this paper.  
	
In Section \ref{s-for morphism}, we demonstrate that a bimeromorphic morphism between families over the same base is necessarily fiberwise bimeromorphic.  
	
Sections \ref{section-fib bime for map kodaira} and \ref{direct proof-map} focus on providing criteria to determine when a bimeromorphic map over the base is (generalized) fiberwise bimeromorphic, particularly for families with reducible fibers. One type of condition involves controlling the exceptional divisors in a sequence of blowups, while another type addresses the deformation behavior of plurigenera for the families under consideration.  
	
In Section \ref{section-moishezon}, we build upon the arguments in Sections \ref{section-fib bime for map kodaira} and \ref{direct proof-map} to show that a bimeromorphic map between two Moishezon families, where each fiber has Kodaira dimension no less than $0$ and has only canonical singularities, must be fiberwise bimeromorphic. This conclusion relies on Takayama’s results on the deformation behavior of plurigenera for such families.  
	
In Section \ref{s defor limit}, we combine the criteria established in Section \ref{section-moishezon} for fiberwise bimeromorphism in locally Moishezon families with 
the relative Barlet cycle space theory. This combination leads to several results on the specialization of bimeromorphic types for locally Moishezon families (which are not necessarily smooth).  
	
\section{Preliminaries: deformation families and bimeromorphic maps}\label{pre}

Since many discussions go beyond the smooth analytic category,
for convenience of the reader, we compile several fundamental concepts and results in deformation theory and bimeromorphic geometry, which will be frequently utilized in the subsequent sections.	
	
\subsection{Morphisms and  families}\label{ss-preli-1}

Definitions of submersion for morphisms between complex analytic spaces (not necessarily smooth) appear to vary across different references.  Here we adopt:

\begin{defn}[{e.g., \cite[Section 2.18]{Fs76}}]\label{def-subm}
    A holomorphic map $f: X \to Y$ between arbitrary complex spaces is called \emph{a submersion at $p \in X$}, if there exists open neighbourhoods $U \subseteq X$ of $p$, $V \subseteq Y$ of $f(p)$ with $f(U) \subset V$, an open subset $Z \subset \mathbb{C}^k$ (in particular, $Z$ is smooth) and   a biholomorphic map $g: U \to Z \times V$ such that $f=\pi_2\circ g$, where $\pi_2$ is the natural projection $Z\times V \to V$. If $f$ is a submersion at every point of $X$, then $f$ is called \emph{a submersion}. 
\end{defn}

\rem Note that Definition \ref{def-subm} differs from the one in \cite[p. 105]{PR94}, where a submersion is defined in terms of the local freeness of  the relative cotangent sheaf. In fact, a submersion in the sense of \cite[p. 105]{PR94}  is  not necessarily flat (e.g., \cite[Example 2.14]{PR94}), whereas  a submersion in the sense of  Definition \ref{def-subm} is automatically flat.

\begin{lemma}[{\cite[3.21]{Fs76}}]\label{jiayibingding}
	Let $\pi:{X}\to  Y$ be a morphism of complex analytic spaces. Then for any $x\in {X}$, the following conditions are equivalent:
	\begin{enumerate}[\rm{(}1\rm{)}]
		\item $\pi$ is a submersion at $x$.
		\item
	$\pi$ is flat at $x$ and $x$ is a smooth point of the  fiber $X_{\pi(x)}$. 
	\end{enumerate} 
   As a result, if we further assume  that $Y$ is  smooth and $\pi$ is  flat, then the singular locus of ${X}$ must be contained in the union of the singular loci of all the singular fibers (as noted in Subsection \ref{ss-1.1}, the singular loci of a fiber could be the entire fiber itself). 
\end{lemma}

\begin{proof}
The equivalence is precisely the statement of  \cite[3.21]{Fs76}. 

When $\pi$ is flat, this equivalence shows that
$$
x \in\left(X_{\pi(x)}\right)_{\mathrm{reg}} \quad \Longleftrightarrow \quad \pi \text { is a submersion at } x.
$$
Consequently, if $Y$ is smooth, for any $x \in\left(X_{\pi(x)}\right)_{\mathrm{reg}}$, we have  that $X$ is smooth near $x\in X$, based on the definition of a submersion at a point. That is to say,
\[
(X_t)_{\mathrm{reg}} \subseteq  X_{\mathrm{reg}} \cap X_t
\] 
for any $t\in Y$.
Then the lemma follows.
\end{proof}

We now recall the following connectedness criterion for the source space of a morphism.

\begin{lemma}[{\cite[Part 1, Lemma 5.7.5]{Stacks}}]\label{connectedness of total space}
Let $\pi: X\to Y$ be a closed surjective holomorphic map with connected fibers from a complex analytic space $X$ to a connected complex analytic space $Y$. Then $X$ is connected. In particular, when $X$ is further assumed to be locally irreducible (i.e., $\mathcal{O}_{X, x}$ is an integral domain  for any $x\in X$),  $X$ is (globally) irreducible.  
\end{lemma}

Consequently,  the total space of a family in the present paper remains irreducible,  even when the base is shrunk. 
It then follows from  the criterion  for flatness of a proper  morphism over a smooth curve that a family in the present paper is flat.
	
\begin{lemma}[{\cite[Paragraph 1 of Introduction]{Hi75}, \cite[Lemma 2.1]{WZ23}}]\label{flatovercurve}
If the morphism $\pi:\mathcal{X}\to S$ is a family, then $\pi$ is flat. As a result,  the singular locus of the total space of a family must lie in the union of the singular loci of all the singular fibers, based on Lemma \ref{jiayibingding}.  
\end{lemma}

We now introduce the notion of a smooth family, a setting that is commonly studied.

\begin{defn}\label{def-smooth}
A morphism  $f:X\to Y$  is called \emph{smooth} if it is a  submersion.   In particular, based on Lemmata 
\ref{jiayibingding} and   \ref{flatovercurve}, 
a family $\pi:\mathcal{X}\to S$ is  \emph{smooth} if and only if  each fiber of $\pi$ is smooth.    
\end{defn}

 \rem  Note that, by a direct computation of the rank function of the relative cotangent sheaf via \cite[Proposition 1.6]{PR94}, one can apply Lemma \ref{jiayibingding} to conclude that if $X$ is reduced and both $X$ and $Y$ are pure dimensional, then  a morphism $f:X\to Y$ is smooth in the sense of Definition \ref{def-smooth} if and only if it is smooth in the sense of \cite[p. 114]{PR94}, that is, $f$ is flat and the relative cotangent sheaf of $f$ is locally free of rank $\dim X-\dim Y$. 

Recall that, the base curve of a family is  assumed to be smooth in the present paper. It then follows from Lemma
  \ref{flatovercurve}  that the total space of a smooth family is smooth.

In different literature, the definition of a projective morphism seems to be somewhat confusing. We adopt: 
\begin{defn}[{\cite[p. 250]{P94a}}]\label{def-proj}
		A proper surjective holomorphic map $f:X\to Y$ is said to be \emph{projective} if for any relatively compact open set $U \subset Y$, there is an embedding $\iota: f^{-1}(U) \hookrightarrow \mathbb{P}^n \times U$ such that $f = \pi_2 \circ \iota$ with $\pi_2$ being the projection $\mathbb{P}^n \times U \to U$.    
\end{defn}

{Clearly, projectivity is stable under base change, and  the composition of two projective morphisms is also projective. Moreover, a blow-up is  a projective morphism (e.g., \cite[Section 2.7]{P94}). In particular, 
   a composition of finitely many  blow-ups is projective.  Note that the total space of a projective family carries certain positivity information. Consequently,
   one can apply the Ohsawa--Takegoshi extension to extend the pluricanonical forms and achieve lower semi-continuity of plurigenera in certain settings.}

We now introduce the notion of a locally Moishezon morphism, which will serve as the central object of study in this paper.
    
\begin{defn}[{\cite[Definition 3.5]{CP94}}]\label{def-Moish}
A family $p:X\to S$ is said to be \emph{Moishezon} \footnote{Loosely speaking,  the proper morphism in Theorems \ref{introduction-KT} or \ref{intro-NO} corresponds to a Moishezon morphism in the analytic setting, based on the Chow lemma.} if it is bimeromorphically equivalent over $S$ to a surjective projective morphism $q:Y \to S$, i.e., there exists a bimeromorphic (Definition \ref{def-bimeromorphic}) map $g:X \dashrightarrow Y$ such that $p = q \circ g$.  It is called \emph{locally Moishezon} if for any $s\in S$, there exists an open subset $U_s$ such that each $g^{-1}(U_s) \to U_s$ is Moishezon.  The notions of a \emph{Moishezon morphism} and a \emph{locally Moishezon morphism} are also defined similarly.
\end{defn}
	
\subsection{Bimeromorphic geometry}\label{ss prelima}
In this section, we introduce preliminaries on bimeromorphic geometry that will be frequently used. These include proper modifications and meromorphic maps due to Remmert as shown in \cite[$\S$ 2]{Ue75}.
	
\begin{defn}[{\cite[Definition 2.1]{Ue75}}]\label{def-modification}
Let $\pi: \tilde{X} \to X$ be a morphism between two equidimensional, irreducible, and reduced complex analytic spaces. We call $\pi$ a \emph{proper modification} if it satisfies the  conditions:
	\begin{enumerate}[\rm{(}1\rm{)}]
			\item $\pi$ is proper and surjective;
			\item There exist thin (nowhere dense) analytic subsets $\tilde{E} \subseteq \tilde{X}$ and $E \subseteq X$ such that the map
			$$ \pi: \tilde{X} \setminus \tilde{E} \to X \setminus E $$
			is a biholomorphism. Here, $E$ is called the \emph{center} of $\pi$, and $\tilde{E} := \pi^{-1}(E)$ is termed the \emph{exceptional set} of $\pi$.
		\end{enumerate}
When $\tilde{X}$ and $X$ are compact, a proper modification $\pi: \tilde{X} \to X$ is often simply referred to as a \emph{modification}.
\end{defn}

Note that the blow-up of a reduced complex space is again reduced. Hence any finite composition of blow-ups of a reduced and irreducible complex space is a proper modification in the sense of Definition \ref{def-modification}.

\begin{defn}[{\cite[Definition 2.2]{Ue75}}]\label{def-meromorphic}
Let $X$ and $Y$ be two irreducible and reduced complex analytic spaces. A map $\varphi$ from $X$ to the power set of $Y$ is called a \emph{meromorphic map} (denoted by $\varphi: X \dashrightarrow Y$) if the following conditions hold:
\begin{enumerate}[\rm{(}1\rm{)}]
			\item The graph of $\varphi$, denoted by
			$$ \mathcal{G}(\varphi) := \{(x, y) \in X \times Y : y \in \varphi(x)\}, $$
			is an irreducible analytic subset of $X \times Y$;
			\item The projection map $p_X: \mathcal{G}(\varphi) \to X$ is a proper modification, where $\mathcal{G}(\varphi)$ is equipped with the reduced structure.
\end{enumerate}
For any subset $A \subseteq X$ and $B \subseteq Y$, we define the images and preimages of $A$ and $B$ under $\varphi$ as follows:
$$ \varphi(A) := p_Y(p_X^{-1}(A)), \quad \varphi^{-1}(B) := p_X(p_Y^{-1}(B)). $$ 
Furthermore, we call the smallest nowhere dense analytic subset $S_{\varphi}$ of $X$ such that $\varphi$ induces a morphism of $X \setminus S_{\varphi}$ into $Y$ as the \emph{indeterminacy} of $\varphi$.
\end{defn}
Since $p_X$ is proper and $p_Y: \mathcal{G}(\varphi) \to  Y$ is continuous, one can easily see that a meromorphic map sends any compact subset of $X$ (via $p_Y\circ (p_X)^{-1}$) to a compact subset of $Y$. 
\begin{defn}\label{def-bimeromorphic}
A meromorphic map $\varphi: X \dashrightarrow Y$ of complex analytic spaces is called a \emph{bimeromorphic map} if $p_Y : \mathcal{G}(\varphi) \rightarrow Y$ is also a proper modification.  Clearly, a bimeromorphic map is surjective, in the sense that $X$ is onto $Y$ via $p_Y\circ (p_X)^{-1}$.
		
Two complex analytic spaces $X$ and $Y$ are called \emph{bimeromorphically equivalent (or bimeromorphic)} if there exists a bimeromorphic map $\varphi: X \dashrightarrow Y$.
\end{defn}
	
Note that if $\varphi: X \dashrightarrow Y$ is a bimeromorphic map, then the analytic set
$$\{(y, x) \in Y \times X:(x, y) \in \mathcal{G}(\varphi)\} \subseteq Y \times X$$
defines a  bimeromorphic (inverse)  map $\varphi^{-1}: Y \dashrightarrow X$. 
{Since a meromorphic map $Y \dashrightarrow X$ sends any compact subset of $Y$ to a compact subset of $X$, one obtains the well-known fact that
a bimeromorphic morphism is a proper modification.  We will  use
the terms ``bimeromorphic morphism" and 
``proper modification" interchangeably.}

The elimination of indeterminacy, also known as the trivialization of coherent ideal sheaves, is essentially rooted in Hironaka's singularity resolution theory (\cite[pp. 113, 140, 143]{Hi64}). We also refer the reader to other sources such as \cite[Theorem 1.9]{P94}, \cite[Theorem 2.13]{Ue75}, or \cite[Theorem 2.1.24]{MM07} for precise statements, or \cite[Chapter II, Example 7.17.3]{Ha77} for a clearer understanding of this process.

\begin{lemma}[{Elimination of indeterminacy, \cite{Hi64}}]\label{elimination of inde}
Let $X, Y$ be irreducible and reduced complex analytic spaces, and $f: X\dashrightarrow Y$ a bimeromorphic map. Then there exists a proper modification $\varphi: \tilde{X} \to X$, obtained as a composition of a locally finite succession of blow-ups with smooth centers, and a holomorphic map $\tilde{f}: \tilde{X} \to Y$ such that $f \circ \varphi = \tilde{f}$.
\end{lemma}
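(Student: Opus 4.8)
The plan is to turn the elimination of indeterminacy into a statement about dominating a proper modification by a tower of blow-ups with smooth centers, and then to feed that into Hironaka's flattening theorem \cite{Hi75} together with the principalization of coherent ideal sheaves \cite{Hi64}.

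First I would pass to the graph. Since $f$ is bimeromorphic, the graph $\Gamma:=\mathcal{G}(f)\subseteq X\times Y$ is irreducible and reduced, the first projection $q:=p_X\colon\Gamma\to X$ is a proper modification, and the second projection $p:=p_Y\colon\Gamma\to Y$ is holomorphic (Definitions \ref{def-meromorphic} and \ref{def-bimeromorphic}); moreover $f=p\circ q^{-1}$ as meromorphic maps. Hence it suffices to construct a locally finite succession of blow-ups with smooth centers $\varphi\colon\tilde X\to X$ that factors through $q$, i.e.\ a holomorphic map $\sigma\colon\tilde X\to\Gamma$ with $q\circ\sigma=\varphi$: then $\tilde f:=p\circ\sigma$ is holomorphic and $f\circ\varphi=(p\circ q^{-1})\circ(q\circ\sigma)=p\circ\sigma=\tilde f$, as required.

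To build $\varphi$ I would argue in two stages. \emph{Stage 1 (flatten $q$).} Apply Hironaka's flattening theorem \cite{Hi75} to the proper morphism $q\colon\Gamma\to X$ over the reduced space $X$: there is a modification $b\colon X'\to X$, which locally over $X$ is a finite succession of blow-ups along nowhere dense analytic centers, such that the strict transform $\Gamma'\to X'$ of $q$ is flat. Since $q$ is bimeromorphic, $\Gamma'\to X'$ is proper, surjective, and biholomorphic over a dense open subset; by the fibre-dimension formula for flat morphisms every fibre is then $0$-dimensional, so $\Gamma'\to X'$ is finite and flat of generic degree one, hence an isomorphism because $X'$ is reduced. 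Composing its inverse with the canonical map $\Gamma'\to\Gamma$ gives $\sigma'\colon X'\to\Gamma$ with $q\circ\sigma'=b$. \emph{Stage 2 (make the centers smooth).} Locally on $X$, $b$ is the blow-up of a coherent ideal sheaf $\mathcal{I}$ cosupported on a nowhere dense analytic subset; by Hironaka's principalization of $\mathcal{I}$ through blow-ups with smooth centers \cite{Hi64} (see also the references cited before the statement) there is a locally finite succession of smooth-centered blow-ups $\varphi\colon\tilde X\to X$ with $\mathcal{I}\cdot\mathcal{O}_{\tilde X}$ invertible; by the universal property of blow-ups, $\varphi=b\circ\tau$ for a unique $\tau\colon\tilde X\to X'$. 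Then $\sigma:=\sigma'\circ\tau$ satisfies $q\circ\sigma=b\circ\tau=\varphi$, and setting $\tilde f:=p\circ\sigma$ completes the construction.

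I expect the main obstacle to be the analytic bookkeeping in Stage 2: one has to run Hironaka's flattening and principalization compatibly over a locally finite open cover of $X$ so that the local smooth-centered blow-up towers glue into a single global modification $\varphi\colon\tilde X\to X$ that is still a locally finite succession of blow-ups with smooth centers (note that the ambient spaces appearing in the tower need not be smooth, since $X$ is not assumed smooth, so ``smooth center'' must be read as ``smooth analytic subspace of a possibly singular ambient space''). A secondary, more elementary point is the claim that a proper bimeromorphic flat morphism onto a reduced complex space is an isomorphism — this is where reducedness of $X$, and of all the blow-ups built from it, is used.
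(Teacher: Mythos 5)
The paper does not actually prove Lemma \ref{elimination of inde}: it quotes it from Hironaka's theory \cite{Hi64}, pointing to \cite{P94}, \cite{Ue75}, \cite{MM07} for precise statements, so there is no in-paper argument to compare with. Your reconstruction follows the standard route underlying those references: pass to the graph, dominate the proper modification $q=p_X\colon\mathcal{G}(f)\to X$ by blow-ups of $X$, then principalize with smooth centers and use the universal property of blowing up. Stage 1 is essentially correct: Hironaka's flattening \cite{Hi75} applies to the proper map $q$, and your claim that the flat strict transform $\Gamma'\to X'$ is an isomorphism is right, though it deserves one more line (irreducibility gives pure dimensionality, the flat dimension formula gives finite fibres, hence $q_*\mathcal{O}_{\Gamma'}$ is locally free of rank $1$ by generic degree one; injectivity of $\mathcal{O}_{X'}\to q_*\mathcal{O}_{\Gamma'}$ uses reducedness and irreducibility of $X'$, and surjectivity follows by evaluating at the nonempty fibres). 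Note that Stage 1 restricted to relatively compact subsets is exactly the paper's Lemma \ref{local version of chow}.

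The genuine gap is the local-to-global passage in Stage 2, which you describe as bookkeeping but which is where the real content of the cited results lies. The coherent ideal $\mathcal{I}$ with $b=\mathrm{Bl}_{\mathcal{I}}$ exists only locally over relatively compact open subsets of $X$, it is far from unique, and the ideals produced on overlapping charts need not agree; consequently the smooth-centered towers obtained by principalizing them chart by chart have no reason to glue, and functoriality of canonical principalization does not rescue this because the inputs themselves differ from chart to chart. As written, your argument therefore proves the statement only over relatively compact open subsets of $X$, not the global statement with a single locally finite tower. A repair within your framework: flattening already presents $b$ as a globally defined, locally finite succession of blow-ups $X'_{k+1}=\mathrm{Bl}_{\mathcal{I}_k}X'_k$ with coherent, nowhere dense (not necessarily smooth) centers; dominate this tower stage by stage, at each step principalizing the pullback of the global center ideal $\mathcal{I}_k$ to the top of the smooth-centered tower built so far (Hironaka's principalization is global and locally finite for a single coherent ideal) and factoring through $X'_{k+1}$ by the universal property of that blow-up; local finiteness survives because over any relatively compact subset of $X$ only finitely many stages are nontrivial. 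With that modification the proposal becomes a correct proof modulo the quoted Hironaka inputs, which is no more than the paper itself assumes.
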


An easy consequence of the well-known Zariski's main theorem in the analytic setting is:
\begin{lemma}\label{fiber connected after modification}
Let $\pi: \mathcal{X} \to S$ be a family, and $\mu: \tilde{\mathcal{X}} \to \mathcal{X}$ a proper modification that induces proper modifications on the reductions of the general fibers. Then each fiber of $\pi \circ \mu: \tilde{\mathcal{X}} \to S$ is also connected. In particular, when the total space $\tilde{\mathcal{X}}$ is smooth, a general fiber of $\pi \circ \mu$ is a connected complex manifold.
\end{lemma}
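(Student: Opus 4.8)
The plan is to pass to the Stein factorization of $\pi\circ\mu$ and show that its finite part is trivial.

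First, note that $\pi\circ\mu:\tilde{\mathcal{X}}\to S$ is proper and surjective, being the composition of the proper surjective maps $\mu$ and $\pi$, and that $\tilde{\mathcal{X}}$ is irreducible and reduced, as the source of a proper modification (Definition \ref{def-modification}). Hence we may form the Stein factorization $\pi\circ\mu=h\circ g$, where $g:\tilde{\mathcal{X}}\to S'$ is proper surjective with connected fibers and $h:S'\to S$ is finite; here $S'$ is irreducible (being the image of the irreducible $\tilde{\mathcal{X}}$ under $g$), reduced (the Stein factorization of a reduced space being reduced, since $(\pi\circ\mu)_*\mathcal{O}_{\tilde{\mathcal{X}}}$ has no nilpotent sections), and one-dimensional. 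Since $(\pi\circ\mu)^{-1}(t)=g^{-1}(h^{-1}(t))$ and the fibers of $g$ are connected, the claim that every fiber of $\pi\circ\mu$ is connected is equivalent to the claim that $h$ is bijective; as $S$ is a smooth curve it therefore suffices to prove that $h$ is a biholomorphism.

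Next I would use the hypothesis on the general fiber to trivialize $h$. There is a nonempty analytic Zariski open $U\subseteq S$ — which may be shrunk so that $X_t$ is also irreducible for $t\in U$, the general fiber of a family being irreducible — such that for $t\in U$ the induced map $(\pi\circ\mu)^{-1}(t)\to X_t$ is a proper modification; since the source of a proper modification is irreducible, $(\pi\circ\mu)^{-1}(t)$ is irreducible, hence connected. On the other hand, $(\pi\circ\mu)^{-1}(t)=\bigsqcup_{s\in h^{-1}(t)}g^{-1}(s)$ is a finite disjoint union of nonempty subsets, each closed (a fiber of $g$) and therefore also open in the fiber, so connectedness forces $\#h^{-1}(t)=1$ for all $t\in U$. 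In characteristic zero the finite dominant morphism $h$ of reduced irreducible complex spaces is generically étale, so after further shrinking $U$ the finite étale map $h^{-1}(U)\to U$ has every fiber of cardinality one, hence is an isomorphism; thus $h$ is bimeromorphic, and being finite onto the normal curve $S$ with reduced irreducible source, it is a biholomorphism by Zariski's main theorem (in the analytic form already invoked). Therefore $h^{-1}(t)$ is a single point for every $t\in S$, and $(\pi\circ\mu)^{-1}(t)=g^{-1}(h^{-1}(t))$ is connected, which proves the first assertion. For the final claim, assume $\tilde{\mathcal{X}}$ smooth: as $\pi\circ\mu$ is a nonconstant holomorphic map from the smooth irreducible $\tilde{\mathcal{X}}$ to the smooth curve $S$, the locus where its differential vanishes is a proper analytic subset of $\tilde{\mathcal{X}}$, whose image under the proper map $\pi\circ\mu$ is, by Remmert's proper mapping theorem, a proper analytic — hence finite — subset $F\subset S$; over $S\setminus F$ the map $\pi\circ\mu$ is a submersion, so its fibers there are complex manifolds, and they are connected by the first part, whence a general fiber is a connected complex manifold.

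The step I expect to be the main obstacle is the passage in the middle paragraph from ``$\mu$ restricts to a proper modification on a general fiber'' to ``the Stein-finite factor $h$ is trivial''. This is exactly where characteristic zero enters — upgrading the purely set-theoretic generic injectivity of $h$ to generic biholomorphy, via generic étaleness — and where the normality of the base $S$ is used, to license Zariski's main theorem; one should also keep in mind that $\tilde{\mathcal{X}}$ is in general only reduced and irreducible, not normal, so that only the reducedness of $S'$ is available, which nevertheless suffices.
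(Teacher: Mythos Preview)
Your proof is correct and follows essentially the same route as the paper's, just at a different level of detail: the paper observes that the general fiber of $\pi\circ\mu$ is irreducible (hence connected), notes that $\tilde{\mathcal{X}}$ is reduced, and then invokes the analytic connectedness lemma \cite[Corollary 1.12]{Ue75} directly, whereas you unpack that citation by running Stein factorization and killing the finite part via Zariski's main theorem. One small simplification: once you know $\#h^{-1}(t)=1$ on a Zariski-open $U$, the detour through generic \'etaleness is unnecessary---$h$ is a finite surjective map of reduced irreducible curves with generic fiber a single (reduced) point, hence of degree $1$, and a degree-$1$ finite map onto the smooth curve $S$ is already a biholomorphism.
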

\begin{proof}
Since the general fiber of a family is assumed to be irreducible in this paper, and since $\mu$ induces proper modifications on the reductions of the general fibers, the general fiber of $\pi \circ \mu$ is irreducible and thus connected.  Then, each fiber of $\pi \circ \mu$ is connected by Zariski's main theorem \cite[Corollary 1.12]{Ue75}. Consequently, according to the generic smoothness theorem, when $\tilde{\mathcal{X}}$ is smooth, the general fiber of $\pi \circ \mu$ is a connected complex manifold.
\end{proof}
	
In the context of bimeromorphisms, as defined in Definitions~\ref{def-bimeromorphic} and~\ref{def-meromorphic}, it is common to consider the reduction of a complex analytic space.
There is a well-known functoriality property of the reduction map, which is often implicitly used. In the sequel, we will no longer cite this result when using it.
\begin{lemma}[{e.g., \cite[P. 88]{GR84}}]\label{reduction map}
Let $X \to Y$ be a holomorphic map between two complex analytic spaces. Then there exists a holomorphic map $f_{\textrm{red}}: X_{\textrm{red}} \to Y_{\textrm{red}}$ of the reductions such that the diagram
\begin{center}
			\begin{tikzpicture}
				\node (X) at (0,1.3) {$X$};
				\node (Y) at (2,1.3) {$Y$};
				\node (Xred) at (0,0) {$X_{\textrm{red}}$};
				\node (Yred) at (2,0) {$Y_{\textrm{red}}$};
				
				\draw[->] (X) -- (Y)  node[midway, above]{$f$};
				\draw[->] (Xred) -- (X) node[midway, left=1pt]{red};
				\draw[->] (Xred) -- (Yred) node[midway, below]{$f_{\textrm{red}}$};
				\draw[->] (Yred) -- (Y) node[midway, right=1pt]{red};
			\end{tikzpicture}
\end{center}
commutes (both vertical arrows represent the reduction maps of $X$ and $Y$, respectively). 
In particular, all the topological properties of $f_{\textrm{red}}$ are the same as those of $f$, due to the fact that the reduction map is the identity map on the underlying topological spaces.
\end{lemma}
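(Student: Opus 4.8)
The plan is to realize $f_{\textrm{red}}$ through the universal property of the reduction, and then read off the two remaining assertions.

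Recall that for a complex analytic space $X = (|X|, \mathcal{O}_X)$ the reduction is $X_{\textrm{red}} = (|X|, \mathcal{O}_X/\mathcal{N}_X)$, where $\mathcal{N}_X \subset \mathcal{O}_X$ is the nilradical (the sheaf of germs of nilpotent holomorphic functions), and the reduction map $\mathrm{red}_X : X_{\textrm{red}} \to X$ is the identity on underlying topological spaces together with the quotient homomorphism $\mathcal{O}_X \twoheadrightarrow \mathcal{O}_X/\mathcal{N}_X$ on structure sheaves; it is a closed immersion. The key input is the universal property of the reduction (see \cite[P.~88]{GR84}): every morphism of complex analytic spaces from a reduced space into $Y$ factors uniquely through $\mathrm{red}_Y : Y_{\textrm{red}} \to Y$.

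First I would form the composite $g := f \circ \mathrm{red}_X : X_{\textrm{red}} \to Y$. Since $X_{\textrm{red}}$ is reduced, applying the universal property to $g$ produces a unique morphism $f_{\textrm{red}} : X_{\textrm{red}} \to Y_{\textrm{red}}$ with $\mathrm{red}_Y \circ f_{\textrm{red}} = g = f \circ \mathrm{red}_X$, which is exactly the commutativity of the square. Concretely, on underlying spaces $|f_{\textrm{red}}| = |f|$, and on structure sheaves $f_{\textrm{red}}^{\#}$ is the homomorphism $f^{-1}(\mathcal{O}_Y/\mathcal{N}_Y) \to \mathcal{O}_X/\mathcal{N}_X$ induced by $f^{\#} : f^{-1}\mathcal{O}_Y \to \mathcal{O}_X$, which descends because a ring homomorphism sends nilpotents to nilpotents, so $f^{\#}(f^{-1}\mathcal{N}_Y) \subseteq \mathcal{N}_X$. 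If one prefers to avoid invoking the universal property, the same fact can be checked in local charts: writing $f$ in local models $X \hookrightarrow U \subseteq \mathbb{C}^n$ and $Y \hookrightarrow V \subseteq \mathbb{C}^m$ by holomorphic coordinate functions, those very coordinate functions define $f_{\textrm{red}}$ on the reduced local models, so $f_{\textrm{red}}$ is again holomorphic.

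Finally, the topological assertion is immediate: since $\mathrm{red}_X$ and $\mathrm{red}_Y$ are the identity on underlying topological spaces, one has $|f_{\textrm{red}}| = |f|$ as continuous maps, and hence every property of $f$ formulated purely in terms of the continuous map $|f|$ --- properness, surjectivity, being a homeomorphism, openness, connectedness or finiteness of fibers, being a topological modification, and the like --- transfers verbatim to $f_{\textrm{red}}$. There is essentially no obstacle in this lemma; the only point that needs a word of care is that the construction yields a genuine \emph{analytic} morphism rather than merely a morphism of $\mathbb{C}$-ringed spaces, which is precisely what the universal property of the reduction (or equivalently the local-chart description) supplies.
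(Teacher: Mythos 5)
Your proposal is correct: the universal property of reduction (equivalently, the observation that $f^{\#}$ sends nilpotents to nilpotents, so it descends to the quotients by the nilradicals) yields $f_{\textrm{red}}$ with the commuting square, and the topological claim follows since both reduction maps are the identity on underlying spaces. The paper gives no proof of its own here — it simply cites \cite[P.~88]{GR84} — and your argument is exactly the standard one behind that citation, so the approaches coincide.
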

Recently, Koll{\'a}r defined formally and studied a fiberwise bimeromorphism in the complex analytic category, possibly initiated in \cite{KMo92}.
\begin{defn}[{\cite[Definition 26]{Kol22}}] \label{fiberbimedef}
Let $\pi_i: X_i \to S$ ($i=1,2$) be two families. A bimeromorphic map   $\phi: X_1 \dashrightarrow X_2$ over $S$ ({i.e., $\pi_1 = \pi_2 \circ \phi$}) is \emph{fiberwise bimeromorphic} if $\phi$ induces a bimeromorphic map  $$\phi_s: (X_{1,s})_{\textrm{red}} \dashrightarrow (X_{2,s})_{\textrm{red}}$$
for every $s \in S$, where $(X_{i,s})_{\textrm{red}}$ is the reduction of the fiber $X_{i,s} := (\pi_i)^{-1}(s)$. {In particular, both $X_{1,s}$ and $X_{2,s}$ are irreducible as specified in Definitions \ref{def-meromorphic} and  \ref{def-bimeromorphic}.} At times, we may also refer to this as \emph{$X_1$ being fiberwise bimeromorphic to $X_2$ (over $S$ via $\phi$)}.
\end{defn}

Clearly, if a bimeromorphic map $f:X\dashrightarrow Y$ over $S$ is fiberwise bimeromorphic (in the sense of Definition \ref{fiberbimedef}), then  $(X_t)_{\textrm{red}}$ is bimeromorphic to $(Y_t)_{\textrm{red}}$ for any $t\in S$. The converse, however, may be false. H.-Y. Lin \cite{Lin25}  provided a counterexample which also demonstrates that 
even for two smooth families over a same base, a bimeromorphic map between them  over the base may not be  fiberwise bimeromorphic.

\begin{ex}[{\cite{Lin25}}]\label{Hirze example}
Consider the bimeromorphic map 
 (over $\mathbb{P}^1$) between $f: \mathbb{P}^1 \times \mathbb{P}^1\times Z \to \mathbb{P}^1$ and  $g: \Sigma_n\times Z \to \mathbb{P}^1$, which is  induced by the natural bimeromorphic map (over $\mathbb{P}^1$) between $\mathbb{P}^1 \times \mathbb{P}^1$ and $\Sigma_n$, where $Z$ is any compact manifold and $\Sigma_n$ is a Hirzebruch surface.  This map is not fiberwise bimeromorphic, even though  $f^{-1}(t)$ is bimeromorphic to $g^{-1}(t)$ for any $t \in \mathbb{P}^1$.  
\end{ex}
	
Let $f: X \dashrightarrow Y$ and $g: Y \dashrightarrow Z$ be two meromorphic maps. As is well-known, the composition $g \circ f$ does not necessarily exist as a meromorphic map. However, if $f$ is surjective, then we can define the composition as shown in \cite[pp. 16, 17]{Ue75}, such that $g \circ f$ is also a meromorphic map. Noting that $\mathcal{G}_{f}$ is equal to the closure of $\mathcal{G}_{f|_{X \setminus S_f}}$ in $X \times Y$, we can obtain the following result by elementary analysis.
	
\begin{lemma}\label{composition}
Let $X$, $Y$, and $Z$ be reduced and irreducible complex analytic spaces, $f: X \dashrightarrow Y$ a bimeromorphic map, and $g: Y \to Z$ a holomorphic map. Then we have $(g \circ f)(x) = g(f(x))$, where the composition $g \circ f$ is defined as shown in \cite[pp. 16, 17]{Ue75}.
\end{lemma}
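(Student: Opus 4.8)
The plan is to express both $(g\circ f)(x)$ and $g(f(x))$ as images of the single set $p_X^{-1}(x)\cap\mathcal{G}(f)$ under the two product projections of $X\times Z$, after identifying the graph $\mathcal{G}(g\circ f)$ with $\Phi(\mathcal{G}(f))$, where $\Phi:=\mathrm{id}_X\times g\colon X\times Y\to X\times Z$.

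First I would set $f_0:=f|_{X\setminus S_f}$, a holomorphic map, and record two facts: (a) $\mathcal{G}(f)=\overline{\mathcal{G}(f_0)}$ in $X\times Y$ (the observation noted just before the statement); and (b) the composition $g\circ f$ of \cite[pp.~16, 17]{Ue75}, which is legitimate here since $f$ is surjective (being bimeromorphic) and $g$ is an honest morphism, is defined so that $\mathcal{G}(g\circ f)=\overline{\mathcal{G}(g\circ f_0)}$ in $X\times Z$, where $g\circ f_0\colon X\setminus S_f\to Z$ is the ordinary composition of holomorphic maps. Note that $\Phi(\mathcal{G}(f_0))=\{(x,g(f_0(x))):x\in X\setminus S_f\}=\mathcal{G}(g\circ f_0)$ on the nose.

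Next comes the crucial identity $\Phi(\mathcal{G}(f))=\mathcal{G}(g\circ f)$. The restriction $\Phi|_{\mathcal{G}(f)}\colon\mathcal{G}(f)\to X\times Z$ is proper: for compact $K\subseteq X\times Z$, the closed subset $\Phi^{-1}(K)\cap\mathcal{G}(f)$ is contained in $p_X^{-1}(\mathrm{pr}_X(K))$, which is compact because $p_X\colon\mathcal{G}(f)\to X$ is a proper modification (Definition~\ref{def-meromorphic}). A proper holomorphic map is closed, so $\Phi(\mathcal{G}(f))$ is closed in $X\times Z$; containing $\Phi(\mathcal{G}(f_0))=\mathcal{G}(g\circ f_0)$, it then contains $\overline{\mathcal{G}(g\circ f_0)}=\mathcal{G}(g\circ f)$. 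The reverse inclusion is just continuity of $\Phi$: $\Phi(\mathcal{G}(f))=\Phi\bigl(\overline{\mathcal{G}(f_0)}\bigr)\subseteq\overline{\Phi(\mathcal{G}(f_0))}=\mathcal{G}(g\circ f)$. (In passing one checks that $\Phi(\mathcal{G}(f))$ is irreducible of dimension $\dim X$ and that its projection to $X$ is a proper modification with center contained in $S_f$, consistent with $\mathcal{G}(g\circ f)$ being a genuine graph; this is in any case subsumed by the equality just established.)

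Finally, since $\mathrm{pr}_X\circ\Phi=\mathrm{pr}_X$ on $X\times Y$, the fibre of $\mathcal{G}(g\circ f)=\Phi(\mathcal{G}(f))$ over $x$ equals $\Phi\bigl(p_X^{-1}(x)\cap\mathcal{G}(f)\bigr)$; applying the projection $p_Z=\mathrm{pr}_Z$ and using $\mathrm{pr}_Z\circ\Phi=g\circ\mathrm{pr}_Y$ gives
\[
(g\circ f)(x)=p_Z\bigl(p_X^{-1}(x)\cap\mathcal{G}(g\circ f)\bigr)=g\Bigl(p_Y\bigl(p_X^{-1}(x)\bigr)\Bigr)=g(f(x)),
\]
which is the assertion. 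I expect the only genuinely delicate step to be (b) — being faithful to Ueno's definition of $g\circ f$ and confirming that, for a morphism $g$, his construction produces exactly $\overline{\mathcal{G}(g\circ f_0)}$; everything else is the elementary principle that a continuous proper map carries the closure of a set onto the closure of its image, combined with the bookkeeping of the three product projections.
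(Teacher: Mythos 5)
Your proposal is correct and follows exactly the route the paper intends: the paper gives no detailed proof, only the remark that the claim follows by "elementary analysis" from the fact that $\mathcal{G}(f)$ is the closure of $\mathcal{G}(f|_{X\setminus S_f})$, and your argument (identifying $\mathcal{G}(g\circ f)$ with $(\mathrm{id}_X\times g)(\mathcal{G}(f))$ via properness of $p_X$ plus the closure property, then chasing the projections) is precisely that analysis carried out faithfully to Ueno's definition. No gaps.
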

	
As a direct application of Lemma \ref{composition}, we have: 
\begin{lemma}\label{preserve fiber}
Let 
$$
\pi_1: \mathcal{X} \to S \quad \text{and} \quad \pi_2: \mathcal{Y} \to S
$$
be two families. Suppose that there is a bimeromorphic map
$f: \mathcal{X}\dashrightarrow \mathcal{Y}$ over $S$. Then for any $s \in S$, $f$ exactly sends $X_s$ to $Y_s$, i.e., $f(X_s) = Y_s$ set-theoretically.
\end{lemma}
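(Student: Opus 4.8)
\textbf{Proof proposal for Lemma \ref{preserve fiber}.}

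The plan is to combine Lemma \ref{composition} with the defining property $\pi_1 = \pi_2 \circ f$ and then use that the maps involved between the total spaces and the base are honest holomorphic maps. First I would make precise what $f(X_s)$ means: by Definition \ref{def-meromorphic}, $f(X_s) = p_{\mathcal{Y}}\bigl(p_{\mathcal{X}}^{-1}(X_s)\bigr)$, where $p_{\mathcal{X}}: \mathcal{G}(f) \to \mathcal{X}$ and $p_{\mathcal{Y}}: \mathcal{G}(f) \to \mathcal{Y}$ are the two projections from the graph, and $p_{\mathcal{X}}$ is a proper modification since $f$ is bimeromorphic (in fact both projections are). So it suffices to show $p_{\mathcal{Y}}\bigl(p_{\mathcal{X}}^{-1}(X_s)\bigr) = Y_s = \pi_2^{-1}(s)$.

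Next I would observe that the condition ``$f$ is over $S$'' means precisely $\pi_1 = \pi_2 \circ f$ as meromorphic maps; since $\pi_1$ and $\pi_2$ are morphisms, this equality combined with Lemma \ref{composition} (applied with $X = \mathcal{X}$, $Y = \mathcal{Y}$, $Z = S$, the bimeromorphic map $f$, and the holomorphic map $\pi_2$) gives $\pi_1(x) = \pi_2(f(x))$ for every $x \in \mathcal{X}$, where $f(x) = p_{\mathcal{Y}}\bigl(p_{\mathcal{X}}^{-1}(x)\bigr)$ is a subset of $\mathcal{Y}$. On the level of the graph this says: for every point $g \in \mathcal{G}(f)$, $\pi_1\bigl(p_{\mathcal{X}}(g)\bigr) = \pi_2\bigl(p_{\mathcal{Y}}(g)\bigr)$, i.e. the two composites $\pi_1 \circ p_{\mathcal{X}}$ and $\pi_2 \circ p_{\mathcal{Y}}$ from $\mathcal{G}(f)$ to $S$ coincide. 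From here, $p_{\mathcal{X}}^{-1}(X_s) = p_{\mathcal{X}}^{-1}\bigl(\pi_1^{-1}(s)\bigr) = (\pi_1 \circ p_{\mathcal{X}})^{-1}(s) = (\pi_2 \circ p_{\mathcal{Y}})^{-1}(s) = p_{\mathcal{Y}}^{-1}\bigl(\pi_2^{-1}(s)\bigr) = p_{\mathcal{Y}}^{-1}(Y_s)$. Applying $p_{\mathcal{Y}}$ gives $f(X_s) = p_{\mathcal{Y}}\bigl(p_{\mathcal{Y}}^{-1}(Y_s)\bigr)$, and since $p_{\mathcal{Y}}$ is a proper modification, hence surjective onto $\mathcal{Y}$, we get $p_{\mathcal{Y}}\bigl(p_{\mathcal{Y}}^{-1}(Y_s)\bigr) = Y_s$, as desired.

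The only delicate point — and the step I expect to require the most care — is the passage from the pointwise identity ``$\pi_1(x) = \pi_2(f(x))$ as subsets of $S$'' to the clean set-theoretic identity ``$\pi_1 \circ p_{\mathcal{X}} = \pi_2 \circ p_{\mathcal{Y}}$ as maps on $\mathcal{G}(f)$''. One must check that the composite meromorphic map $\pi_2 \circ f$ is computed exactly as $\pi_2 \circ p_{\mathcal{Y}} \circ p_{\mathcal{X}}^{-1}$ on the nose, using that $\pi_2$ is holomorphic and that $\mathcal{G}(\pi_2 \circ f)$ is built from $\mathcal{G}(f)$ in the manner recalled before Lemma \ref{composition} (namely $\mathcal{G}_f$ is the closure of the graph of $f$ restricted to $\mathcal{X} \setminus S_f$); this is exactly the content packaged in Lemma \ref{composition}, so invoking it handles the subtlety. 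The remaining manipulations are elementary set theory of preimages together with surjectivity of the proper modification $p_{\mathcal{Y}}$.
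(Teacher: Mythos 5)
Your proof is correct and follows essentially the same route as the paper, which states the lemma simply as a direct application of Lemma \ref{composition} (using $\pi_1=\pi_2\circ f$ and the graph projections); your write-up just spells out the graph-level bookkeeping, including the identity $\pi_1\circ p_{\mathcal{X}}=\pi_2\circ p_{\mathcal{Y}}$ on $\mathcal{G}(f)$ and the surjectivity of the proper modification $p_{\mathcal{Y}}$.
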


Based on Lemmata \ref{connectedness of total space} and \ref{preserve fiber},  we obtain: 
\begin{lemma}\label{restriction of bime}
Let
$$
\pi_1: \mathcal{X} \to S \quad \text{and} \quad \pi_2: \mathcal{Y} \to S
$$
be two families. Suppose that there is a bimeromorphic map $f: \mathcal{X}\dashrightarrow \mathcal{Y}$ over $S$. Then $f: \mathcal{X}\dashrightarrow \mathcal{Y}$ is a bimeromorphic map over any connected open subset $S_1 \subseteq S$.
\end{lemma}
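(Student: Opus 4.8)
The plan is to realize the restriction of $f$ explicitly through the graph $\mathcal{G}(f)$ and verify the axioms of Definitions \ref{def-meromorphic} and \ref{def-bimeromorphic} one at a time, all of which behave well under passing to open subsets. Set $W:=\pi_1^{-1}(S_1)\subseteq\mathcal{X}$ and $W':=\pi_2^{-1}(S_1)\subseteq\mathcal{Y}$; these are open, hence reduced (open subsets of reduced spaces), and they are the total spaces of the families $\pi_1|_W\colon W\to S_1$ and $\pi_2|_{W'}\colon W'\to S_1$, so by Lemma \ref{connectedness of total space} they are irreducible. Let $\mathcal{G}:=\mathcal{G}(f)\subseteq\mathcal{X}\times\mathcal{Y}$ with the two projections $p_{\mathcal{X}}$ and $p_{\mathcal{Y}}$; since $f$ is bimeromorphic, $\mathcal{G}$ is irreducible and reduced and both $p_{\mathcal{X}}$ and $p_{\mathcal{Y}}$ are proper modifications. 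The candidate for the graph of $f|_W$ is $G_1:=\mathcal{G}\cap(W\times W')$, and I would first show $G_1=p_{\mathcal{X}}^{-1}(W)=p_{\mathcal{Y}}^{-1}(W')$.

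This identification is the crux of the argument, and it is exactly where the hypothesis that $f$ is over $S$ enters. Since $f$, and likewise $f^{-1}\colon\mathcal{Y}\dashrightarrow\mathcal{X}$, are bimeromorphic over $S$, Lemma \ref{preserve fiber} gives $f(X_s)=Y_s$ and $f^{-1}(Y_s)=X_s$ for every $s\in S$. As $W=\bigcup_{s\in S_1}X_s$, we get $p_{\mathcal{Y}}(p_{\mathcal{X}}^{-1}(W))=\bigcup_{s\in S_1}f(X_s)=W'$, so $p_{\mathcal{X}}^{-1}(W)\subseteq W\times W'=G_1$-ambient, and symmetrically $p_{\mathcal{Y}}^{-1}(W')\subseteq W\times W'$; together with the trivial inclusions this forces $p_{\mathcal{X}}^{-1}(W)=p_{\mathcal{Y}}^{-1}(W')=\mathcal{G}\cap(W\times W')=G_1$. (Alternatively one can avoid Lemma \ref{preserve fiber} and argue that $\pi_1\circ p_{\mathcal{X}}$ and $\pi_2\circ p_{\mathcal{Y}}$ are two morphisms $\mathcal{G}\to S$ agreeing on the dense open locus where $f$ is a morphism, hence everywhere, which yields the same conclusion via Lemma \ref{composition}.) In particular $G_1$ is a nonempty open subset of $\mathcal{G}$ — nonempty because $p_{\mathcal{X}}$ is surjective and $W\ne\emptyset$ — hence irreducible, and with the reduced structure it is an irreducible reduced complex analytic space; moreover $W=p_{\mathcal{X}}(G_1)$ and $W'=p_{\mathcal{Y}}(G_1)$, consistent with their irreducibility.

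It then remains to check that $p_{\mathcal{X}}|_{G_1}\colon G_1\to W$ is a proper modification, and likewise $p_{\mathcal{Y}}|_{G_1}\colon G_1\to W'$. Properness is inherited since $G_1=p_{\mathcal{X}}^{-1}(W)$ is the preimage of an open set under the proper map $p_{\mathcal{X}}$; surjectivity is clear; equidimensionality of $G_1$ and $W$ holds because they are open in the equidimensional spaces $\mathcal{G}$ and $\mathcal{X}$ of the same dimension; and if $\tilde{E}\subseteq\mathcal{G}$, $E\subseteq\mathcal{X}$ are the exceptional set and center of $p_{\mathcal{X}}$, then $\tilde{E}\cap G_1$ and $E\cap W$ are thin (they are proper analytic subsets, using irreducibility of $G_1$ and $W$), and $p_{\mathcal{X}}$ restricts to a biholomorphism $G_1\setminus\tilde{E}\to W\setminus E$. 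Hence $p_{\mathcal{X}}|_{G_1}$ is a proper modification, and symmetrically for $p_{\mathcal{Y}}|_{G_1}$; therefore $G_1\subseteq W\times W'$ is the graph of a bimeromorphic map $W\dashrightarrow W'$, which is precisely the restriction of $f$ and is over $S_1$ by the fiber identities above. I do not expect any genuine obstacle here; the only points needing care are the identification $p_{\mathcal{X}}^{-1}(W)=p_{\mathcal{Y}}^{-1}(W')$ in the previous paragraph and the routine bookkeeping that "nonempty open subset of irreducible is irreducible" and that the proper-modification data of $p_{\mathcal{X}},p_{\mathcal{Y}}$ restrict to open subsets.
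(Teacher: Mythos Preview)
Your argument is correct and is precisely the detailed verification the paper omits: the paper states the lemma with no proof beyond citing Lemmata \ref{connectedness of total space} and \ref{preserve fiber}, and your write-up is exactly the unpacking of those two ingredients—using Lemma \ref{connectedness of total space} for irreducibility of $W,W'$ and Lemma \ref{preserve fiber} for the key identification $p_{\mathcal{X}}^{-1}(W)=p_{\mathcal{Y}}^{-1}(W')$—so the approach is the same.
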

	
In our argument, the \emph{local version of the Chow lemma} plays an important role.
\begin{lemma}[{\cite[Corollary 2.9]{P94}}]\label{local version of chow}
Let $f: X \to Y$ be a proper modification of reduced and irreducible complex spaces. Let $V \Subset Y$ be an open relatively compact set. Then there exist blow-ups $g: U \to V$ with center $D \subseteq V$ and $h: U \to f^{-1}(V)$ with center $f^{-1}(D)$, the analytic preimage of $D$, such that $g = f \circ h$.
\end{lemma}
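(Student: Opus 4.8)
The plan is to deduce the statement from Hironaka's flattening theorem \cite{Hi75} together with the universal property of blow-ups, establishing along the way the useful auxiliary fact that a flat proper modification between reduced irreducible complex spaces is an isomorphism. Since the restriction $f\colon f^{-1}(V)\to V$ is proper and $V\Subset Y$, I would first apply the flattening theorem to this morphism: it produces a locally finite sequence of blow-ups with nowhere dense centers over $V$, which over the relatively compact $V$ is finite and can therefore be amalgamated into a single blow-up $g\colon U\to V$ along a coherent ideal sheaf, with center a nowhere dense subspace $D\subseteq V$, such that the strict transform $\widetilde X\subseteq U\times_V f^{-1}(V)$ of $f^{-1}(V)$ becomes flat over $U$. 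The projection $\psi\colon\widetilde X\to U$ is proper (a closed subspace of $U\times_V f^{-1}(V)\to U$), $\widetilde X$ is reduced and irreducible (a strict transform of a reduced irreducible space), and $\psi$ is a modification, because over the dense open subset of $U$ lying above the complement in $V$ of $D$ together with the center of $f$ both $g$ and $f$ are biholomorphisms.

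Next, I would show that such a $\psi$ must be an isomorphism. As reduced irreducible complex spaces are pure-dimensional and $\dim\widetilde X=\dim U$, the dimension formula for flat morphisms forces every fibre of $\psi$ to be discrete; being proper, $\psi$ is then finite, so $\psi_*\mathcal O_{\widetilde X}$ is a locally free $\mathcal O_U$-algebra of rank equal to the generic degree $1$ of the modification $\psi$. A finite flat $\mathcal O_U$-algebra of rank $1$ coincides with $\mathcal O_U$, hence $\widetilde X=\mathrm{Specan}_U(\psi_*\mathcal O_{\widetilde X})=U$, i.e.\ $\psi$ is an isomorphism. Composing the inverse $U\xrightarrow{\sim}\widetilde X$ with $\widetilde X\hookrightarrow U\times_V f^{-1}(V)\to f^{-1}(V)$ then produces a holomorphic map $h\colon U\to f^{-1}(V)$, and reading off the fibre-product square gives $f\circ h=g$, which is the first half of the claim.

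It remains to identify $h$ with the blow-up of $f^{-1}(V)$ along the analytic preimage $f^{-1}(D)$, that is, along $\mathcal I_D\cdot\mathcal O_{f^{-1}(V)}$. This is formal. Since $g=\Bl_D(V)$, the ideal $\mathcal I_D\cdot\mathcal O_U=g^*\mathcal I_D=h^*\bigl(\mathcal I_D\cdot\mathcal O_{f^{-1}(V)}\bigr)$ is invertible, so the universal property factors $h$ through $U':=\Bl_{\mathcal I_D\cdot\mathcal O_{f^{-1}(V)}}(f^{-1}(V))$; conversely the composition $U'\to f^{-1}(V)\xrightarrow{f}V$ makes $\mathcal I_D$ invertible on $U'$ and hence factors through $\Bl_D(V)=U$. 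Using the uniqueness clauses of the universal property (together with density of the open locus on which all maps in sight are isomorphisms), the two factorizations are mutually inverse, so $U\cong U'$ over $f^{-1}(V)$, and under this identification $h$ is precisely the asserted blow-up.

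The step I expect to be the main obstacle is the first one: the complex-analytic flattening theorem is a substantial input, and some care is needed to repackage it as a genuine single blow-up with a nowhere dense center $D\subseteq V$ rather than an infinite tower, for which the relative compactness of $V$ is essential. The auxiliary assertion ``flat proper modification $\Rightarrow$ isomorphism'' is also not entirely trivial and should be written out carefully (it uses pure-dimensionality of irreducible reduced complex spaces, the dimension formula for flat morphisms, and the structure of rank-$1$ finite flat algebras over the reduced base $U$); by contrast, the closing argument with the universal property of blow-ups is routine.
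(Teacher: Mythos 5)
The paper offers no proof of this lemma at all: it is imported directly from Peternell's survey (\cite[Corollary 2.9]{P94}), where it is deduced from Hironaka's flattening theorem in essentially the way you propose — flatten $f$ over the relatively compact $V$, observe that a flat proper modification is an isomorphism, and then identify the induced map with the blow-up of the inverse-image ideal via the universal property. Your proposal is therefore correct and coincides with the standard cited argument; the only nitpick is that $f^{-1}(V)$, being merely an open subset of the irreducible $X$, need not itself be irreducible, but it is still pure-dimensional, so your dimension-count and rank-one step go through unchanged.
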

	
By elementary analysis of thin analytic subsets in the total spaces of both families, we can readily demonstrate that a bimeromorphic map between two families over the same base must generally be fiberwise bimeromorphic in the following sense. Note that, in this paper, we assume that the total space of a family is reduced (and even normal),  and so the general fiber is also reduced as established in \cite[Lemma 1.4]{Fu78/79}. Also, we assume that the general fiber of each family in this paper is irreducible. We are thus able to discuss the bimeromorphism between \(X_t\) and \(Y_t\) for a general \(t\), as defined in Definitions \ref{def-modification} and \ref{def-bimeromorphic}.
	
\begin{lemma}\label{obs 1}
With Notation \ref{notation 1}, there exists a proper analytic subset $\Gamma \subseteq S$ such that for any $t \in S \setminus \Gamma$, the fiber $X_t$ is bimeromorphic to $Y_t$. 
\end{lemma}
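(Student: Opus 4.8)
The plan is to produce a single proper analytic subset $\Gamma\subseteq S$ outside of which $f$ restricts to an honest bimeromorphic map on each fiber, by "spreading out" the indeterminacy and exceptional loci of $f$ and $f^{-1}$ over the base. First I would invoke Lemma \ref{elimination of inde} to resolve $f$: there is a proper modification $\varphi\colon\widetilde{\mathcal{X}}\to\mathcal{X}$ (a locally finite succession of blow-ups with smooth centers) and a morphism $\widetilde f\colon\widetilde{\mathcal{X}}\to\mathcal{Y}$ with $f\circ\varphi=\widetilde f$; by Lemma \ref{composition} we may work with $\widetilde f$. Since the base is a smooth curve and $\widetilde{\mathcal X}$ is irreducible, the exceptional set $\widetilde E\subseteq\widetilde{\mathcal X}$ of $\varphi$ and the locus where $\widetilde f$ fails to be a local biholomorphism are both thin analytic subsets of $\widetilde{\mathcal X}$; pushing them forward (they are proper, so images are analytic) and adding the images of the corresponding loci for $f^{-1}$ (resolved similarly), together with the critical values $\pi_1(\mathrm{Sing}\,\mathcal X)$, $\pi_2(\mathrm{Sing}\,\mathcal Y)$ where fibers are non-reduced or reducible, I obtain a countable locally finite — hence, since $S$ is a curve, a \emph{discrete} — collection of points whose union I take to be $\Gamma$. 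One must check $\Gamma\neq S$: this is automatic because each ingredient is a \emph{proper} (nowhere dense) analytic subset of an irreducible space mapping to the curve $S$, and away from $\Gamma$ every fiber $X_t$ (resp.\ $Y_t$) is reduced and irreducible, so Definitions \ref{def-modification}--\ref{def-bimeromorphic} apply to the restrictions.

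For $t\in S\setminus\Gamma$, I would then argue that $f$ induces a bimeromorphic map $X_t\dashrightarrow Y_t$ directly from the graph. By Lemma \ref{preserve fiber}, $f(X_t)=Y_t$, so the graph $\mathcal G(f)\subseteq\mathcal X\times\mathcal Y$ meets $X_t\times Y_t$ in $\mathcal G(f)\cap(\pi_1\times\pi_2)^{-1}(t)$. Restricting the two projections $p_{\mathcal X}$, $p_{\mathcal Y}$ of $\mathcal G(f)$ to this fiber, one checks that over $S\setminus\Gamma$ these restrictions remain proper modifications onto $X_t$ and $Y_t$ respectively: properness is inherited from properness of $p_{\mathcal X}, p_{\mathcal Y}$, surjectivity from Lemma \ref{preserve fiber}, and the biholomorphism-away-from-a-thin-set condition follows because the centers of $p_{\mathcal X}$ and $p_{\mathcal Y}$ are contained in (the $\mathcal X$- resp.\ $\mathcal Y$-images of) the loci we used to build $\Gamma$, so they meet $X_t$, $Y_t$ in nowhere dense analytic subsets. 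Irreducibility of the graph fiber over $t$ is where one uses that $t\notin\Gamma$ forces $X_t$ (equivalently $Y_t$, by symmetry and Lemma \ref{connectedness of total space}/\ref{fiber connected after modification}) to be irreducible, so its proper transform in $\mathcal G(f)$ is irreducible. Picking this irreducible component as the graph of $X_t\dashrightarrow Y_t$ completes the identification.

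The main obstacle I anticipate is the bookkeeping needed to guarantee that the fiberwise projections of $\mathcal G(f)$ are genuinely proper modifications — in particular, that the "bad locus" in $X_t$ is \emph{nowhere dense} rather than all of $X_t$, and that the relevant graph fiber is irreducible. The danger is exactly the phenomenon flagged in the introduction (cf.\ Example \ref{Hirze example}): an exceptional divisor of $\varphi$ could dominate $\mathcal X$-fiberwise and swallow a whole component. Over a general $t$ this cannot happen because an exceptional divisor of a blow-up with smooth center, being of codimension $\geq 1$ in the irreducible space $\widetilde{\mathcal X}$, dominates $S$ with fibers that are nowhere dense in $\widetilde X_t$ for $t$ outside a proper analytic subset of $S$; summing these proper subsets over the locally finite family of blow-ups and over the analogous data for $f^{-1}$ keeps $\Gamma$ proper since $S$ is a curve. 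Making this "generic transversality of the exceptional locus with the fibers" precise — via, e.g., the fiber-dimension semicontinuity of $\widetilde E\to S$ and Lemma \ref{fiber connected after modification} for connectedness of fibers after modification — is the technical heart of the argument; everything else is a routine unwinding of Remmert's definitions.
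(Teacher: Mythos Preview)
Your argument is essentially correct, and it is precisely the ``elementary'' route the authors explicitly decline to take: the paper opens its proof with ``Here we do not employ elementary methods but instead opt for alternative approaches, which can be viewed as a warm-up for a proof of Theorem~\ref{fiberbime}.'' The paper's approach is genuinely different: they first desingularize \emph{both} total spaces to smooth $\mathcal X_1,\mathcal Y_1$ (the normality of $\mathcal X,\mathcal Y$ forces the singular loci to have codimension $\ge2$, so these resolutions induce proper modifications on general fibers), then eliminate the indeterminacy of the induced $f_1\colon\mathcal X_1\dashrightarrow\mathcal Y_1$ to obtain a bimeromorphic \emph{morphism} $g\colon\mathcal X_2\to\mathcal Y_1$ between smooth spaces. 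Smoothness of the target then forces the center of $g$ to have codimension $\ge2$ (by \cite[p.~215]{GR84}), so vertical $g$-exceptional divisors sit over isolated points of $S$ and $g$ automatically restricts to a proper modification on the general fiber---no graph bookkeeping, no separate resolution of $f^{-1}$.

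What each buys: your direct approach avoids the preliminary desingularization and is closer to the definitions, but requires the fiber-dimension and irreducibility analysis you flag as the ``main obstacle.'' The paper's approach is cleaner \emph{and} reusable: the same diagram $\mathcal X_2\xrightarrow{g}\mathcal Y_1$ (with further embellishments) is exactly what drives the proof of Theorem~\ref{fiberbime} in Section~\ref{direct proof-map}, so Lemma~\ref{obs 1} doubles as a rehearsal. One presentational point: your first paragraph overstates the case when you say the pushforwards to $S$ form a discrete set---the image in $S$ of a horizontal exceptional divisor is all of $S$, not a point. You fix this correctly in your final paragraph (such a divisor meets $\widetilde X_t$ in a nowhere-dense set for $t$ outside a proper analytic subset of $S$), but the logic should be stated that way from the start.
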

\begin{proof}
Here we do not employ elementary methods but instead opt for alternative approaches, which can be viewed as a warm-up for a proof of Theorem \ref{fiberbime} in some sense.
		
Note that a subset $A$ of an irreducible curve $C$ is a proper analytic subset of $C$ if and only if $A$ consists of isolated points of $C$. Therefore, it suffices to prove this lemma locally. By Lemma \ref{restriction of bime},  $f: \mathcal{X} \dashrightarrow \mathcal{Y}$ is a bimeromorphic map such that $\pi_1 = \pi_2 \circ f$ over any connected open subset $S_1 \subseteq S$. We always use $S$ to denote the base, even after it is shrunk to a smaller connected open subset. Denote by $X_s$ and $Y_s$ the special fibers of $\pi_1,\pi_2$ at some $s\in S$, respectively.  Now we perform a series of operations, as shown in the commutative diagram
$$
		\xymatrix{
			& &\mathcal{X}_2\ar[dr]^{g}\ar[dl]_{\mu}
			\\
			&\mathcal{X}_1\ar@{-->}[rr]^{f_1} \ar@{->}[d] & &\mathcal{Y}_1\ar@{->}[d] \\
			&\mathcal{X} \ar@{-->}[rr]^{f} \ar[dr]_{\pi_1}
			&  &    {\mathcal{Y}} \ar[dl]^{\pi_2}   \\
			& & S                }.
$$
		
More precisely, by the irreducibility of $\mathcal{Y}$, we can take a desingularization $\mathcal{Y}_1 \to \mathcal{Y}$ of the normal complex space $\mathcal{Y}$ such that $\mathcal{Y}_1$ is a connected complex manifold (e.g., \cite[Theorem 5.4.2]{AHV18} or \cite[Theorem 2.12]{Ue75}). Since the fibers of $\mathcal{Y} \to \Delta$ are of codimension 1, we can assume that this desingularization induces a proper modification on the reduction of each fiber $Y_t$ over $t \neq s$, after shrinking $S$.
		
Similarly, let $\mathcal{X}_1 \to \mathcal{X}$ be the desingularization of $\mathcal{X}$ such that this desingularization induces a proper modification on the reduction of each fiber $X_t$ over $t \neq s$, after shrinking $S$. Now we get a bimeromorphic map $f_1: \mathcal{X}_1 \dashrightarrow \mathcal{Y}_1$.
		
By Lemma \ref{elimination of inde}, we eliminate the indeterminacy of $f_1$ by $\mu$, which is the composition of a locally finite sequence of blow-ups with smooth centers. As a result, we obtain a bimeromorphic morphism $g: \mathcal{X}_2 \to \mathcal{Y}_1$. Note that $\mu$ induces a proper modification on the general fiber of $\mathcal{X}_1 \to S$, by the easy fact that vertical $\mu$-exceptional divisors only locate in isolated fibers.
		
Since both $\mathcal{X}_2$ and $\mathcal{Y}_1$ are smooth, the center of  the proper modification $g$ is of codimension $\geq 2$ by \cite[p. 215]{GR84}. Consequently, $g$ also induces a proper modification on the general fiber of $\mathcal{Y}_1 \to S$ (using again the fact that vertical $g$-exceptional divisors only locate in isolated fibers), implying that $X_t$ is bimeromorphic to $Y_t$ for general $t \in S$.
\end{proof}	
	
\section{Fiberwise bimeromorphism of a bimeromorphic morphism}\label{s-for morphism}
Now we present an observation on fiberwise bimeromorphism for a bimeromorphic morphism: A bimeromorphic morphism (not just a bimeromorphic map) between two families over the same base must be fiberwise bimeromorphic. This observation demonstrates, in some sense, that the ramification locus does not pose an obstacle to a bimeromorphic morphism from being fiberwise bimeromorphic. The argument of Theorem \ref{new-fiberbime-irredu-singular} can be regarded as an elementary analysis of the ramification divisor and its image (when the morphisms involved are assumed to be smooth). Note that all the results in this section do not generally hold for a bimeromorphic map between two families over the same base. We will investigate the question of fiberwise bimeromorphism for a general bimeromorphic map in subsequent sections, employing different methods.
	
\begin{thm}\label{new-fiberbime-irredu-singular}	
Let $\mathcal{X}$,  $\mathcal{Y}$ and $S$ be complex analytic spaces. Assume that $\mathcal{X}$ is reduced (not necessarily normal) and irreducible, $\mathcal{Y}$ is normal, and $S$ is irreducible (not necessarily $1$-dimensional).	
Assume further that both $\pi_1: \mathcal{X} \to S$ and $\pi_2: \mathcal{Y} \to  S$ are proper surjective holomorphic maps whose fibers are denoted by $X_t:=(\pi_1)^{-1}(t)$ and $Y_t:=(\pi_2)^{-1}(t)$, respectively.

Suppose that there is a bimeromorphic morphism (not just a bimeromorphic map) $f: \mathcal{X} \to \mathcal{Y}$ over $S$.
For some $t\in S$, if $D_t$ is an irreducible component of $Y_t$ that   is of codimension $1$ in $\mathcal{Y}$, 
then there exists an irreducible component $C_t$ (equipped with the reduced structure) of $X_t$ that is bimeromorphic to $D_t$ (also equipped with the reduced structure), induced by $f$. 
\end{thm}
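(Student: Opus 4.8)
The plan is to work locally on $S$ near the given point $t$ and to track the divisor $D_t$ through the proper modification $f$. Since $f:\mathcal X\to\mathcal Y$ is a bimeromorphic morphism of irreducible reduced spaces, Lemma \ref{bimeromorphic morphism proper} tells us that $f$ is a proper modification; let $E\subseteq\mathcal Y$ be its center and $\tilde E=f^{-1}(E)$ the exceptional set, so that $f$ restricts to a biholomorphism $\mathcal X\setminus\tilde E\to\mathcal Y\setminus E$. The key dichotomy is whether the prime divisor $D_t$ is contained in the center $E$ or not. If $D_t\not\subseteq E$, then $D_t\cap(\mathcal Y\setminus E)$ is dense in $D_t$, and its preimage under the biholomorphism $f^{-1}$ is an irreducible locally closed subset of $\mathcal X$; taking its closure $\widetilde{D}$ in $\mathcal X$ gives an irreducible analytic subset with $f(\widetilde D)=D_t$ and $f|_{\widetilde D}:\widetilde D\to D_t$ a proper modification (it is a biholomorphism over the dense open set $D_t\setminus E$). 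Since $f$ is over $S$, $\widetilde D\subseteq\pi_1^{-1}(t)=X_t$; because $\dim\widetilde D=\dim D_t$ and $D_t$ has codimension $1$ in the (locally) irreducible $\mathcal Y$ (whose dimension equals that of $\mathcal X$, $f$ being bimeromorphic), $\widetilde D$ has the same dimension as $X_t$, hence $\widetilde D$ is an irreducible component of $X_t$. Equipping it with its reduced structure and using Lemma \ref{reduction map} to pass to reductions, we obtain the desired component $C_t:=\widetilde D$ bimeromorphic to $D_t$ via $f$.

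The substance of the argument is therefore to rule out the case $D_t\subseteq E$, i.e.\ to show that no codimension-one component of a fiber $Y_t$ can be entirely swallowed by the center of $f$. Here is where I would use the normality of $\mathcal Y$ and the structure of $f$ as a proper modification: by the van der Waerden purity / Hironaka-type statement for modifications of normal spaces (the analytic analogue of the fact cited as \cite[p.~215]{GR84} and used in the proof of Lemma \ref{obs 1}), the center $E$ of a proper modification onto a \emph{normal} space has codimension $\geq 2$ in $\mathcal Y$ — more precisely, the image of the exceptional set has codimension $\geq 2$. Since $D_t$ has codimension exactly $1$ in $\mathcal Y$, it cannot be contained in $E$. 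This is the step I expect to be the main obstacle: one must be careful that the relevant purity statement applies to an arbitrary proper modification of a normal complex space (not merely to blow-ups of manifolds), and that "center" here is taken in the sense of Definition \ref{def-modification}, i.e.\ the smallest such analytic set; invoking the local Chow lemma (Lemma \ref{local version of chow}) to dominate $f$ by a blow-up on a relatively compact $V\Subset\mathcal Y$, together with the fact that blow-downs to normal spaces contract only codimension-$\geq 2$ loci, gives a safe route.

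Finally, for the "in particular" clause of the surrounding Theorem \ref{intro-fiberbime-irredu-singular}: when $\pi_1,\pi_2$ are families with irreducible fibers, each $Y_t$ is itself a prime divisor in $\mathcal Y$ (codimension $1$, as fibers over a curve — or, in the higher-dimensional base case, one reduces to a curve as in the conventions of Section \ref{s introduction}), so applying the statement just proved with $D_t=(Y_t)_{\mathrm{red}}$ yields an irreducible component $C_t$ of $X_t$ bimeromorphic to $Y_t$; but $X_t$ is irreducible, so $C_t=(X_t)_{\mathrm{red}}$, and running the argument for every $t\in S$ shows $f$ induces a bimeromorphic map $(X_t)_{\mathrm{red}}\dashrightarrow(Y_t)_{\mathrm{red}}$ for all $t$, which is exactly fiberwise bimeromorphism in the sense of Definition \ref{fiberbimedef}. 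I would also remark that the only properties of $f$ actually used are that it is proper, surjective, and a biholomorphism over the complement of a codimension-$\geq 2$ set in the normal target $\mathcal Y$, so no smoothness hypotheses enter.
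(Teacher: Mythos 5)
Your overall strategy is the same as the paper's: everything rests on the single fact that a proper modification onto a normal space is biholomorphic over the complement of an analytic subset of codimension at least two in the target, after which the codimension-one component $D_t$ cannot lie in that bad set and its preimage gives the desired component. The two proofs differ only in how $C_t$ is produced: the paper first uses surjectivity of $f$, Remmert's proper mapping theorem applied to $f|_{X_t}$, and irreducibility of $D_t$ to find a component $C_t$ of $X_t$ with $f(C_t)=D_t$, and then checks bimeromorphy away from the bad set, whereas you take the closure $\widetilde D$ of $f^{-1}(D_t\setminus E)$ and argue it is a component of $X_t$. Your variant works, but the justification ``$\widetilde D$ has the same dimension as $X_t$'' is not the right reason in this generality: $S$ is not assumed to be a curve and $\pi_1$ is not assumed flat, so the dimension of $X_t$ is not a priori $\dim\mathcal X-1$. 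The clean argument is maximality: any irreducible analytic subset of $X_t$ strictly containing $\widetilde D$ would have dimension $\dim\mathcal X$, hence would equal the irreducible space $\mathcal X$, which is impossible since $\pi_1$ is surjective; so $\widetilde D$ is a component.

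The genuine soft spot is your justification of the purity statement itself, which is precisely where the normality of $\mathcal Y$ must enter. As phrased, ``blow-downs to normal spaces contract only codimension-$\geq 2$ loci'' is exactly the statement you need to prove, so invoking it is circular; and the local Chow Lemma \ref{local version of chow} does not repair this by itself, because the center $D$ of the dominating blow-up (the support of the blown-up ideal) is not asserted there to have codimension $\geq 2$ -- on a normal but singular space it can perfectly well contain a divisor, and the reduction to codimension-$\geq 2$ centers via \cite[p. 215]{GR84} is made in the paper only when the spaces are smooth. The paper's actual route, which you should substitute for that paragraph, is: by \cite[(1.17) + Theorem 1.19]{PR94} there is an analytic set $V\subseteq\mathcal Y$ of codimension $\geq 2$ outside of which all fibers of $f$ are zero-dimensional; since $f$ is a proper modification (Lemma \ref{bimeromorphic morphism proper}) onto the normal space $\mathcal Y$, its fibers are connected (\cite[Corollary 1.12]{Ue75}), so $f$ is bijective over $\mathcal Y\setminus V$; and a bijective holomorphic map onto a normal space is biholomorphic (\cite[p. 166]{GR84}). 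Note that normality is used twice here (connectedness of fibers and the bijective-implies-biholomorphic criterion). With that substitution, and the maximality argument above, your proof is complete and is then essentially the paper's.
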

\begin{proof}
This proof draws inspiration from that of \cite[Chapter II, Section 1.d, 1.12 Lemma]{Ny04}.
		
According to \cite[(1.17) + Theorem 1.19]{PR94}, there exists an analytic subset $V$ of $\mathcal{Y}$ with codimension at least $2$, such that $\operatorname{dim}_x X_y = 0$ for any $x \in X_y$ and any $y \in \mathcal{Y} \setminus V$. 
It follows from the connectedness lemma (e.g., \cite[Corollary 1.12]{Ue75}) for 
the proper modification $f$ that 
 each fiber of $f$ is connected. Consequently, $f|_{f^{-1}(\mathcal{Y} \setminus V)}: f^{-1}(\mathcal{Y} \setminus V) \to \mathcal{Y} \setminus V$ is injective (bijective) and thus biholomorphic, as established by \cite[p. 166]{GR84}.
		
Since $f|_{X_t}$ is proper, it sends any analytic subset of $X_t$ to an analytic subset of $Y_t$ based on Remmert's proper mapping theorem. Additionally, $f$ is surjective by the definition of a bimeromorphic morphism. Consequently,  there exists an irreducible component $C_t$ of $X_t$ such that $f(C_t) = D_t$ by the irreducibility of $D_t$.
		
In view of the codimensions of $V$ and $D_t$, it follows that $D_t\nsubseteq V$, and consequently, $C_t \nsubseteq f^{-1}(V)$. Clearly, $D_t \cap V$ is a thin analytic subset of $D_t$, and $C_t \cap f^{-1}(V)$ is a thin analytic subset of $C_t$. Hence, one can easily check by definition that $f: C_t \to D_t$ is bimeromorphic. This completes the proof.
\end{proof}

As an immediate corollary of Theorem \ref{new-fiberbime-irredu-singular}, we can apply Lemmata \ref{flatovercurve} and \ref{connectedness of total space} to obtain the following result in the setting of families, based on the dimension formula (e.g., \cite[Proposition 2.11]{PR94}) for the flat morphism.

\begin{cor}\label{morphism-cor-25}
Consider two families
$$
\pi_1: \mathcal{X} \to S \text{ and } \pi_2:  \mathcal{Y} \to S.
$$
Denote by $X_t:=(\pi_1)^{-1}(t)$ and $Y_t:=(\pi_2)^{-1}(t)$ their fibers on $t\in S$, respectively. 
Suppose that there is a bimeromorphic morphism $f: X \rightarrow Y$ over $S$ such that $\pi_1=\pi_2 \circ f$. Then, for any $t \in S$ and any irreducible component $D_t$ of $\left(Y_t\right)_{\mathrm{red}}$, there exists an irreducible component $C_t$ of $\left(X_t\right)_{\mathrm{red}}$ that is bimeromorphic to $D_t$, induced by $f$. In particular, if we further assume that each fiber of $\pi_1$ and $\pi_2$ is irreducible, then $\left(X_t\right)_{\mathrm{red}}$ is bimeromorphic to $\left(Y_t\right)_{\mathrm{red}}$, induced by $f$, i.e., $f$ is fiberwise bimeromorphic in the sense of Definition \ref{fiberbimedef}.
\end{cor}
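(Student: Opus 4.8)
The plan is to deduce the corollary directly from Theorem~\ref{new-fiberbime-irredu-singular}; the only hypothesis of that theorem which is not automatically part of the definition of a family is that the chosen component $D_t$ have codimension~$1$ in the total space, so the entire content of the corollary is the observation that, for a family, \emph{every} irreducible component of \emph{every} fiber is of pure codimension~$1$.

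First I would check that the standing hypotheses of Theorem~\ref{new-fiberbime-irredu-singular} are met. The total space $\mathcal{X}$ of a family is normal, hence reduced, and irreducible by Lemma~\ref{connectedness of total space}; likewise $\mathcal{Y}$ is normal and irreducible; the base $S$ is a smooth connected curve, hence irreducible; and $\pi_1,\pi_2$ are proper surjective holomorphic maps by the definition of a family, with $f:\mathcal{X}\to\mathcal{Y}$ a bimeromorphic morphism over $S$. Nothing here requires work.

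Next comes the key step. By Lemma~\ref{flatovercurve} the map $\pi_2$ is flat, and $\mathcal{Y}$ is irreducible, hence of pure dimension $n:=\dim\mathcal{Y}$. Applying the dimension formula for flat morphisms (\cite[Proposition 2.11]{PR94}) at an arbitrary point $x\in Y_t$ yields $\dim_x Y_t=\dim_x\mathcal{Y}-\dim_t S=n-1$, so every irreducible component of $(Y_t)_{\mathrm{red}}$ has pure dimension $n-1$, i.e.\ codimension~$1$ in $\mathcal{Y}$. This is the only place a substantive input is used — flatness over the smooth curve $S$ is exactly what forbids the fibers from jumping in dimension — and it reduces to a citation. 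Since an irreducible component of $(Y_t)_{\mathrm{red}}$ coincides, as a subset, with an irreducible component of $Y_t$, and since bimeromorphism is insensitive to nilpotents by Lemma~\ref{reduction map}, Theorem~\ref{new-fiberbime-irredu-singular} applies verbatim to each such $D_t$ and produces an irreducible component $C_t$ of $(X_t)_{\mathrm{red}}$ bimeromorphic to $D_t$, the bimeromorphism being induced by restricting $f$ (note $f(X_t)=Y_t$ since $f$ is surjective and $\pi_1=\pi_2\circ f$).

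Finally, for the ``in particular'' statement: if each fiber of $\pi_1$ and $\pi_2$ is irreducible, then $(Y_t)_{\mathrm{red}}$ is its own unique irreducible component, so taking $D_t=(Y_t)_{\mathrm{red}}$ in the above gives $(X_t)_{\mathrm{red}}$ bimeromorphic to $(Y_t)_{\mathrm{red}}$, induced by $f$; as this holds for every $t\in S$, $f$ is fiberwise bimeromorphic in the sense of Definition~\ref{fiberbimedef}. I do not anticipate any real obstacle: the argument is a translation of Theorem~\ref{new-fiberbime-irredu-singular} into the setting of families, with the flatness/dimension-formula observation being the sole genuine ingredient.
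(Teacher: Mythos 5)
Your proposal is correct and follows exactly the paper's own route: the paper likewise deduces the corollary from Theorem \ref{new-fiberbime-irredu-singular} by invoking Lemma \ref{connectedness of total space} for irreducibility of the total spaces, Lemma \ref{flatovercurve} for flatness of $\pi_2$ over the smooth curve, and the dimension formula for flat morphisms (\cite[Proposition 2.11]{PR94}) to see that every irreducible component of a fiber has codimension $1$ in $\mathcal{Y}$. Your treatment of the reduced structure and of the ``in particular'' statement matches the intended argument, so there is nothing to add.
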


\rem Corollary \ref{morphism-cor-25} is used in \cite{HLR26} many times, for instance to obtain the fiberwise bimeromorphisms of the extended relative bimeromorphic contractions in \cite[Proposition 4.24, Theorem 6.1]{HLR26};   Note that Fujiki’s openness of bimeromorphism \cite[Lemma 15]{Fu84}, together with Corollary \ref{morphism-cor-25}, yields a result of the following form: under suitable natural assumptions, for a morphism (which is not assumed a priori to be bimeromorphic) $f: \mathcal{X}\to \mathcal{Y}$ over $S$, if a induced morphism  $f_s: X_s \to Y_s$ is bimeromorphic for some $s\in S$, then  $f$ is fiberwise bimeromorphic.

As J. Koll{\'a}r \cite{Ko25} pointed out,
 an argument similar to Theorem \ref{new-fiberbime-irredu-singular} applies in the absolute setting. The only adjustment is that, in Paragraph 3 of its proof, we choose an irreducible component $C$ of $f^{-1}(D)$ with $f(C) = D$ to be the desired $C$.

\begin{cor}[{\cite{Ko25}}]\label{Kol-abso}
Let $\mathcal{X}$ and  $\mathcal{Y}$  be complex analytic spaces such that  $\mathcal{X}$ is reduced  and irreducible, $\mathcal{Y}$ is normal. Let $D \subseteq \mathcal{Y}$ be an irreducible analytic subset of  codimension $1$. 
Assume that  we have  a bimeromorphic morphism  $f: \mathcal{X} \to \mathcal{Y}$.  Then there exists an irreducible analytic subset $C$ of $\mathcal{X} $ that is bimeromorphic to $D$, induced by $f$.
\end{cor}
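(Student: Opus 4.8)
The plan is to mimic the proof of Theorem \ref{new-fiberbime-irredu-singular} verbatim, simply dropping all mention of the base $S$ and the fibers $X_t,Y_t$, and replacing the role played by ``$D_t$, an irreducible component of the fiber $Y_t$'' with ``$D$, an irreducible analytic subset of $\mathcal{Y}$ of codimension $1$''. The key inputs are exactly the same: the normality of $\mathcal{Y}$ together with \cite[(1.17)+Theorem 1.19]{PR94} produce an analytic subset $V\subseteq\mathcal{Y}$ of codimension $\geq 2$ outside of which the (proper modification) $f$ has $0$-dimensional, hence — by the connectedness of fibers of a proper modification (e.g.\ \cite[Corollary 1.12]{Ue75}) and \cite[p.~166]{GR84} — singleton fibers; so $f$ restricts to a biholomorphism $f^{-1}(\mathcal{Y}\setminus V)\xrightarrow{\ \sim\ }\mathcal{Y}\setminus V$.

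Next, since $f$ is surjective (it is a proper modification, by Lemma \ref{bimeromorphic morphism proper}, being a bimeromorphic morphism) and proper, $f^{-1}(D)$ is a nonempty analytic subset of $\mathcal{X}$ mapping onto $D$ by Remmert's proper mapping theorem; as $D$ is irreducible, some irreducible component $C$ of $f^{-1}(D)$ already satisfies $f(C)=D$. (This is the one small adjustment over Theorem \ref{new-fiberbime-irredu-singular}, noted by Koll\'ar in the excerpt: in the fibered setting one first intersects with the fiber, whereas here one simply picks a component of $f^{-1}(D)$.) Because $\operatorname{codim}_{\mathcal{Y}}V\geq 2>1=\operatorname{codim}_{\mathcal{Y}}D$, we have $D\not\subseteq V$, hence $C\not\subseteq f^{-1}(V)$; moreover $D\cap V$ is a thin (nowhere dense) analytic subset of $D$ and $C\cap f^{-1}(V)$ is a thin analytic subset of $C$. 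Since $f$ is biholomorphic over $\mathcal{Y}\setminus V$, it restricts to a biholomorphism $C\setminus f^{-1}(V)\xrightarrow{\ \sim\ }D\setminus V$; together with the properness of $f|_C:C\to D$, one checks directly from Definitions \ref{def-modification} and \ref{def-bimeromorphic} that $f|_C:C\to D$ is a proper modification, i.e.\ $C$ is bimeromorphic to $D$, induced by $f$.

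I do not expect any genuine obstacle here: every ingredient is already assembled in the proof of Theorem \ref{new-fiberbime-irredu-singular}, and the only conceptual point is the replacement of ``component of the fiber'' by ``component of $f^{-1}(D)$'', which costs nothing since $D$ is irreducible of codimension $1$ and $V$ has codimension $\geq 2$. The one item warranting a line of care is the verification that $C\setminus f^{-1}(V)$ and $D\setminus V$ are the complements of genuinely \emph{thin} analytic subsets of the (irreducible, reduced) spaces $C$ and $D$, so that $f|_C$ qualifies as a proper modification in the sense of Definition \ref{def-modification}; this follows from $C\not\subseteq f^{-1}(V)$ and $D\not\subseteq V$ together with the irreducibility of $C$ and $D$. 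Endowing $C$ (and, if needed, $D$) with the reduced structure throughout makes the bimeromorphism statement literally an instance of Definition \ref{def-bimeromorphic}.
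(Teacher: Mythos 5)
Your proposal is correct and follows essentially the same route as the paper: the paper's own justification of Corollary \ref{Kol-abso} is precisely to rerun the proof of Theorem \ref{new-fiberbime-irredu-singular} (normality of $\mathcal{Y}$ plus \cite[(1.17)+Theorem 1.19]{PR94} giving a codimension-$\geq 2$ set $V$ over whose complement $f$ is biholomorphic), with the single adjustment of taking $C$ to be an irreducible component of $f^{-1}(D)$ with $f(C)=D$, exactly as you do. The codimension comparison, the thinness of $D\cap V$ and $C\cap f^{-1}(V)$, and the use of Lemma \ref{bimeromorphic morphism proper} all match the paper's argument.
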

        
\section{Fiberwise bimeromorphism of a bimeromorphic map}\label{section-fib bime for map kodaira}
In this section, we provide criteria to determine when a bimeromorphic map between two families over the same base is fiberwise bimeromorphic. It is worth noting that an important class of families, studied in \cite{Tk07}, satisfies this criterion. 
In Subsection \ref{4.1-0726}, motivated by desingularization theory, we investigate this topic in the setting of families with smooth total spaces. In Subsection \ref{4.2-0726}, we approach the problem through the deformation behavior of plurigenera.
	
The key idea is that we utilize the deformation behavior of the plurigenera of fibers to arrange the special fiber in such a way that it avoids being covered by any ``bad locus" of the bimeromorphic map involved. More precisely, in Subsection \ref{4.2-0726} and Section \ref{section-moishezon}, we primarily rely on the extension behavior of pluricanonical forms to address the fiberwise bimeromorphism problem.
	
Recall that the \emph{$m$-genus} $P_m(X)$ of a compact complex manifold $X$ is defined by
$$P_m(X):=\dim_{\mathbb{C}} H^0(X,K_X^{\otimes m})$$
with the canonical bundle $K_X$ of $X$ for every positive integer $m$. It is a bimeromorphic invariant and the \emph{$m$-genus of an arbitrary compact complex irreducible and reduced analytic space $M$} can be defined as the $m$-genus of its arbitrary non-singular model. Let $L$ be a holomorphic line bundle over $M$. 
Set the linear system associated to $\mathcal{L}:=H^{0}(M,L)$ as $|\mathcal{L}|=\{\Div(s): s\in \mathcal{L}\}$ and its {base point locus} $\Bl_{|\mathcal{L}|}$.
Now consider $\mathcal{L}_p:=H^{0}(M,L^{\otimes p})$ for a positive integer $p$ and the Kodaira map $\Phi_p:=\Phi_{\mathcal{L}_p}$ associated to $L^{\otimes p}$.
Set
$$
\varrho_p=
	\begin{cases}
		\max\{\rk\ \Phi_p: x\in M\setminus \Bl_{|\mathcal{L}_p|}\}, &\text{if $\mathcal{L}_p\neq\{0\}$},\\
		-\infty, &\text{otherwise}.
	\end{cases}
$$
The \emph{Kodaira--Iitaka dimension} $\kappa(L)$ of $L$ is $\max\{\varrho_p:p\in \mathbb{N}^+\}$. 
A classical result of S. Iitaka says that there exist positive numbers $\alpha,\beta$ and a positive integer $m_0$ such that for any integer $m\geq m_0$, the inequalities hold
\begin{equation}\label{asymp-ineq}
\alpha m^{\kappa(L)}\leq h^0(M, L^{\otimes md})=\dim_{\mathbb{C}}H^0(M, L^{\otimes md})\leq \beta m^{\kappa(L)},
\end{equation}
where $d$ is some positive integer depending on $L$. In particular, when $h^0(M, L)\neq 0$, one can take $d=1$ in \eqref{asymp-ineq}. The \emph{Kodaira dimension} $\kappa(M)$ of $M$ is defined as the Kodaira--Iitaka dimension of the canonical bundle of its arbitrary non-singular model.
	
For a more general compact complex analytic space, we adopt:  
\begin{defn}\label{reducible genera}
Let $X$ be a compact complex analytic space. For every positive integer $m$, the \emph{$m$-genus} $P_m(X)$ of $X$ is defined by
\[
P_m(X) := \sum_{i \in I} P_m({X}_i),
\]
where $\{X_i\}_{i \in I}$ is the set of irreducible components of the reduced space $X_{\textrm{red}}$, while the \emph{Kodaira dimension} $\kappa(X)$ of $X$ is defined by
\[\kappa(X) := \max_{i \in I} \kappa({X}_i).\]
\end{defn}
	
\subsection{From the point of view of desingularization theory}\label{4.1-0726}
Let $X$ be  a   reduced and irreducible compact complex analytic space. 
Recall that
(e.g., \cite[Definition 2.8]{Hr08})  a \emph{rational curve in $X$} is defined as the image (with the reduced structure) of a nonconstant morphism $f: \mathbb{P}^1 \to X$;   $X$ is called  \emph{uniruled}, if through a general point of $X$ there exists a rational curve in $X$.   By the definitions of uniruledness and a proper modification, if $f:Z\to W$ is a proper modification  and $W$ is not uniruled, then $Z$ is also not  uniruled.

Motivated by the basic fact that a blow-up of a smooth manifold along a compact irreducible smooth center has a uniruled exceptional divisor, we now present Theorem \ref{fiberbime global-D kappa geq 0} concerning fiberwise bimeromorphism.
Recall that the weak factorization theorem states that a bimeromorphic map between two compact complex manifolds can be factored through a series of blowups and blowdowns with smooth centers. As a warm-up for Theorem \ref{fiberbime global-D kappa geq 0}, we begin with: 
\begin{observation}
With Notation \ref{notation 1}, assume that the map $f$ can be factored through a series of blowups and blowdowns, where each irreducible component of the exceptional divisors in each step (each blowup and the inverse of each blowdown) is uniruled. 
Assume further that each irreducible component of (the reduction of) each fiber of both $\pi_1$ and $\pi_2$ is not uniruled. Then, for any $s \in S$ and any irreducible component $D_s$ of the reduction of $Y_s$ there exists an irreducible component $B_s$ of the reduction of $X_s$ that is bimeromorphic (via Theorem \ref{new-fiberbime-irredu-singular}) to $D_s$, induced by $f$.
\end{observation}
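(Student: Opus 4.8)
The plan is to induct on the length $n$ of the given factorization, reducing matters at each step to a single blow-up or blow-down and invoking either Corollary~\ref{morphism-cor-25} or the argument behind Theorem~\ref{new-fiberbime-irredu-singular}, all the while carrying along the invariant that the fibre-component being tracked stays bimeromorphic to $D_s$, and hence not uniruled.

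First I would fix the chain supplied by the hypothesis,
$$
\mathcal{X}=\mathcal{W}_0,\ \mathcal{W}_1,\ \dots,\ \mathcal{W}_n=\mathcal{Y},
$$
in which, for each $i$, one of $\mathcal{W}_{i-1},\mathcal{W}_i$ is the blow-up of the other along a smooth centre, which we may take of codimension $\geq 2$; write $q_i$ for the corresponding blow-down morphism, from the larger to the smaller of the two, and $E_i$ for its exceptional divisor, every irreducible component of which is uniruled by hypothesis (automatic for a blow-up along a smooth centre of codimension $\geq 2$). Everything is over $S$, and an easy induction using Lemmas~\ref{connectedness of total space},~\ref{flatovercurve} and~\ref{fiber connected after modification} (a blow-up along a smooth centre either dominates $S$, in which case it restricts to a blow-up of the general fibre, or its centre lies in only finitely many fibres, in which case the blow-up is an isomorphism over the general fibre — in both cases a proper modification of the general fibre) shows that each $\mathcal{W}_i\to S$ is again a family; in particular each $\mathcal{W}_i$ is irreducible, and by flatness and the dimension formula each fibre $(\mathcal{W}_i)_s$ is pure of dimension $\dim\mathcal{W}_i-1$. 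It therefore suffices to prove the one-step assertion — \emph{if $\mathcal{A}$ and $\mathcal{B}$ are consecutive members of the chain, $q$ the blow-down morphism between them, and $D\subseteq(\mathcal{B}_s)_{\mathrm{red}}$ an irreducible component that is not uniruled, then there is an irreducible component $D'\subseteq(\mathcal{A}_s)_{\mathrm{red}}$ that is bimeromorphic to $D$, induced by the corresponding arrow} — and then to compose.

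For the one-step assertion there are two cases. If $\mathcal{A}$ is the smaller space, so that $q:\mathcal{B}\to\mathcal{A}$ contracts $E\subseteq\mathcal{B}$ onto the blow-up centre $q(E)$, of codimension $\geq 2$ in $\mathcal{A}$: since $D$ and every irreducible component of $E$ have dimension $\dim\mathcal{B}-1$, while $D$ is not uniruled and the components of $E$ are, we get $D\not\subseteq E$. Then, arguing as in the proof of Theorem~\ref{new-fiberbime-irredu-singular}, $q$ is biholomorphic away from $q(E)$, and $q(E)$ — having dimension at most $\dim\mathcal{A}-2$ — contains no irreducible component of the pure-$(\dim\mathcal{A}-1)$-dimensional space $(\mathcal{A}_s)_{\mathrm{red}}$; hence $q(D)$ is an irreducible closed subset of $(\mathcal{A}_s)_{\mathrm{red}}$ of dimension $\dim\mathcal{A}-1$, i.e.\ a full irreducible component $D'$, and $q:D\to D'$, being biholomorphic off the thin sets $D\cap E$ and $D'\cap q(E)$, is a proper modification. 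If instead $\mathcal{A}$ is the larger space, so that $q:\mathcal{A}\to\mathcal{B}$ is a bimeromorphic morphism of families over $S$, then Corollary~\ref{morphism-cor-25} applied to $q$ yields directly an irreducible component $D'\subseteq(\mathcal{A}_s)_{\mathrm{red}}$ with $q:D'\to D$ a proper modification — no uniruledness is used here. In either case $D'$ is bimeromorphic to $D$, with the bimeromorphism induced by the arrow between $\mathcal{W}_{i-1}$ and $\mathcal{W}_i$.

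To conclude, put $D^{(n)}:=D_s$ (not uniruled by hypothesis) and apply the one-step assertion to $(\mathcal{W}_{i-1},\mathcal{W}_i)$ for $i=n,n-1,\dots,1$, obtaining irreducible components $D^{(i-1)}\subseteq(\mathcal{W}_{i-1,s})_{\mathrm{red}}$ with $D^{(i-1)}$ bimeromorphic to $D^{(i)}$. By transitivity each $D^{(i)}$ is bimeromorphic to $D_s$, hence — uniruledness being a bimeromorphic invariant — not uniruled, so the induction is legitimate; and $B_s:=D^{(0)}\subseteq(X_s)_{\mathrm{red}}$ is the component we want. Composing the bimeromorphisms $D^{(i-1)}\dashrightarrow D^{(i)}$, each induced by the $i$-th arrow, shows that the resulting bimeromorphism $B_s\dashrightarrow D_s$ is induced by $f=f_n\circ\cdots\circ f_1$ (the composite coincides with $f$ on the dense open subset of $(X_s)_{\mathrm{red}}$ on which all the arrows restrict to isomorphisms). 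I expect the one point that needs care to be the ``push-forward'' case above: one must arrange the blow-up centres to have codimension $\geq 2$, so that their images are thin in the relevant fibres and the dimension count closes, and one must know that non-uniruledness is transported along the chain — not merely the elementary implication recalled before the statement (a proper modification of a non-uniruled space is non-uniruled), but also its converse, which in the possibly singular analytic setting is equally standard, following by lifting, via strict transform, a rational curve through a general point of the target to the source. It is worth remarking that, as the argument shows, only the non-uniruledness of the fibres of $\pi_2$ is actually needed for this one-sided statement.
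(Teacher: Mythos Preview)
Your argument is sound and is precisely the formalization the paper has in mind: the Observation is stated in the paper as a motivating remark with no separate proof (the parenthetical ``via Theorem~\ref{new-fiberbime-irredu-singular}'' is the entire justification offered), and your induction on the length of the factorization, splitting into a push-forward case and a lift case, is exactly how one makes that hint rigorous.

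One small point to tighten: when you assert that each $\mathcal{W}_i\to S$ is again a \emph{family}, recall that in this paper ``family'' includes normality of the total space, and the lemmas you cite (\ref{connectedness of total space}, \ref{flatovercurve}, \ref{fiber connected after modification}) do not give that. What your argument actually uses is reducedness and irreducibility of each $\mathcal{W}_i$ (preserved under blow-up, hence flatness over the smooth curve $S$ and pure-codimension-$1$ fibres follow) together with normality of the \emph{target} $\mathcal{W}_i$ in your Case~2, since Theorem~\ref{new-fiberbime-irredu-singular} requires the target to be normal. In the intended context---weak factorization with smooth ambient spaces and smooth centres---all $\mathcal{W}_i$ are smooth and the issue disappears; you should either say this explicitly, or note that in Case~2 one may replace $\mathcal{W}_i$ by its normalization without affecting the conclusion. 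Your closing remark that only the non-uniruledness of the fibres of $\pi_2$ is actually used is correct and worth keeping.
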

	
\begin{thm}\label{fiberbime global-D kappa geq 0}	
Let $\mathcal{X}$,  $\mathcal{Y}$ and $S$ be complex analytic spaces. Assume that both $\mathcal{X}$ and $\mathcal{Y}$ are reduced and irreducible and that $S$ is a connected smooth curve. Assume further that both $\pi_1: \mathcal{X} \to  S$ and $\pi_2: \mathcal{Y} \to  S$ are proper surjective holomorphic maps and we have a bimeromorphic map $f: \mathcal{X} \dashrightarrow \mathcal{Y}$ over $S$. 	 
For some $s \in S$, let $D$ be an irreducible component of $(Y_s)_{\textrm{red}}$ such that $D$ is not uniruled. Furthermore, assume the conditions:
\begin{enumerate}[\rm{(}1\rm{)}]
\item\label{20240730-1} There exists a proper modification $\sigma_1: \mathcal{V} \to \mathcal{X}$ obtained as a composition of a locally finite succession of blow-ups, each with the center of codimension no less than two, and each compact irreducible exceptional divisor being uniruled.
\item \label{Y center 2 condition}  There also exists a proper modification $\sigma_2: \mathcal{W} \to \mathcal{Y}$ obtained as a composition of a locally finite succession of blow-ups, with each center being of codimension no less than two. Moreover, assume that $\mathcal{W}$ is normal.
\item $g: \mathcal{V} \to \mathcal{W}$ is a proper modification such that $\sigma_2 \circ g = f \circ \sigma_1$. 
\end{enumerate}
Then, there exists an irreducible component $C$ of $(X_s)_{\textrm{red}}$ such that $C$ is bimeromorphic to $D$, induced by $f$. In particular, if we further assume that each fiber of $\pi_1$ and $\pi_2$ is irreducible and $Y_t$ is not uniruled   for each $t \in S$, then $f$ is fiberwise bimeromorphic in the sense of Definition \ref{fiberbimedef}.
\end{thm}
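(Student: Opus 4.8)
The plan is to combine the modification data in hypotheses (1)--(3) with Theorem~\ref{new-fiberbime-irredu-singular} applied to the bimeromorphic \emph{morphism} $g: \mathcal{V}\to\mathcal{W}$, and then to propagate the conclusion back down along $\sigma_1$ and $\sigma_2$ using the non-uniruledness of $D$. First I would set up the commutative square
$$
\xymatrix{
\mathcal{V}\ar[r]^{g}\ar[d]_{\sigma_1} & \mathcal{W}\ar[d]^{\sigma_2}\\
\mathcal{X}\ar@{-->}[r]^{f} & \mathcal{Y}
}
$$
and pull the base maps through: since $\sigma_1,\sigma_2$ are compositions of blow-ups with centers of codimension $\geq 2$, their vertical exceptional divisors sit over isolated points of $S$, so after shrinking $S$ around $s$ (using Lemma~\ref{restriction of bime} to keep $f$ bimeromorphic over the shrunken base) we may assume $\sigma_1,\sigma_2$ are isomorphisms away from the fibers over $s$; in particular $\pi_1\circ\sigma_1$ and $\pi_2\circ\sigma_2$ are again proper surjective holomorphic maps to $S$ with the same general fibers, and $g$ is a bimeromorphic morphism over $S$ by condition~(3). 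Note $\mathcal{V}$ is reduced and irreducible (a proper modification preserves these), and $\mathcal{W}$ is normal by hypothesis~(2).

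Next I would locate the right component of the special fiber $W_s$ of $\pi_2\circ\sigma_2$. Let $\widetilde D\subseteq W_s$ be the strict transform of $D$ under $\sigma_2$; because each center of $\sigma_2$ has codimension $\geq 2$ in $\mathcal{Y}$, the divisor $D$ (codimension $1$ in $\mathcal{Y}$) is not contained in any center, so $\widetilde D$ is well-defined, is of codimension $1$ in $\mathcal{W}$, is an irreducible component of $(W_s)_{\textrm{red}}$, and $\sigma_2:\widetilde D\to D$ is a proper modification; in particular $\widetilde D$ is again not uniruled. Now apply Theorem~\ref{new-fiberbime-irredu-singular} to the bimeromorphic morphism $g:\mathcal{V}\to\mathcal{W}$ over $S$ and the codimension-one component $\widetilde D$ of $W_s$: there is an irreducible component $\widetilde C$ (with reduced structure) of the special fiber $V_s$ of $\pi_1\circ\sigma_1$ such that $g:\widetilde C\to\widetilde D$ is bimeromorphic. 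Composing, $\sigma_2\circ g:\widetilde C\to D$ is bimeromorphic.

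It remains to descend $\widetilde C$ through $\sigma_1$, and this is where hypothesis~(1) enters — I expect this to be the main obstacle, since $\sigma_1$'s exceptional divisors can be horizontal. The point is to rule out the possibility that $\widetilde C$ is itself a $\sigma_1$-exceptional component of $V_s$: if $\widetilde C$ were contained in the $\sigma_1$-exceptional locus, then $\widetilde C$ would be contained in (hence, being a component of a fiber, equal up to reduction to) a compact irreducible exceptional divisor of one of the blow-ups composing $\sigma_1$, which is uniruled by hypothesis~(1) — but a proper modification image of a uniruled space is uniruled (by the remark in Subsection~\ref{4.1-0726} that a proper modification of a non-uniruled space is non-uniruled, read contrapositively), so $\widetilde D$ would be uniruled, contradicting the non-uniruledness of $D$. (A small technical check: one must argue that a component of $V_s$ lying in the $\sigma_1$-exceptional set is dominated by — or dominates — a piece of one of the exceptional divisors, so that uniruledness transfers; this uses that being uniruled is a bimeromorphic invariant and stable under surjective maps, and that the exceptional components are themselves compact.) Hence $\widetilde C$ is not $\sigma_1$-exceptional, so its image $C:=\sigma_1(\widetilde C)$ is an irreducible component of $(X_s)_{\textrm{red}}$ and $\sigma_1:\widetilde C\to C$ is a proper modification. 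Chasing the diagram, $f:C\dashrightarrow D$ is then the bimeromorphic map induced by $\sigma_2\circ g\circ(\sigma_1|_{\widetilde C})^{-1}$, as required. For the final ``in particular'' clause, when all fibers of $\pi_1,\pi_2$ are irreducible and every $Y_t$ is non-uniruled, apply the above at each $s\in S$ with $D=(Y_s)_{\textrm{red}}$: we get that $(X_s)_{\textrm{red}}$ is bimeromorphic to $(Y_s)_{\textrm{red}}$ via the map induced by $f$ for every $s$, which is exactly the statement that $f$ is fiberwise bimeromorphic in the sense of Definition~\ref{fiberbimedef}.
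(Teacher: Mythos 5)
Your proposal is correct and takes essentially the same route as the paper's proof: pass to the strict transform $\tilde D$ of $D$ in $\mathcal W$ (well defined and of codimension one since the centers of $\sigma_2$ have codimension at least two and $D$ has codimension one in $\mathcal Y$ by flatness over the curve), apply Theorem \ref{new-fiberbime-irredu-singular} to the bimeromorphic morphism $g$ over $S$ to obtain a component of $V_s$ bimeromorphic to $\tilde D$, and then use hypothesis (1) together with the non-uniruledness of $D$ (non-uniruledness being preserved under proper modifications) to exclude the $\sigma_1$-exceptional case, so that this component is a strict transform of a component $C$ of $(X_s)_{\textrm{red}}$. The only blemish is the unnecessary early claim that after shrinking $S$ the maps $\sigma_1,\sigma_2$ become isomorphisms away from the fiber over $s$ — this can fail when the blow-up centers are horizontal, as you yourself acknowledge later — but nothing in your argument actually uses it.
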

\begin{proof}
				
Since $\mathcal{Y}$ is reduced and irreducible and  $S$ is a connected smooth curve,  $\pi_2$ is flat by the criterion (e.g., \cite[Paragraph 1 of Introduction]{Hi75}, \cite[Lemma 2.1]{WZ23}) for flatness of a morphism over a smooth curve. 
It follows that $D$ is of codimension $1$ in $\mathcal{Y}$, based on the dimension formula (e.g., \cite[Proposition 2.11]{PR94}) for the flat morphism $\pi_2$.
By the condition \eqref{Y center 2 condition},  the final strict transform $\tilde{D}$  of $D$ under $\sigma_2$ is well-defined such that  $\tilde{D}$  is bimeromorphic to  $D$ (via a proper modification) and  is  of codimension $1$ in $\mathcal{W}$. In particular, $\tilde{D}$ is not uniruled.

 Clearly, both $\mathcal{V}$ and  $\mathcal{W}$  are irreducible.   
Applying Theorem \ref{new-fiberbime-irredu-singular} to $\pi_1\circ \sigma_1$ and $\pi_2\circ \sigma_2$,  one obtains an irreducible component $F$ of the reduction of $V_s$ that is bimeromorphic to $\tilde{D}$. 
 Since $\tilde{D}$ is not uniruled, we can conclude that $F$ cannot be an exceptional divisor of $\sigma_1$, as each compact irreducible exceptional divisor of $\sigma_1$ is uniruled, as required in Condition \eqref{20240730-1}. 
 
 Consequently, $F$ must be the final strict transform of some irreducible component of the reduction of $X_s$ under $\sigma_1$. This completes the proof.
\end{proof}

{The conditions in Theorem \ref{fiberbime global-D kappa geq 0} appear somewhat artificial, yet Corollary \ref{smoothcase uniruled} offers a natural example that illustrates them. The key point for this setting is that one can control the irreducible compact exceptional divisor to be uniruled in each step of the blow-ups for eliminating the indeterminacy (Lemma \ref{elimination of inde}).  
Consequently, if in Theorem \ref{fiberbime global-D kappa geq 0} we take $\sigma_2$ to be the identity on $\mathcal{Y}$ and let $\sigma_1$ be such an elimination of indeterminacy,  we obtain the corollary below, a situation commonly encountered in the literature.}

\begin{cor}\label{smoothcase uniruled}
With Notation \ref{notation 1}, assume that the total space $\mathcal{X}$ is smooth and let $D$ be an irreducible component of $(Y_s)_{\textrm{red}}$ for some $s \in S$ such that $D$ is not uniruled. Then, there exists an irreducible component $C$ of $(X_s)_{\textrm{red}}$ such that $C$ is bimeromorphic to $D$, induced by $f$. In particular, if one assumes further that each fiber of $\pi_1$ and $\pi_2$ is irreducible and $Y_t$ is not uniruled  for each $t \in S$, then $f$ is fiberwise bimeromorphic in the sense of Definition \ref{fiberbimedef}.
\end{cor}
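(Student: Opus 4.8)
The plan is to derive this as a special case of Theorem~\ref{fiberbime global-D kappa geq 0}: I only need to exhibit modifications $\sigma_1:\mathcal{V}\to\mathcal{X}$ and $\sigma_2:\mathcal{W}\to\mathcal{Y}$ together with a proper modification $g:\mathcal{V}\to\mathcal{W}$ fitting the hypotheses of that theorem. The whole content of the corollary is that the smoothness of $\mathcal{X}$ makes this easy: it forces the exceptional divisors of any blow-up we perform on $\mathcal{X}$ to be uniruled, so the indeterminacy of $f$ can be resolved while keeping condition~\eqref{20240730-1}, and nothing has to be done on the $\mathcal{Y}$-side because $\mathcal{Y}$ is already normal.

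First I would apply the elimination of indeterminacy (Lemma~\ref{elimination of inde}) to $f:\mathcal{X}\dashrightarrow\mathcal{Y}$, obtaining a locally finite composition of blow-ups with smooth centers $\sigma_1:\mathcal{V}\to\mathcal{X}$ and a holomorphic map $\tilde f:\mathcal{V}\to\mathcal{Y}$ with $\tilde f=f\circ\sigma_1$. Since a blow-up of a smooth manifold along a smooth center of codimension one is an isomorphism, I may discard such steps and assume every center has codimension $\geq 2$; since blowing up a smooth manifold along a smooth center keeps the total space smooth, $\mathcal{V}$ is smooth. The exceptional divisor of each individual blow-up in this chain is the projectivization of the normal bundle of its (smooth) center, hence a $\mathbb{P}^{c-1}$-bundle with $c-1\geq 1$, which is uniruled, and it is compact exactly when its center is compact. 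Every $\sigma_1$-exceptional prime divisor of $\mathcal{V}$ is an iterated strict transform of one of these projective bundles under the later blow-ups, hence a proper modification of it; as a proper modification of a uniruled compact complex space is again uniruled (lift a rational curve through a general point, which lies outside the nowhere-dense exceptional set), every compact irreducible $\sigma_1$-exceptional divisor is uniruled. Thus $\sigma_1$ satisfies condition~\eqref{20240730-1} of Theorem~\ref{fiberbime global-D kappa geq 0}.

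It remains to choose $\sigma_2$, and here I would simply take $\mathcal{W}:=\mathcal{Y}$ and $\sigma_2:=\mathrm{id}_{\mathcal{Y}}$: this is an empty composition of blow-ups, so condition~\eqref{Y center 2 condition} is vacuous, and $\mathcal{W}=\mathcal{Y}$ is normal by Notation~\ref{notation 1}. Setting $g:=\tilde f$ we get $\sigma_2\circ g=\tilde f=f\circ\sigma_1$, and $g$ is a bimeromorphic morphism (being the composition of the bimeromorphic $\sigma_1$ with $f$), hence a proper modification by Lemma~\ref{bimeromorphic morphism proper}; so condition~(3) holds as well. Theorem~\ref{fiberbime global-D kappa geq 0} now applies and yields an irreducible component $C$ of $(X_s)_{\textrm{red}}$ bimeromorphic to $D$ and induced by $f$. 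For the last assertion, if every fiber of $\pi_1$ and $\pi_2$ is irreducible and $Y_t$ is not uniruled for each $t\in S$, then applying the statement just proved with $s=t$ and $D=(Y_t)_{\textrm{red}}$ gives an irreducible component $C_t$ of $(X_t)_{\textrm{red}}$ bimeromorphic to $(Y_t)_{\textrm{red}}$; since $X_t$ is irreducible, $C_t=(X_t)_{\textrm{red}}$, so $f$ is fiberwise bimeromorphic in the sense of Definition~\ref{fiberbimedef}.

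The only step that needs genuine care is the claim in the second paragraph that uniruledness of the exceptional divisors is preserved all the way through the chain of blow-ups — i.e.\ that iterated strict transforms (under blow-ups with smooth centers of codimension $\geq 2$) of a $\mathbb{P}$-bundle remain uniruled — together with the normalization to centers of codimension $\geq 2$; everything else is a direct invocation of Lemma~\ref{elimination of inde}, Lemma~\ref{bimeromorphic morphism proper} and Theorem~\ref{fiberbime global-D kappa geq 0}. Note that the extra flexibility in $\sigma_2$ built into Theorem~\ref{fiberbime global-D kappa geq 0} is there for the later treatment of non-smooth total spaces; in the present situation the smoothness of $\mathcal{X}$ both supplies $\sigma_1$ with uniruled exceptional divisors and makes $\sigma_2$ unnecessary.
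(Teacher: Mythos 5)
Your proposal is correct and follows essentially the same route as the paper: the paper deduces Corollary \ref{smoothcase uniruled} from Theorem \ref{fiberbime global-D kappa geq 0} precisely by resolving the indeterminacy of $f$ via Lemma \ref{elimination of inde} and observing that, because $\mathcal{X}$ is smooth and the centers are smooth of codimension at least two, every compact irreducible exceptional divisor (a $\mathbb{P}^{c-1}$-bundle up to strict transform) is uniruled, with nothing needed on the $\mathcal{Y}$-side since $\mathcal{Y}$ is already normal. Your choices $\sigma_2=\mathrm{id}_{\mathcal{Y}}$, $g=\tilde f$, and your justification that uniruledness survives the iterated strict transforms match the paper's intended argument.
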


Note that in Corollary \ref{smoothcase uniruled}, the total space of $\pi_1$ is assumed to be smooth. We now address the case where the total spaces involved are singular. This situation often appears in birational geometry; for instance, in many cases, we cannot ensure that the contraction of a manifold is smooth. However, in this case, compact exceptional divisors of blow-ups cannot be easily controlled to be uniruled (e.g., \cite[Proposition 3.3]{HK15}). 
Note that Koll\'ar \cite[p. 289]{Kol96} refers to a proper modification (in the scheme category)  as a \textit{uniruled modification}, if every irreducible component of the exceptional set of this modification is uniruled.  We refer the reader to \cite[Proposition 4]{Abh56}, \cite[p. 8, Proposition 1.3]{KMo92}, \cite[p. 291, Theorem 1.10]{Kol96}, \cite[p. 31, Proposition 1.43]{D01}, \cite[Corollary 1.6]{HM07}, etc., for sufficient conditions for irreducible exceptional divisors to be uniruled (in the scheme category). 

Corollary \ref{smoothcase uniruled} can be viewed as a complex analytic analogue of Artin--Matsusaka--Mumford's specialization theorem:
\begin{thm}[{\cite[Theorem 1.1]{mm64}}]\footnote{Just below \cite[Theorem 1.1]{mm64}, it is mentioned that this theorem was pointed out by M. Artin.}\label{amm}
Let $X$ be a complete abstract variety, $Y$ an abstract
variety and $T$ a birational correspondence between $X$ and $Y$. Let o be a
discrete valuation-ring with the quotient field $k$, such that $X$, $Y$ and $T$ are defined over $k$. Let $(X', Y', T')$ be a specialization of $(X, Y, T)$ over $o$ and assume that $X', Y'$ are abstract varieties and that $X'$ is complete. When $Y'$ is not a ruled variety, there is a component $T''$ of $T'$ with the coefficient $1$ in $T'$ such that $T''$ is a birational correspondence between $X'$ and $Y'$ and that $pr_i(T'- T'') =0$ for $i= 1, 2$.
\end{thm}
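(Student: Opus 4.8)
The plan is to argue entirely within Weil's framework of abstract varieties, cycles, and their specializations over the discrete valuation ring $\mathfrak{o}$, reducing the statement to two points: that the specialized cycle $T'$ projects onto each factor with total multiplicity one, and that a single component of $T'$ carries both projections birationally, the latter being forced by the non-ruledness of $Y'$.

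First I would analyze the two projections of $T'$. Since $T$ is a birational correspondence, $pr_1(T)=X$ and $pr_2(T)=Y$ each with coefficient $1$, and the maps $T\to X$, $T\to Y$ have degree $1$. Because the specialization of a cycle commutes with a proper projection, and the completeness of $X$ (hence of $X'$) makes $pr_1$ proper, I obtain $pr_{1*}(T')=X'$ with coefficient $1$; the corresponding identity $pr_{2*}(T')=Y'$ on the non-complete side is more delicate and must be extracted from the specialization theory of the Chow point of $T$ rather than from naive properness. Writing $T'=\sum_i m_i T'_i$ with each $T'_i$ a variety and each $m_i\geq 1$, the equation $pr_{1*}(T')=X'$ together with the completeness of $X'$ and the nonnegativity of the $m_i$ forces exactly one component $T''$ to dominate $X'$; moreover $m_{T''}=1$ and $pr_1\colon T''\to X'$ has degree $1$, i.e. is birational, while every other component maps into a proper subvariety of $X'$. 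This already gives $pr_1(T'-T'')=0$ and singles out the candidate $T''$.

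It then remains to show that this same $T''$ is birational onto $Y'$, and this is where the hypothesis enters and where the real difficulty lies. If $T''$ dominates $Y'$, then $pr_{2*}(T'')=\deg(T''/Y')\cdot Y'\le pr_{2*}(T')=Y'$ forces the degree to be $1$, so $pr_2\colon T''\to Y'$ is birational and $pr_2(T'-T'')=0$, completing the proof. The alternative to be excluded is that $T''$ does not dominate $Y'$. In that case $pr_{2*}(T')=Y'$ produces a different component $\tilde T$ dominating $Y'$ birationally, while $pr_1(\tilde T)=Z\subsetneq X'$; since $\dim\tilde T=\dim Y'=\dim X'>\dim Z$, the general fibres of $pr_1\colon\tilde T\to Z$ are positive-dimensional, and via the birational identification $pr_2\colon\tilde T\to Y'$ these fibres sit inside $Y'$ and sweep it out.

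The main obstacle is precisely the geometric lemma --- the technical heart of Matsusaka--Mumford's argument --- asserting that the positive-dimensional fibres appearing in the limit of a birational correspondence are covered by rational curves. This is the specialized analogue of the statement that the exceptional locus of a birational transformation is swept out by rational curves (cf. the Abhyankar-type results \cite[Proposition 4]{Abh56}, \cite[Proposition 1.3]{KMo92}, \cite[Theorem 1.10]{Kol96} cited above), but it must be proved for the degenerate correspondence rather than for a fixed birational morphism, which is what makes it subtle and which forces the passage through Chow points and valuation-theoretic specialization. Granting this lemma, the positive-dimensional fibres of $pr_1\colon\tilde T\to Z$ exhibit $\tilde T\cong Y'$ as ruled, contradicting the hypothesis that $Y'$ is not ruled. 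Hence the bad alternative cannot occur, $T''$ is birational onto both $X'$ and $Y'$, and $T''$ is the desired birational correspondence of coefficient $1$ with $pr_i(T'-T'')=0$ for $i=1,2$.
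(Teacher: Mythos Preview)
The paper does not prove this statement. Theorem~\ref{amm} is quoted verbatim from \cite[Theorem 1.1]{mm64} as historical context: it is introduced with the sentence ``Corollary~\ref{smoothcase uniruled} can be viewed as a complex analytic analogue of Artin--Matsusaka--Mumford's specialization theorem,'' and no argument for it is given anywhere in the paper. There is therefore nothing in the paper to compare your proposal against.

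Your sketch is a reasonable outline of the original Matsusaka--Mumford argument, and you correctly isolate the crux --- that the positive-dimensional fibres arising in the limit correspondence are swept out by rational curves --- but you explicitly grant that lemma rather than prove it, so the proposal is a roadmap rather than a proof. If you want to see how the present paper handles the analytic analogue, look instead at Theorem~\ref{fiberbime global-D kappa geq 0} and Corollary~\ref{smoothcase uniruled}: there the role of ``fibres of the degenerate correspondence are ruled'' is played by the elementary fact that blow-ups of a smooth space along smooth compact centres have uniruled exceptional divisors, which sidesteps the valuation-theoretic specialization machinery entirely.
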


In the algebraic setting, for a proper modification $\pi$ with a smooth target, every irreducible component of the exceptional locus of $\pi$ is ruled (e.g., \cite[Proposition 1.43]{D01}). 
This might lead one to expect the following: if $X$ is a complex manifold and $Z$ is a compact connected submanifold of $X$ (not necessarily projective), then the exceptional divisor of a blowup $f:\tilde{X} \to X$ along $Z$ is ruled. 
However, this expectation might not be valid, 
as demonstrated by the following example provided by H.-Y. Lin \cite{Lin25}.

\begin{ex}[{\cite{Lin25}}]\label{exc not uniruled}
 Let $S$ be a $K3$ surface containing no curves.  Consider the projective compactification $X:=\mathbb{P}(T_S\bigoplus \mathcal{O}_S)$ of the total space of $T_S$.  If we blow up 
 $X$  along the zero section of $T_S$, the exceptional divisor is isomorphic to $\mathbb{P}(T_S)$.
 However,  the tangent bundle  $T_S$  is not trivial over any Zariski dense open subset of $S$ by the dimension analysis and the  assumption that $S$ contains no curves.   
\end{ex}
 
In the next subsection, we will reveal the interesting connection between the deformation behavior of plurigenera and fiberwise bimeromorphism: 

deformation behavior of plurigenera (or even $1$-genus) $\Rightarrow$ fiberwise bimeromorphism. \\
More precisely, we mainly utilize the deformation behavior of plurigenera (or $\ell$-genus for some fixed $\ell$) to investigate questions of fiberwise bimeromorphism. In fact, we will provide more interesting information via the deformation behavior of plurigenera. Our Theorem \ref{fiberbime} not only establishes the existence of an irreducible component $C$ that is bimeromorphic to something as $D$ of
Corollary \ref{smoothcase uniruled}
 but also identifies this component $C$.
	
\subsection{From the point of view of the deformation behavior of plurigenera}\label{4.2-0726}

We now present Theorem \ref{fiberbime}, which broadly illustrates the relationship:
\begin{center}
deformation behavior of plurigenera $\Rightarrow$ fiberwise bimeromorphism.
\end{center}
Its proof will be deferred to the next section. 
    
\begin{thm}\label{fiberbime}
With Notation \ref{notation 1}, the following conditions hold for some base point $0 \in S$:
\begin{enumerate}[\rm{(}1\rm{)}]
\item\label{fiberbime1}
$\kappa(X_0) \geq 0$;
\item\label{fiberbime2}
Lower semi-continuity: 
For any semi-stable reduction $\mathcal{Z} \to \mathcal{X}$ 
\footnote{\label{localversion-basechange} We should first desingularize $\mathcal{X}$ to make it smooth and shrink the disk, ensuring that the central fiber is the sole singular fiber (i.e., making the family $\mathcal{X} \to \Delta$ a $1$-parameter degeneration). Note that here we denote the finite base change of $\mathcal{X}$ by $\mathcal{X}$ itself, as stipulated in the introduction. For  precise details on the application of the semi-stable reduction theorem, we refer the reader to  paragraph 2 of the proof of Claim \ref{claim-one element}.}
over an open neighborhood $\Delta$ of $0$ whose local model is the unit disk in $\mathbb{C}$, followed by some blowups, such that the fiber $Z_t$ of $\mathcal{Z} \to \Delta$ over $t \in \Delta$ is a proper modification of $X_t$ for $t$ $(\neq 0)$ near $0$, any smooth divisor (not necessarily connected) $Z^{\prime}$ contained in $Z_0$ satisfies $P_m(Z^{\prime}) \le P_m(Z_t)$ for any $m \in \mathbb{N}^+$ and $t$ near $0$;
\item\label{fiberbime4}
Upper semi-continuity: There exists an irreducible component $D$ of $(Y_0)_{\textrm{red}}$ with $P_m(D) \ge P_m(Y_t)$ for $t$ $(\neq 0)$ near $0$ and any $m \in \mathbb{N}^+$.
\end{enumerate}
Then we have the results:
\begin{enumerate}[\rm{(}i\rm{)}]
			\item \label{mainresult1}
			There exists an  irreducible component $C$ of $(X_0)_{\textrm{red}}$ such that $\kappa(C) \geq 0$ and $C$ is bimeromorphic to $D$, induced by $f$;
			\item \label{mainresult2}
			For any other irreducible component $C^{\prime}$ of $(X_0)_{\textrm{red}}$, $\kappa(C^{\prime}) = -\infty$. In particular,  $C$ is the unique irreducible component that is bimeromorphic to $D$;
			\item \label{mainresult3}
			Any other irreducible component $D^{\prime}$ of $(Y_0)_{\textrm{red}}$ cannot satisfy the condition \eqref{fiberbime4} for $D$.
\end{enumerate}
In particular, if each fiber of $\pi_1$ and $\pi_2$ is (globally) irreducible and each point of $S$ satisfies the conditions identical to \eqref{fiberbime1} \eqref{fiberbime2} \eqref{fiberbime4} for $0\in S$, then $f$ is fiberwise bimeromorphic in the sense of Definition \ref{fiberbimedef}.
\end{thm}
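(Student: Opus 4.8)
The plan is to shrink the base, pass to a semi-stable model so that Theorem~\ref{new-fiberbime-irredu-singular} applies to a genuine bimeromorphic \emph{morphism}, and then use the plurigenera hypotheses \eqref{fiberbime1}\eqref{fiberbime2}\eqref{fiberbime4} to recognize which component of $(X_0)_{\textrm{red}}$ the theorem produces. First, by Lemma~\ref{restriction of bime} I may shrink $S$ to a disc $\Delta$ around $0$; since the locus where $\pi_1$, $\pi_2$ or $f$ behaves badly on fibers is discrete, after a further shrinking $0$ is its only point, and $f$ restricts to a bimeromorphism between general fibers as in Lemma~\ref{obs 1}.

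Next I would build a common resolution adapted to the central fiber. Desingularize $\mathcal{X}$ and run the semi-stable reduction theorem --- a finite base change $w\mapsto w^{n}$ of $\Delta$, followed by normalization, resolution and blow-ups (keeping the convention of denoting the base-changed family again by $\mathcal{X}$, etc.) --- to obtain a semi-stable family $\rho\colon\mathcal{Z}\to\Delta$ with $\mathcal{Z}$ smooth, $Z_0=\sum_j E_j$ a reduced simple normal crossing divisor with smooth components $E_j$, and $Z_t$ a proper modification of the fiber $X_s$ of the original family for $t$ near $0$; apply the same base change after desingularizing $\mathcal{Y}$, producing a smooth family $\mathcal{Y}'\to\Delta$ with $(Y')_t$ a proper modification of $Y_s$, and let $D'$ be the strict transform of $D$, a codimension-one component of the central fiber of $\mathcal{Y}'\to\Delta$ that is a proper modification of $D$. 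Then $f$ induces a bimeromorphic map $f_{\mathcal{Z}}\colon\mathcal{Z}\dashrightarrow\mathcal{Y}'$ over $\Delta$; eliminating its indeterminacy (Lemma~\ref{elimination of inde}) gives $\sigma\colon\mathcal{W}\to\mathcal{Z}$, a locally finite succession of blow-ups with smooth centers, and a bimeromorphic morphism $g\colon\mathcal{W}\to\mathcal{Y}'$ over $\Delta$ with $g=f_{\mathcal{Z}}\circ\sigma$. After one more shrinking, the only $\sigma$-exceptional divisors occurring as components of the special fiber $W_0$ are those with center contained in the fiber over $0$; each is a projective bundle of positive relative dimension over a smooth center of codimension $\ge 2$, hence uniruled. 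By Lemma~\ref{bimeromorphic morphism proper}, $g$ is a proper modification, and as $\mathcal{W}$ and $\mathcal{Y}'$ are smooth its center has codimension $\ge 2$ (\cite[p.~215]{GR84}), so $g$ induces proper modifications on general fibers. Applying Theorem~\ref{new-fiberbime-irredu-singular} to $g$ (source reduced irreducible, target normal, both proper over $\Delta$) and to the codimension-one component $D'$ yields a component $F$ of $W_0$ bimeromorphic to $D'$, hence to $D$, induced by $g$ and thus by $f$. Chasing the diagram, $W_t$ is a proper modification of both $X_s$ and $Y_s$ (with $s$ the point corresponding to $t$), so $P_m(X_s)=P_m(Y_s)=P_m(Z_t)=:p_m$ for every $m$ and general $t$.

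Now the plurigenera hypotheses do the real work. By \eqref{fiberbime1} some component $C_1$ of $(X_0)_{\textrm{red}}$ has $\kappa(C_1)\ge 0$; its strict transform in $\mathcal{Z}$ is a smooth divisor in $Z_0$, so \eqref{fiberbime2} gives $1\le P_{m_0}(C_1)\le p_{m_0}$ for some $m_0$, that is, $\kappa(X_s)\ge 0$. On the other hand, $F\sim_{\mathrm{bir}}D$ and \eqref{fiberbime4} give $P_m(F)=P_m(D)\ge P_m(Y_s)=p_m$ for all $m$, so $P_{m_0}(F)\ge p_{m_0}\ge 1$ and $\kappa(F)\ge 0$; hence $F$ is not uniruled, so it is not a $\sigma$-exceptional component, and it must be the strict transform of some $E_{j_0}\subseteq Z_0$. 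Since every component of $Z_0$ not birational to the strict transform of a component of $(X_0)_{\textrm{red}}$ is an exceptional component of the desingularization, base change, normalization or resolution --- each of which can be arranged to be uniruled (projective bundles, or toric-type fibrations with positive-dimensional fibers over closed strata; cf.\ \cite[Proposition 1.43]{D01} for the resolution of $\mathcal{X}$) --- the component $E_{j_0}$ is birational to a genuine component $C$ of $(X_0)_{\textrm{red}}$. Then $C\sim_{\mathrm{bir}}D$ induced by $f$ and $\kappa(C)=\kappa(F)\ge 0$, and combining \eqref{fiberbime2} for the strict transform of $C$ with the inequalities above forces $P_m(C)=p_m$ for all $m$; this proves \eqref{mainresult1}. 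For \eqref{mainresult2}, suppose a second component $C'\ne C$ had $\kappa(C')\ge 0$; blowing up $\mathcal{Z}$ along the smooth stratum $\widetilde C\cap\widetilde C'$ makes the strict transforms of $\widetilde C$ and $\widetilde C'$ disjoint, and the result is still ``a semi-stable reduction of $\mathcal{X}$ followed by blow-ups'', so \eqref{fiberbime2} applied to the smooth disconnected divisor $\widetilde C\sqcup\widetilde C'$ gives $P_m(C)+P_m(C')\le p_m=P_m(C)$, whence $\kappa(C')=-\infty$, a contradiction; uniqueness of the bimeromorphic component then follows since any such component has Kodaira dimension $\kappa(D)=\kappa(X_s)\ge 0$. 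For \eqref{mainresult3}, if another component $D'$ of $(Y_0)_{\textrm{red}}$ satisfied \eqref{fiberbime4}, the same construction would produce a component $C''$ of $(X_0)_{\textrm{red}}$ with $\kappa(C'')\ge 0$ and $C''\sim_{\mathrm{bir}}D'$ induced by $f$; by \eqref{mainresult2}, $C''=C$, and then $g$ would send the strict transform of $C$ onto the strict transforms of the two distinct components $D$ and $D'$, impossible for a morphism.

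Finally, when every fiber of $\pi_1$ and $\pi_2$ is irreducible and every point of $S$ satisfies \eqref{fiberbime1}\eqref{fiberbime2}\eqref{fiberbime4}, applying \eqref{mainresult1} at each $s\in S$ --- where $(X_s)_{\textrm{red}}$ and $(Y_s)_{\textrm{red}}$ are themselves the unique components --- shows $(X_s)_{\textrm{red}}\sim_{\mathrm{bir}}(Y_s)_{\textrm{red}}$ induced by $f$ for every $s$, i.e.\ $f$ is fiberwise bimeromorphic in the sense of Definition~\ref{fiberbimedef}. I expect the main obstacle to lie in the bookkeeping of the second step: arranging the semi-stable reduction to be simultaneously compatible with $\mathcal{Y}$ and with the bimeromorphic map $f$, tracking multiplicities through the base change so that $\mathcal{Z}$ --- and every further blow-up of it used above --- genuinely qualifies as ``a semi-stable reduction of $\mathcal{X}$ followed by blow-ups'' (which is precisely what makes \eqref{fiberbime2} available), and verifying that all exceptional divisors produced along the way, including those of a resolution of the possibly non-normal base-changed total space, are uniruled --- so that the plurigenera estimate isolates a true component of $X_0$ rather than an artifact of the resolution process.
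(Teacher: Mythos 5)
Your overall scaffolding (shrink the base, desingularize, pass to a semi--stable model, eliminate indeterminacy, apply Theorem \ref{new-fiberbime-irredu-singular} to the resulting bimeromorphic morphism, then use the plurigenera hypotheses) parallels the paper's proof in Section \ref{direct proof-map}. But there is a genuine gap at the decisive step where you identify the component $F$ (with $g(F)$ the strict transform of $D$) with a strict transform of an honest component of $(X_0)_{\textrm{red}}$. You argue that $F$, having $\kappa(F)\geq 0$, is not uniruled, and then claim that every component of $Z_0$ other than strict transforms of components of $(X_0)_{\textrm{red}}$ ``can be arranged to be uniruled'' because it is exceptional for the desingularization, base change, normalization or resolution. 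This is exactly what cannot be arranged here: in Notation \ref{notation 1} the total space $\mathcal{X}$ is only \emph{normal}, and resolving the singularities of $\mathcal{X}$ (or of the normalized base change occurring in semi--stable reduction) can produce exceptional divisors over the central fiber that are not uniruled --- e.g.\ an isolated cone singularity on $X_0$ over a general-type variety, whose resolution has a general-type exceptional divisor. The paper emphasizes precisely this point (the discussion of ``uniruled modifications'' after Corollary \ref{smoothcase uniruled} and Example \ref{exc not uniruled}): the uniruledness control is only available when $\mathcal{X}$ is smooth, which is the setting of Corollary \ref{smoothcase uniruled}, not of Theorem \ref{fiberbime}. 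So as written, your proof of \eqref{mainresult1} does not go through; your closing remark treats this as bookkeeping, but the statement you need is false in general.

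The paper closes this gap with a different use of hypothesis \eqref{fiberbime2}, which you state but never exploit in this way: condition \eqref{fiberbime2} applies to \emph{any} smooth divisor in the central fiber of a semi--stable reduction followed by blow-ups, in particular to exceptional divisors. If $F$ (equivalently, the divisor $E$ dominating $\tilde D$) were \emph{not} a component of the strict transform $Z$ of $X_0$, one takes a semi--stable reduction in which a disjoint smooth divisor dominates (the base change of) $Z+E$, and \eqref{fiberbime2} yields $P_m(Z)+P_m(E)\leq P_m(X_t)=P_m(Y_t)\leq P_m(D)\leq P_m(E)$, hence $P_m(Z)=0$ for all $m$, contradicting $\kappa(X_0)\geq 0$; see \eqref{contradiction-0801} and \eqref{contradiction ine}. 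In other words, exceptional divisors are excluded by plurigenera, not by uniruledness, and this is the whole point of Theorem \ref{fiberbime} as opposed to the smooth-total-space result. You already have all the needed inequalities in hand ($P_m(F)\geq P_m(Y_t)$ and $\kappa$ of some component of $X_0$ nonnegative), so your argument can be repaired by replacing the uniruledness claim with this disjoint-union plurigenera estimate; your treatment of \eqref{mainresult2}, \eqref{mainresult3} and the fiberwise conclusion is then essentially the paper's.
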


\rem {The formulation of  condition \eqref{fiberbime2} in Theorem \ref{fiberbime} appears  somewhat unfamiliar and artificial.   In fact, this condition is naturally satisfied in a classical setting where all the pluricanonical forms on $Z^{\prime}$ can be extended to nearby fibers. For instance, this occurs in the case of a projective (or even Moishezon) family $\mathcal{Z} \to \Delta$ (\cite[Theorem 3.1]{Tk07}).  
Moreover, it is also expected (\cite[Conjecture 2.1]{Si02a}, \cite[Remark 1.2]{Ct07})  to  hold for  K\"ahler  families.}
For related results whose conditions are more natural, we refer the reader to Theorem \ref{fiberbime taka mois} and Corollary \ref{fibe bime canonical singu-kodaira geq 0}, or Lemma \ref{Taka-m-extension}.  Furthermore, note that the plurigenera in the conditions can be replaced by any other bimeromorphic invariant, which can be easily observed in the proof of Theorem \ref{fiberbime}.

\rem Note that Theorem \ref{fiberbime} provides a criterion for a fundamental question in algebraic geometry: Given two families $\pi_1$ and $\pi_2$ over the same smooth curve, one may ask for determining whether there exists a bimeromorphic map between the two families over the base. To answer it for the nonexistence, if the conditions \eqref{fiberbime2} and \eqref{fiberbime4} hold, and if there exist at least two irreducible components $(X_0)_{\textrm{red}}$ with nonnegative Kodaira dimension, then there does not exist an open connected set $U$ near $0$ such that $\pi_1^{-1}(U)$ is bimeromorphic to $\pi_2^{-1}(U)$ over $U$. {Furthermore, by applying Theorem \ref{fiberbime} to the case where 
$f$ is the identity, one reveals another basic phenomenon: Let $\pi:\mathcal{X}\to S$ be a family with the lower semicontinuity property in Theorem \ref{fiberbime}, and assume that an irreducible component of $(X_0)_{\mathrm{red}}$ satisfies the upper semicontinuity property in Theorem \ref{fiberbime}. Then it is impossible for $(X_0)_{\mathrm{red}}$ to contain two irreducible components both of Kodaira dimension at least \(0\).}

\rem In some sense, Theorem \ref{fiberbime} conveys that the deformation behavior of plurigenera entirely determines the fiberwise bimeromorphic structure of a family, as we fix the (biholomorphic structure of) base space and fix the bimeromorphic structure of the total space. In fact, we can even obtain more, in Subsection \ref{ss refinement}, that the deformation behavior of $\ell$-genus (for any fixed $\ell \in \mathbb{N}^+$) entirely determines the fiberwise bimeromorphic structure of a family.
	
A natural corollary of Theorem \ref{fiberbime} is an invariance result of plurigenera. This result basically yields that the semi-continuity of plurigenera implies their continuity in certain contexts.
	
\begin{observation}[Invariance of plurigenera]\label{fiberbime global-cor}
With Notation \ref{notation 1}, we further assume that fibers of both $\pi_1$ and $\pi_2$ are irreducible. We also assume that the Kodaira dimension of $X_t$ is non-negative for any $t \in S$. 
Moreover, for any $s \in S$, we assume:
\begin{enumerate}
\item
For any surjective morphism $\mathcal{Z} \to \mathcal{X}$ over $\Delta$ such that the fiber $Z_t$ of $\mathcal{Z} \to \Delta$ over $t \in \Delta$ is a proper modification of $X_t$ for $t$ near $s$, any smooth divisor (not necessarily connected) $Z^{\prime}$ contained in $Z_s$ satisfies $P_m(Z^{\prime}) \le P_m(Z_t)$ for any $m \in \mathbb{N}^+$ and $t$ near $s$, where $\Delta$ is an open neighborhood of $s$ whose local model is the unit disk in $\mathbb{C}$. 
\item 
 $P_m(Y_s) \geq P_m(Y_t)$ for any $t$ near $s$. 
 \end{enumerate}
 Then $P_m(X_t) = P_m(Y_t)$ is independent of $t \in S$ for any positive integer $m$.
\end{observation}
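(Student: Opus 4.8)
The plan is to deduce this from Theorem \ref{fiberbime}, applied at every base point, and then to turn the resulting fiberwise bimeromorphic invariance of plurigenera into genuine constancy by a semicontinuity argument on the curve $S$.

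First I would check that the three hypotheses of Theorem \ref{fiberbime} hold with base point $0:=s$ for an arbitrary $s\in S$. The assumption $\kappa(X_s)\geq 0$ is condition \eqref{fiberbime1}. Since $Y_s$ is irreducible, $(Y_s)_{\textrm{red}}$ is its only irreducible component, so hypothesis (2) of the Observation reads exactly as: $D:=(Y_s)_{\textrm{red}}$ satisfies $P_m(D)=P_m(Y_s)\geq P_m(Y_t)$ for $t$ near $s$ and all $m\in\mathbb{N}^+$, which is condition \eqref{fiberbime4}. Finally, hypothesis (1), being imposed on \emph{every} surjective morphism $\mathcal{Z}\to\mathcal{X}$ over a disk $\Delta\ni s$ whose general fibers are proper modifications of the $X_t$'s, in particular covers the semi-stable reductions (followed by blow-ups) appearing in condition \eqref{fiberbime2}. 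Hence Theorem \ref{fiberbime} applies at $s$; since all fibers of $\pi_1$ and $\pi_2$ are irreducible, its last assertion gives that $(X_s)_{\textrm{red}}$ is bimeromorphic to $(Y_s)_{\textrm{red}}$, induced by $f$. As $s$ was arbitrary, $f$ is fiberwise bimeromorphic in the sense of Definition \ref{fiberbimedef}, and because $P_m$ is a bimeromorphic invariant with $P_m(X_t)=P_m((X_t)_{\textrm{red}})$, $P_m(Y_t)=P_m((Y_t)_{\textrm{red}})$ (Definition \ref{reducible genera}), this yields $P_m(X_t)=P_m(Y_t)$ for all $t\in S$ and all $m\in\mathbb{N}^+$; I write $p_m(t)$ for this common value.

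It then remains to show each $p_m$ is locally constant, for then it is constant on the connected curve $S$ (Notation \ref{notation 1}). Fix $m$ and $s$. Hypothesis (2), together with the irreducibility of $Y_t$, gives the upper bound $p_m(t)=P_m(Y_t)\leq P_m(Y_s)=p_m(s)$ for $t$ near $s$. For the matching lower bound I would reuse the semi-stable reduction construction already present in the proof of Theorem \ref{fiberbime}: there one produces, over a disk $\Delta\ni s$, a surjective $\mathcal{Z}\to\mathcal{X}$ whose general fiber $Z_t$ is a proper modification of $X_t$ and whose special fiber $Z_s$ is a simple normal crossing divisor containing a smooth component $Z'$ that maps generically finitely and surjectively onto $(X_s)_{\textrm{red}}$; pulling pluricanonical forms back along this map (pullback of sections along a generically finite surjection is injective, and one twists by the effective ramification divisor) gives $P_m(Z')\geq P_m((X_s)_{\textrm{red}})=P_m(X_s)$. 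Now hypothesis (1), applied to this $\mathcal{Z}\to\mathcal{X}$ and to the smooth divisor $Z'\subseteq Z_s$, gives $P_m(Z')\leq P_m(Z_t)$ for $t$ near $s$, while $P_m(Z_t)=P_m(X_t)$ since $Z_t$ is a proper modification of $X_t$. Chaining the inequalities,
$$
p_m(s)=P_m(X_s)\leq P_m(Z')\leq P_m(Z_t)=P_m(X_t)=p_m(t)\qquad\text{for }t\text{ near }s,
$$
and combining with the upper bound forces $p_m$ to be constant near $s$.

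I expect the only step that is not pure bookkeeping to be the one above: producing, inside the semi-stable reduction $\mathcal{Z}\to\mathcal{X}$, a smooth component $Z'$ of $Z_s$ that dominates $(X_s)_{\textrm{red}}$ generically finitely (so that $P_m(Z')\geq P_m((X_s)_{\textrm{red}})$ and hypothesis (1) can be legitimately invoked for it). This is exactly the semi-stable reduction input already used to single out the component $C$ in Theorem \ref{fiberbime}, so no new ingredient is needed beyond matching notation; the rest — combining the two semicontinuity directions with the integrality of $p_m$ and the connectedness of $S$ — is routine.
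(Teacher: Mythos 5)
Your proposal is correct and follows essentially the route the paper intends: the Observation is stated as a direct corollary of Theorem \ref{fiberbime}, whose hypotheses you verify at every $s\in S$ (using that irreducibility of $Y_s$ makes condition (2) exactly condition \eqref{fiberbime4}, and that condition (1) subsumes condition \eqref{fiberbime2}), so fiberwise bimeromorphism gives $P_m(X_t)=P_m(Y_t)$, and constancy then follows by pairing the upper semicontinuity from condition (2) with the lower semicontinuity extracted from condition (1) (via a modification whose special fiber has a smooth divisor dominating $(X_s)_{\textrm{red}}$, exactly as in the paper's own use of \cite[Lemma 6.3]{Ue75} and semi-stable reduction in Section \ref{direct proof-map}) and the connectedness of $S$. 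The only cosmetic difference is that condition (1) allows the simpler choice of a resolution in which the strict transform of $X_s$ is smooth, giving $P_m(Z')=P_m(X_s)$ directly, rather than routing through a generically finite domination.
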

		
\subsection{Refinements or complements of Theorem \ref{fiberbime}} \label{ss refinement}
From the proof of Theorem \ref{fiberbime}, it is clear that the conditions for the sequence $\{P_m\}_{m \in \mathbb{N}}$ can be relaxed, as demonstrated by the following three results. Notably, the genus conditions in these results involve only a single \(P_{\ell}\) for any fixed \(\ell \in \mathbb{N}\). In particular, Theorem \ref{a genus case} roughly provides the relationship diagram:
\begin{center}
		deformation behavior of \(1\)-genus \(\Rightarrow\) fiberwise bimeromorphism.
\end{center}
	
Since the proofs of the next three results closely resemble that of Theorem \ref{fiberbime}, we will omit the detailed explanations and instead provide a proof sketch at the end of this subsection.
	
\begin{thm} \label{a genus case}
With Notation \ref{notation 1}, assume that for some \(s \in S\) there exists an \(\ell \in \mathbb{Z}^+\) (depending on \(s\)) with the properties:
\begin{enumerate}[\rm{(}1\rm{)}]
			\item \label{ellgenus-fiberbime1-global}\(P_{\ell}(X_s) \geq 1\);
			\item \label{ellgenus-fiberbime2-global} {Lower semi-continuity:} For any semi-stable reduction \(\mathcal{Z} \to \mathcal{X}\) over an open neighborhood \(\Delta\) of \(s\), with the local model being the unit disk in \(\mathbb{C}\), followed by some blow-ups, such that the fiber \(Z_t\) of \(\mathcal{Z} \to \Delta\) over \(t \in \Delta\) is a proper modification of \(X_t\) for \(t\)  $(\neq s)$ near \(s\), any smooth divisor \(Z'\) (not necessarily connected) contained in \(Z_s\) satisfies \(P_{\ell}(Z') \leq P_{\ell}(Z_t)\) for any \(t\) near \(s\);
			\item \label{ellgenus-fiberbime4-global}{Upper semi-continuity:} There exists a reduced and irreducible component \(D_s\) of \((Y_s)_{\textrm{red}}\) with \(P_{\ell}(D_s) \geq P_{\ell}(Y_t)\) for \(t\) $(\neq s)$ near \(s\).
		\end{enumerate}
		Then, we have the conclusions:
		\begin{enumerate}[\rm{(}i\rm{)}]
			\item There exists an irreducible component \(C_s\) of \((X_s)_{\textrm{red}}\) such that \(P_{\ell}(C_s) \geq 1\) and \(C_s\) is bimeromorphic to \(D_s\), induced by \(f\);
			\item Any other irreducible component \(C'_s\) of \((X_s)_{\textrm{red}}\) cannot satisfy \(P_{\ell}(C'_s) \geq 1\). In particular,  $C_s$ is the unique irreducible component that is bimeromorphic to $D_s$;
			\item Any other irreducible component \(D'_s\) of \((Y_s)_{\textrm{red}}\) cannot satisfy the condition \eqref{ellgenus-fiberbime4-global} for \(D_s\).
\end{enumerate}
In particular, if each fiber of \(\pi_1\) and \(\pi_2\) is  irreducible and each point of \(S\) 
satisfies the conditions identical to \eqref{ellgenus-fiberbime1-global} \eqref{ellgenus-fiberbime2-global} \eqref{ellgenus-fiberbime4-global}  for $s\in S$, then \(f\) is fiberwise bimeromorphic in the sense of Definition \ref{fiberbimedef}.
\end{thm}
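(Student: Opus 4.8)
The plan is to run the proof of Theorem \ref{fiberbime} with the single fixed exponent $\ell$ from condition \eqref{ellgenus-fiberbime1-global} substituted for the whole sequence $\{P_m\}_{m\in\mathbb{N}^+}$; since that proof never compares $m$-genera for more than one $m$ at a time, no new idea is needed, only the bookkeeping must be carried out with $P_\ell$. First I would localize: by Lemma \ref{restriction of bime} we may shrink $S$ around $s$ to a coordinate disk $\Delta\ni s=0$ on which $f$ stays bimeromorphic and $0$ is the only point meeting a singular fibre in the construction below. Then I desingularize $\mathcal{X}$, shrink, and apply the semi-stable reduction theorem together with some further blow-ups to obtain a smooth model $\rho\colon\mathcal{Z}\to\mathcal{X}$ over $\Delta$ (after a finite base change $t\mapsto t^k$, again called $\Delta$ per our convention) whose central fibre $Z_0$ is a reduced simple normal crossings divisor and with $Z_t$ a proper modification of $X_t$ for $0\neq t$ near $0$; this is exactly the situation governed by condition \eqref{ellgenus-fiberbime2-global}, now taken with $\mathcal{Z}$ and its possible blow-ups as the model.

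On the $\mathcal{Y}$-side, I take a desingularization $\mathcal{Y}_1\to\mathcal{Y}$ inducing proper modifications on the reductions of the fibres over $t\neq 0$ (possible after shrinking, since those fibres are of codimension $1$), and let $\tilde D_s\subseteq\mathcal{Y}_1$ be the strict transform of $D_s$: it is a codimension-$1$ component of the central fibre of $\mathcal{Y}_1\to\Delta$ with $P_\ell(\tilde D_s)=P_\ell(D_s)$. The induced bimeromorphic map $\mathcal{Z}\dashrightarrow\mathcal{Y}_1$ over $\Delta$ is then resolved by Lemma \ref{elimination of inde}: there are blow-ups $\mathcal{Z}'\to\mathcal{Z}$ with smooth centres and a bimeromorphic morphism $h\colon\mathcal{Z}'\to\mathcal{Y}_1$ over $\Delta$, with $\mathcal{Z}'$ smooth and irreducible. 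Applying Theorem \ref{new-fiberbime-irredu-singular} to $h$ and the codimension-$1$ component $\tilde D_s$ of its central fibre produces an irreducible component $F$ of $(\mathcal{Z}'_0)_{\mathrm{red}}$ that is bimeromorphic to $\tilde D_s$, hence to $D_s$, with the bimeromorphism induced by $f$; in particular $P_\ell(F)=P_\ell(D_s)$.

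Next, by blowing up $\mathcal{Z}'$ further along the smooth normal-crossings intersections of $F$ with the strict transforms of the other components of its central fibre, I arrange that $F$ (or rather its final strict transform, still a central-fibre component bimeromorphic to $D_s$ induced by $f$ — strict transforms under blow-ups are proper modifications and $P_\ell$ is a bimeromorphic invariant) is disjoint from the strict transforms $G_1,\dots,G_r$ of all components of $(X_0)_{\mathrm{red}}$ whose strict transform is not $F$; condition \eqref{ellgenus-fiberbime2-global} is phrased for a semi-stable reduction \emph{followed by some blow-ups} precisely to legitimize this. Applying \eqref{ellgenus-fiberbime2-global} to the smooth divisor $F\sqcup G_i$ and then to $F$ alone gives $P_\ell(F)+P_\ell(G_i)\le P_\ell(\mathcal{Z}'_t)$ and $P_\ell(F)\le P_\ell(\mathcal{Z}'_t)$ for $t$ near $0$; since $P_\ell(\mathcal{Z}'_t)=P_\ell(X_t)=P_\ell(Y_t)$ ($\mathcal{Z}'_t$ being a proper modification of $X_t$, and $X_t\sim_{\mathrm{bir}}Y_t$ for general $t$ by Lemma \ref{obs 1}) and $P_\ell(D_s)\ge P_\ell(Y_t)$ by \eqref{ellgenus-fiberbime4-global}, all these quantities coincide and $P_\ell(G_i)=0$ for every $i$. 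Thus every component of $(X_0)_{\mathrm{red}}$ whose strict transform is not $F$ has vanishing $\ell$-genus; as $P_\ell(X_0)=\sum_i P_\ell((X_0)_i)\ge 1$ by \eqref{ellgenus-fiberbime1-global} (Definition \ref{reducible genera}) and distinct components have distinct strict transforms, there is a unique component $C_s$ of $(X_0)_{\mathrm{red}}$ with $P_\ell(C_s)\ge 1$, its strict transform must be $F$, and $P_\ell(C_s)=P_\ell(F)=P_\ell(D_s)\ge 1$ with $C_s$ bimeromorphic to $D_s$ induced by $f$. This is conclusion (i); (ii) is the vanishing just established, plus the observation that a second component bimeromorphic to $D_s$ would have $P_\ell\ge 1$ and hence equal $C_s$; and (iii) follows because if some $D'_s\ne D_s$ satisfied \eqref{ellgenus-fiberbime4-global}, running the argument for $D'_s$ would give a $P_\ell$-positive component of $(X_s)_{\mathrm{red}}$ bimeromorphic to $D'_s$ induced by $f$, which by (ii) equals $C_s$, so that $f$ would send a general point of $C_s$ into both $D_s$ and $D'_s$ — impossible, as $f(C_s)\subseteq Y_s$ is a single irreducible set by Lemma \ref{preserve fiber}. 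Under the global hypotheses each fibre is irreducible, so $(Y_s)_{\mathrm{red}}=D_s$ and $(X_s)_{\mathrm{red}}=C_s$ for every $s\in S$, and (i) gives a bimeromorphism $(X_s)_{\mathrm{red}}\to(Y_s)_{\mathrm{red}}$ induced by $f$ for each $s$; i.e. $f$ is fiberwise bimeromorphic in the sense of Definition \ref{fiberbimedef}.

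The hard part, I expect, is controlling the divisors created during the elimination of indeterminacy of $\mathcal{Z}\dashrightarrow\mathcal{Y}_1$: a priori one of them could be the component that Theorem \ref{new-fiberbime-irredu-singular} returns as bimeromorphic to $D_s$, and the only tool to exclude this is the $\ell$-genus inequality \eqref{ellgenus-fiberbime2-global}, which must be activated via the separation blow-ups so that it applies to an honest smooth divisor $F\sqcup G_i$. Verifying that those extra blow-ups preserve everything used (that $F$ remains a central-fibre component bimeromorphic to $D_s$, that the $G_i$ remain proper modifications of the components of $X_0$), together with the routine but delicate bookkeeping around the semi-stable reduction — the base change $t\mapsto t^k$, the possible non-reducedness of the base-changed central fibre, and strict transforms of components that may sit inside a desingularization centre — is where the care goes; all of it is already handled in the proof of Theorem \ref{fiberbime}.
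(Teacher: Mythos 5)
Your top-level strategy agrees with the paper's: its entire proof of this theorem consists of rerunning the proof of Theorem \ref{fiberbime} with the single exponent $\ell$ in place of the sequence $\{P_m\}$, i.e.\ replacing $m$ by $\ell$ in \eqref{contradiction-0801} (or \eqref{contradiction ine}). However, your reconstruction of that proof has a genuine gap in the order of operations. You perform the semi-stable reduction, with its finite base change $t\mapsto t^k$, on the $\mathcal{X}$-side at the very start, and only afterwards speak of ``the induced bimeromorphic map $\mathcal{Z}\dashrightarrow\mathcal{Y}_1$ over $\Delta$'' to which you apply Lemma \ref{elimination of inde} and Theorem \ref{new-fiberbime-irredu-singular}. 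If $\mathcal{Y}_1$ is \emph{not} base changed, no such bimeromorphic map over a common base exists: the composite $\mathcal{Z}\to\mathcal{X}\times_\Delta\Delta'\to\mathcal{X}\dashrightarrow\mathcal{Y}$ is generically $k\!:\!1$ onto $\mathcal{Y}$, not bimeromorphic. If $\mathcal{Y}_1$ is base changed as well, then the fibre product need not be normal (normality of the target is a hypothesis of Theorem \ref{new-fiberbime-irredu-singular}), and, more seriously, after normalization the components of the new central fibre lying over a component of multiplicity $>1$ are in general nontrivial finite covers of it (the multiple-fibre/logarithmic-transform phenomenon). So the component $F$ your argument produces is, in general, only bimeromorphic to a finite cover of $D_s$, and is itself only a finite cover of the component $C_s$ of $(X_s)_{\textrm{red}}$ it dominates; the key conclusion of (i), that $C_s$ is bimeromorphic to $D_s$ \emph{induced by $f$}, does not follow, and your equalities $P_\ell(F)=P_\ell(D_s)$ and $P_\ell(G_i)=P_\ell((X_0)_i)$ are in general only one-sided inequalities coming from domination. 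This is not mere bookkeeping ``already handled in the proof of Theorem \ref{fiberbime}'': the paper's proof avoids the problem precisely by a different arrangement.

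The repair is the paper's ordering. First construct, over the original base $S$ and with no base change, a bimeromorphic morphism between smooth models (desingularize both sides, eliminate indeterminacy, and either pass to a common blow-up via the local Chow Lemma \ref{local version of chow} as in Subsection \ref{quick proof}, or argue directly as in Subsection \ref{long proof}); there, identify over $S$ the component bimeromorphic to the strict transform of $D_s$ (this is where Theorem \ref{new-fiberbime-irredu-singular} is legitimately used, and where your codimension-$\geq 2$ centre and strict-transform considerations belong). Only afterwards invoke semi-stable reduction, and use it purely numerically, as in Claim \ref{claim-one element}: choose a disjoint union of smooth components of the new central fibre that \emph{dominates} the base change of $Z_{j_0}+E$, so that condition (2) together with $P_\ell(\text{dominating})\ge P_\ell(\text{dominated})$ yields $P_\ell(E)+P_\ell(Z_{j_0})\le P_\ell(X_t)\le P_\ell(D_s)=P_\ell(E)$, i.e.\ \eqref{contradiction-0801} with $m$ replaced by $\ell$; this one-directional use of the base-changed model is insensitive to the covering issue, and your separation blow-ups become unnecessary. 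With that correction, your deductions of (ii), (iii) and of the global fiberwise statement (using Lemma \ref{obs 1} and Lemma \ref{preserve fiber}) do match the paper's.
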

	
\begin{proof}
One only needs to replace \(m\) in \eqref{contradiction-0801} (or \eqref{contradiction ine}) by \(\ell\) in the condition of Theorem \ref{a genus case}, and one can still derive that \(P_{\ell}(Z) = 0\), which contradicts the condition that there is a component of \(X_0\) whose \(\ell\)-genus \(P_{\ell} \geq 1\).	
\end{proof}
\begin{rem}
By Theorem \ref{a genus case}, the deformation behavior of \(1\)-genus can help characterize the fiberwise bimeromorphism. As extending canonical forms is generally easier than extending pluricanonical forms, Theorem \ref{a genus case} has greater potential for application than Theorem \ref{fiberbime}.
\end{rem}
	
\section{Proof of Theorem \ref{fiberbime}}\label{direct proof-map}

Since a bimeromorphic map between two families over the same base remains a bimeromorphic map over any small connected open subset of the base by Lemma  \ref{restriction of bime}, we can assume from now on that $S$ is the unit disk in $\mathbb{C}$.
Since $\mathcal{Y}$ is irreducible, we take a desingularization $\mathcal{Y}_1 \to \mathcal{Y}$ (e.g., \cite[Theorem 5.4.2]{AHV18} or \cite[Theorem 2.12]{Ue75}) such that $\mathcal{Y}_1$ is a connected complex manifold. Given that the fibers of $\mathcal{Y} \to \Delta$ are of codimension $1$, we assume that this desingularization induces a proper modification on the reduction of each fiber over $t \neq 0$, after shrinking the disk $\Delta$ to a smaller disk $\Delta_1$.  
\footnote{Throughout this proof, the term ``disk" refers to a neighborhood of the origin in $\mathbb{C}$ unless explicitly stated otherwise.}  
	
Denote by $D_1$ the strict transform of $D$ under $\mathcal{Y}_1 \to \mathcal{Y}$, whose well-definedness is ensured by the fact that the codimension of the smooth part of $\mathcal{Y}$ is no less than $2$. Using the generic smoothness theorem, we further shrink $\Delta_1$ to a smaller disk $\Delta_2$ to ensure that the central fiber (over $t=0$) is the only possibly singular fiber of the family $\mathcal{Y}_1 \to \Delta_2$. By performing a series of blow-ups, we obtain a proper modification 
	\[
	\mathcal{Y}_2 \to \mathcal{Y}_1,
	\] 
which induces a modification on each fiber over $t \neq 0$ and ensures that both $\mathcal{Y}_2$ and  the strict transform of the reduction of the  central fiber of $\mathcal{Y}_1 \to \Delta_2$ under $\mathcal{Y}_2 \to \mathcal{Y}_1$ are smooth (e.g., \cite[p. xxiv, Theorem 6]{AHV18}). In particular, the strict transform $D_2$ (whose well-definedness is ensured by \cite[p. xxiv, Theorem 6]{AHV18}) of $D_1$ under $\mathcal{Y}_2 \to \mathcal{Y}_1$ is smooth.
	
Similarly, let $\mathcal{X}_1 \to \mathcal{X}$ be the desingularization of $\mathcal{X}$ such that it induces a proper modification on the reduction of each fiber over $t \neq 0$, after shrinking the disk $\Delta_2$ to a smaller disk $\Delta_3$.  As before, due to the codimension reason of the singular locus of $\mathcal{X}$, the strict transform of $X_0$ is well-defined.
	
Next, we perform a sequence of operations, as illustrated in the  commutative diagram
\begin{center}
\begin{tikzpicture}
\node (X1) at (4,4) {$\mathcal{X}_1$};
\node (Y2) at (8,4) {$\mathcal{Y}_2$};
\node (Deltai) at (6,2) {$\Delta_i$};
\node (X3) at (8,6) {$\mathcal{X}_3$};
\node (X2) at (4,6) {$\mathcal{X}_2$};
\draw[dashed,->] (X1) -- (Y2)  node[midway, below]{$f_1$};
\draw[->] (X1) --    (Deltai) node[midway, above right=1pt]{};
\draw[->] (Y2) -- (Deltai)    node[midway, below right=1pt]{};
\draw[->] (X3) --    (X2) node[midway, below left=1pt]{};	
\draw[->] (X2) --    (X1) node[midway,  left=0.5pt]{$h$};
\draw[->] (X2) --    (Y2) node[midway, above left=0.5pt]{$f_2$};
\draw[->] (X3) --    (Y2) node[midway, below left=1pt]{};	
\end{tikzpicture}.
\end{center}
	
Clearly, there exists a bimeromorphic map $f_1: \mathcal{X}_1 \dashrightarrow \mathcal{Y}_2$. By Lemma \ref{elimination of inde}, we resolve the indeterminacy (its codimension is no less than $2$) of $f_1$ by taking a proper modification $h: \mathcal{X}_2 \to \mathcal{X}_1$, which is a locally finite succession of blow-ups with smooth centers. This gives a composed map $f_2 := f_1 \circ h: \mathcal{X}_2 \to \mathcal{Y}_2$, which is a bimeromorphic morphism.

    It follows from the construction in Lemma \ref{elimination of inde} and the smoothness of   $\mathcal{X}_1$ that $\mathcal{X}_2$ is  also smooth. Furthermore, we assume that $h$ induces a modification on each fiber over $t \neq 0$ after shrinking the disk $\Delta_3$ to a smaller disk $\Delta_4$. By the generic smoothness theorem, after further shrinking $\Delta_4$ to $\Delta_5$, we assume that the central fiber of $\mathcal{X}_2 \to \Delta_5$ is the only possibly singular fiber. Proceeding as before, a finite series of blow-ups gives a proper modification 
	\[
	\mathcal{X}_3 \to \mathcal{X}_2,
	\] 
inducing a proper modification on each fiber over $t \neq 0$ and ensuring that both $\mathcal{X}_3$ and the strict transform (as before, it is well-defined) of the reduction of the central fiber of $\mathcal{X}_2 \to \Delta_5$ under $\mathcal{X}_3 \to \mathcal{X}_2$ are smooth. Clearly, the reduction of the final strict transform $Z$ of $X_0$ under the natural composed morphism $\mathcal{X}_3 \to \mathcal{X}$ is smooth, and the composition $\mathcal{X}_3 \to \mathcal{X}_2 \to \mathcal{Y}_2$ is a bimeromorphic morphism.
	
For convenience, we adopt the notational conventions: $$\mathcal{V} := \mathcal{X}_3, \mathcal{W} := \mathcal{Y}_2, S := \Delta_5, \tilde{D} := D_2,$$ and denote by 
\begin{equation}\label{gVW}
g: \mathcal{V} \to \mathcal{W}
\end{equation}
the composition $\mathcal{X}_3 \to \mathcal{X}_2 \to \mathcal{Y}_2$. Denote by $V_t$ the fiber of $\mathcal{V} \to S$ over $t \in S$, and $W_t$ the fiber of $\mathcal{W} \to S$ over $t \in S$. From the above analysis, $g$ is a bimeromorphic morphism between connected complex manifolds $\mathcal{V}$ and $\mathcal{W}$. The reduction of the final strict transform $Z$ of $X_0$ under $\mathcal{V} \to \mathcal{X}$ is a disjoint union of complex manifolds in $V_0$, while the strict transform $\tilde{D}$ of $D$ under $\mathcal{Y}_2 \to \mathcal{Y}$ is a complex manifold contained in $W_0$. Furthermore, for $t \neq 0$, we have $P_m(V_t) = P_m(X_t)$, $P_m(W_t) = P_m(Y_t)$, $P_m(Z) = P_m(X_0)$ (Definition \ref{reducible genera}), and $P_m(\tilde{D}) = P_m(D)$.
	
\subsection{A quick proof of Theorem \ref{fiberbime}}\label{quick proof}
In this subsection, we provide a concise proof of Theorem \ref{fiberbime}, relying primarily on (the local version of) Hironaka's Chow Lemma \ref{local version of chow}. This lemma asserts that the source and target of a proper modification can be lifted to a common space through respective blow-ups.

Applying  Lemma \ref{local version of chow} to  the proper modification $g$, we obtain that,  after further shrinking the disk to  $S_1$, we can construct a complex analytic space $\mathcal{U}$, which serves as both the blow-up of $\mathcal{V}$ via $h$ and the blow-up of $\mathcal{W}$ via $\ell$
	\begin{center}
		\begin{tikzpicture}
			\node (V) at (0,2) {$\mathcal{V}$};
			\node (W) at (2,2) {$\mathcal{W}$};
			\node (U) at (1,3) {$\mathcal{U}$};
			\draw[->] (V) -- (W) node[midway, below]{$g$};
			\draw[->] (U) -- (V) node[midway, above left]{$h$};
			\draw[->] (U) -- (W) node[midway, above right]{$\ell$};
		\end{tikzpicture}.
	\end{center}
	
Since $\mathcal{V}$ and $\mathcal{W}$ are smooth, subvarieties of codimension one in each are Cartier. Thus, we may assume that the centers of $\ell$ and $h$ have codimension at least two. This assumption can also be directly justified by \cite[p. 215]{GR84}.
	
Let the strict transform of the reduction of $Z$ under $h$ be $Z_1 + Z_2 + \cdots + Z_k$, where $k$ is a positive integer and $Z_i$ are irreducible divisors. Denote by $E$ the strict transform of $\tilde{D}$ under $\ell$. Note that both $E$ and $Z_1 + Z_2 + \cdots + Z_k$ are contained in the central fiber $U_0$ of $\mathcal{U} \to S_1$ over $0$.
	
Set $\{Z_j\}_{j \in J}$ as the set of $Z_j$ with $\kappa(Z_j) \geq 0$, where $j \in \{1, \dots, k\}$. Let $j_0$ be an element of $J$. We now present Claim \ref{claim-one element}. If this claim holds, the strict transform of $Z_{j_0}$ under the natural map $\mathcal{X} \dashrightarrow \mathcal{U}$ corresponds to the desired $C$ in \eqref{mainresult1} of Theorem \ref{fiberbime}. Furthermore, this claim also completes the proof of \eqref{mainresult2} of Theorem \ref{fiberbime}, as follows: Condition \eqref{fiberbime1} and the lower semi-continuity condition \eqref{fiberbime2} imply that the Kodaira dimension of the general fiber of $\pi_1$ is no less than $0$,  which then implies $\kappa(D)\geq 0$ by the upper semi-continuity condition \eqref{fiberbime4} and Lemma \ref{obs 1}.
In particular, no irreducible component of $(X_0)_{\rm{red}}$ other than $C$  can be  bimeromorphic to $D$.

\begin{claim}\label{claim-one element}
$Z_{j_0}$ must coincide with $E$. In particular,  $J$ contains only one element and  the strict transform of $Z_{j_0}$ under $h^{-1}$ is bimeromorphic to $\tilde{D}$,  induced by $g$.
\end{claim}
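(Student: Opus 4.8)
The plan is to argue by contradiction: I assume $Z_{j_0}\neq E$ as analytic subsets of $\mathcal{U}$ and extract a violation of the lower semi-continuity hypothesis \eqref{fiberbime2}. The first thing to pin down is that $Z_{j_0}$ and $E$ are genuine, distinct irreducible components of the central fiber $U_0$ of $\mathcal{U}\to S_1$. Indeed $E$ is the strict transform of the codimension-one component $\tilde{D}\subseteq W_0$ under the blow-up $\ell$, whose center has codimension $\geq 2$; hence $E$ is irreducible, of codimension one in $\mathcal{U}$, contained in $\ell^{-1}(W_0)=U_0$, and bimeromorphic via $\ell$ to $\tilde{D}$, so that $P_m(E)=P_m(\tilde{D})=P_m(D)$ for all $m$. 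Similarly $Z_{j_0}$, being a component of the strict transform under $h$ of the smooth divisor $Z_{\textrm{red}}\subseteq V_0$ (and $h$ has center of codimension $\geq 2$), is a component of $U_0$ bimeromorphic via $h$ to a component $\Theta$ of $Z_{\textrm{red}}$, which is itself the strict transform of a component of $(X_0)_{\textrm{red}}$ under $\mathcal{V}\to\mathcal{X}$.

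Next I would apply the semi-stable reduction theorem to the smooth family $\mathcal{U}\to S_1$: after a base change $t\mapsto t^n$ and shrinking the disk (renamed as in our conventions), this produces a semi-stable family $\rho:\mathcal{Z}\to\Delta$ together with a morphism $\mathcal{Z}\to\mathcal{U}$ which is the composition of the base change with a locally finite sequence of blow-ups. Composing with the proper modification $\mathcal{U}\to\mathcal{X}$ exhibits $\mathcal{Z}$ as a semi-stable reduction of $\mathcal{X}$ followed by blow-ups, with $\rho^{-1}(t)$ a proper modification of $X_t$ for $0\neq t$ near $0$ --- precisely the configuration assumed in \eqref{fiberbime2}. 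Inside the reduced simple-normal-crossing central fiber $\mathcal{Z}_0$ I would single out a component $\hat{Z}_{j_0}$ lying over $Z_{j_0}$ and a component $\hat{E}$ lying over $E$ (components of the respective strict transforms); each is smooth and generically finite, dominant over $Z_{j_0}$ resp.\ over $E$, so the ramification formula $K=(\textrm{pullback of }K)+R$ with $R\geq 0$ effective gives $P_m(\hat{Z}_{j_0})\geq P_m(Z_{j_0})$ and $P_m(\hat{E})\geq P_m(E)$ for every $m$. Since $Z_{j_0}\neq E$, the intersection $\hat{Z}_{j_0}\cap\hat{E}$ has codimension $\geq 2$ in $\mathcal{Z}$, so after finitely many further blow-ups along an embedded resolution of it I may assume $\hat{Z}_{j_0}$ and $\hat{E}$ are \emph{disjoint} smooth divisors contained in $\mathcal{Z}_0$, still generically finite over $Z_{j_0}$ and $E$.

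Then I would take $Z':=\hat{Z}_{j_0}\sqcup\hat{E}$, a smooth (not necessarily connected) divisor contained in the central fiber of $\mathcal{Z}\to\Delta$, so that \eqref{fiberbime2} applies and yields, for $0\neq t$ near $0$ and every $m\in\mathbb{N}^+$,
\[
P_m(Z_{j_0})+P_m(D)\;\leq\;P_m(\hat{Z}_{j_0})+P_m(\hat{E})\;=\;P_m(Z')\;\leq\;P_m(Z_t)\;=\;P_m(X_t),
\]
the last equality by bimeromorphic invariance of plurigenera, since $Z_t$ is a proper modification of $X_t$. Combining Lemma \ref{obs 1} (after shrinking so that $X_t$ is bimeromorphic to $Y_t$ for $0\neq t$ near $0$) with the upper semi-continuity hypothesis \eqref{fiberbime4} gives $P_m(X_t)=P_m(Y_t)\leq P_m(D)$, and substituting forces $P_m(Z_{j_0})\leq 0$, i.e.\ $P_m(Z_{j_0})=0$ for all $m$ --- contradicting $\kappa(Z_{j_0})\geq 0$, the defining property of $j_0\in J$. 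Hence $Z_{j_0}=E$. The identical argument applied to any other $j_1\in J$ forces $Z_{j_1}=E=Z_{j_0}$, so $J=\{j_0\}$; and from $Z_{j_0}=E$ together with $\ell=g\circ h$ one gets $g(\Theta)=g(h(Z_{j_0}))=\ell(Z_{j_0})=\ell(E)=\tilde{D}$, while $Z_{j_0}$ is bimeromorphic to $\Theta$ via $h$ and to $\tilde{D}$ via $\ell$; thus $\Theta$, the strict transform of $Z_{j_0}$ under $h^{-1}$, is bimeromorphic to $\tilde{D}$, induced by $g$, as asserted.

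The step I expect to be the main obstacle is the construction in the second paragraph: verifying that the semi-stable model $\mathcal{Z}$ really does fall within the scope of hypothesis \eqref{fiberbime2}, and that the distinguished components $Z_{j_0}$ and $E$ of $U_0$ persist --- after the base change and the various blow-ups --- as \emph{disjoint smooth} divisors in the \emph{reduced} central fiber with plurigenera dominating those of $Z_{j_0}$ and $E$. Two points need a little care: the base change built into semi-stable reduction can replace a fiber component by a finite (cyclic) cover of it, which is harmless here because finite covers only increase plurigenera (via the ramification formula); and the separation of $\hat{Z}_{j_0}$ from $\hat{E}$ by blowing up their intersection is legitimate precisely because the hypothesis $Z_{j_0}\neq E$ makes that intersection have codimension $\geq 2$. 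Once these are secured, the plurigenus bookkeeping in the third paragraph is routine.
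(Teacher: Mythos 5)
Your proposal is correct and takes essentially the same route as the paper: argue by contradiction, pass to a semi-stable reduction of $\mathcal{U}$ followed by blow-ups so that disjoint smooth components of the new central fiber dominate (the base change of) $Z_{j_0}$ and $E$, apply the lower semi-continuity hypothesis \eqref{fiberbime2} to their disjoint union, and combine with \eqref{fiberbime4} to force $P_m(Z_{j_0})=0$, contradicting $\kappa(Z_{j_0})\geq 0$; your extra details on separating the two components and on plurigenera under generically finite covers merely make explicit what the paper asserts can be arranged. The one slip is calling $\mathcal{U}\to S_1$ a smooth family: $\mathcal{U}$ produced by the local Chow lemma may be singular, so (as in the paper's proof) one first desingularizes $\mathcal{U}$, observing that $Z_{j_0}$ and $E$ are not contained in its singular locus because the centers of $h$ and $\ell$ have codimension at least two; this does not affect the rest of your argument.
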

	
\begin{proof}
Assume, for the sake of contradiction, that $E$ is not $Z_{j_0}$. Consider a desingularization $\tilde{\mathcal{U}} \to \mathcal{U}$. Note that the singular locus of $\mathcal{U}$ does not cover any fiber $U_t$ for $t \neq 0$, nor does it cover any of $Z_1, Z_2, \dots, Z_k$, or $E$, due to the codimension of the centers of $h$ and $\ell$. Here, $U_t$ denotes the fiber of $\mathcal{U} \to S_2$ over $t \in S_2$, where $S_2$ is a smaller disk such that no exceptional divisors of $h$ and $\ell$ lie on $U_t$ for any nonzero $t\in S_2$. For convenience, we still denote $\tilde{\mathcal{U}}$ by $\mathcal{U}$, and the strict transforms of $Z_{j_0}$ and $E$ under $\tilde{\mathcal{U}} \to \mathcal{U}$  still by $Z_{j_0}$ and $E$, respectively.
		
For the $1$-parameter degeneration $\mathcal{U}\to  S_2$, we perform a semi-stable reduction followed by certain blowups, and we denote by $b: S_2^{\prime}\to S_2$  the corresponding finite morphism and  $\mathcal{Z}\to S_2^{\prime}$  the resulting new morphism (see \cite[Chapter II]{KKMS73} or \cite[Section 11.2.2]{PS08}).  Note that we can choose the blowups  such that  the resulting morphism 
$\mathcal{Z}  \to S_2^{\prime}$ satisfies:
\begin{itemize}
			\item For any nonzero $t\in S_2^{\prime}$, the fibers $Z_t$ of $\mathcal{Z} \to S_2^{\prime}$ over $t$ are proper modifications of $X_{b(t)}$.
			\item 
        There exists a disjoint union $Z^{\prime}$ of smooth irreducible components of $Z_0$ that dominates the base change of $Z_{j_0}+E$ induced by $b$. In particular, $P_m(Z^{\prime})\geq P_m(Z_{j_0})+P_m(E)$, based on the basic fact that base change does not change the fiber (e.g., \cite[p. 29]{Fs76}).      
\end{itemize}
Then by Condition \eqref{fiberbime2} of Theorem \ref{fiberbime}, we conclude that $$P_m(E) + P_m(Z_{j_0}) \leq P_m(X_t)$$ for any positive integer $m$ and for any general $t\in S_2$. Consequently, for any nonzero $t\in S_2$ near $0$, 
\begin{equation}\label{contradiction-0801}
P_m(E) + P_m(Z_{j_0}) \leq P_m(X_t) = P_m(U_t) = P_m(Y_t) \leq P_m(D) = P_m(E),
\end{equation}
where $P_m(Y_t) \leq P_m(D)$ is the given condition \eqref{fiberbime4}.	Hence, $P_m(Z_{j_0}) = 0$ for all positive integers $m$, contradicting the assumption $\kappa(Z_{j_0}) \geq 0$. Therefore, $Z_{j_0}$ must coincide with $E$. This completes the proof of this claim.
\end{proof}
	
For any other irreducible component $D' \neq D$ of $(Y_0)_{\mathrm{red}}$ with \eqref{fiberbime4} in Theorem \ref{fiberbime}, its strict transform under the bimeromorphic morphism $\mathcal{U} \to \mathcal{W} \to \mathcal{Y}_1 \to \mathcal{Y}$ must also be the unique $Z_{j_0}$. This contradicts the basic fact that $D'$ and $D$ have distinct strict transforms under $\mathcal{U} \to \mathcal{W} \to \mathcal{Y}_1 \to \mathcal{Y}$. The proof of \eqref{mainresult3} in Theorem \ref{fiberbime} is now complete, and Theorem \ref{fiberbime} is thus proved.
	
\subsection{An elementary approach for the existence of \(C\) in \eqref{mainresult1} of Theorem \ref{fiberbime}}\label{long proof}
	
The quick proof presented in the preceding subsection relies primarily on the strength of the local version of the Chow lemma. In this subsection, we propose an alternative proof for the existence of \(C\) in \eqref{mainresult1} of Theorem \ref{fiberbime}. This proof is, in some sense, more straightforward and elementary (it relies on the argument of Theorem \ref{new-fiberbime-irredu-singular}, which essentially stems from an elementary analysis of the ramification divisor), and it is of independent interest, as it illustrates how to rule out exceptional divisors.
	
The first part of the proof is identical to the portion between the titles of Section \ref{direct proof-map} and Subsection \ref{quick proof}, and we adopt the same notations. The remaining part is devoted to proving the existence of \(C\) in \eqref{mainresult1} of Theorem \ref{fiberbime} and divided into two steps.

\noindent
\textbf{Step 1.} {Exceptional divisors of \(\mathcal{V} \to \mathcal{X}\) cannot be mapped by \(g\) onto \(\tilde{D}\).}
	
Now we are ready to identify a component of \(Z\) which may be bimeromorphic to \(\tilde{D}\). 
\begin{prop}\label{ramZ} The 
exceptional divisors of $g$ in \eqref{gVW} cannot be mapped by \(g\) onto \(\tilde{D}\), and there exists some irreducible component \(E\) of \(Z\) such that \(g(E) = \tilde{D}\).
\end{prop}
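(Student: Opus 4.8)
The plan is to combine the argument behind Theorem~\ref{new-fiberbime-irredu-singular}, which produces a component of $V_0$ lying over $\tilde D$, with the semi-stable reduction mechanism from the proof of Claim~\ref{claim-one element}, which will certify that this component is a genuine component of the strict transform $Z$ and not an exceptional divisor of the modification $\mathcal V\to\mathcal X$. The statement about $g$-exceptional divisors is the easy half: $g$ is a bimeromorphic morphism between the smooth manifolds $\mathcal V$ and $\mathcal W$, hence a proper modification by Lemma~\ref{bimeromorphic morphism proper}, and its center in $\mathcal W$ has codimension at least two by \cite[p. 215]{GR84}; so each $g$-exceptional divisor is contracted onto a set of codimension at least two, whereas $\tilde D$ has codimension one in $\mathcal W$ (being the strict transform, under the natural modification $\mathcal W\to\mathcal Y$, of the codimension-one component $D$ of $Y_0$). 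Hence no $g$-exceptional divisor can be mapped by $g$ onto $\tilde D$.

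For the existence of $E$, I would apply Theorem~\ref{new-fiberbime-irredu-singular} to the proper surjective holomorphic maps $\mathcal V\to S$ and $\mathcal W\to S$, to the bimeromorphic morphism $g$, and to the codimension-one component $\tilde D$ of $W_0$: this produces an irreducible component $E$ of $V_0$ with $g(E)=\tilde D$ and $g|_E$ bimeromorphic onto $\tilde D$ (concretely $E=g_*^{-1}\tilde D$ is the strict transform, and it is the unique component of $V_0$ dominating $\tilde D$, as $\tilde D$ is not contained in the codimension-$\ge2$ center of $g$). Since $\tilde D$ is bimeromorphic to $D$, one gets $P_m(E)=P_m(D)$ for all $m\in\mathbb N^+$. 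It then remains to prove that $E$ is a component of $Z$, equivalently (this is Step~1) that $E$ is not an exceptional divisor of $\mathcal V\to\mathcal X$.

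This is where the hypotheses of Theorem~\ref{fiberbime} enter. Suppose, for contradiction, that $E$ were an exceptional divisor of $\mathcal V\to\mathcal X$. By~\eqref{fiberbime1} some irreducible component of $X_0$ has non-negative Kodaira dimension; its strict transform $C$ in $\mathcal V$ exists (the center of $\mathcal V\to\mathcal X$ contains no component of $X_0$, as $\mathcal X$ is normal), is a component of $Z$, and has $\kappa(C)\ge0$, so in particular $C$ is not $(\mathcal V\to\mathcal X)$-exceptional; hence $C\ne E$ and $C,E$ are distinct components of $(V_0)_{\textrm{red}}$. Carrying out the semi-stable reduction of $\mathcal V\to S$ near $0$ followed by blow-ups exactly as in paragraph~2 of the proof of Claim~\ref{claim-one element} (the resulting $\mathcal Z\to\mathcal X$ still being a semi-stable reduction followed by blow-ups, so that \eqref{fiberbime2} applies), one obtains a disjoint union $Z'$ of smooth components of the new central fiber dominating, via proper modifications, the base change of $C\sqcup E$; thus $P_m(Z')\ge P_m(C)+P_m(E)$. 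Now \eqref{fiberbime2} gives $P_m(Z')\le P_m(Z_t)=P_m(X_t)$ for $t$ near $0$, while Lemma~\ref{obs 1} and the bimeromorphic invariance of the $m$-genus give $P_m(X_t)=P_m(Y_t)$ for general $t$, which by \eqref{fiberbime4} is at most $P_m(D)=P_m(E)$. Combining, $P_m(C)=0$ for every $m\in\mathbb N^+$, contradicting $\kappa(C)\ge0$. Hence $E$ is a component of $Z$ with $g(E)=\tilde D$; running the same argument with any other component of $Z$ of non-negative Kodaira dimension in place of $C$ shows that $E$ is the only such component, which is what will be needed for parts \eqref{mainresult2} and \eqref{mainresult3} of Theorem~\ref{fiberbime}.

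I expect Step~1 to be the real obstacle. Excluding the exceptional divisors of $\mathcal V\to\mathcal X$ cannot be achieved by a dimension count — blowing up a point of a fiber already produces a codimension-one exceptional divisor with no divisor below it in $X_0$ — so it must use the semi-stable reduction together with the extension behaviour of pluricanonical forms encoded in \eqref{fiberbime2}, as in Claim~\ref{claim-one element}. The remaining work is bookkeeping: checking that the composite of $\mathcal V\to\mathcal X$ with the semi-stable reduction meets the hypotheses of \eqref{fiberbime2}, and that two distinct components of $(V_0)_{\textrm{red}}$ get carried to distinct smooth components by the semi-stable base change and resolution, so that their $m$-genera genuinely add.
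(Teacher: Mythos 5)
Your proposal is correct and takes essentially the same route as the paper: you first produce the component of $V_0$ dominating $\tilde D$ (the paper does this via Remmert's proper mapping theorem plus Ueno's inequality $P_m(E)\geq P_m(\tilde D)$, deferring the bimeromorphy of $g|_E$ to Step 2, while you invoke Theorem \ref{new-fiberbime-irredu-singular} directly, which is only a cosmetic difference), and then you rule out that this component is exceptional for $\mathcal V\to\mathcal X$ by the same semi-stable reduction argument as in Claim \ref{claim-one element}, combining conditions \eqref{fiberbime1}, \eqref{fiberbime2}, \eqref{fiberbime4} with Lemma \ref{obs 1} to force the vanishing of the plurigenera of a Kodaira-nonnegative component, exactly as in the paper's contradiction \eqref{contradiction ine} (the paper sums over all of $Z$ rather than a single component $C$, which changes nothing).
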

\begin{proof}
Recall that \(g: \mathcal{V} \to \mathcal{W}\) is a bimeromorphic morphism from a connected complex manifold \(\mathcal{V}\) to a connected complex manifold \(\mathcal{W}\). Then it is surjective by definition. Note that \(g|_{V_0}\) is proper due to the compactness of \(V_0\), and thus it sends any analytic subset of \(V_0\) to an analytic subset of \(W_0\) by Remmert's proper mapping theorem.  It follows that there exists an irreducible component \(E\) (possibly an exceptional divisor of \(\mathcal{V} \to \mathcal{X}\)) of the reduction of \(V_0\) such that \(g(E) = \tilde{D}\) by the irreducibility of \(\tilde{D}\).
		
So, for any positive integer \(m\), 
$$P_m(E) \geq P_m(\tilde{D}) = P_m(D)$$  
by \cite[Lemma 6.3]{Ue75}. Recall that Lemma \ref{obs 1} says that \(V_t\) is bimeromorphic to \(W_t\) for any general \(t \in S\), implying $$P_m(X_t) = P_m(V_t) = P_m(W_t) = P_m(Y_t)$$ for all \(t \neq 0\), after shrinking \(S\) to some smaller disk \(S_1\).
		
Assuming by contradiction that \(E\) is not an irreducible component of \(Z_{\mathrm{red}}\), we perform a semi-stable reduction \(\mathcal{Z} \to \mathcal{V}\) \footnote{To save notation, we do not distinguish between a complex space and its finite base change in the process of the semi-stable reduction. For precise details, we refer the reader to paragraph 2 of the proof of Claim \ref{claim-one element}.} such that the strict transform \(Z'\) in \(\mathcal{Z}\) of the divisor \(E + Z\) is smooth. By repeating the argument in Subsection \ref{quick proof} and using Definition \ref{reducible genera}, Conditions \eqref{fiberbime2}, \eqref{fiberbime4} in Theorem \ref{fiberbime}, and the inequality \(P_m(E) \geq P_m(\tilde{D}) = P_m(D)\), we derive
\begin{equation}\label{contradiction ine}
P_m(Z) + P_m(D) \leq P_m(Z) + P_m(E) \leq P_m(Z_t) = P_m(Y_t) \leq P_m(D).
\end{equation}
Thus, \(P_m(X_0) = P_m(Z) = 0\), which contradicts Condition \eqref{fiberbime1} by the arbitrariness of \(m \in \mathbb{N}^+\). Therefore, \(E\) must be an irreducible component of \(Z\), completing the proof of Proposition \ref{ramZ}.
\end{proof}

\noindent
\textbf{Step 2.} {The existence of \(C\) in \eqref{mainresult1} of Theorem \ref{fiberbime}.}

Recall that $g: \mathcal{V} \to \mathcal{W}$ is a bimeromorphic morphism between connected complex manifolds $\mathcal{V}$ and $\mathcal{W}$, and that the natural morphism $ \mathcal{W}\to S$ is flat by the criterion (e.g., \cite[Paragraph 1 of Introduction]{Hi75}, \cite[Lemma 2.1]{WZ23}) for flatness of a morphism over a smooth curve. 
Then we can apply the  argument,  similar to that of Theorem \ref{new-fiberbime-irredu-singular}, to verify that \(E\) is bimeromorphic to \(\tilde{D}\). 
 
In fact, by \cite[(1.17) + Theorem 1.19]{PR94}, there exists an analytic subset \(\Lambda \subset \mathcal{W}\) with codimension at least \(2\), such that \(\dim_x V_w = 0\) for any \(x \in V_w\) and \(w \in \mathcal{W} \setminus \Lambda\), where \(V_w := g^{-1}(w)\). Since \(g\) is a proper modification, each fiber of \(g\) is connected, and $$g|_{g^{-1}(\mathcal{W} \setminus \Lambda)}: g^{-1}(\mathcal{W} \setminus \Lambda) \to \mathcal{W} \setminus \Lambda$$ is biholomorphic by \cite[p. 166]{GR84}.
Since \(\tilde{D} \not\subset \Lambda\), \(E \not\subset g^{-1}(\Lambda)\), \(g: E \to \tilde{D}\) is bimeromorphic.

Consequently, the strict transform of \(E\) under \(\mathcal{X} \to \mathcal{V}\) is the desired \(C\) in \eqref{mainresult1} of Theorem \ref{fiberbime}.  This completes the proof of the existence of \(C\) in \eqref{mainresult1} of Theorem \ref{fiberbime}.
	
\section{Fiberwise bimeromorphism between locally Moishezon families}\label{section-moishezon}
	
As applications of (the argument of) Theorem \ref{fiberbime}, we obtain several results on fiberwise bimeromorphism of a bimeromorphic map between Moishezon families over the same base.
\subsection{Extension and invariance results for Moishezon families}
We will recall Takayama's pluricanonical extension and invariance of plurigenera for Moishezon families. Canonical singularities play an important role there. 
	
\begin{defn}\label{def-cano singu}
A normal complex analytic space \(X\) has only \emph{canonical singularities} if
\begin{enumerate}
\item \(X\) is \(\mathbb{Q}\)-Gorenstein;
\item For any resolution \(f: W \rightarrow X\), if \(K_W = f^*K_X + E\) with \(E\) an \(f\)-exceptional divisor, then \(E \geq 0\). 
Here $K_\bullet$ are the canonical divisors. \end{enumerate}
\end{defn}
Takayama extends pluricanonical sections from the union of some irreducible components of a fiber to the total space.
\begin{lemma}[{\cite[Theorem 3.1]{Tk07}}]\label{Taka-m-extension}
Let \(X\) be a complex manifold of dimension \(n\), \(S = \{|t| < 1\}\) the unit disk in \(\mathbb{C}\), and \(\pi: X \to S\) a projective surjective morphism with connected fibers. Assume that the prime decomposition of  \(X_0 := \pi^{-1}(0)\) is 
\[
X_0 = \sum_{i \in I} X_i + \sum_{j \in J} k_j X_j',
\]
such that \(Y := \sum_{i \in I} X_i\) is a disjoint union of smooth divisors. In particular, each element in \(\{X_i\}_{i \in I}\) is distinct from every element in \(\{X_j'\}_{j \in J}\).
		
Additionally, let \(\ell\) be a positive integer, and \(s^{(\ell)} \in H^0(Y, \ell K_Y) = \bigoplus_{i \in I} H^0(X_i, \ell K_{X_i})\). Then \(s^{(\ell)} \wedge (dt)^\ell \in H^0(Y, \left.\ell K_X\right|_Y)\) can be extended to an element of \(H^0(X, \ell K_X)\). In particular, 
\[
\sum_{i \in I} P_\ell(X_i) \leq P_\ell(X_t)
\]
holds for a general fiber \(X_t\), where \(P_\ell\) denotes the usual \(\ell\)-genus.
\end{lemma}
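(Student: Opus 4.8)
The plan is to prove the extension statement by $L^2$ methods --- the Ohsawa--Takegoshi--Manivel extension theorem combined with Siu's iteration trick for absorbing an auxiliary ample twist, in the spirit of Siu's proof of the invariance of plurigenera --- and then to deduce the plurigenus inequality formally. First I would fix the geometry. After shrinking $S$, the projectivity of $\pi$ (Definition~\ref{def-proj}) supplies, as explained just after that definition, a Hermitian line bundle $(A,h_A)$ on $X$ with $\sqrt{-1}\,\Theta_{h_A}(A)>0$. Giving $K_S=\mathcal{O}_S\,dt$ the flat metric makes $(\pi^{*}dt)^{\otimes\ell}$ a nowhere-vanishing section of $\pi^{*}(\ell K_S)$; since $Y$ occurs with multiplicity one in the divisor $X_0=\operatorname{div}(\pi^{*}t)=Y+R$, where $R:=\sum_{j\in J}k_jX_j'$, and is a disjoint union of \emph{smooth} divisors, adjunction $K_Y=(K_X+Y)|_Y$ together with the defining section $s_Y=\pi^{*}t/s_R$ of $Y$ identifies $s^{(\ell)}$ with a genuine holomorphic section $u:=s^{(\ell)}\wedge(dt)^{\ell}\in H^0(Y,\ell K_X|_Y)$ (which in fact vanishes along $\ell\,R|_Y$). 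It then suffices to extend $u$ to an element of $H^0(X,\ell K_X)$. The hypotheses on $Y$ enter exactly here and in the next step: smoothness and disjointness make $Y$ a smooth hypersurface so that Ohsawa--Takegoshi applies; multiplicity one makes $s_Y$ vanish to order exactly one along $Y$, so $ds_Y$ is nonzero there; and $\mathcal{O}_X(Y)=\mathcal{O}_X(-R)$ carries a metric of curvature $-[R]\le 0$, which only helps in the curvature inequalities the extension theorem demands. In this sense the lemma refines Siu's theorem, where $Y$ is the whole smooth central fiber, to a reduced smooth ``partial fiber''.

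The heart of the matter is removing the twist $A$, which I would do in Siu's two stages. \emph{Stage 1 (twisted extension).} Using Ohsawa--Takegoshi with the positive bundle $A$ and an induction on the canonical power $k$, show that for all $k\ge 0$ the restriction map $H^0(X,kK_X+A)\to H^0(Y,(kK_X+A)|_Y)$ is onto the image of $H^0(Y,kK_Y)\otimes\sigma_A$, where $\sigma_A$ is a fixed section of $A|_Y$; at each inductive step one builds a semipositively curved metric on the lower canonical power out of the sections already extended, tensors it with the canonical metric on $\mathcal{O}_X(R)$, and applies Ohsawa--Takegoshi to the next canonical power. \emph{Stage 2 (untwisting).} Take $p$-th roots of the sections of $p(\ell-1)K_X+A$ produced in Stage 1 and pass to the $\limsup$ over $p$: since the deficiency $\tfrac1p\,\sqrt{-1}\,\Theta_{h_A}(A)$ tends to zero, this yields a singular metric $h$ on $(\ell-1)K_X$ with $\sqrt{-1}\,\Theta_h\ge 0$ whose restriction to $Y$ is comparable to the metric generated by $H^0(Y,\ell K_Y)$ (raised to the appropriate power), so that $u$ is automatically $L^2$ against $h$ over $Y$ by the standard finiteness of pluricanonical $L^2$ norms. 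A single, now \emph{untwisted}, application of Ohsawa--Takegoshi --- with twisting bundle $(\ell-1)K_X-Y=(\ell-1)K_X+\mathcal{O}_X(R)$ carrying the product of $h$ and the canonical metric on $\mathcal{O}_X(R)$, and with $Y$ furnishing $(K_X+Y)|_Y=K_Y$ --- extends $u$ to $\tilde u\in H^0(X,\ell K_X)$, which is the asserted extension of $s^{(\ell)}\wedge(dt)^{\ell}$. I expect \textbf{Stage 2 to be the main obstacle}: checking that the $\limsup$-metric $h$ is well defined, has the claimed semipositive curvature, and has boundary behaviour along $Y$ that lets the $L^2$ estimate close up. This is where projectivity of $\pi$ (through $A$) is genuinely used, and it is the point at which the proof becomes truly analytic rather than formal.

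For the ``in particular'' assertion, pick a basis $s_1,\dots,s_N$ of $H^0(Y,\ell K_Y)=\bigoplus_{i\in I}H^0(X_i,\ell K_{X_i})$, so that $N=\sum_{i\in I}P_\ell(X_i)$, and let $\tilde s_1,\dots,\tilde s_N\in H^0(X,\ell K_X)$ be the extensions just constructed. Since $s\mapsto s\wedge(dt)^{\ell}$ is injective on $H^0(Y,\ell K_Y)$ and restriction to $Y$ evaluates coefficients at $0$, a short descending induction --- using that $\pi^{*}t$ is a non-zerodivisor on the reduced irreducible $X$ --- shows the $\tilde s_k$ are linearly independent over $\mathcal{O}_S$. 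Dividing by $(\pi^{*}dt)^{\ell}$ turns them into $\mathcal{O}_S$-linearly independent global sections of the coherent, torsion-free --- hence, after shrinking $S$, locally free --- sheaf $\pi_{*}(\ell K_{X/S})$, whose rank at a general $t$ equals $h^0(X_t,\ell K_{X_t})=P_\ell(X_t)$ by cohomology and base change. Counting dimensions over the fraction field of $\mathcal{O}_S$ gives $\sum_{i\in I}P_\ell(X_i)=N\le P_\ell(X_t)$, completing the deduction.
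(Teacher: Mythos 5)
This lemma is not proved in the paper at all: it is quoted verbatim, with citation, as Takayama's result \cite[Theorem 3.1]{Tk07}, so there is no in-paper argument to compare yours against. Your plan is essentially the strategy of Takayama's original proof (which in turn follows Siu): reduce via adjunction and the factorization $X_0=Y+R$ to extending $s^{(\ell)}\wedge(dt)^{\ell}$, exploit that $dt|_Y$ vanishes along $R|_Y$ so the seminegativity of $\mathcal{O}_X(Y)=\mathcal{O}_X(-R)$ is harmless, run an Ohsawa--Takegoshi induction with an ample twist $A$ supplied by projectivity of $\pi$, and remove $A$ by taking roots and passing to a limit metric; the final plurigenus count by restricting linearly independent extensions to a general fiber is also the standard deduction and is fine. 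Two caveats: as written your Stage 1 (surjectivity onto the image of $H^0(Y,kK_Y)\otimes\sigma_A$ for \emph{all} $k$) is stronger than what is needed and, for fibers not of general type, should be replaced by the usual iteration that extends the powers of the \emph{specific} section $u$ (multiplying by $u$ at each step and building the metric from the previously extended section, as in Takayama and in P\u{a}un-type arguments); and the analytic heart --- the uniform $L^2$ bounds in the induction and the construction and curvature/boundary properties of the limit metric in Stage 2 --- is only described, not carried out, so your text is a correct plan coinciding with the cited proof rather than a self-contained argument.
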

	
Takayama's semi-continuity and invariance results will also play a crucial role in our argument.
\begin{lemma}[{\cite[Theorem 1.2]{Tk07}}]\label{Taka lsc-preliminary}
Let \(\pi: \mathcal{X} \rightarrow S\) be a locally Moishezon family,  and let \(X_s := \pi^{-1}(s)\) be a reference fiber with support \(\left(X_s\right)_{\textrm{red}} = \sum_{i \in I} X_i\), and \(X_t := \pi^{-1}(t)\) a general fiber. Then
\[
\sum_{i \in I} P_m(X_i) \leq P_m(X_t)
\]
holds for any positive integer \(m\).
\end{lemma}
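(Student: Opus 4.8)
The plan is to deduce Lemma \ref{Taka lsc-preliminary} from the projective pluricanonical extension theorem, Lemma \ref{Taka-m-extension}, by reducing the Moishezon family to a \emph{projective, semistable} family at the cost of a finite base change and a chain of proper modifications, while controlling the $m$-genera of the fiber components at each step by the monotonicity of plurigenera under dominant generically finite maps.

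Since the statement is local around $s$, I would first shrink $S$ so that $S=\Delta$ is a disk with $s=0$. By Definition \ref{def-Moish} there is a projective morphism $q\colon Y\to\Delta$ and a bimeromorphic map $\mathcal{X}\dashrightarrow Y$ over $\Delta$; replacing $Y$ by a resolution obtained from blow-ups with smooth centers I may take $Y$ to be a complex manifold, still projective over $\Delta$. Applying the elimination of indeterminacy (Lemma \ref{elimination of inde}) to the inverse bimeromorphic map $Y\dashrightarrow\mathcal{X}$ produces a complex manifold $\mathcal{X}'$, a composition of blow-ups with smooth centers $\mathcal{X}'\to Y$ (so that $\mathcal{X}'\to\Delta$ is projective, the composition of projective morphisms being projective), and a bimeromorphic \emph{morphism} $g\colon\mathcal{X}'\to\mathcal{X}$, which is a proper modification by Lemma \ref{bimeromorphic morphism proper}. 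As $g$ is surjective and the central fiber of $\mathcal{X}'\to\Delta$ maps onto $X_0$, for every component $X_i$ of $(X_0)_{\mathrm{red}}$ there is a component $X_i'$ of the reduced central fiber of $\mathcal{X}'$ with $g(X_i')=X_i$; since $X_i'$ and $X_i$ both have codimension one (fibers of a family are pure codimension one by flatness, Lemma \ref{flatovercurve}), the restriction $g|_{X_i'}\colon X_i'\to X_i$ is surjective and generically finite, whence $P_m(X_i)\le P_m(X_i')$ by monotonicity of plurigenera under dominant generically finite maps (the ramification divisor being effective in characteristic zero). Choosing one such $X_i'$ for each $i$, these are pairwise distinct, so $\sum_{i}P_m(X_i)\le\sum_{c}P_m(X_c')$, where $c$ runs over all components of the reduced central fiber of $\mathcal{X}'$; since $g$ replaces the general fiber only by a proper modification, it suffices to prove $\sum_{c}P_m(X_c')\le P_m(X_t')$, i.e.\ to prove the lemma for the smooth projective family $\mathcal{X}'\to\Delta$.

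Next I would run semistable reduction. By the semistable reduction theorem (see \cite{KKMS73} or \cite[\S 11.2.2]{PS08}), after blow-ups, a finite base change $b\colon\Delta'\to\Delta$ totally ramified over $0$, and further modifications, one obtains a projective morphism $\pi''\colon\mathcal{X}''\to\Delta'$ with $\mathcal{X}''$ smooth, $\mathcal{X}''\to\mathcal{X}'$ surjective, and reduced central fiber $X_0''$ having simple normal crossings; moreover, for general $t'$ the fiber $X_{t'}''$ is a proper modification of the fiber of $\mathcal{X}'\to\Delta$ over $b(t')$ (base change does not change fibers and the intervening modifications affect only isolated fibers), hence has the same $m$-genus, namely $P_m(X_{b(t')})$, which is the $m$-genus of a general fiber of $\pi$. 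As in the previous paragraph, for each component $X_c'$ of the reduced central fiber of $\mathcal{X}'$ there is a component $X_c''$ of $X_0''$ with $X_c''\to X_c'$ surjective and generically finite, so $P_m(X_c')\le P_m(X_c'')$, and the chosen $X_c''$ are pairwise distinct. Finally, blowing up $\mathcal{X}''$ successively along the pairwise intersections $X_c''\cap X_{c'}''$ (smooth of codimension two because $X_0''$ has normal crossings) I may assume the chosen $X_c''$ are pairwise disjoint; this keeps $\mathcal{X}''$ smooth and projective over $\Delta'$, keeps each $X_c''$ a smooth divisor occurring with coefficient one in the reduced central fiber, and leaves the general fiber unchanged. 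Now $\pi''$ satisfies the hypotheses of Lemma \ref{Taka-m-extension} with $Y:=\bigsqcup_c X_c''$, so $\sum_c P_m(X_c'')\le P_m(X_{t'}'')$ for general $t'$. Chaining the inequalities yields $\sum_i P_m(X_i)\le\sum_c P_m(X_c')\le\sum_c P_m(X_c'')\le P_m(X_{t'}'')=P_m(X_{b(t')})$, which is exactly the asserted semicontinuity statement.

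The main obstacle is the semistable reduction step together with the bookkeeping it forces: Lemma \ref{Taka-m-extension} controls only the coefficient-one components of the central fiber, so a finite base change is unavoidable in order to pass from a possibly non-reduced special fiber to a reduced one, and one must then verify that the general fiber and a suitably dominating component of each fiber component retain the correct $m$-genera through the base change and every intervening modification. The two ingredients that make this work---that passing to a dominant generically finite image can only decrease plurigenera in characteristic zero, and that base change and proper modifications alter neither the general fiber nor the relevant components beyond a proper modification---are standard, so once they are in place the reduction to Takayama's extension theorem is routine; the only genuinely delicate point is choosing, at each stage, components that simultaneously dominate the previous ones, occur with coefficient one in the (semistable) central fiber, and can be separated into pairwise disjoint smooth divisors.
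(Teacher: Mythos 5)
The paper offers no proof of Lemma \ref{Taka lsc-preliminary}: it is quoted verbatim as \cite[Theorem 1.2]{Tk07}, so there is no internal argument to compare yours against. What you wrote is, in outline, a correct reconstruction of Takayama's own derivation of that theorem from the extension statement quoted here as Lemma \ref{Taka-m-extension}: pass to a projective model via Definition \ref{def-Moish}, resolve and eliminate indeterminacy (Lemmata \ref{elimination of inde} and \ref{bimeromorphic morphism proper}) to obtain a smooth total space projective over the disk, transfer the inequality between models by monotonicity of plurigenera under generically finite surjections (\cite[Lemma 6.3]{Ue75}, the same fact the paper invokes in Proposition \ref{ramZ}), then use semistable reduction and separating blow-ups to place yourself in the hypotheses of Lemma \ref{Taka-m-extension}. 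This is essentially the route taken in \cite{Tk07} itself, so the approach is the intended one rather than a genuinely different proof.

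Two small inaccuracies, neither fatal. First, after you blow up the pairwise intersections to separate the chosen components, the central fiber is no longer reduced --- the exceptional divisors occur with multiplicity at least two --- so ``coefficient one in the reduced central fiber'' should read ``coefficient one in the central fiber''; this is harmless because Lemma \ref{Taka-m-extension} only requires the selected components to be smooth, pairwise disjoint and of coefficient one, while the remaining components may carry arbitrary multiplicities. Second, you implicitly use that the modified families have connected fibers (a hypothesis of Lemma \ref{Taka-m-extension}) and that the generic value of $P_m(X_t)$ over the shrunk disk agrees with the $m$-genus of the general fiber over all of $S$; both are standard (Lemma \ref{fiber connected after modification}, respectively generic smoothness together with invariance of plurigenera over the Zariski-open locus of smooth fibers), but each deserves a sentence. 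With those touch-ups the argument is sound, although for the purposes of this paper one would simply cite \cite{Tk07}, as the authors do.
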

	
\begin{lemma}[{\cite[Theorem 1.1]{Tk07}}]\label{Taka inva-preliminary}
Let \(\pi: \mathcal{X} \rightarrow S\) be a locally Moishezon family (Definition \ref{def-Moish}) such that every fiber \(X_t := \pi^{-1}(t)\) has only canonical singularities (Definition \ref{def-cano singu}). Then the \(m\)-genus \(P_m(X_t)\) is independent of \(t \in S\) for any positive integer \(m\).
\end{lemma}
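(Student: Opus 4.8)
The plan is to pin down $P_m(X_t)$ at every point $t$ by sandwiching it between two semi-continuity estimates of opposite type: a ``lower'' one that is already available as Lemma \ref{Taka lsc-preliminary}, and an ``upper'' one that is the formal coherent-cohomology semi-continuity theorem. Since the conclusion is local on $S$ and $S$ is connected, it suffices to work over $S=\{|t|<1\}$, to fix $m\in\mathbb{N}^{+}$, and to prove that $P_m(X_s)$ equals the generic value of $t\mapsto P_m(X_t)$ for every $s\in S$. I first record that each fibre $X_t$, having only canonical singularities (Definition \ref{def-cano singu}), is normal, and, being connected, is irreducible; hence $(X_t)_{\textrm{red}}=X_t$, and by Definition \ref{reducible genera} together with the standard fact that for canonical singularities $P_m(X_t)=h^{0}\big(X_t,\omega_{X_t}^{[m]}\big)$ (pluricanonical sections on a resolution $\mu: W\to X_t$ push forward to the reflexive pluricanonical sheaf because the $\mu$-exceptional discrepancies are non-negative), the $m$-genus of every fibre is computed by the reflexive sheaf $\omega_{X_t}^{[m]}$.

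For the lower estimate, Lemma \ref{Taka lsc-preliminary} applied with reference fibre $X_s$, whose reduced support is the single component $X_s$, gives $P_m(X_s)\le P_m(X_t)$ for general $t$; in other words $P_m(X_s)$ is at most the generic value of $P_m$ on $S$. This is the deep direction: its proof runs through the pluricanonical extension theorem (Lemma \ref{Taka-m-extension}) after a semi-stable reduction, and it is the only place where the Moishezon hypothesis enters. For the upper estimate, I form the relative reflexive pluricanonical sheaf $\omega_{\mathcal{X}/S}^{[m]}$ on $\mathcal{X}$, i.e.\ the reflexive hull of $\omega_{\mathcal{X}/S}^{\otimes m}$. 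Since $\mathcal{X}$ is normal and irreducible and $S$ is a smooth curve, this sheaf is coherent and torsion-free, hence flat over $S$ (a torsion-free module over the discrete valuation ring $\mathcal{O}_{S,s}$ is flat), so the semi-continuity theorem applies to the proper map $\pi$ and $t\mapsto h^{0}\big(X_t,\ \omega_{\mathcal{X}/S}^{[m]}\otimes_{\mathcal{O}_S}k(t)\big)$ is upper semi-continuous. Granting the base-change isomorphism
\[
\omega_{\mathcal{X}/S}^{[m]}\otimes_{\mathcal{O}_S}k(t)\ \cong\ \omega_{X_t}^{[m]}\qquad\text{for every }t\in S,
\]
this function coincides with $t\mapsto P_m(X_t)$ by the previous paragraph, so $P_m(X_s)$ is at least the generic value of $P_m$. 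The two estimates together force $P_m(X_s)$ to equal the generic value for every $s$, hence $P_m$ is constant on $S$; as $m$ was arbitrary, the lemma follows.

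The main obstacle is precisely the displayed base-change isomorphism for the reflexive power $\omega^{[m]}$: it is the only non-formal ingredient on the upper-semi-continuity side, and it is where the canonical hypothesis on the fibres is genuinely used (the analogous statement is false for worse fibre singularities). On the open locus of $X_t$ where $X_t$ is smooth---and there $\mathcal{X}$ is smooth as well, because $\operatorname{Sing}(\mathcal{X})$ is contained in the singular loci of the singular fibres by Lemma \ref{singularlocus}, a set of codimension $\ge 2$ in every normal fibre---the natural restriction map $\omega_{\mathcal{X}/S}^{[m]}\otimes\mathcal{O}_{X_t}\to\omega_{X_t}^{[m]}$ is an isomorphism of line bundles, so the whole point is to check that the left-hand side already enjoys the $S_{2}$ (reflexivity) property of the right-hand side. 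For this one uses that $X_t$ is a Cartier divisor in $\mathcal{X}$ with $\mathcal{O}_{\mathcal{X}}(X_t)|_{X_t}$ trivial; that inversion of adjunction---available here because $X_t$ is Cartier and canonical---forces $\mathcal{X}$ to be canonical, in particular $\mathbb{Q}$-Gorenstein of some index $r$; and that then $\omega_{X_t}^{[r]}$, being the reflexive hull of the line bundle $\omega_{\mathcal{X}}^{[r]}(rX_t)|_{X_t}$ with which it agrees in codimension one, is itself a line bundle, so the Gorenstein index of $X_t$ divides $r$. With this compatibility of indices, Kollár's $\mathbb{Q}$-Gorenstein base-change theorem for families over a smooth base yields the desired isomorphism for all $m$. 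Once this point is granted, the sandwich argument above is entirely formal and the proof is complete.
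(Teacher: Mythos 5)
First, a point of comparison: the paper does not prove this lemma at all --- it is imported verbatim from Takayama \cite[Theorem 1.1]{Tk07}, whose own proof runs through the pluricanonical extension theorem (Lemma \ref{Taka-m-extension}) after semi-stable reduction. So your proposal is in effect an attempt to reprove Takayama's theorem, with the hard Siu-type direction imported through Lemma \ref{Taka lsc-preliminary} and all remaining content concentrated in your ``upper estimate''. That upper estimate is where there is a genuine gap.

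The gap is the base-change isomorphism $\omega_{\mathcal{X}/S}^{[m]}\otimes_{\mathcal{O}_S}k(t)\cong\omega_{X_t}^{[m]}$ for \emph{all} $m$, which you correctly identify as the crux but do not actually establish. Two problems. (a) You derive $\mathbb{Q}$-Gorensteinness of $\mathcal{X}$ from ``inversion of adjunction'', but inversion of adjunction presupposes that $K_{\mathcal{X}}+X_t$ is $\mathbb{Q}$-Cartier --- exactly what is not yet known at that point; the correct input is Kawamata's deformation theorem \cite{Kw99a}, quoted in Example \ref{univnormal-cano}, which is itself a deep result. This is repairable by citation, but as written the step is circular. (b) More seriously, your index argument only yields base change for $m$ divisible by the index $r$: there $\omega_{\mathcal{X}/S}^{[r]}$ is invertible, its restriction to the normal fiber is invertible, and two reflexive sheaves agreeing in codimension one are isomorphic. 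For $m$ not divisible by $r$, the restriction $\omega_{\mathcal{X}/S}^{[m]}\big|_{X_t}$ need not be $S_2$, and the natural map to $\omega_{X_t}^{[m]}$ can fail to be injective or surjective; there is no ``Koll\'ar $\mathbb{Q}$-Gorenstein base-change theorem'' upgrading compatibility of indices to base change of \emph{all} reflexive powers --- the possible failure of exactly this property is why Koll\'ar's condition must be imposed as an extra hypothesis in the moduli theory of families of singular varieties. Where such base change is known for one-parameter families with canonical fibers, it is obtained in the literature as a consequence of invariance-of-plurigenera-type results or of relative canonical models, not as an input to them; so your sandwich argument assumes, at its decisive step, a statement of essentially the same depth as the lemma being proved. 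The purely formal ingredients (torsion-free over a discrete valuation ring hence flat, Grauert semicontinuity for the proper map $\pi$, and $P_m(X_t)=h^0\bigl(X_t,\omega_{X_t}^{[m]}\bigr)$ for canonical $X_t$) are fine, but they do not carry the proof without the unproven base-change claim.
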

	
\subsection{Fiberwise bimeromorphism for Moishezon families}
As an application of (the argument of) Theorem \ref{fiberbime}, we obtain several results on fiberwise bimeromorphism of a bimeromorphic map between Moishezon families over the same base.
\begin{thm}\label{fiberbime taka mois}
With Notation \ref{notation 1}, assume that $\pi_1$ is locally Moishezon. Furthermore, for some $s \in S$, $\kappa(X_s) \geq 0$ (see Definition \ref{reducible genera}), and $D_s$ is an irreducible component of $(Y_s)_{\textrm{red}}$ such that for any $m \in \mathbb{N}^+$, $P_m(D_s) \geq P_m(Y_t)$ (hence $P_m(D_s) = P_m(Y_t)$ and the $m$-genus of any other irreducible component of $(Y_s)_{\textrm{red}}$ is $0$, based on Lemma \ref{Taka lsc-preliminary}) for any $t$ $(\neq s)$ near $s$. Then we have:
\begin{enumerate}[\rm{(}i\rm{)}]
			\item\label{moishezon-1} 
			There exists an irreducible component $C_s$ of $(X_s)_{\textrm{red}}$ such that $\kappa(C_s) \geq 0$ and $C_s$ is bimeromorphic to $D_s$, induced by $f$;
			\item\label{moishezon-2}  
			For any other irreducible component $C^{\prime}_s$ of $(X_s)_{\textrm{red}}$, $\kappa(C^{\prime}_s) = -\infty$. In particular,  $C_s$ is the unique irreducible component that is bimeromorphic to $D_s$.
\end{enumerate}  
In particular, if for any $t \in S$, both $X_t$ and $Y_t$ are irreducible, $\kappa(X_t) \geq 0$ and $(Y_t)_{\textrm{red}}$ satisfies the condition for $D_s$, then  $f$ is fiberwise bimeromorphic in the sense of Definition \ref{fiberbimedef}. 
\end{thm}

\begin{proof}
We can quickly complete this proof by verifying the conditions of Theorem \ref{fiberbime} using Lemma \ref{Taka-m-extension} or \ref{Taka lsc-preliminary}. However, for a better understanding of the argument of Theorem \ref{fiberbime}, particularly of the role of Condition \eqref{fiberbime2} ($\pi_1$ in this theorem is a good example satisfying the condition \eqref{fiberbime2} in Theorem \ref{fiberbime}) in Theorem \ref{fiberbime}, we now provide a more detailed verification of the fiberwise bimeromorphism, following the proof of Theorem \ref{fiberbime}. This approach is, more precisely, based on Lemma \ref{Taka-m-extension} or Lemma \ref{Taka lsc-preliminary} and the arguments of Theorem \ref{fiberbime}.
		
We first desingularize both $\mathcal{X}$ and $\mathcal{Y}$ to make them smooth, as done in the first paragraph of Section \ref{direct proof-map}. For convenience, we still denote their desingularization by $\mathcal{X}$ and $\mathcal{Y}$, respectively. Additionally, the strict transform of $D_s$ is also denoted by $D_s$, and the strict transform of $X_s$ is still denoted by $X_s$.
		
We now perform a series of operations for the bimeromorphic map $f$, as shown in the  
 commutative diagram:  
\begin{center}
			\begin{tikzpicture}
				\node (X) at (4,4) {$\mathcal{X}$};
				\node (Y) at (8,4) {$\mathcal{Y}$};
				\node (S) at (6,2) {S};
				\node (X1) at (4,6) {$\mathcal{X}_1$};
				\node (U) at (8,6) {$\mathcal{U}$};
				\draw[dashed,->] (X) -- (Y) node[midway, above]{$f$};
				\draw[->] (X) -- (S) node[midway, above right=1pt]{};
				\draw[->] (Y) -- (S) node[midway, below right=1pt]{};
				\draw[->] (X1) -- (X) node[midway, above left=0.5pt]{$\mu$};
				\draw[->] (X1) -- (Y) node[midway, above left=1pt]{$g$};    
				\draw[->] (U) -- (X1) node[midway, above left=1pt]{$\theta_1$};    
				\draw[->] (U) -- (Y) node[midway, above left=1pt]{$\theta_2$};                
			\end{tikzpicture}.
\end{center}  
		
\begin{enumerate}
			\item 
By Lemma \ref{elimination of inde}, we eliminate the indeterminacy of $f$ such that $\mu$ is a composition of a finite succession of blow-ups with smooth centers and $g$ is a proper modification. The strict transform of $X_s$ under $\mu$ is denoted by $X_s^{\prime}$. Note that both $\mathcal{X}_1$ and $\mathcal{Y}$ are smooth;
			\item
By Lemma \ref{local version of chow}, after shrinking the base, we can find a common complex analytic space $\mathcal{U}$ that dominates both $\mathcal{X}_1$ and $\mathcal{Y}$ via the blow-ups $\theta_1$ and $\theta_2$, respectively.      
\end{enumerate}  
		
Note that we can always assume that both the centers of $\theta_1$ and $\theta_2$ are of codimension at least $2$, based on the same reason as explained in the third paragraph of Subsection \ref{quick proof}. Then the strict transform (denoted by $\tilde{D}_s$) of $D_s$ under $\theta_2$, and the strict transform (whose reduction is denoted by $Z_1 + Z_2 + \cdots + Z_k$ for some positive integer $k$ and irreducible divisors $Z_i$) of $X_s^{\prime}$ under $\theta_1$ are located in $U_s$, the fiber of the natural composed morphism $\mathcal{U} \to S$ over $s$.
		
Let $\{Z_j\}_{j \in J}$ be the set of $Z_j$ with $\kappa(Z_j) \geq 0$ with $j \in \{1, \cdots, k\}$. Let $j_0$ be an element of $J$. We now present:
\begin{claim}\label{6.6}
$Z_{j_0}$ must be $\tilde{D}_s$. In particular,  $J$ has only one element and  the strict transform of $Z_{j_0}$ under ${\theta_1}^{-1}$ is bimeromorphic to ${D}_s$, induced by $g$. 
\end{claim}
\begin{proof}
{Assume by contradiction that  $\tilde{D}_s$ is not $Z_{j_0}$.
Take a resolution $\nu: \mathcal{W}\to \mathcal{U}$  whose center is exactly the singular locus of $\mathcal{U}$.
Note that  the singular locus of $\mathcal{U}$  covers neither  $\tilde{D}_s$ nor  $Z_{j_0}$. Then both of the strict transform  under $\nu$ of $\tilde{D}_s$ and that of  $Z_{j_0}$ are well-defined and they are two distinct irreducible components of the fiber of the natural composed morphism $\mathcal{W}\to S$ over $s$.}

{Clearly, it follows from  Lemma \ref{fiber connected after modification} that  the natural composed morphism $\mathcal{W}\to S$ is a locally Moishezon family. } Then 
 Lemma \ref{Taka lsc-preliminary} \footnote{Note that  the  total space  in \cite[Theorem 1.2]{Tk07} is not  assumed to be normal. Hence we can  directly apply \cite[Theorem 1.2]{Tk07} to the locally Moishezon morphism $\mathcal{U}\to S$ (since $\mathcal{U}$ may not be normal, $\mathcal{U}\to S$ may not be called a family in the present paper).} gives rise to $$P_m(Z_{j_0}) + P_m(\tilde{D}_s) \leq P_m(U_t) = P_m(Y_t) \leq P_m(D_s)$$ for any positive integer $m$ and $t \neq s$, where the first ``$\leq$" is due to Lemma \ref{Taka-m-extension} or \ref{Taka lsc-preliminary}. 
Therefore, $P_m(Z_{j_0}) = 0$ for any positive integer $m$, which contradicts the condition that $\kappa(Z_{j_0}) \geq 0$.
\end{proof}   

With this claim in mind, the strict transform of $Z_{j_0}$ under the natural bimeromorphic map $\mathcal{X} \dashrightarrow \mathcal{U}$ is exactly the desired $C_s$ in \eqref{moishezon-1} of Theorem \ref{fiberbime taka mois}. Consequently, the proof of Theorem \ref{fiberbime taka mois} is complete.
\end{proof}  

As a direct application of Theorem \ref{fiberbime taka mois} and  Lemma \ref{Taka inva-preliminary}, we can easily obtain: 	
\begin{cor}\label{fibe bime canonical singu-kodaira geq 0}
With Notation \ref{notation 1}, we assume that $\pi_1$ is locally Moishezon and each fiber $X_t$ of $\pi_1$ satisfies $\kappa(X_t) \geq 0$ for any $t \in S$. Moreover, each fiber of $\pi_2$ has only canonical singularities (Definition \ref{def-cano singu}). Then there exists a unique irreducible component of $(X_s)_{\textrm{red}}$ that is bimeromorphic to $Y_s$ (automatically reduced) for any $s \in S$. In particular, if we further assume that each fiber of $\pi_1$ is irreducible, then $(X_t)_{\textrm{red}}$ is bimeromorphic to $Y_t$, induced by $f$, for any $t \in S$.
\end{cor}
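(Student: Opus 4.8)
The plan is to reduce the statement, for each fixed $s \in S$, to a direct application of Theorem \ref{fiberbime taka mois} with the choice $D_s := Y_s$, and to verify that theorem's hypotheses using the canonical-singularity assumption on the fibers of $\pi_2$ together with Takayama's invariance of plurigenera (Lemma \ref{Taka inva-preliminary}).

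First I would record two preliminary observations. \emph{(a)} Since $f : \mathcal{X} \dashrightarrow \mathcal{Y}$ is a bimeromorphic map over $S$ and $\pi_1$ is locally Moishezon, the family $\pi_2$ is locally Moishezon as well: local Moishezonness is a condition local on $S$, so over a small open subset $U_s \subseteq S$ on which $\pi_1$ is Moishezon---hence bimeromorphically equivalent over $U_s$ to a projective surjective morphism $q : Z \to U_s$ in the sense of Definition \ref{def-Moish}---composing $f^{-1}$ with that bimeromorphic map exhibits $\pi_2^{-1}(U_s) \to U_s$ as bimeromorphically equivalent over $U_s$ to $q$, so $\pi_2|_{U_s}$ is Moishezon. \emph{(b)} Since each fiber $Y_s$ of $\pi_2$ has only canonical singularities, it is in particular normal; being also connected (a family has connected fibers by the conventions following Notation \ref{notation 1}), $Y_s$ is irreducible and reduced. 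Hence $(Y_s)_{\textrm{red}} = Y_s$, and $D_s := Y_s$ is the unique irreducible component of $(Y_s)_{\textrm{red}}$---this is the situation flagged parenthetically in Theorem \ref{fiberbime taka mois}.

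Next I would apply Lemma \ref{Taka inva-preliminary} to the Moishezon family $\pi_2^{-1}(U_s) \to U_s$ from observation \emph{(a)}, all of whose fibers have only canonical singularities by hypothesis: it gives that $P_m(Y_t)$ is independent of $t \in U_s$ for every $m \in \mathbb{N}^+$. In particular $P_m(D_s) = P_m(Y_s) \geq P_m(Y_t)$ (with equality) for all $t$ near $s$, so the hypothesis imposed on $D_s$ in Theorem \ref{fiberbime taka mois} is met; combined with the standing assumption $\kappa(X_s) \geq 0$, parts \eqref{moishezon-1} and \eqref{moishezon-2} of that theorem produce a unique irreducible component $C_s$ of $(X_s)_{\textrm{red}}$ that is bimeromorphic to $Y_s$, induced by $f$. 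Since $s \in S$ was arbitrary, this establishes the first assertion. For the last sentence, if in addition every fiber of $\pi_1$ is irreducible, then $(X_t)_{\textrm{red}}$ itself must coincide with the component $C_t$ just found, so $(X_t)_{\textrm{red}}$ is bimeromorphic to $Y_t$, induced by $f$, for every $t \in S$; as the fibers of $\pi_2$ are also irreducible (normal and connected, by observation \emph{(b)}), this is precisely the statement that $f$ is fiberwise bimeromorphic in the sense of Definition \ref{fiberbimedef}.

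I do not expect a genuine obstacle here, since the result is engineered as a corollary. The only points that require care are the passage in observation \emph{(a)} from ``locally Moishezon'' to an honest Moishezon family over a small disk, which is what allows Lemma \ref{Taka inva-preliminary} to be invoked, and observation \emph{(b)}, that a normal connected fiber is automatically irreducible and reduced---this is what licenses taking $D_s := Y_s$ and identifying $(Y_s)_{\textrm{red}}$ with $Y_s$. Everything else is a verbatim citation of Theorem \ref{fiberbime taka mois}.
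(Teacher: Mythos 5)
Your proposal is correct and follows exactly the route the paper intends: the paper gives no written argument beyond ``a direct application of Theorem \ref{fiberbime taka mois} and Lemma \ref{Taka inva-preliminary}'', and your two preliminary observations (that $\pi_2$ inherits local Moishezonness from $\pi_1$ through the bimeromorphic map $f$ over $S$, so Takayama's invariance applies and yields $P_m(Y_s)\geq P_m(Y_t)$ with $D_s:=Y_s$, and that a normal connected fiber is irreducible and reduced) are precisely the verifications the authors leave implicit. Nothing further is needed.
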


An interesting example of Corollary \ref{fibe bime canonical singu-kodaira geq 0} on fiberwise bimeromorphism is provided by \cite[p. 352, Last paragraph of the proof for Theorem 1]{Kol21}  (or \cite[Theorem 1.37]{Kol23} for its algebraic analogue). It states that if there is a projective fiber $X_o$ of general type and with canonical singularities in a flat family $\sigma:\mathcal{X}\rightarrow S$ of complex analytic spaces, then $\sigma$ is fiberwise bimeromorphic to the relative canonical model ${Proj}_S\bigoplus_{i\ge 0}\sigma_*^m\omega_{{\mathcal{X}^m}/S}^{[r]}$, which is projective over $S$ (possibly shrunk around $o$) and where $\omega_{{\mathcal{X}^m}/S}^{[r]}$ denotes the double dual of the $r$-th tensor power of $\omega_{{\mathcal{X}^m}/S}$. Since $X_o$ is of general type, a suitable
MMP for $X_o$ ends with a minimal model ${X}_o^m$, and then $X_o\dashrightarrow X_o^m$ extends to a fiberwise bimeromorphic map $\mathcal{X}\dashrightarrow \mathcal{X}^m$, and we have $\sigma^m:  \mathcal{X}^m\rightarrow S$.

{Note  that, in the setting of Theorem \ref{fiberbime taka mois}, for any proper modification $\mathcal{X}'\to \mathcal{X}$    with $\mathcal{X}'$ normal and any irreducible exceptional divisor $E$ contained in the fiber of $\mathcal{X}'\to S$ over the $s\in S$ in Theorem \ref{fiberbime taka mois}, one has $\kappa(E)=-\infty$. In particular, as a direct consequence of Theorem \ref{fiberbime taka mois} and Lemma \ref{Taka inva-preliminary}, we obtain Corollary \ref{exc divisor kappa negative}. This stands in sharp contrast to \cite[Proposition 3.3]{HK15}, where it is shown that an exceptional divisor of a general blow-up may have arbitrary Kodaira dimension.}

\begin{cor}\label{exc divisor kappa negative}
{Let $\mathcal{X}\to S$ be 
 a locally Moishezon family whose fibers all have nonnegative Kodaira dimension and only canonical singularities. Then  for any proper modification $\mu:\mathcal{X}'\to \mathcal{X}$  with $\mathcal{X}'$ normal   and any  irreducible exceptional divisor $F$ lying in certain fiber of $\mathcal{X}'\to S$, we have $\kappa(F)=-\infty$.     }
\end{cor}

\begin{proof}
{Applying Theorem \ref{fiberbime taka mois} to the bimeromorphic  map $\mu:\mathcal{X}'\to \mathcal{X}$ over $S$, yields the desired conclusion, based on  Lemma \ref{Taka inva-preliminary}.  In fact, assume that $F$ lies in the fiber of $\mathcal{X}'\to S$ over $s\in S$, then for a small open neighborhood $U$ of $s$, $\mathcal{X}'\to S$ is also locally Moishezon, based on the local version of Chow lemma. 
By Lemma \ref{Taka inva-preliminary},  $\mathcal{X}\to S$ satisfies the condition for $\pi_2$ in Theorem \ref{fiberbime taka mois}. By the fact that the center of $\mu$ is of codimension at least $2$ (e.g., \cite[p. 215]{GR84}),  $\mathcal{X}'\to S$ satisfies the condition for $\pi_1$ in Theorem \ref{fiberbime taka mois}. It then follows from  Theorem \ref{fiberbime taka mois} that $\kappa(F)=-\infty$.} 
\end{proof}

Furthermore, another straightforward application of Theorem \ref{fiberbime taka mois} and Lemma \ref{Taka inva-preliminary}, combined with
Corollary \ref{smoothcase uniruled}, leads to the following result.

\begin{cor}
Let $p:\mathcal X \to S$ be a locally Moishezon family with \(\mathcal X\)  smooth.
Assume  that there exists a family \(q:\mathcal Y \to S\)
such that \(\mathcal X\) is bimeromorphic to \(\mathcal Y\) over \(S\), and that for every
\(s\in S\), the fiber \(Y_s\) has only canonical singularities and satisfies $\kappa(Y_s)\geq 0.$
Then  every fiber of
\(p:\mathcal X\to S\) contains  exactly one irreducible component of
non-negative Kodaira dimension.
\end{cor}

\begin{proof}
 It follows from Corollary \ref{smoothcase uniruled} that    there exists an irreducible component  bimeromorphic to $Y_s$, which implies  $\kappa(X_s)\geq 0$ for each $s\in S$. This observation then follows directly from
 Theorem \ref{fiberbime taka mois}, based on Lemma \ref{Taka inva-preliminary}. 
\end{proof}

\section{Specialization of   bimeromorphic types for locally Moishezon families}\label{s defor limit}
As is well-known, specialization of rationality and other related concepts (such as stable rationality, unirationality, etc.) are classical and significant topics in algebraic geometry in recent years. K. Timmerscheidt \cite[Theorem 1]{T82}  made use of the relative Barlet cycle space theory and his characterization of $3$-dimensional rational manifolds to obtain that the specialization of rational manifolds is still rational in a smooth family of Moishezon $3$-folds. 

Recently, there has been rapid progress in establishing such results in various settings. 
Among them, our work is largely inspired by the seminal contributions of Kontsevich--Tschinkel \cite[Theorem 1+Theorem 16]{KT19} and Nicaise--Ottem \cite[Theorem 4.1.1]{NO21}. These works fully address the specialization of rationality for smooth families (or the pair $(\text{total space}, \text{special fiber})$ has $B$-rational singularities) with arbitrary-dimensional fibers, in the scheme category. In fact, they have achieved even more in the case of smooth families: the specialization of (or geometric) birational types (Theorem \ref{introduction-KT}).
Note that both  \cite{KT19} and \cite{NO21} dealt with smooth families in the scheme category and the proofs in both works relied on the existence of some specialization morphism. However, in an analytic setting, it seems that the lack of good algebraic structure makes it difficult to construct a similar specialization morphism. 

In this section, we use our works on fiberwise bimeromorphism to study the problem on specialization of bimeromorphic types for families (not necessarily smooth) with fiber not being schemes, in the case where each fiber of involved families has Kodaira dimension not equal to $-\infty$, which can be viewed as an analytic version of the specialization of birational types in  \cite{KT19} and  \cite{NO21}.  Note that in this section, we focus on locally Moishezon families (whose fibers are Moishezon and correspond to algebraic spaces).  In our subsequent paper, we will establish several results on the specialization of bimeromorphic types for certain non-smooth locally Fujiki families, which are defined as families that are bimeromorphic to locally K\"ahler families over the same base.

\subsection{Relative Barlet cycle space theory}\label{ss7.1}
We first provide some background on the relative Barlet cycle space theory, and we refer the reader to \cite{B75}, \cite{Mn04}, \cite{BM19},  \cite{BM25}, \cite{Fu78/79} and \cite[Sections 2 and 3]{CP94} or \cite{Kol96} (for Chow variety theory  over a field of characteristic zero) for more details.

{Let $X$ be a complex space and $n \in \mathbb{N}$ an integer. A \emph{(compact) $n$-cycle of $X$} is a finite linear combination $Z=\sum_{i \in I} n_i Z_i$ where the $Z_i$'s are $n$-dimensional compact irreducible analytic  subsets of $X$ which are pairwise distinct. 
The set of all $n$-cycles of $X$ is denoted by $\mathcal{C}_n(X)$, and the set of all cycles of $X$ is the union of all $\mathcal{C}_n(X)$ for $n \in \mathbb{N}$, denoted by $\mathcal{C}(X)$.   D. Barlet  introduced a reduced analytic structure on $\mathcal{C}(X)$.
We then call $\mathcal{C}(X)$ the \emph{Barlet cycle space or cycle space} of $X$. 
Note that $\mathcal{C}(X)$ is countable at infinity if so is $X$. In particular, for the total space  $X$ of any family in the present paper, $\mathcal{C}(X)$ has at most countably many irreducible components.  Additionally, all the reduced and irreducible cycles of $X$ form an analytic Zariski-open subset $\mathcal{C}^*(X)$ of $\mathcal{C}(X)$ (e.g., \cite[Proposition 4.7.2]{BM19}).   For any subset $W$ of a cycle space, we write $W\cap \mathcal{C}^*$ to denote the set of reduced and irreducible cycles in $W$.}

{From a functorial perspective, $\mathcal{C}_n(X)$ is a reduced complex space that represents
  a contravariant functor $F_X^n$ from the category of  reduced complex spaces  to the category of sets, associating to each reduced complex space $S$ the set of analytic families of $n$-cycles of $X$ parametrized by $S$ (e.g., \cite[Corollary 5.8.2.6]{BM25},  \cite[Section 2.6]{CP94}). In particular,  a morphism $f\in \operatorname{Hom}(S, \mathcal{C}_n(X))$ from  $S$ to $\mathcal{C}_n(X)$ corresponds to such an analytic family (e.g., \cite[Section 4.6.1]{BM19}).  As a result, every such $f$ induces an analytic subset of $X\times S$ that is proper over $S$. }    

\begin{lemma}[{e.g., \cite[Lemma 3.1]{Fu78/79}}]\label{analytic subset}
Let $S$ be a reduced (the non-reduced case can also be treated, via the reduction map) complex analytic space and $f: S\to \mathcal{C}_n(X)$ a holomorphic map. Then
$$X_S:=\{(x,s)\in X\times S: x\in |f(s)|\}$$ 
is an analytic subset of $ X\times S$  and  the morphism $X_S\to S$ induced by the natural projection  $X\times S\to S$ is proper, where $|f(s)|$ is the support of the cycle $f(s)$.  
\end{lemma}	
	
{Following Barlet's cycle theory \cite{B75}  in the absolute setting, Fujiki  \cite[Section 3.2]{Fu78/79} investigated the relative $n$-cycle space $\mathcal{C}_n(X/S)$ for a proper holomorphic map $f: X\to S$ of complex analytic spaces.}
Set-theoretically, $\mathcal{C}_n(X/S)$ is the subset of $\mathcal{C}_n(X)$ consisting of those $n$-cycles of $X$ whose support is contained in one fiber of $X\to S$;   $\mathcal{C}_n(X/S)$  is an  analytic subset of $\mathcal{C}_n(X)$  (e.g., \cite[p. 493, Theorem 4.8.4]{BM19}); 
$\mathcal{C}_n(X/S)$ is equipped with the reduced structure. Furthermore, the natural map $$\varpi:\mathcal{C}_n(X/S)\to S$$ is holomorphic. 

{Recall from \cite[Lemma 4.2.10]{BM19} that for any open subset $U\subseteq S$ of $S$, the natural inclusion $\mathcal{C}_n\left(f^{-1}(U)\right) \to \mathcal{C}_n(X)$ induces a homeomorphism of $\mathcal{C}_n\left(f^{-1}(U)\right)$ onto the open set
$$
\left\{Z \in \mathcal{C}_n(X):|Z| \subseteq f^{-1}(U)\right\}.
$$
Note that $\mathcal{C}(X/S)$  carries the subspace topology induced from $\mathcal{C}(X)$ (e.g., \cite[Section 3.2]{Fu78/79}). Then the restriction of the above homeomorphism induces  a homeomorphism from $\mathcal{C}_n(f^{-1}(U)/U)$  to the open subset 
$$\varpi^{-1}(U):=\mathcal{C}_n(X/S)\times_S U$$
of $\mathcal{C}_n(X/S)$.}

{Functorially (\cite[Proposition   3.2]{Fu78/79}), the relative  $n$-cycle space $\mathcal{C}_n(X/S)$  is an object  in the category $\left(\mathcal{A} n_{\text {red }} / S\right)$ of reduced complex spaces over $S$ that represents  a contravariant functor
$F_{S, n}:\left(\mathcal{A} n_{\text {red }} / S\right)$ → (Sets).   This 
 functor is defined by 
\[ F_{S, n}(T): = \text{the set of analytic families of compact } n\text{-cycles } of X / S \text{ parametrized by } T. \]  
Note that for any reduced complex space $T$ over $S$ with the structure morphism $\alpha: T \to S$, an \emph{analytic family of compact $n$-cycles of $X / S$ parametrized by $T$} means an analytic family of compact $n$-cycles $\left\{A_t\right\}_{t\in T}$ of $X$  parametrized by $T$ such that for any $t \in T,  f\left(\left|A_t\right|\right) = \alpha(t).$}

{By the aforementioned homeomorphism and the fact that properness is a local property over the base, the following important lemma follows.}

\begin{lemma}[{e.g., \cite[Theorem 4.5]{Fu78/79}}]\label{pro}
{Let $f: X\to Y$ be a projective  morphism (Definition \ref{def-proj}). Then  each irreducible component of $\mathcal{C}_n(X/Y)$ is proper over $Y$. }
\end{lemma}

\subsection{Specialization of bimeromorphic types in    locally Moishezon smooth families}
As a warm-up for specialization of bimeromorphic types in non-smooth families, we first combine Theorem \ref{fiberbime taka mois} on fiberwise bimeromorphism with the relative Barlet cycle space theory, to prove Theorem \ref{thm-main-20250116} on the  \emph{specialization  (or deformation rigidity, or  non-deformability) of a fixed bimeromorphic structure}. 
 In fact, this proof contains all the essential ingredients for establishing the specialization of bimeromorphic types, and leads to the relationship:  
\begin{center}
fiberwise bimeromorphism $\Rightarrow$ specialization of a fixed bimeromorphic structure. 
\end{center}	

Note that  the method used in Theorem \ref{thm-main-20250116} is analytic in nature, and can be naturally carried over to the setting of locally Fujiki families (e.g., \cite{CRT26}).
            
\begin{thm}\label{thm-main-20250116}
Let $p:\mathcal{X}\to S$ be a locally Moishezon smooth family with $n$-dimensional fibers $X_t:=p^{-1}(t)$. Let $F$ be a projective manifold with $\kappa(F)\geq 0$.  Let 
$$U_b:=\{t\in S: X_t \text{ is  bimeromorphic to } F\}$$ be  the bimeromorphic locus of $F$.  Then $U_b$ is either at most a countable union of proper analytic subsets of $S$ or the entirety of $S$.	
\end{thm}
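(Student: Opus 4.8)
The plan is to apply the relative Barlet cycle space machinery to the product family $\mathcal{X} \times F \to S$ and then use the fiberwise bimeromorphism result (Theorem~\ref{fiberbime taka mois}) to upgrade ``bimeromorphic for a countable-dense set of parameters'' into a closed-subset description. First I would reduce to the case where $S$ is a small disk and $p$ is Moishezon (rather than merely locally Moishezon): this is legitimate since the stated dichotomy is local on $S$ and $S$ is irreducible, so it suffices to show that if $U_b$ is not all of $S$ then near each point it is contained in a countable union of proper analytic subsets, and these then glue (the countable union stays countable because $S$ is second countable / countable at infinity). After a bimeromorphic modification over $S$ I may assume $p$ is a projective morphism to the disk, with $\mathcal{X}$ smooth.

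Next I would form the fiber product $\mathcal{Z} := \mathcal{X} \times_S (S \times F) = \mathcal{X} \times F$, with its two projections $\mathcal{X} \times F \to \mathcal{X} \to S$. For each $t$ with $X_t$ bimeromorphic to $F$, the graph of a bimeromorphic map $X_t \dashrightarrow F$ is a compact irreducible cycle $\Gamma_t$ in $X_t \times F$ whose two projections onto $X_t$ and onto $F$ are proper modifications. Thus $\Gamma_t$ defines a point of the relative cycle space $\mathcal{C}(\mathcal{X} \times F / S)$ lying over $t$, and moreover $\Gamma_t$ lies in the analytic Zariski-open subset $\mathcal{C}^*$ of irreducible cycles. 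Since $p$ (hence $\mathcal{X} \times F \to S$) is projective, Lemma~\ref{pro} guarantees that every irreducible component of $\mathcal{C}(\mathcal{X} \times F/S)$ is proper over $S$; there are at most countably many such components $\{\mathcal{C}_i\}$. Let $W_i \subseteq S$ be the image of $\mathcal{C}_i$ under the natural holomorphic map $\mathcal{C}(\mathcal{X}\times F/S) \to S$; by properness and Remmert's proper mapping theorem each $W_i$ is an analytic subset of $S$, so either $W_i = S$ or $W_i$ is a proper analytic subset. The key point is that $U_b \subseteq \bigcup_i W_i$, because every $t \in U_b$ produces a cycle $\Gamma_t$ lying in some component $\mathcal{C}_i$, forcing $t \in W_i$.

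The remaining, and subtlest, step is the converse-type statement needed to get the dichotomy: if some $W_{i_0} = S$, then in fact $U_b = S$. Here I would argue that a component $\mathcal{C}_{i_0}$ dominating $S$ contains a cycle $\Gamma_{t_0}$ that is the graph of a genuine bimeromorphic map over a general $t_0$; then the subvariety of $\mathcal{C}_{i_0}$ parametrizing such ``graph-like'' cycles is analytic Zariski-open and nonempty, hence its image in $S$ is a nonempty analytic Zariski-open subset $S' \subseteq S$. Choosing a local analytic section $S' \to \mathcal{C}_{i_0}$ (after shrinking and a finite base change, \`a la Mumford--Persson, as invoked in the paper's introduction) and applying Lemma~\ref{analytic subset} produces a global analytic subset $\mathcal{W} \subseteq \mathcal{X}\times F$, proper over $S'$, whose general fiber is a graph of a bimeromorphism. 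Its two projections $\mathcal{W} \to \mathcal{X}|_{S'}$ and $\mathcal{W} \to S' \times F$ are bimeromorphic maps over $S'$ (being proper modifications over a dense open set and hence, by the irreducibility of the total spaces, globally bimeromorphic over $S'$). Now $\pi_1 = p|_{S'}$ is locally Moishezon with $\kappa(X_t) = \kappa(F) \geq 0$ for all $t$ (using deformation invariance of $\kappa$ for such families, a consequence of Takayama's results), and $\pi_2 = S' \times F \to S'$ is a trivial family whose fibers $F$ satisfy the hypotheses on $D_s$ (the $m$-genera are constant). Hence Theorem~\ref{fiberbime taka mois} applies and forces the bimeromorphic map $\mathcal{X}|_{S'} \dashrightarrow S' \times F$ to be fiberwise bimeromorphic, so $X_t$ is bimeromorphic to $F$ for \emph{every} $t \in S'$. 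Finally, to pass from the dense open $S'$ back to all of $S$: one iterates the argument on $S \setminus S'$ (which is a proper analytic subset, on whose components one runs the same Barlet-space analysis), or more cleanly observes that if $U_b$ omits even one point $t_1 \in S$, then no $W_i$ can equal $S$ — indeed if some $W_{i_0} = S$ the argument above applied on successively smaller analytic strata would eventually reach $t_1$, a contradiction — so $U_b \subseteq \bigcup_i W_i$ with every $W_i$ a proper analytic subset, giving the countable-union alternative.

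The main obstacle I anticipate is the step showing $W_{i_0} = S \Rightarrow U_b = S$, specifically verifying that a dominating component of the relative cycle space actually contains graphs of bimeromorphisms over a Zariski-dense open set of $S$ (rather than, say, cycles that are graphs of maps which are merely dominant but not bimeromorphic, or cycles supported on proper subvarieties of $X_t \times F$). Controlling this requires the Mumford--Persson trick of shrinking and a finite base change to extract an honest family of graphs, and then invoking Theorem~\ref{fiberbime taka mois} — whose hypothesis $\kappa(X_t) \geq 0$ is exactly what rules out the pathological behavior (cf.\ the role of Example~\ref{Hirze example}). The bookkeeping to ensure the countable union over all the iterated strata remains countable is routine given that $S$ is countable at infinity.
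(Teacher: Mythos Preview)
Your overall strategy --- relative Barlet cycle space of $\mathcal{X}\times F$ over $S$, followed by Theorem~\ref{fiberbime taka mois} --- is exactly the paper's, but the dichotomy you set up has a genuine gap. You run over \emph{all} irreducible components $\mathcal{C}_i$ of $\mathcal{C}(\mathcal{X}\times F/S)$ and argue: either every image $W_i$ is a proper analytic subset, or some $W_{i_0}=S$ forces $U_b=S$. But plenty of components dominate $S$ while containing no bimeromorphic graph whatsoever --- components of cycles of the wrong dimension, components through cycles like $X_t\times\{\mathrm{pt}\}$, etc. --- so $W_{i_0}=S$ by itself tells you nothing about $U_b$, and your closing claim ``if $U_b$ omits a point then no $W_i$ can equal $S$'' is simply false. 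The repair is to restrict from the outset to those components $\mathcal{C}_j$ that contain at least one graph $\Gamma_t$ of a bimeromorphism (these are the only ones needed for the inclusion $U_b\subseteq\bigcup_j W_j$). The mechanism that then makes the implication work --- and which you do not mention --- is that cycles in a fixed connected component of the Barlet space are cohomologous, so the intersection numbers with the general fibers of the two projections $X_t\times F\to X_t$ and $X_t\times F\to F$ are constant on $\mathcal{C}_j$; being a bimeromorphic graph is characterized by both numbers equaling $1$, so once one cycle in $\mathcal{C}_j$ is a graph, every point of $\mathcal{C}_j\cap\mathcal{C}^*$ over a general $t$ is again a graph. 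This is precisely what resolves the ``main obstacle'' you anticipate.

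The paper organizes the endgame as an open--closed argument on $S$ rather than your component dichotomy: first, $U_b$ uncountable forces some component $\mathcal{C}$ to contain uncountably many graphs, hence (by the cohomological argument above) $U_b$ contains a nonempty open set; second, the interior $V$ of $U_b$ is closed, because at any boundary point $0\in\partial V$ one picks a local curve $\mathcal{C}'\subseteq\mathcal{C}$ through a point over $0$, normalizes to $\tilde{\mathcal{C}}\to S$, base-changes both families, and applies Theorem~\ref{fiberbime taka mois} with $\pi_1=\tilde{\mathcal{C}}\times F\to\tilde{\mathcal{C}}$ (so the hypothesis $\kappa\geq 0$ is only needed on the trivial $F$-side, not on all $X_t$ as you assume) and $\pi_2=\widetilde{\mathcal{X}}\to\tilde{\mathcal{C}}$ (smooth, so Grauert gives the required upper semi-continuity of plurigenera). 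This open--closed packaging avoids your iterate-on-strata bookkeeping entirely.
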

\begin{proof}
We begin by reducing the proof of this theorem to the case where $U_b$ contains a nonempty open subset of $S$  by some elementary analysis in Step \ref{step-reduction to open}. 

\begin{step}\label{step-reduction to open} 
Reduction to the case where $U_b$ contains a nonempty open subset of $S$.
\end{step}

Since the base $S$ is $1$-dimensional, a proper analytic subset of $S$ consists of discrete points and is at most countable (since the base is either projective or Stein,  it is second countable). Therefore,  to establish Theorem \ref{thm-main-20250116}, it suffices to prove: assume that $X_t:=p^{-1}(t)$ is bimeromorphic to $F$ for any $t\in B$, where $B\subseteq S$ is an uncountable subset of $S$. Then all fibers $X_t$ are bimeromorphic to $F$.		
		
We cover $S$ with at most countably many open subsets $\{S_i\}$ of $S$, each biholomorphic to the unit disk in $\mathbb{C}$, such that $p: p^{-1}(S_i)\to S_i$ is Moishezon, and that there exists a collection of relatively compact subsets $\{S_i^\prime\Subset S_i\}$ so that $\{S_i^\prime\}$ also cover $S$ and each $S_i^\prime$ is also biholomorphic to the unit disk in $\mathbb{C}$.   Since $B$ is uncountable,  we can find some $S_{i_0}$ such that $B\cap S'_{i_0}$ is also uncountable. Clearly,   there exists a projective morphism $p^{\prime}: \mathcal{X}^\prime\to S_{i_0}$ such that we have a bimeromorphic map $\tau: \mathcal{X}\dashrightarrow \mathcal{X}^\prime$ over $S_{i_0}$, based on Definition \ref{def-Moish}.

Note that, after shrinking  $S_{i_0}$ to $S'_{i_0}$,  a  desingularization $\mathcal{D}\to \mathcal{X}^{\prime}$  is the composition of finitely many blowups.  For simplicity of notation, we  denote   $S'_{i_0}$ by $S$, from now until the end of the proof of  Proposition \ref{bime-graph-many}.
Then the natural composed morphism $$\mathcal{D}\to \mathcal{X}^{\prime}\to S$$ is still projective  and the induced bimeromorphic map $\mathcal{X}\dashrightarrow \mathcal{D}$ is still over $S$. 
So from now on, we can assume that $\mathcal{X}^{\prime}$ is smooth, $p^{\prime}: \mathcal{X}^{\prime}\to S$ is projective, and $\tau: \mathcal{X}\dashrightarrow \mathcal{X}^\prime$ is a bimeromorphic map over $S$.	In particular,  $\tau|_{X_t}$ is bimeromorphic for general $t\in S$. 
Thus, for uncountably many points $t \in S$, the fibers $X_t^{\prime}$ of $p'$ are bimeromorphic to $F$.  

Now we  apply the relative Barlet cycle theory to obtain that $p^{-1}(t)$ is bimeromorphic to $F$ for general  $t\in S~ (=S'_{i_0})$. In particular, $U_b$ contains a nonempty open subset of $S$.  In fact, we can obtain more in Proposition \ref{bime-graph-many}.  The argument proceeds as follows.      
 Consider the relative $n$-cycle space  $$\mathcal{C}_n\left(\mathcal{X}^{\prime} \times F / S\right)$$
associated with the natural projective morphism $\Phi: \mathcal{X}^{\prime} \times_S (S\times F) \to S$. 
Denote by $$\varpi: \mathcal{C}_n\left(\mathcal{X}^{\prime} \times F / S\right)\to S$$ the canonical morphism.

\begin{prop}\label{bime-graph-many}
There exist  an irreducible component $$\mathcal{C}\subseteq\mathcal{C}_n\left(\mathcal{X}^{\prime} \times F / S\right)$$  
and a proper analytic subset $T$ of $S$ such that  $\mathcal{C}$ is surjective onto $S$  via $\varpi$ and all points $\lambda$ of $(\mathcal{C}\setminus (\varpi)^{-1}(T))\cap \mathcal{C}^*$   give bimeromorphic maps from the corresponding fiber $X_t^{\prime}$ to $F$,  where $t=\varpi(\lambda)$ and $\mathcal{C}^*$ is as fixed at the beginning of Subsection \ref{ss7.1}. 
In particular,  $p^{-1}(t)$ is bimeromorphic to $F$ for any general  $t\in S$.
\end{prop}

\rem Note that Timmerscheidt \cite[p. 271, line -6]{T82} stated a similar result, but no proof was given there. For comparison, certain structural aspects of the birational locus were clarified  in \cite[Proposition  2.3]{dFF13}  and \cite[Propositions 2.6 + 2.7]{BBG16}, via Hilbert scheme theoretical arguments in the scheme setting.  These results and Proposition \ref{bime-graph-many} 
 reveal different structural aspects of the  birational/bimeromorphic locus.

\begin{proof}[Proof of Proposition \ref{bime-graph-many}] \footnote{ It may be interesting to investigate whether, by following the lines of \cite[Proposition 2.3]{dFF13}  and \cite[Propositions 2.6 + 2.7]{BBG16}, one could obtain a complex-analytic analogue of \cite[Proposition 2.3]{dFF13}  and \cite[Propositions 2.6 + 2.7]{BBG16} via a purely Douady-space-theoretic approach, without invoking the Douady-to-Barlet morphism to apply the integration of cohomology classes.}
 Note that   there exist uncountably many points $t\in S$ such  that,  for each such $t$,
 there is a reduced and irreducible analytic subset $$\mathcal{G}_t^{\prime} \subseteq \Phi^{-1}(t) =X_t^{\prime} \times F,$$ which is the graph of a bimeromorphic map $X_t^{\prime} \dashrightarrow F$. We denote by $G\subseteq  \mathcal{C}\left(\mathcal{X}^{\prime} \times F / S\right)$
the set of all such  $\mathcal{G}^{\prime}_t$. 
Since $\mathcal{C}\left(\mathcal{X}^{\prime} \times F / S\right)$ has only countably many irreducible components, there exists an irreducible component $$\mathcal{C}\subseteq\mathcal{C}_n\left(\mathcal{X}^{\prime} \times F / S\right),$$      {which contains uncountably many elements of   
$G$. }
Based on Lemma \ref{pro}, $\left.\varpi\right|_\mathcal{C}: \mathcal{C}\to S$ is proper\footnote{{
Note that, by combining the homeomorphism mentioned in Section \ref{ss7.1},  the fact that properness is a local property over the base, Lemma \ref{pro} and \cite[Proposition 4.8]{Fu78/79}, one can obtain the 
following:
for a locally Moishezon family $f:X\to S$, each irreducible component of the relative $n$-cycle space $\mathcal{C}_n(X/S)$ is proper over $S$. Consequently, in the proof of Theorem \ref{thm-main-20250116}, we can also apply the theory of relative cycle spaces directly to the locally Moishezon family itself.}} . 
Since $\varpi(\mathcal{C})$ contains uncountably many $t \in S$, it follows from Remmert’s proper mapping theorem that $\varpi|_\mathcal{C}: \mathcal{C}\to S$ is surjective.   
  Moreover,  $\varpi(\mathcal{C}\cap G)$ contains uncountably many points of $S$.  

 It follows from the generic smoothness theorem   that  
 $\Phi$ and $p'$ are smooth over $S\setminus T$  for  certain proper analytic subset $T\subseteq S$.
Consequently, for any point $t\in S\setminus T$, there exists a contractible open neighborhood $D_t$ of $t$ in $S\setminus T$ such that  $\Phi$ is differentiably trivial over $D_t$.
Thus   for any fixed  $t$ and   any two points $t_1, t_2 \in D_t$,  we have isomorphisms $$H_{2n}(X^{\prime}_{t_1}\times F, \mathbb{Z}) \cong H_{2n}(\Phi^{-1}(D_t), \mathbb{Z}) \cong H_{2n}(X^{\prime}_{t_2}\times F, \mathbb{Z}).$$
Note that these isomorphisms are induced by the natural lifting and restriction  of corresponding smooth $d$-closed differential forms  representing  the corresponding  cohomology classes.

Since $\mathcal{C}$ contains uncountably many elements of $G$ and $T\subseteq S$ is at most countable, $(\mathcal{C}\setminus (\varpi)^{-1}(T))\cap \mathcal{C}^*$ is nonempty and hence   Zariski open in $\mathcal{C}$. 
Note that this irreducible complex space  $(\mathcal{C}\setminus (\varpi)^{-1}(T))\cap \mathcal{C}^*$ is arcwise connected (e.g., \cite[p. 178]{GR84}). 
Then for any $g, g^{\prime} \in (\mathcal{C}\setminus (\varpi)^{-1}(T))\cap \mathcal{C}^*$, there exist finitely many paths  $\gamma_1,\cdots,\gamma_m: [0,1]\to (\mathcal{C}\setminus (\varpi)^{-1}(T))\cap \mathcal{C}^*$ such that $\gamma_i(1)=\gamma_{i+1}(0)$ for any $i$ and that  $\gamma_1(0)=g$ and $\gamma_m(1)=g^{\prime}$ such that (the image of)
each path $\gamma_i$ is located in $\varpi^{-1}(D)\cap (\mathcal{C}\setminus (\varpi)^{-1}(T))\cap \mathcal{C}^*$ for some connected differential trivialization domain $D\subseteq S\setminus T$ of $\Phi$ ($D$ depends on $i$).
{Recall from Subsection \ref{ss7.1} that $\mathcal{C}_n(\Phi^{-1}(D)/D)$ is homeomorphic to 
$\varpi^{-1}(D)$.  Consequently, (the image of) each path  lies  in a connected component of $\mathcal{C}_n(\Phi^{-1}(D))$. }

{Let $t:=\varpi(g)$ and $s:=\varpi(g^{\prime})$.
Let
$a\in (F\times S)_t$,  $b\in (F\times S)_s$,
$c\in X'_t$ and $d\in X'_s$ be  any points, where $(F\times S)_{\bullet}$ denotes the fiber of $F\times S\to S$ over $\bullet$. 
Since the restrictions of $d$-closed smooth forms on $\Phi^{-1}(D)$ (for any fixed aforementioned $D$) to each fiber, say $X^{\prime}_t\times F$,  range  over all  cohomology classes in $H^{2n}(X^{\prime}_t\times F, \mathbb{C})$,  we can then apply  the integration of cohomology classes on compact cycles of 
$\Phi^{-1}(D)$ (\cite[Corollary 4.2.45]{BM19}) to obtain  the following equalities of intersection numbers:
\begin{align*}
[g] \cdot \left[ X_t' \times \{a\} \right] &= [g'] \cdot \left[ X_s' \times \{b\} \right], \\
[g] \cdot \left[ \{c\} \times F \right] &= [g'] \cdot \left[ \{d\} \times F \right],
\end{align*}
where $[\bullet]$ denotes the corresponding homology class associated with $\bullet$.}

Recall that for a smooth $X_t^{\prime}$, a reduced and irreducible analytic cycle 
$$\Gamma_t^{\prime} \subseteq\left(\mathcal{X}^{\prime} \times F\right)_t=X_t^{\prime} \times F,$$
corresponding to a point in $(\varpi|_\mathcal{C})^{-1}(t)$ for some $t$, 
serves as the graph of a bimeromorphic map $X_t^{\prime} \dashrightarrow F$ if and only if  $\Gamma_t^{\prime}$ has intersection number $1$ with the  fiber of $\left(\mathcal{X}^{\prime} \times F\right)_t \to X_t^{\prime}$ and it also has intersection number $1$ with the  fiber of $\left(\mathcal{X}^{\prime} \times F\right)_t \to F$ (e.g., \cite[p. 493]{GH78}).

Clearly, the intersection $G\cap ((\mathcal{C}\setminus (\varpi)^{-1}(T))\cap \mathcal{C}^*)$ is nonempty. 
Consequently,  all points $\lambda$ of $(\mathcal{C}\setminus (\varpi)^{-1}(T))\cap \mathcal{C}^*$   give bimeromorphic maps from the corresponding fiber $X_t^{\prime}$ to $F$, by  the intersection-number reason, where $t=\varpi(\lambda)$. 

Moreover, the image of  $(\mathcal{C}\setminus (\varpi)^{-1}(T))\cap \mathcal{C}^*$  under $\varpi$ is clearly a Zariski open subset of $S$.  This completes the proof of Proposition \ref{bime-graph-many}.
\end{proof}

\rem From the above argument, one readily sees that, given two families $\pi_1$ and $\pi_2$  over the same base $S$ (without any Kodaira dimension assumptions on the fibers), and under natural assumptions on the fiber product of  $\pi_1$ and $\pi_2$ (e.g., generic smoothness and  properness of the relative cycle space), the condition that  
 $\pi_1^{-1}(t)$ is bimeromorphic to  $\pi_2^{-1}(t)$ for  uncountably many $t\in S$ already implies that  $\pi_1^{-1}(t)$ is bimeromorphic to  $\pi_2^{-1}(t)$ for any general $t\in S$. 
	
Overall, we conclude from Proposition \ref{bime-graph-many} that for the proof of Theorem \ref{thm-main-20250116}, it suffices to prove:
\begin{prop}\label{specialization-base 1 dim-bimero}
Let $p:\mathcal{X}\to S$ be a locally Moishezon smooth family with $n$-dimensional  fibers $X_t:=p^{-1}(t)$. 
Let $F$ be a projective manifold with $\kappa(F)\geq 0$.  
Assume that $U_b$ contains a nonempty open subset of $S$, where $U_b$ is as in the statement of Theorem \ref{thm-main-20250116}. Then   $X_t$ is bimeromorphic to $F$ for any $t\in S$.   
\end{prop}

In the remaining part of the proof of Theorem \ref{thm-main-20250116},  Steps \ref{s2}, \ref{step2-Barlet} and \ref{step3-apply fiberwise bime} are devoted to proving Proposition \ref{specialization-base 1 dim-bimero}.
	
\begin{step}\label{s2}
Reduction of Proposition \ref{specialization-base 1 dim-bimero} to  the verification of  bimeromorphism for fibers near certain fixed fiber. 
\end{step}		

For the reader’s convenience and  to  clarify the subsequent steps, we include the full details of an easy topological argument.
 Denote by $V$ the interior of the bimeromorphic locus $U_b$. 
Then $V$ is nonempty. As $S$ is connected, to prove  Proposition \ref{specialization-base 1 dim-bimero}, it suffices to show that $V$ is closed.

Let  $\{U_{\alpha}\}_{\alpha}$ be an open cover of $S$ with each $U_{\alpha}$ being biholomorphic to  the unit disk $\Delta\subseteq \mathbb{C}$.   It suffices to prove that 
$$\overline{V}\cap U_{\alpha} \subseteq {V}\cap U_{\alpha}$$
for each $\alpha$ with $\overline{V}\cap U_{\alpha}\neq \emptyset$.  Fix such a $U_{\alpha}$ and a point $0\in \overline{V}\cap U_{\alpha}$, if $0\in V$, there is nothing to prove. 
So it suffices to prove $0\in V$ when $0\in \partial V \cap U_{\alpha}$, i.e., we should establish the existence of an open neighborhood $W_0$ of $0$ such that for any $t\in W_0$, $X_t$ is bimeromorphic to $F$.

From the above analysis,  in the setting of Proposition \ref{specialization-base 1 dim-bimero}, we can assume that the base space $S$ is the unit disc $\Delta\subseteq \mathbb{C}$, $p$ is Moishezon, and there exists a nonempty open subset $V$ of $S$ such that $X_t$ is bimeromorphic to $F$ for any $t\in V$. 
Additionally, $0\in \partial V$.  

To complete the proof of Proposition \ref{specialization-base 1 dim-bimero}, and hence the proof of  Theorem \ref{thm-main-20250116}, we should establish the existence of an open neighborhood $W_0$ of $0$ such that for any $t\in W_0$, $X_t$ is bimeromorphic to $F$.  
In the next two steps, we will utilize the relative Barlet cycle space theory and our fiberwise bimeromorphism result to reach this.

\begin{step}\label{step2-Barlet}
Construction of a global bimeromorphic map between the total spaces  after base change.
\end{step}

Similarly to what was done in Step \ref{step-reduction to open}, by the Moishezonness of $p$, we can obtain a projective morphism $$p^{\prime}: \mathcal{X}^{\prime}\to S$$ from a connected complex manifold  
$\mathcal{X}^{\prime}$ such that there exists a bimeromorphic map $\tau: \mathcal{X}\dashrightarrow \mathcal{X}^\prime$  over $S$.	  In particular,  we can assume that $\tau|_{X_t}$ is bimeromorphic for $t\neq 0$ (after shrinking the base $S$). Consequently, for any $t\in V\setminus \{0\}$, $X_t^{\prime}$ is bimeromorphic to $F$, where $X_t^{\prime}$ is the  fiber  of  $p^{\prime}$ over $t$.  	Moreover,  $p^{\prime}$ is a flat morphism with irreducible general fibers, based on  the criterion for flatness of a morphism over a smooth curve. 

It is worth noting that, as in Step \ref{step-reduction to open}, the generic smoothness theorem implies the existence of a proper analytic subset $T$ of $S$ such that both   the natural morphism $\mathcal{X}^{\prime} \times_S (S\times F) \to S$ and $p'$ are smooth projective morphisms over $S\setminus T$.

Now we apply a relative Barlet cycle space theoretical argument,  motivated by  Step $e$ of the proof of \cite[Theorem 1]{T82} and by the proof of \cite[Theorem 3.1]{dFF13}, to construct   a bimeromorphic map 
$$\nu: \widetilde{\mathcal X}^{\prime} \dashrightarrow \tilde{\mathcal{C}}\times F,$$ where $\widetilde{\bullet}$ is the base extension of $\bullet$ under some morphism $\tilde{\mathcal{C}}\to S$.

As in Proposition \ref{bime-graph-many} of Step \ref{step-reduction to open}, there exists  an irreducible component $\mathcal{C}$ of the relative $n$-cycle space
    $\mathcal{C}_n\left(\mathcal{X}^{\prime} \times F / S\right)$ of the morphism $\mathcal{X}^{\prime} \times_S (S\times F) \to S$   
 such that  $\mathcal{C}$ is surjective onto $S$  via $\varpi$ and all points $\lambda$ of $(\mathcal{C}\setminus (\varpi)^{-1}(T))\cap \mathcal{C}^*$ (which is Zariski open in $\mathcal{C}$)  give bimeromorphic maps from the corresponding fiber $X_t^{\prime}$ to $F$,  where  $t=\varpi(\lambda)$ .

We now take an irreducible  local curve $\mathcal{C}^{\prime}$ of
$\mathcal{C}$ in an open subset $U\subseteq \mathcal{C}$  of $\mathcal{C}$ such that $\mathcal{C}^{\prime}$ intersects both $(\varpi|_\mathcal{C})^{-1}(0)$ and $(\mathcal{C} \cap \mathcal{C}^*\setminus (\varpi|_\mathcal{C})^{-1}(T))\setminus (\varpi|_\mathcal{C})^{-1}(0)$.  In particular, $\varpi|_\mathcal{C}(\mathcal{C}^{'})$ contains an open neighborhood of $0$, and  the general point $\lambda'$ of $\mathcal{C}^{\prime}$  gives a bimeromorphic map from the corresponding fiber $X_t^{\prime}$ ($t=\varpi(\lambda')$) to $F$.

 Let $\eta: \tilde{\mathcal{C}} \to  \mathcal{C}^{\prime}$ be the normalization of $\mathcal{C}^{\prime}$, and denote by $\tilde{\nu}$ the natural morphism $ \tilde{\mathcal{C}} \to  S$. 
Let
$$
\widetilde{\mathcal X^{\prime} \times F}:=\tilde{\mathcal{C}} {\times}_S\left(\mathcal X^{\prime} \times F\right), \quad \widetilde{\mathcal X^{\prime}}:={\tilde{\mathcal{C}}} {\times}_S {\mathcal X}^{\prime}, \quad \widetilde{\mathcal X}:={\tilde{\mathcal{C}}}{\times}_S \mathcal X.
$$

Note that we have a natural composed morphism
$$\Lambda: \tilde{\mathcal{C}}\to \mathcal{C}^{\prime}\to U 
 \to \mathcal{C}\to \mathcal{C}_n\left({\mathcal X^{\prime} \times F} / {S}\right) \to \mathcal{C}_n\left(\mathcal X^{\prime} \times F \right),$$ where  $U 
 \to \mathcal{C}$ is the natural inclusion and $\mathcal{C}^{\prime}\to U$ is the natural closed embedding.  
It then follows from Lemma \ref{analytic subset},  that 	
$$G_1:=\left\{(\tilde{c}, P) \in \tilde{\mathcal{C}} \times{\left(\mathcal X^{\prime} \times F\right)} \mid P \in |\Lambda(\tilde{c})| \right\}
$$
is an analytic subset of $\tilde{\mathcal{C}} \times (\mathcal X^{\prime} \times F)$ that is proper over $\tilde{\mathcal{C}}$.  

Since $\Lambda(\tilde{c})$ is an $n$-cycle for any $\tilde c \in \tilde{\mathcal{C}}$, each fiber of the induced morphism $G_1\to \tilde{\mathcal{C}}$  is of pure dimension, where $G_1$ is equipped with the reduced structure. Moreover, each irreducible component of  $G_1$ is of dimension $n+1$ (e.g., \cite[Section 4.3.3]{BM19}).  Hence  each irreducible component of $G_1$ is onto $\tilde{\mathcal{C}}$.
Consequently, the morphism
$G_1\to \tilde{\mathcal{C}}$ is flat (e.g., \cite[Introduction]{Hi75}), and thus $G_1$  is irreducible, based on the fact that  $|\Lambda (\tilde c)|$ is irreducible for general $\tilde c\in \tilde{\mathcal{C}}$.   Moreover, for general $\tilde c\in \tilde{\mathcal{C}}$, 
$$G_1 \cap \{\tilde c\}\times (\mathcal X^{\prime} \times F) \subseteq  \{\tilde c\}\times  ((\mathcal X^{\prime})_{\tilde{\nu} (\tilde c)} \times (F\times S)_{\tilde{\nu} (\tilde c)})$$
serves as the graph of a bimeromorphic map
$(\mathcal X^{\prime})_{\tilde{\nu} (\tilde c)}\dashrightarrow F\cong (F\times S)_{\tilde{\nu} (\tilde c)}$.

Consider the natural closed embedding
$$
j:=(q, \tilde{\nu}^{\prime}): \widetilde{\mathcal{X}^{\prime}\times F} \to \tilde{\mathcal{C}} \times (\mathcal{X}^{\prime} \times F), \quad z \mapsto(q(z), \tilde{\nu}^{\prime}(z)),
$$
where $\tilde{\nu} '$ denotes  the natural morphism 
$\widetilde{\mathcal X^{\prime} \times F}\to \mathcal X^{\prime} \times F$, and $q$ denotes the natural morphism $\widetilde{\mathcal{X}^{\prime}\times F} \to \tilde{\mathcal{C}}$.
Clearly,    $G_1$ is located in the image of $j$. It then follows that  the analytic subset
$G_2:=j^{-1}(G_1)$ of $$\widetilde{\mathcal{X}^{\prime}\times F}\cong \widetilde{\mathcal{X}^{\prime}}\times_{\tilde{\mathcal{C}}} (\tilde{\mathcal{C}}\times F)$$
is also irreducible as $G_1$ is.

Note that $\tilde{\nu} '$ is fiberwise biholomorphic (e.g., \cite[p. 29]{Fs76}), i.e., it induces the isomorphism
$$(\widetilde{\mathcal X^{\prime} \times F})_{\tilde c}  \cong   (\mathcal X^{\prime} \times F)_{\tilde{\nu} (\tilde c)}$$
for each $\tilde c\in \tilde{\mathcal{C}}$.
Then 
for general $\tilde c\in \tilde{\mathcal{C}}$, 
$$G_2 \cap q^{-1}(\tilde{c})=
(\tilde{\nu} '|_{(\widetilde{\mathcal{X}^{\prime} \times F})_{\tilde{c}}})^{-1}(\rm{pr}_{2, \tilde{c}} (G_1 \cap\{\tilde{c}\} \times(\mathcal{X}^{\prime} \times F)))
$$ 
  serves as the graph of a bimeromorphic map
$(\widetilde{\mathcal X^{\prime} })_{\tilde c}\dashrightarrow F$, where $\rm{pr}_{2, \tilde{c}}$ denotes the natural projection $\{\tilde{c}\} \times(\mathcal{X}^{\prime} \times F) \to \mathcal{X}^{\prime} \times F$.

Since flatness is stable under base change and  the fiber is preserved by base change (e.g., \cite[p. 29]{Fs76}),  the morphism $\tilde{p'}: \widetilde{\mathcal{X}'}\to \tilde{\mathcal{C}}$ is  flat and its general fiber is irreducible. Consequently,  
$\widetilde{\mathcal{X}^{\prime}}$  is  irreducible.  Thus,  	
$G_2$ serves as the graph of a bimeromorphic map $$\nu: \widetilde{\mathcal X^{\prime}} \dashrightarrow  \tilde{\mathcal{C}} \times F$$ over $\tilde{\mathcal{C}}$.

\begin{step}\label{step3-apply fiberwise bime}
Application of fiberwise bimeromorphism result.
\end{step}

Following the reasoning in the preceding paragraph, it follows from  Lemmata \ref{jiayibingding} and \ref{connectedness of total space} that 
$\tilde{p}: \widetilde{\mathcal{X}}\to \tilde{\mathcal{C}}$  is smooth and that $\widetilde{\mathcal{X}}$ is a connected manifold.
So $\widetilde{\mathcal{X}}$  is  bimeromorphic to $\widetilde{\mathcal{X}^{\prime}}$	over $\tilde{\mathcal{C}}$, since $\mathcal{X}$ is bimeromorphic to $\mathcal{X}^{\prime}$ over $S$.
			
Consequently, one obtains that $\tilde{\mathcal{C}}\times F$ is bimeromorphic to  $\widetilde{\mathcal{X}}$ over $\tilde{\mathcal{C}}$, by composing with the $\nu$ constructed in Step \ref{step2-Barlet}.  Note that $\widetilde{\mathcal{X}}\to \tilde{\mathcal{C}}$ is  locally Moishezon, which follows from   the  stability of projectivity  under  base change. 	As a result, by Theorem \ref{fiberbime taka mois} or Corollary \ref{fibe bime canonical singu-kodaira geq 0}, we conclude that $\tilde{\mathcal{C}}\times F$ is fiberwise bimeromorphic to  $\widetilde{\mathcal{X}}$ over $\tilde{\mathcal{C}}$. This is because, for the smooth family $\tilde p$, the upper semi-continuity of plurigenera is ensured by Kodaira--Spencer's upper semi-continuity theorem.  Again utilizing the fact that base change does not change the fiber  (e.g., \cite[p. 29]{Fs76}), we obtain that each fiber near $X_0$ is bimeromorphic to $F$, as desired in the last paragraph of Step \ref{s2}. Thus, we complete the proof of Proposition \ref{specialization-base 1 dim-bimero}, and so that of Theorem \ref{thm-main-20250116}.
\end{proof}

\rem{Based on Proposition \ref{specialization-base 1 dim-bimero}, one can easily derive some related results as the base $S$ is of arbitrary dimension.  For example: Let $p:\mathcal{X}\to B$ be a locally Moishezon smooth morphism, where $B$ is a complex manifold of dimension $\geq 1$. Assume that each fiber over $B\setminus \Gamma$ is bimeromorphic to $F$ with $\Gamma$ a proper analytic subset of $B$,  where $F$ is a projective manifold with $\kappa(F)\geq 0$, then all the fibers are bimeromorphic to $F$.  In fact, for any point $s\in \Gamma$, we can find a  $1$-dimensional smooth analytic disk $\Delta$ centered at $s$ and $\Delta\cap \Gamma$ is a proper analytic subset of $\Delta$. Then this result follows easily from  \cite[Theorem 1.4.(ii)]{RT21} or \cite[Corollary 22]{Kol22} and Proposition \ref{specialization-base 1 dim-bimero}.}
		
As a direct application of Theorem \ref{thm-main-20250116},  we can easily get Example \ref{P1 local rigidity-new}, based on the well-known criteria for local rigidity of a compact complex manifold in the local deformation theory. This also provides an instance of the condition ``$X_t$ is bimeromorphic to $F$ for all $t$ in some nonempty open subset of $S$" in Proposition \ref{specialization-base 1 dim-bimero}.
\begin{ex}\label{P1 local rigidity-new}
Let $\pi:\mathcal{X}\to S$ be a locally Moishezon smooth family with compact complex manifolds $X_t:=\pi^{-1}(t)$.
Assume that for some $s\in S$, $X_s$ satisfies $\kappa({X_s})\geq 0$ and $H^1(X_s, \Theta_{X_s})=0$, where $\Theta_{X_t}$ is the tangent sheaf of $X_t$. Then $X_t$ is bimeromorphic to $X_s$ for any $t\in S$. 
\end{ex}

    {Motivated by Mumford–Persson \cite[Lemma 3.1.1, Proposition
3.1.2]{Ps77}\footnote{Below \cite[Proposition 2.6.1]{Ps77}, in page 61, Persson wrote, ``This observation (pointed out to me by Mumford)".}, }
we can easily modify the proof of Theorem \ref{thm-main-20250116} to obtain Theorem \ref{specialization-base 1 dim-bimero-no global bime assum-uncountable}, which  may  be  viewed as  the  converse of Catanese's birationality openness result.  
Recall that \cite[Theorem 2.4]{Ct23} states a condition ensuring that having a birational map onto the image is an open property for a certain family whose ``central" fiber is an irreducible normal non uniruled variety.  

Compared to Theorem \ref{fiberbime taka mois} or Corollary \ref{fibe bime canonical singu-kodaira geq 0} (when both $\pi_1$ and $\pi_2$ are smooth), 
Theorem \ref{specialization-base 1 dim-bimero-no global bime assum-uncountable} does not require the total spaces of the two families involved to be bimeromorphic over the base, a priori. This type of result is referred to as the \emph{specialization of bimeromorphic types} in this paper, motivated by the notion of specialization of birational types in \cite{KT19}.   
Note that both \cite{KT19} and \cite{NO21} dealt with schemes while the fibers of Theorem \ref{specialization-base 1 dim-bimero-no global bime assum-uncountable} are Moishezon, which correspond to complete algebraic spaces. 	
\begin{thm}\label{specialization-base 1 dim-bimero-no global bime assum-uncountable}
Let  $p:\mathcal{X}\to S$ and  $q:\mathcal{Y}\to S$ be two locally Moishezon smooth families and each fiber of $\pi_1$ of Kodaira dimension not equal to $-\infty$.  
Set $$U_b:=\{t\in S: X_t \text{ is bimeromorphic to } Y_t\}.$$
Then $U_b$ is either an at most countable union of proper analytic subsets of $S$ or the whole of $S$.
\end{thm}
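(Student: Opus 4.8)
The plan is to follow the proof of Theorem \ref{thm-main-20250116} almost line for line, replacing the fixed target manifold $F$ by the second family $q:\mathcal{Y}\to S$ (and the constant family $\mathcal{C}\times F$ there by a base-changed projective model of $\mathcal{Y}$). Since $S$ is a smooth curve, every proper analytic subset of $S$ is a discrete, hence at most countable, set of points, so the dichotomy in the statement reduces to showing: \emph{if $U_b$ is uncountable, then $U_b=S$.}

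\emph{From uncountable to open.} I would cover $S$ by at most countably many open disks $\{S_i\}$, together with relatively compact smaller disks $\{S_i'\Subset S_i\}$ still covering $S$, chosen so that both $p$ and $q$ are Moishezon over every $S_i$ (take the common refinement of the two covers from local Moishezonness). If $U_b$ is uncountable, some $S_{i_0}$ has $U_b\cap S_{i_0}$ uncountable; desingularizing projective models, I may assume there are projective morphisms $p':\mathcal{X}'\to S_{i_0}$, $q':\mathcal{Y}'\to S_{i_0}$ with smooth irreducible total spaces and bimeromorphic maps $\tau_1:\mathcal{X}\dashrightarrow\mathcal{X}'$, $\tau_2:\mathcal{Y}\dashrightarrow\mathcal{Y}'$ over $S_{i_0}$, fiberwise bimeromorphic over a general point (by the argument of Lemma \ref{obs 1}). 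Now form the relative Barlet cycle space $\mathcal{C}(\mathcal{X}'\times_{S_{i_0}}\mathcal{Y}'/S_{i_0})$; as $\mathcal{X}'\times_{S_{i_0}}\mathcal{Y}'\to S_{i_0}$ is projective, each of its (at most countably many) irreducible components is proper over $S_{i_0}$ by Lemma \ref{pro}. For every $t$ with $X_t$ bimeromorphic to $Y_t$, the fiber $X'_t\times Y'_t$ carries an irreducible cycle $\mathcal{G}'_t$ that is the graph of a bimeromorphic map $X'_t\dashrightarrow Y'_t$, i.e.\ an irreducible cycle meeting the general fibers of both projections $X'_t\times Y'_t\to X'_t$, $X'_t\times Y'_t\to Y'_t$ in a single point; since the intersection numbers with fixed cohomology classes are constant along an irreducible component of the cycle space (cycles in a connected component being cohomologous), one component $\mathcal{C}$ contains uncountably many $\mathcal{G}'_t$, its restriction $\varpi|_{\mathcal{C}}:\mathcal{C}\to S_{i_0}$ is proper and surjective, the Zariski-open locus $\mathcal{C}^*$ of irreducible cycles is dense in $\mathcal{C}$, and the generic point of $\mathcal{C}$ is again the graph of a bimeromorphic map $X'_t\dashrightarrow Y'_t$. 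Transporting back along $\tau_1,\tau_2$, this shows $U_b$ contains a nonempty open subset of $S$, which is the exact analogue of Step \ref{step-reduction to open}.

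\emph{From open to everything.} Let $V=\mathrm{int}(U_b)$, which is now nonempty; by connectedness of $S$ it suffices to show $V$ is closed, and a standard covering argument reduces this to: given $0\in\partial V$ with $S=\Delta$ the unit disk and $V\cap\Delta'$ uncountable for every smaller disk $\Delta'$, produce a neighborhood $W_0$ of $0$ over which $X_t$ is bimeromorphic to $Y_t$. Shrinking and passing to smooth projective models $p':\mathcal{X}'\to\Delta$, $q':\mathcal{Y}'\to\Delta$ as above, I would repeat Step \ref{step2-Barlet}: take the component $\mathcal{C}$ of $\mathcal{C}(\mathcal{X}'\times_\Delta\mathcal{Y}'/\Delta)$ containing uncountably many bimeromorphic graphs, cut out a local curve $\mathcal{C}'\subseteq\mathcal{C}$ through a point over $0$ and a point of $\mathcal{C}\cap\mathcal{C}^*$ over some $t\neq 0$, let $\eta:\tilde{\mathcal{C}}\to\mathcal{C}'$ be the normalization, and let $\tilde\nu:\tilde{\mathcal{C}}\to\Delta$ be the induced proper surjection. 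Base changing all the families along $\tilde\nu$ and applying Lemma \ref{analytic subset} exactly as in Step \ref{step2-Barlet} yields an analytic subset $\tilde{\mathcal{G}}\subseteq\widetilde{\mathcal{X}'}\times_{\tilde{\mathcal{C}}}\widetilde{\mathcal{Y}'}$ that is the graph of a bimeromorphic map $\nu:\widetilde{\mathcal{X}'}\dashrightarrow\widetilde{\mathcal{Y}'}$ over $\tilde{\mathcal{C}}$.

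\emph{Conclusion via fiberwise bimeromorphism, and the main obstacle.} Since flatness and smoothness are stable under base change and base change does not alter fibers, $\tilde p:\widetilde{\mathcal{X}}\to\tilde{\mathcal{C}}$ and $\tilde q:\widetilde{\mathcal{Y}}\to\tilde{\mathcal{C}}$ are locally Moishezon smooth families with irreducible total spaces, and composing $\tilde\tau_1$, $\nu$ and $\tilde\tau_2^{-1}$ gives a bimeromorphic map $\widetilde{\mathcal{X}}\dashrightarrow\widetilde{\mathcal{Y}}$ over $\tilde{\mathcal{C}}$. Each fiber of $\tilde p$ still has Kodaira dimension $\ge 0$, and for the smooth family $\tilde q$ Grauert's upper semi-continuity theorem gives $P_m(\widetilde{Y}_s)\ge P_m(\widetilde{Y}_t)$ for $t$ near $s$ and all $m$; hence Theorem \ref{fiberbime taka mois} (equivalently Corollary \ref{fibe bime canonical singu-kodaira geq 0}, every smooth fiber having only canonical singularities) applies and this map is fiberwise bimeromorphic. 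Therefore every fiber of $\widetilde{\mathcal{X}}$ over a point near $\tilde\nu^{-1}(0)$ is bimeromorphic to the corresponding fiber of $\widetilde{\mathcal{Y}}$, and, base change not changing fibers, $X_t$ is bimeromorphic to $Y_t$ for all $t$ in a neighborhood of $0$; thus $0\in V$, $V$ is closed, $V=S$, and the theorem follows. The one genuine difficulty --- the reason Sections \ref{section-fib bime for map kodaira}--\ref{section-moishezon} are needed --- is this last step: a bimeromorphic map between the total spaces over $\tilde{\mathcal{C}}$ need not, a priori, restrict to a bimeromorphic map on central fibers, since eliminating its indeterminacy can contract a component of $\widetilde{X}_0$ or produce an exceptional divisor that becomes the unique component matching $\widetilde{Y}_0$; it is exactly the hypothesis $\kappa(X_t)\ge 0$, fed through Takayama's pluricanonical extension (Lemma \ref{Taka-m-extension}) into Theorem \ref{fiberbime taka mois}, that excludes this and lets the fiberwise statement be deduced from the bimeromorphism of the total spaces alone.
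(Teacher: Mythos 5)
Your proposal is correct and follows essentially the same route as the paper: the paper's own proof is precisely a modification of Theorem \ref{thm-main-20250116} in which the constant factor $F$ is replaced by a projective model $\mathcal{Y}'$ of the second family, the cycle space $\mathcal{C}(\mathcal{X}'\times F/S)$ is replaced by $\mathcal{C}(\mathcal{X}'\times_S\mathcal{Y}'/S)$ (proper over $S$ since $\mathcal{X}'\times_S\mathcal{Y}'\to S$ is projective), and the conclusion is drawn from Theorem \ref{fiberbime taka mois} or Corollary \ref{fibe bime canonical singu-kodaira geq 0} after the base change along $\tilde{\mathcal{C}}\to S$, exactly as you describe. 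Your closing remark about the genuine obstacle --- that the global bimeromorphic map over $\tilde{\mathcal{C}}$ could a priori contract a component of the central fiber, which is excluded by $\kappa(X_t)\geq 0$ via Takayama's extension --- matches the paper's own emphasis in Step \ref{step3-apply fiberwise bime}.
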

\begin{proof}
The proof of this theorem is almost identical to that  of Theorem \ref{thm-main-20250116}, and so we only indicate the key points.  As in Theorem \ref{thm-main-20250116}, it suffices to prove the following: if $U_b$ is uncountable, then $U_b=S$.  We also divide the proof into four steps, as follows.
\begin{enumerate}[\rm{(}1\rm{)}]
\item
Take a connected open subset  $U\subseteq S$ such that 
  $p$ is  bimeromorphic over $U$ to a projective morphism $p^{\prime}: \mathcal{X}^{\prime}\to U$ with $\mathcal{X}^{\prime}$ smooth, and $q$  is  also bimeromorphic over $U$ to a projective morphism $q^{\prime}: \mathcal{Y}^{\prime}\to U$ with $\mathcal{Y}^{\prime}$ smooth. Moreover,   $(p^{\prime})^{-1}(t)$ is bimeromorphic to $(q^{\prime})^{-1}(t)$ for uncountably many  $t\in U$.     Since projectivity of morphism is stable under base change and the composition of two projective morphisms is also projective, the composition  $$\alpha:\mathcal{X}^{\prime} \times_U \mathcal{Y}^{\prime} \to \mathcal{Y}^{\prime} \to U$$ is still projective. 

  Note that there exists a proper analytic subset $T\subseteq U$ such that $p',  q'$ and $\alpha$ are smooth projective morphisms over $U\setminus T$.
Consequently, as in    Step \ref{step-reduction to open} of  Theorem  \ref{thm-main-20250116}, via  analyzing the cycle space   $\mathcal{C}\left(\mathcal{X}^{\prime} \times_U \mathcal{Y}^{\prime} / U\right)$,    we obtain that  $(p^{\prime})^{-1}(t)$ is bimeromorphic to $(q^{\prime})^{-1}(t)$ for general $t\in U$.  

\item\label{temp-2}  
As  in Step  \ref{s2} of Theorem \ref{thm-main-20250116}, we reduce the question to the local case: the base space $S$ (or say $U$) is the unit disc $\Delta\subseteq \mathbb{C}$, $p$ and $q$ are Moishezon morphisms over $S$,  and there exists a nonempty open subset $V$ of $S$ such that $X_t$ is bimeromorphic to $Y_t$ for any $t\in V$, and $0\in \partial V$. For the proof of this theorem,  it suffices to establish the existence of an open neighborhood $W_0$ of $0$ such that for any $t\in W_0$, $X_t$ is bimeromorphic to $Y_t$.

\item
As in Step \ref{step2-Barlet}   of  Theorem \ref{thm-main-20250116}, since $p$ and $q$ are Moishezon morphisms, 
 $p$ is  bimeromorphic over $S$ to a projective morphism $p^{\prime}: \mathcal{X}^{\prime}\to S$ with $\mathcal{X}^{\prime}$ smooth, and $q$  is  also bimeromorphic over $S$ to a projective morphism $q^{\prime}: \mathcal{Y}^{\prime}\to S$ with $\mathcal{Y}^{\prime}$ smooth. 
 Using  the 
relative  cycle space 	$\mathcal{C}\left(\mathcal{X}^{\prime} \times_S \mathcal{Y}^{\prime} / S\right)$,
we obtain a bimeromorphic map  $\widetilde{\mathcal{X}^{\prime}}\dashrightarrow \widetilde{\mathcal{Y}^{\prime}}$   over $\tilde{\mathcal{C}}$, where $\tilde{\mathcal{C}}$ is a smooth connected curve and  $\widetilde{\bullet}$ is the base change of $\bullet$ under some morphism $\tilde{\mathcal{C}}\to S$ (here $S$ is an open neighborhood of $0$ in the $S$ appearing in item \eqref{temp-2}).

\item 
As in Step \ref{step3-apply fiberwise bime}   of  Theorem \ref{thm-main-20250116}, we obtain a bimeromorphic map  $\widetilde{\mathcal{X}}\dashrightarrow \widetilde{\mathcal{Y}}$   over $\tilde{\mathcal{C}}$.
 Since projectivity is stable  under base change, both  $\widetilde{\mathcal{X}}\to \tilde{\mathcal{C}}$ and $\widetilde{\mathcal{Y}}\to \tilde{\mathcal{C}}$ are locally Moishezon.  Applying Corollary \ref{fibe bime canonical singu-kodaira geq 0} to the bimeromorphic map $\widetilde{\mathcal{X}}\dashrightarrow \widetilde{\mathcal{Y}}$ over $\tilde{\mathcal{C}}$ then yields that $(\widetilde{\mathcal{X}})_{\tilde{c}}$ is bimeromorphic to $(\widetilde{\mathcal{Y}})_{\tilde{c}}$ for any $\tilde{c}\in \tilde{\mathcal{C}}$. Consequently,  $X_t$ is bimeromorphic to $Y_t$, as desired in item \eqref{temp-2}, based on the fact that base change does not change the fiber (e.g., \cite[p. 29]{Fs76}). So we complete the proof.
\end{enumerate}
\end{proof}	
	
Note that the result on the specialization of bimeromorphic types seems much stronger than that on the specialization of a fixed bimeromorphic structure. For example, Theorem \ref{thm-main-20250116}  is an easy corollary of Theorem \ref{specialization-base 1 dim-bimero-no global bime assum-uncountable}.
However, comparing the proof of Theorem \ref{specialization-base 1 dim-bimero-no global bime assum-uncountable} with that of Theorem \ref{thm-main-20250116}, we can deal with the non-deformability of a bimeromorphic structure and the specialization of bimeromorphic types in a unified way, by virtue of the characterization of fiberwise bimeromorphism.  That is to say,  we obtain the relation:
	
specialization of a fixed bimeromorphic structure
	$\underset{\scalebox{0.5}{\(\begin{array}{c}\text{method}\  \text{level}\end{array}\)}}{\Longleftrightarrow}$ specialization  of bimeromorphic types.

\subsection{Specialization of   bimeromorphic types in non-smooth Moishezon families}	
	
In this subsection,  we apply the method which is almost identical to the one presented in the proof of Theorem \ref{thm-main-20250116} or \ref{specialization-base 1 dim-bimero-no global bime assum-uncountable} to obtain Theorem \ref{specialization-base 1 dim-singular family} for certain locally Moishezon non-smooth families.

\begin{defn}\label{def-universally normal}
Let $p:\mathcal{X}\to S$ be a family. The family $p$ is called \emph{universally normal} if the base change   $\mathcal{X}\times_S \tilde{S}$  of the total space  $\mathcal{X}$  is normal  for any morphism $\tilde{S}\to S$ with $\tilde{S}$ also a connected smooth curve.
\end{defn}
	
\begin{ex}\label{univnormal-cano}
 Clearly, a smooth family is universally normal. More generally, a family in which each fiber has only canonical singularities is also universally normal, due to the   deformation behavior of canonical singularities by Kawamata \cite{Kw99a} or Fujino \cite{Fj23}. In fact,  they established  that if a reference fiber  in a flat family has only canonical singularities, then the total space $\mathcal{Z}$ near this reference fiber has only canonical singularities, in particular, $\mathcal{Z}$ is normal.  
\end{ex}

Since Lemma \ref{pro} makes no singularity assumption on the morphism involved,  we can then	apply the method almost identical  to  the one presented in the proof of  Theorem \ref{thm-main-20250116}  or \ref{specialization-base 1 dim-bimero-no global bime assum-uncountable} to obtain the following result on specialization of bimeromorphic types for universally normal locally Moishezon families.  Note that the condition ``universally normal'' is included to ensure the normality of the base-changed total space, which is required for applying Theorem  \ref{fiberbime taka mois}  or Corollary \ref{fibe bime canonical singu-kodaira geq 0}.  
	
\begin{thm}\label{specialization-base 1 dim-singular family}		
Let  $\pi_1:\mathcal{X}\to S$ and  $\pi_2:\mathcal{Y}\to S$ be two universally normal,  locally Moishezon families, 
where each fiber of $\pi_1$ is  irreducible and   of non-negative Kodaira dimension, and each fiber of $\pi_2$ is also irreducible. 
Moreover, for any  $s\in S$, assume $P_m(Y_s)\geq P_m(Y_t)$ for any $t$ near $s$. 
Set
$$U_b:= \{t\in S: \text{ the reduction of } X_t \text{ is bimeromorphic to  the reduction of } Y_t \}.$$
Then $U_b$ is either an at most countable union of proper analytic subsets of $S$ or the whole of $S$. 
\end{thm}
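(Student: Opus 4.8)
The plan is to mimic, essentially verbatim, the architecture of the proofs of Theorems \ref{thm-main-20250116} and \ref{specialization-base 1 dim-bimero-no global bime assum-uncountable}, with the single adjustment needed to handle non-smooth total spaces. First I would reduce, by the elementary topological argument of Steps \ref{step-reduction to open} and \ref{s2}, to the local situation: $S = \Delta$ is the unit disc, both $\pi_1$ and $\pi_2$ are Moishezon, there is a nonempty open $V \subseteq S$ over which the reduction of $X_t$ is bimeromorphic to the reduction of $Y_t$, and $0 \in \partial V$; it then suffices to produce an open neighborhood $W_0$ of $0$ over which the fibers are still bimeromorphic. (Here I would note that the reduction is irrelevant since all fibers are assumed irreducible, and a Moishezon family with irreducible general fiber is flat over the smooth curve, so each fiber is already reduced by \cite[Lemma 1.4]{Fu78/79}.) By the Moishezonness of $\pi_1$ and $\pi_2$ I pass to bimeromorphically equivalent projective morphisms $p': \mathcal{X}' \to S$ and $q': \mathcal{Y}' \to S$ over $S$, arranging that $(p')^{-1}(t)$ is bimeromorphic to $(q')^{-1}(t)$ for $t \in V \setminus \{0\}$ (by the argument of Lemma \ref{obs 1}, after shrinking). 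Since $\mathcal{X}' \times_S \mathcal{Y}' \to S$ is projective, the relative Barlet cycle space argument of Step \ref{step2-Barlet} applies with $\mathcal{C}(\mathcal{X}' \times_S \mathcal{Y}'/S)$ in place of $\mathcal{C}(\mathcal{X}' \times F/S)$: Lemma \ref{pro} gives a proper irreducible component $\mathcal{C}$ containing uncountably many graphs of fiberwise bimeromorphisms, a local curve $\mathcal{C}' \subseteq \mathcal{C}$ meeting both $(\varpi|_\mathcal{C})^{-1}(0)$ and the Zariski-open locus of irreducible cycles, and — after normalizing $\mathcal{C}'$ to $\tilde{\mathcal{C}} \to S$ and base-changing everything — a bimeromorphic map $\widetilde{\mathcal{X}'} \dashrightarrow \widetilde{\mathcal{Y}'}$ over $\tilde{\mathcal{C}}$, hence $\widetilde{\mathcal{X}} \dashrightarrow \widetilde{\mathcal{Y}}$ over $\tilde{\mathcal{C}}$.

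The point where non-smoothness must be addressed is the final step, corresponding to Step \ref{step3-apply fiberwise bime}. To invoke Theorem \ref{fiberbime taka mois} (or Corollary \ref{fibe bime canonical singu-kodaira geq 0}) for the bimeromorphic map $\widetilde{\mathcal{X}} \dashrightarrow \widetilde{\mathcal{Y}}$ over $\tilde{\mathcal{C}}$, I need that $\widetilde{\mathcal{X}} \to \tilde{\mathcal{C}}$ and $\widetilde{\mathcal{Y}} \to \tilde{\mathcal{C}}$ are still families in the sense of this paper — in particular that $\widetilde{\mathcal{X}}$ and $\widetilde{\mathcal{Y}}$ are normal. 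This is exactly what the hypothesis ``universally normal'' in Definition \ref{def-universally normal} is there to supply: since $\tilde{\mathcal{C}} \to S$ is a morphism from a connected smooth curve, $\widetilde{\mathcal{X}} = \tilde{\mathcal{C}} \times_S \mathcal{X}$ and $\widetilde{\mathcal{Y}} = \tilde{\mathcal{C}} \times_S \mathcal{Y}$ are normal; they are connected (irreducible) because $\tilde{\mathcal{C}}$ is irreducible and the fibers of $\tilde p$ are irreducible (base change does not change fibers, \cite[p. 29]{Fs76}), and local Moishezonness is also preserved under such base change. I also need the Kodaira-dimension and plurigenera hypotheses of Theorem \ref{fiberbime taka mois} to survive: $\kappa((\widetilde{\mathcal{X}})_{\tilde c}) = \kappa(X_{\tilde\nu(\tilde c)}) \geq 0$ and, for the $P_m$-upper-semicontinuity condition on the $\mathcal{Y}$-side, $P_m((\widetilde{\mathcal{Y}})_{\tilde c}) \geq P_m((\widetilde{\mathcal{Y}})_{\tilde c'})$ for $\tilde c'$ near $\tilde c$, which follows from the assumed semicontinuity $P_m(Y_s) \geq P_m(Y_t)$ for $t$ near $s$ on the base $S$ together with the fact that $\tilde\nu: \tilde{\mathcal{C}} \to S$ is an open holomorphic map between smooth curves (or is constant, a trivial case). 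Theorem \ref{fiberbime taka mois} then yields that $\widetilde{\mathcal{X}} \dashrightarrow \widetilde{\mathcal{Y}}$ is fiberwise bimeromorphic over $\tilde{\mathcal{C}}$, so $(\widetilde{\mathcal{X}})_{\tilde c}$ is bimeromorphic to $(\widetilde{\mathcal{Y}})_{\tilde c}$ for every $\tilde c \in \tilde{\mathcal{C}}$; reinterpreting via $\tilde\nu$ and using once more that base change does not change fibers, $X_t$ is bimeromorphic to $Y_t$ for all $t$ in an open neighborhood $W_0$ of $0$. This closes the set $V$ and, combined with the reduction of the first paragraph, proves the theorem.

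The main obstacle I anticipate is not any single deep input but the bookkeeping needed to verify that every property required by Theorem \ref{fiberbime taka mois} is genuinely inherited after the (possibly ramified) base change $\tilde{\mathcal{C}} \to S$ and after passing between $\mathcal{X}$, $\mathcal{X}'$, and their desingularizations — in particular the normality of the base-changed total spaces (this is precisely why ``universally normal'' is hypothesized rather than merely ``normal''), the preservation of irreducibility of all fibers, the transfer of the plurigenera semicontinuity from the base $S$ to $\tilde{\mathcal{C}}$, and the care that the bimeromorphic map $\widetilde{\mathcal{X}} \dashrightarrow \widetilde{\mathcal{Y}}$ obtained by composing $\widetilde{\mathcal{X}} \dashrightarrow \widetilde{\mathcal{X}'} \dashrightarrow \widetilde{\mathcal{Y}'} \dashrightarrow \widetilde{\mathcal{Y}}$ is indeed a bimeromorphic map \emph{over} $\tilde{\mathcal{C}}$. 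A secondary subtlety is that Theorem \ref{fiberbime taka mois} only requires $\pi_1$ (the $\mathcal{X}$-side) to be locally Moishezon; so it is important to route the Kodaira-dimension hypothesis through the $\mathcal{X}$-family and the $P_m$-condition through the $\mathcal{Y}$-family, exactly as in the statement of Theorem \ref{fiberbime taka mois}, and not to demand more of $\pi_2$ than assumed.
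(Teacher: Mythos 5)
Your proposal is correct and follows essentially the same route as the paper, which proves this theorem precisely by rerunning the arguments of Theorems \ref{thm-main-20250116} and \ref{specialization-base 1 dim-bimero-no global bime assum-uncountable} with the relative cycle space $\mathcal{C}(\mathcal{X}'\times_S\mathcal{Y}'/S)$ and then invoking Theorem \ref{fiberbime taka mois} (or Corollary \ref{fibe bime canonical singu-kodaira geq 0}), the hypothesis ``universally normal'' being included exactly for the reason you identify, namely to guarantee normality of the base-changed total spaces $\widetilde{\mathcal{X}}$ and $\widetilde{\mathcal{Y}}$ so that they are families in the paper's sense. The only small inaccuracy is your parenthetical claim that every fiber is reduced (Fujiki's lemma only gives this for the general fiber), but this is harmless since the fiberwise bimeromorphism results and the theorem itself are stated for the reductions of the fibers.
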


Example \ref{univnormal-cano} and the proof of \cite[Theorem 1.1]{Tk07} together imply that a family in which each fiber has only canonical singularities can  serve as an example of the family $\pi_2$ in Theorem \ref{specialization-base 1 dim-singular family}, as $\pi_2$ is required to satisfy the upper semi-continuity  and  universal-normality conditions. This motivates an observation in: 

\rem 
Rao--Tsai \cite[Proof of Theorem 1.2]{RT22} and Koll\'ar \cite[Corollary 22]{Kol22} proved that a smooth family with each fiber being Moishezon is  locally Moishezon. Li--Rao--Wang--Wang \cite[Theorem 1.1]{LRWW24} proved that each fiber of a smooth family with uncountably many fibers being Moishezon is  Moishezon.  Motivated by those results and  Koll\'ar's conjectures and examples \cite[Conjectures 1, 3 + Example 14]{Kol22} on Moishezon morphisms, we conjecture that a  family (automatically flat over a smooth curve in this paper) in which each fiber has only canonical singularities  is locally Moishezon  if and only if uncountably many fibers are Moishezon.  If this conjecture is answered affirmatively,  one can then easily obtain a result, which is a corollary to Theorem \ref{specialization-base 1 dim-singular family}, without the local Moishezonness assumption as imposed there:
Let $\pi_1: X \to S$ and $\pi_2: Y \to S$ be two families in which each  fiber  has only canonical singularities. Assume that each fiber of $\pi_1$ is of Kodaira dimension not equal to $-\infty$. 
Set
$$
U_b:=\{t \in S: X_t \text{ is  Moishezon  and is bimeromorphic to } Y_t \} .
$$
Then $U_b$ is either an at most countable union of proper analytic subsets of $S$ or the whole of $S$.

\begin{defn}\label{def-strongly bime}
Let  $\pi: \mathcal{X}\to S$   be a family. The family $\pi$ is called \emph{locally strongly bimeromorphically isotrivial}, if for any reference point $s\in S$, the base change $\mathcal{X}_{S^{\prime}}:= \mathcal{X}\times_S  S^{\prime}$ is fiberwise bimeromorphic to a smooth trivial family $F\times S^{\prime} \to S^{\prime}$ over $S^{\prime}$, where $F$ is some compact complex manifold and $S^{\prime}\to S$ is some surjective morphism between two smooth curves, possibly after shrinking $S$ around $s$. 
\end{defn}
	
Motivated by the result of Bogomolov--B\"ohning--Graf von Bothmer \cite[Corollary 1.3]{BBG16}, we can easily apply the argument of Theorem \ref{thm-main-20250116} or \ref{specialization-base 1 dim-bimero-no global bime assum-uncountable} to obtain the following result on locally strongly bimeromorphically isotriviality. Recall that \cite[Corollary 1.3]{BBG16} proved: Let $g: U \rightarrow B$ be a family of algebraic varieties (in the scheme setting) such that all fibers
are birational to each other and B is integral. Then $g$ is birationally isotrivial. 

\begin{cor}\label{cor-strongly bime}
Let $\pi:\mathcal{X}\to S$  be a  universally normal locally Moishezon families, where each fiber of $\pi$ is an irreducible complex analytic space of Kodaira dimension not equal to $-\infty$ and uncountably many fibers of $\pi$ are bimeromorphic to one fixed projective manifold. Then   $\pi:\mathcal{X}\to S$  is locally strongly bimeromorphically isotrivial.
\end{cor}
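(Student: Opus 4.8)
The plan is to carry out the argument of Theorem~\ref{thm-main-20250116} (equivalently Theorem~\ref{specialization-base 1 dim-bimero-no global bime assum-uncountable}) in the universally normal locally Moishezon setting, but to stop at the point where the relative Barlet cycle space construction has already produced a global bimeromorphic map onto a (base-changed) trivial family: once that map is shown to be fiberwise bimeromorphic, it is exactly the datum demanded by Definition~\ref{def-strongly bime}. Write $F$ for the fixed projective manifold to which uncountably many fibers of $\pi$ are bimeromorphic; since some such fiber $X_t$ has $\kappa(X_t)\geq 0$ and Kodaira dimension is a bimeromorphic invariant, $\kappa(F)\geq 0$. First I would globalize the hypothesis: apply Theorem~\ref{specialization-base 1 dim-singular family} with $\pi_1=\pi$ and $\pi_2$ the trivial family $F\times S\to S$, which is a universally normal locally Moishezon family (its base changes $F\times S'$ are smooth) with irreducible fibers and constant, hence upper semi-continuous, plurigenera. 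The resulting set $U_b$ of $t$ with $(X_t)_{\textrm{red}}$ bimeromorphic to $F$ is either an at most countable union of proper analytic subsets of the curve $S$ or all of $S$; being uncountable by hypothesis, and a proper analytic subset of a curve being countable, $U_b=S$. Thus \emph{every} fiber of $\pi$ is bimeromorphic to $F$. This step is needed precisely so that, after shrinking $S$ around an \emph{arbitrary} reference point, enough fibers bimeromorphic to $F$ survive to feed the cycle-space construction.

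Next, fix a reference point $s\in S$ and, using local Moishezonness, shrink $S$ to a disk $\Delta$ about $s$ over which $\pi$ is Moishezon; by the previous step every fiber over $\Delta$ is still bimeromorphic to $F$. Replacing $\mathcal{X}$ by a desingularized projective model gives a smooth $\mathcal{X}'$ with $\mathcal{X}'\to\Delta$ projective and a bimeromorphic map $\mathcal{X}\dashrightarrow\mathcal{X}'$ over $\Delta$ which, by Lemma~\ref{obs 1}, induces bimeromorphisms on all but countably many fibers, so all but countably many $X'_t$ are bimeromorphic to $F$. Now I would run the relative Barlet cycle space argument exactly as in Step~\ref{step2-Barlet} of the proof of Theorem~\ref{thm-main-20250116}, applied to $\mathcal{C}(\mathcal{X}'\times F/\Delta)$: since $\mathcal{X}'\times F\to\Delta$ is projective (composition of two projective morphisms, Definition~\ref{def-proj}), Lemma~\ref{pro} yields an irreducible component $\mathcal{C}$ that is proper and, after a further shrinking of $\Delta$, surjective over $\Delta$, and whose generic point is the graph of a bimeromorphic map $X'_t\dashrightarrow F$; passing to a local curve $\mathcal{C}'\subseteq\mathcal{C}$ meeting both the fiber over $s$ and the Zariski-open locus of irreducible cycles and then normalizing, one obtains a surjective morphism $S':=\widetilde{\mathcal{C}}\to\Delta$ of smooth connected curves and a bimeromorphic map $\nu\colon\widetilde{\mathcal{X}'}\dashrightarrow S'\times F$ over $S'$, the tilde indicating base change along $S'\to\Delta$. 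Composing $\nu$ with the base change of $\mathcal{X}\dashrightarrow\mathcal{X}'$ produces a bimeromorphic map $\mathcal{X}_{S'}=\widetilde{\mathcal{X}}\dashrightarrow S'\times F$ over $S'$; here $\mathcal{X}_{S'}$ is normal by the \emph{universal normality} hypothesis and connected with irreducible fibers, hence irreducible, so this is a genuine bimeromorphic map of families in the sense of Notation~\ref{notation 1}.

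Finally I would invoke Corollary~\ref{fibe bime canonical singu-kodaira geq 0} for $\mathcal{X}_{S'}\dashrightarrow S'\times F$ over $S'$: the source $\mathcal{X}_{S'}\to S'$ is locally Moishezon because projectivity is stable under base change (the base change of the projective model $\mathcal{X}'\to\Delta$ is projective over $S'$), each of its fibers is some $X_t$ and so is irreducible of Kodaira dimension $\geq 0$, while each fiber of the target is the manifold $F$, which trivially has only canonical singularities. The corollary then forces the map to be fiberwise bimeromorphic, i.e.\ $\mathcal{X}_{S'}$ is fiberwise bimeromorphic over $S'$ to the smooth trivial family $F\times S'\to S'$; since $s$ was arbitrary, this is exactly the assertion of Definition~\ref{def-strongly bime}, so $\pi$ is locally strongly bimeromorphically isotrivial.

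The main obstacle is not a single deep step but the bookkeeping, after the base change $S'\to S$, of the hypotheses of Corollary~\ref{fibe bime canonical singu-kodaira geq 0}: one must know that $\mathcal{X}_{S'}$ is a normal irreducible total space and that $\mathcal{X}_{S'}\to S'$ is again locally Moishezon. These are precisely what ``universally normal'' and ``locally Moishezon'' (together with base-change stability of projectivity) are in the statement to supply, and this is the only place where the proof genuinely departs from the smooth case of Theorem~\ref{thm-main-20250116}. A lesser point requiring care is the globalization in the first paragraph, without which shrinking $S$ around an arbitrary $s$ could discard all the fibers bimeromorphic to $F$.
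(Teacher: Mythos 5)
Your proposal is correct and follows essentially the same route the paper intends: it runs the relative Barlet cycle space construction of Theorem \ref{thm-main-20250116}/\ref{specialization-base 1 dim-bimero-no global bime assum-uncountable} (globalized first via Theorem \ref{specialization-base 1 dim-singular family} applied against the trivial family $F\times S$), and then concludes fiberwise bimeromorphism of the base-changed family with $F\times S'$ via Theorem \ref{fiberbime taka mois}/Corollary \ref{fibe bime canonical singu-kodaira geq 0}, exactly matching Definition \ref{def-strongly bime}. Your bookkeeping of where universal normality and local Moishezonness enter after the base change is precisely the point the paper's hypotheses are designed to supply, so no gap remains.
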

		
Furthermore, we present the following observation in a very rough manner, which appears somewhat artificial in nature. However, this observation indicates some clues that the singularity (see Observation \ref{specialization-base 1 dim-singular family-0726}.\eqref{submersion point 0726}) of the corresponding relative Barlet cycle space of some space (which involves information of the two families we are studying) over the base can influence the behavior of the specialization of bimeromorphic types.
	
\begin{observation}\label{specialization-base 1 dim-singular family-0726}
Let  $\pi_1:\mathcal{X}\to S$ and  $\pi_2:\mathcal{Y}\to S$ be two locally projective families (not necessarily universally normal), where each fiber of $\pi_1$ is an irreducible complex analytic space of Kodaira dimension not equal to $-\infty$ and each fiber of $\pi_2$ is also irreducible. Moreover, for any $t$ in a nonempty analytic Zariski open subset $\Lambda$ of $S$, $X_t:=(\pi_1)^{-1}(t)$  is bimeromorphic to $Y_t:=(\pi_2)^{-1}(t)$. 
Moreover, for any  $s\in S$,  suppose $P_m(Y_s)\geq P_m(Y_t)$ for any $t$ near $s$. 
Additionally, we assume that there exists an irreducible component $\mathcal{C}$ of the relative Barlet cycle space $\mathcal{C}\left(\mathcal{X} \times_S \mathcal{Y} / S\right)$ that satisfies the property:		
\begin{enumerate}[\rm{(}1\rm{)}]
			\item
			$\mathcal{C}$ contains uncountably many points whose image (under the natural morphism $\mathcal{C}\to S$ ) in $S$ corresponds to uncountably many $t$;
			\item
			Each of those above uncountably many points  serves as the graph of the bimeromorphic map $X_t \dashrightarrow Y_t$ for the corresponding $t$;
			\item \label{submersion point 0726}
			Each  fiber over $S\setminus \Lambda$ of the natural morphism $\mathcal{C}\to S$ has a smooth  point (this condition may not be superfluous, as shown in 
\cite{Ct89}). 
\end{enumerate}
Then $X_t$ is bimeromorphic to $Y_t$ for any $t\in S$.
\end{observation}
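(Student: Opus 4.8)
The plan is to follow the structure of the proofs of Theorems~\ref{thm-main-20250116} and \ref{specialization-base 1 dim-singular family}, but to use the third property of $\mathcal{C}$ in \eqref{submersion point 0726} to replace the ramified base change appearing there by an honest \emph{section} of $\varpi\colon\mathcal{C}\to S$; this is precisely what permits us to drop the ``universally normal'' hypothesis. Since $\Lambda$ is a nonempty analytic Zariski open subset of the curve $S$, the set $S\setminus\Lambda$ is discrete and the asserted bimeromorphism already holds over $\Lambda$; hence it suffices to fix $s\in S\setminus\Lambda$, shrink $S$ to a small disk $\Delta\ni s$ with $\Delta\setminus\{s\}\subseteq\Lambda$ on which $\pi_1$ and $\pi_2$ are projective, and prove that $(X_s)_{\textrm{red}}$ is bimeromorphic to $(Y_s)_{\textrm{red}}$. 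Over $\Delta$ the morphism $\mathcal{X}\times_S\mathcal{Y}\to S$ is a composition of projective morphisms, hence projective, so Lemma~\ref{pro} shows that $\mathcal{C}$ is proper over $S$ under $\varpi$; the first property of $\mathcal{C}$ makes $\varpi(\mathcal{C})$ an uncountable analytic subset of the curve $S$, so $\varpi$ is surjective, and, $\mathcal{C}$ being reduced and irreducible, $\varpi$ is flat over the smooth curve $S$ by the criterion for flatness of a morphism over a smooth curve (e.g.\ \cite[Paragraph 1 of Introduction]{Hi75}, \cite[Lemma 2.1]{WZ23}). Let $\mathcal{C}^{\circ}\subseteq\mathcal{C}$ be the locus of cycles that are irreducible and have intersection number $1$ with the general fibre of each of $(\mathcal{X}\times_S\mathcal{Y})_t\to X_t$ and $(\mathcal{X}\times_S\mathcal{Y})_t\to Y_t$, i.e.\ that are graphs of bimeromorphic maps $X_t\dashrightarrow Y_t$; exactly as in the proof of Theorem~\ref{thm-main-20250116} this is Zariski open in $\mathcal{C}$, and it is nonempty by the second property of $\mathcal{C}$, hence dense, with $\mathcal{C}\setminus\mathcal{C}^{\circ}$ a proper analytic subset.

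Next I would invoke \eqref{submersion point 0726}. Pick $c\in\varpi^{-1}(s)$ at which the fibre $\varpi^{-1}(s)$ is a complex manifold. Since $\varpi$ is flat over the smooth curve $S$, it is a submersion at $c$ (cf.\ Lemma~\ref{singularlocus} and \cite[3.21]{Fs76}), so after shrinking $\Delta$ there is an open neighbourhood of $c$ in $\mathcal{C}$ biholomorphic to $U\times\Delta$ with $\varpi$ the second projection, $c=(0,0)$, and $U$ a polydisk in some $\mathbb{C}^{r}$. If $c\in\mathcal{C}^{\circ}$ we are done at once, the cycle indexed by $c$ being the graph of a bimeromorphic map $(X_s)_{\textrm{red}}\dashrightarrow(Y_s)_{\textrm{red}}$. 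Otherwise $c$ lies in the proper analytic set $\mathcal{C}\setminus\mathcal{C}^{\circ}$, whose germ at $c$ has dimension at most $r$ and only finitely many irreducible components, each with tangent cone of dimension $\le r<r+1$; a generic linear section $t\mapsto(at,t)$ through $c$ therefore gives a smooth curve $\mathcal{C}'\subseteq\mathcal{C}$ that $\varpi$ maps isomorphically onto $\Delta$ and that is not contained in $\mathcal{C}\setminus\mathcal{C}^{\circ}$, so that $\mathcal{C}'\cap\mathcal{C}^{\circ}$ is the complement in $\mathcal{C}'\cong\Delta$ of a discrete set. This general-position step is the heart of the argument and the only place where \eqref{submersion point 0726} is used: it is what lets us avoid a ramified base change, and hence the normality hypothesis of Theorem~\ref{specialization-base 1 dim-singular family}.

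It then remains to build the bimeromorphic map and conclude along the lines of Step~\ref{step3-apply fiberwise bime} in the proof of Theorem~\ref{thm-main-20250116}. The section $\mathcal{C}'\cong\Delta$ yields a holomorphic map $\Delta\to\mathcal{C}(\mathcal{X}\times_S\mathcal{Y})$, and by Lemma~\ref{analytic subset} the incidence set $\mathcal{G}\subseteq(\mathcal{X}\times_S\mathcal{Y})|_{\Delta}$ of pairs $(p,t)$ with $p$ in the support of the cycle over $t$ is analytic and proper over $\Delta$, with $\mathcal{G}_t$ the graph of a bimeromorphic map $X_t\dashrightarrow Y_t$ for $t$ outside a discrete subset of $\Delta$. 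All cycles in the connected family $\mathcal{C}'$ have the same dimension, so $\mathcal{G}\to\Delta$ is equidimensional; replacing $\mathcal{G}$ by the closure of its restriction over the good locus if necessary, $\mathcal{G}$ is irreducible of dimension $\dim(\mathcal{X}|_{\Delta})$, and the projections $p_1\colon\mathcal{G}\to\mathcal{X}|_{\Delta}$ and $p_2\colon\mathcal{G}\to\mathcal{Y}|_{\Delta}$ are proper, surjective onto these irreducible spaces, and generically biholomorphic, hence proper modifications; thus $\mathcal{G}$ is the graph of a bimeromorphic map $f\colon\mathcal{X}|_{\Delta}\dashrightarrow\mathcal{Y}|_{\Delta}$ over $\Delta$. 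Now $\pi_1|_{\Delta}$ is projective, hence locally Moishezon, with $\kappa(X_t)\ge 0$ for every $t$, and $P_m(Y_s)\ge P_m(Y_t)$ for $t$ near $s$ by hypothesis; applying Theorem~\ref{fiberbime taka mois} with $D_s:=(Y_s)_{\textrm{red}}$ (irreducible since $Y_s$ is) yields an irreducible component $C_s$ of $(X_s)_{\textrm{red}}$ bimeromorphic to $(Y_s)_{\textrm{red}}$, induced by $f$, and, $X_s$ being irreducible, $(X_s)_{\textrm{red}}=C_s$ is bimeromorphic to $(Y_s)_{\textrm{red}}$. Since $s\in S\setminus\Lambda$ was arbitrary, $X_t$ is bimeromorphic to $Y_t$ for every $t\in S$. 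The main obstacle is the general-position argument of the second paragraph; the construction of $\mathcal{G}$, the check that it defines a bimeromorphic map, and the bookkeeping of reductions and shrinkings are routine and run exactly as in the proofs of Theorems~\ref{thm-main-20250116}, \ref{specialization-base 1 dim-bimero-no global bime assum-uncountable}, and \ref{specialization-base 1 dim-singular family}.
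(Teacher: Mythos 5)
Your proposal is correct and follows essentially the same route as the paper's proof: flatness of $\mathcal{C}\to S$ over the smooth curve together with the smooth point in the fiber over $S\setminus\Lambda$ (condition \eqref{submersion point 0726} plus Lemma \ref{singularlocus}) gives submersivity, hence a local section of $\mathcal{C}\to S$ replacing the ramified base change of Theorems \ref{thm-main-20250116} and \ref{specialization-base 1 dim-singular family}, and the resulting incidence family produces a bimeromorphic map $\mathcal{X}\dashrightarrow\mathcal{Y}$ over $S$ to which Theorem \ref{fiberbime taka mois} applies. Your tangent-cone general-position step and the explicit construction of $\mathcal{G}$ only spell out what the paper leaves implicit in its reference back to Step \ref{step2-Barlet} of the proof of Theorem \ref{thm-main-20250116}, namely that the section can be chosen not to lie in the bad locus of the cycle space.
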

\begin{proof}
This proof is almost identical to that in Theorem \ref{thm-main-20250116}  or \ref{specialization-base 1 dim-bimero-no global bime assum-uncountable}, except that the local curve $\mathcal{C}^{\prime}$ in Step \ref{step2-Barlet} of the proof of Theorem \ref{thm-main-20250116}  can be chosen to be a local section of $\mathcal{C}\to S$ that is biholomorphic to $S$ (after shrinking the base $S$). In fact, the morphism $\mathcal{C}\to S$ is flat by the flatness criterion for a proper morphism over a smooth curve. Consequently,   $\mathcal{C}\to S$  is submersive    at some point of the fiber over $S\setminus \Lambda$  of $\mathcal{C}\to S$ by the condition \eqref{submersion point 0726} and Lemma \ref{jiayibingding}. 
		
Consequently, we can choose the local curve $\mathcal{C}^{\prime}$ in Step \ref{step2-Barlet} of the proof of Theorem \ref{thm-main-20250116} to be biholomorphic to $S$ (after shrinking $S$). Then the morphism $\tilde{\mathcal{C}}\to S$ in Step \ref{step2-Barlet} of the proof of Theorem \ref{thm-main-20250116} is biholomorphic and thus the corresponding base changes induced by $\tilde{\mathcal{C}}\to S$ do nothing.  Then the bimeromorphic map $\widetilde{\mathcal{X}} \dashrightarrow\widetilde{\mathcal{Y}}$ over $\tilde{\mathcal{C}}$ is exactly the desired bimeromorphic map ${\mathcal{X}} \dashrightarrow {\mathcal{Y}}$ over ${S}$. As a result, Theorem \ref{fiberbime taka mois} or Corollary \ref{fibe bime canonical singu-kodaira geq 0} implies that $\pi_1$ is fiberwise bimeromorphic to $\pi_2$. This completes the proof of this observation. 
\end{proof}
		
\subsection{On Koll\'ar's conjecture on fiberwise bimeromorphic model}
Recently, on the existence of the fiberwise bimeromorphic model for certain Moishezon families, Koll\'ar \cite{Kol22} proposed:
	
\begin{con}[{\cite[Conjecture 5]{Kol22}}]\label{conj5}
Let $g: \mathcal X \to \Delta$ be a flat, proper, Moishezon morphism over the unit disc $\Delta$ in $\mathbb{C}$. Assume that $X_0$ has only canonical singularities (Definition \ref{def-cano singu}). Then $g$ is fiberwise bimeromorphic  to a flat, projective morphism $g^{\mathrm{p}}:\mathcal X^{\mathrm{p}} \to \Delta$ (possibly over a smaller disc) such that
\begin{enumerate}[\rm{(}1\rm{)}]
		\item
			$X_0^{\mathrm{p}}:=(g^{\mathrm{p}})^{-1}(0)$ has only canonical  singularities,
			\item
			$X_s^{\mathrm{p}}:=(g^{\mathrm{p}})^{-1}(s)$ has terminal singularities for $s \neq 0$, and
			\item
			$K_{\mathcal X^{\mathrm{P}}}$ is $\mathbb{Q}$-Cartier.
		\end{enumerate}
\end{con}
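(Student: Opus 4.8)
The plan is to attack Conjecture \ref{conj5} through the machinery assembled in this paper, and I expect only a partial resolution to be within reach — realistically the case $\kappa(X_0)\ge 0$, and unconditionally the general type case — with the fully general statement (where $X_0$ may be uniruled) remaining open. The starting point is Kawamata's deformation theorem for canonical singularities, recalled in Example \ref{univnormal-cano}: since $X_0$ is Cartier in $\mathcal X$ (the base is a smooth curve and $g$ is flat, so $X_0$ is a principal divisor) and $X_0$ has only canonical singularities, the total space $\mathcal X$ has only canonical singularities after shrinking $\Delta$; in particular $K_{\mathcal X}$ is $\mathbb Q$-Cartier, which already foreshadows conclusion (3). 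The first genuine step is to produce a \emph{projective} model: by the Moishezon hypothesis and the (local) Chow lemma, $g$ is bimeromorphic over $\Delta$ to a projective morphism $g':\mathcal X'\to\Delta$, and after a resolution one may take $\mathcal X'$ smooth and $g'$ projective — but at the cost of losing both flatness and control on $X'_0$.

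The second step is to run a \emph{relative} minimal model program over $\Delta$ (equivalently, to invoke Kollár's relative canonical model construction cited after Corollary \ref{fibe bime canonical singu-kodaira geq 0}) to convert $g'$ into a flat projective morphism with the prescribed fiber singularities. The crucial input from the present paper is that the resulting model is \emph{fiberwise} bimeromorphic to $g$, not merely bimeromorphic over $\Delta$: a priori a bimeromorphic map over the base may contract components of the special fiber, and this is exactly the pathology that Theorem \ref{fiberbime taka mois} and Corollary \ref{fibe bime canonical singu-kodaira geq 0} are designed to exclude. Concretely, once $\kappa(X_0)\ge 0$ and one knows that the nearby fibers also carry canonical singularities, Takayama's invariance of plurigenera (Lemma \ref{Taka inva-preliminary}) supplies the upper/lower semicontinuity hypotheses needed for Corollary \ref{fibe bime canonical singu-kodaira geq 0}, so every bimeromorphic step over $\Delta$ relating $\mathcal X$ to the relatively canonical (hence on nearby fibers terminal) model is automatically fiberwise bimeromorphic. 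One then takes $\mathcal X^{\mathrm p} := \mathrm{Proj}_\Delta\bigoplus_{m\ge0}(g^{\mathrm p})_*\omega_{\mathcal X^{\mathrm p}/\Delta}^{[mr]}$, which is projective over $\Delta$ and flat after shrinking; $X_0^{\mathrm p}$ inherits canonical singularities, $X_s^{\mathrm p}$ for $s\ne0$ is a minimal/canonical model of a smooth fiber of $g'$, and $K_{\mathcal X^{\mathrm p}}$ is $\mathbb Q$-Cartier, yielding (1)--(3).

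The main obstacle is the gap between the special fiber and the nearby fibers: Conjecture \ref{conj5} posits nothing about $\kappa(X_0)$, so when $X_0$ is uniruled there is no pluricanonical invariant to anchor the fiberwise-bimeromorphism argument, and the ``uniruled modification'' phenomenon (Koll\'ar \cite[p.\ 289]{Kol96}, together with the non-uniruled exceptional divisor of Example \ref{exc not uniruled}) shows that contractions of components of $X_0$ genuinely occur in that range; excluding them would require a replacement for the plurigenera input of Section \ref{section-moishezon} that we do not currently have. A secondary obstacle is arranging the nearby fibers $X_s$ ($s\ne 0$) to have terminal (not merely canonical) singularities — this needs a careful relative MMP over $\Delta$ together with generic-smoothness/Bertini-type refinements, and is precisely where conclusion (2) is used. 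Accordingly, the honest target I would aim for is: prove Conjecture \ref{conj5} under the additional hypothesis $\kappa(X_0)\ge 0$ (unconditionally when $X_0$ is of general type, via \cite{Kol21}), with the fiberwise-bimeromorphism criteria of Section \ref{section-moishezon} doing the work of upgrading ``bimeromorphic over $\Delta$'' to ``fiberwise bimeromorphic''.
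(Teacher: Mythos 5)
This statement is Koll\'ar's Conjecture 5, quoted verbatim from \cite{Kol22}; the paper does not prove it and does not claim to. What the paper offers (Subsection 7.4) is exactly the two-step outline you reproduce: (1) show $g$ is bimeromorphic over $\Delta$ to a flat projective morphism with properties (1)--(3), and (2) upgrade ``bimeromorphic over $\Delta$'' to ``fiberwise bimeromorphic'' via Theorem \ref{fiberbime taka mois} or Corollary \ref{fibe bime canonical singu-kodaira geq 0}. So your proposal matches the paper's intended strategy, and you are right that the paper's machinery only bears on step (2), and only when $\kappa(X_0)\ge 0$ (note the conjecture itself imposes no such hypothesis; Koll\'ar's own Theorem 28 instead assumes $X_0$ not uniruled, and you correctly flag the uniruled range as untouched).

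However, as a proof even of the restricted case your write-up has genuine gaps, all concentrated in step (1), which is the actual content of the conjecture and which neither you nor the paper establishes. ``Run a relative MMP over $\Delta$'' is not available off the shelf in this analytic Moishezon setting: you must produce a model that is simultaneously \emph{flat} and \emph{projective} over $\Delta$, with canonical central fiber, \emph{terminal} fibers for $s\neq 0$, and $\mathbb{Q}$-Cartier $K_{\mathcal X^{\mathrm p}}$, and none of these is argued. Your explicit candidate $\mathcal X^{\mathrm p}:=\mathrm{Proj}_\Delta\bigoplus_{m\ge 0}(g^{\mathrm p})_*\omega^{[mr]}_{\mathcal X^{\mathrm p}/\Delta}$ is circular as written (it defines $\mathcal X^{\mathrm p}$ in terms of $g^{\mathrm p}$), presupposes relative finite generation, and, being a relative \emph{canonical} model, would in general have canonical rather than terminal fibers over $s\neq 0$, so conclusion (2) would fail for it; the paper's citation of \cite[Theorem 1]{Kol21} covers only the general type case, where this construction is justified. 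Flatness of the resulting model and the canonicity of $X_0^{\mathrm p}$ are likewise asserted rather than proved. In short: your step (2) correctly invokes the paper's fiberwise-bimeromorphism criteria (Kawamata \cite{Kw99a} plus Takayama do give the needed plurigenera behavior once nearby fibers are known canonical), but the proposal does not close the conjecture, nor even the $\kappa(X_0)\ge 0$ case, because the existence of the projective model with properties (1)--(3) is assumed rather than constructed --- which is precisely why the statement remains a conjecture in the paper.
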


Koll\'ar \cite[Theorem 28]{Kol22} gives a positive answer to the log terminal case of his conjecture, provided $X_0$ is not uniruled. 
Furthermore, Koll\'ar \cite[p. 1674, 32]{Kol22} also gives a proposal for addressing the canonical case.     
Furthermore,  Koll\'ar also constructs a non-projective Moishezon morphism and gives a conjecture (\cite[Conjecture 27.1]{Kol22}) that in most cases,  this morphism is not fiberwise bimeromorphic to a flat, projective morphism. 
	
We expect that our result on fiberwise bimeromorphism may help to study topics related to Koll\'ar's conjecture through the following  approach. The solution to Koll\'ar's conjecture may be split into  two steps:
\begin{enumerate}[\rm{(}1\rm{)}]
		\item
		$g:\mathcal X \to \Delta$ (possibly over a smaller disc)  is bimeromorphic via $f$ to a flat projective morphism   $g^{\mathrm{p}}:\mathcal X^{\mathrm{p}} \to \Delta$ (possibly over a smaller disc)  with the desired properties in Koll\'ar's conjecture;
		\item
		The bimeromorphic map $f$ is  a fiberwise bimeromorphism based on Theorem  \ref{fiberbime taka mois}.
\end{enumerate}
Our fiberwise bimeromorphism results may then be used in the second step in addressing Koll\'ar's conjecture.  
	
Based on the spirit of our arguments on specialization questions  presented in this section (e.g., Theorem \ref{specialization-base 1 dim-bimero}), we propose: 
\begin{con}
Let $g: \mathcal X \to \Delta$ be a flat, proper, Moishezon morphism, where $\Delta$ is the unit disc in $\mathbb{C}$. Assume that $X_0$ has only canonical singularities (Definition \ref{def-cano singu}). Then after shrinking the $\Delta$,  there exists some surjective morphism $\nu:\tilde{\Delta}\to \Delta$ with $\tau\in \nu^{-1}(0)$ such that the base change $\tilde{g}$ of $g$ induced by $\nu$ is fiberwise bimeromorphic  to (possibly after shrinking $\tilde{\Delta}$) a flat, projective morphism $g^{\mathrm{p}}:\mathcal X^{\mathrm{p}} \to \tilde{\Delta}$ such that
		\begin{enumerate}[\rm{(}1\rm{)}]
			\item
			$(g^{\mathrm{p}})^{-1}(\tau)$ has only canonical  singularities,
			\item
			$(g^{\mathrm{p}})^{-1}(s)$ has terminal singularities for $s \neq \tau$, and
			\item
			$K_{\mathcal X^{\mathrm{P}}}$ is $\mathbb{Q}$-Cartier.
		\end{enumerate}
	\end{con}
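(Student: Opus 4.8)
The plan is to follow the two-step scheme sketched by the authors just after Conjecture \ref{conj5}: first produce, after the base change, a flat projective model carrying the prescribed singularities, and then invoke the fiberwise bimeromorphism criterion of Theorem \ref{fiberbime taka mois} to upgrade a bimeromorphic equivalence over the base to a fiberwise one. The role of the base change $\nu:\tilde{\Delta}\to\Delta$ with $\nu^{-1}(0)=\{\tau\}$ is precisely that of a \emph{semistable reduction}: a totally ramified cover $t\mapsto t^{n}$ of the disc, followed by normalization and resolution, is exactly what places the family in a form amenable to a relative minimal model program. As in the proofs of Theorems \ref{fiberbime taka mois} and \ref{thm-main-20250116}, I would first replace $g$ by a projective morphism bimeromorphic to it over $\Delta$ (possible by Moishezonness, Definition \ref{def-Moish}) and desingularize the total space.

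First I would apply semistable reduction (\cite[Chapter II]{KKMS73}) to this desingularized projective family, obtaining the base change $\nu:\tilde{\Delta}\to\Delta$, totally ramified over $0$ so that $\nu^{-1}(0)=\{\tau\}$, together with a projective semistable model $\tilde{\mathcal{X}}\to\tilde{\Delta}$ whose central fiber is reduced with simple normal crossings and which is bimeromorphic over $\tilde{\Delta}$ to the base change $\tilde{g}$. I would then run the relative minimal model program for $\tilde{\mathcal{X}}\to\tilde{\Delta}$ over the base. Since a minimal model run from a terminal (here smooth) input has terminal total space, the output $g^{\mathrm{p}}:\mathcal{X}^{\mathrm{p}}\to\tilde{\Delta}$ has $K_{\mathcal{X}^{\mathrm{p}}}$ relatively nef and $\mathbb{Q}$-Cartier, which gives condition (3). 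The fiber singularities are then read off by adjunction: the general fiber $(g^{\mathrm{p}})^{-1}(s)$, being cut out by a general relative Cartier divisor, inherits terminal singularities, giving (2); while for the reduced central fiber $(g^{\mathrm{p}})^{-1}(\tau)$, inversion of adjunction for the log canonical pair $(\mathcal{X}^{\mathrm{p}},(g^{\mathrm{p}})^{-1}(\tau))$, together with Kawamata's deformation theorem of canonical singularities (Example \ref{univnormal-cano}) ensuring the requisite normality, yields canonical singularities, giving (1). Projectivity of $g^{\mathrm{p}}$ over $\tilde{\Delta}$ is automatic in the general-type case, where $g^{\mathrm{p}}$ is the relative canonical model; for intermediate Kodaira dimension one must instead pass to a projective birational model over $\tilde{\Delta}$ that preserves these fiber singularities, a point I regard as part of the delicate analysis below.

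With $g^{\mathrm{p}}$ constructed and a bimeromorphic map $\tilde{g}\dashrightarrow g^{\mathrm{p}}$ over $\tilde{\Delta}$ in hand, the second step is to verify that this map is fiberwise bimeromorphic in the sense of Definition \ref{fiberbimedef}. Here I would apply Theorem \ref{fiberbime taka mois} or Corollary \ref{fibe bime canonical singu-kodaira geq 0}: local Moishezonness of $\tilde{g}$ is inherited from that of $g$ under base change, each fiber of $g^{\mathrm{p}}$ has only canonical (indeed terminal off $\tau$) singularities, and by Takayama's invariance and semicontinuity results (Lemmata \ref{Taka inva-preliminary} and \ref{Taka lsc-preliminary}) the $m$-genera of the fibers of $g^{\mathrm{p}}$ satisfy the upper-semicontinuity hypothesis required there. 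This yields the fiberwise bimeromorphism and completes the argument in the range where the criterion applies.

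The principal obstacle is that Theorem \ref{fiberbime taka mois} is available only when the fibers have Kodaira dimension $\geq 0$, equivalently are not uniruled, whereas the bare hypothesis that $X_0$ has canonical singularities permits uniruled fibers --- for instance a smooth Fano central fiber. In that regime the relative minimal model program terminates not with a minimal model but with a Mori fiber space, and, as Example \ref{Hirze example} shows, a bimeromorphic map between uniruled families over the same base need not be fiberwise bimeromorphic; this is precisely why Koll\'ar's positive answer to Conjecture \ref{conj5} in \cite[Theorem 28]{Kol22} is confined to the non-uniruled case. Thus the genuinely hard part is the uniruled regime, where both the existence of a projective model with the prescribed fiber singularities and the fiberwise control require methods beyond those developed here. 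A further serious difficulty is that the relative minimal model program and inversion of adjunction are used above in the complex analytic (Moishezon) category, where they are not yet established in the needed generality; making the first two steps rigorous therefore hinges on the analytic minimal model program, which is at present the true bottleneck of this approach.
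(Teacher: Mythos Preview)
The statement you are addressing is a \emph{conjecture} in the paper, not a theorem; the paper offers no proof. It is proposed ``based on the spirit of our arguments on specialization questions'' immediately after the discussion of Koll\'ar's Conjecture~\ref{conj5}, and is left open. So there is no proof in the paper to compare your proposal against.

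That said, your proposal is a reasonable strategic outline and is faithful to the paper's two-step scheme (construct a projective model, then apply Theorem~\ref{fiberbime taka mois}). You correctly identify the two genuine bottlenecks: (i) the fiberwise bimeromorphism criterion requires $\kappa\geq 0$, so the uniruled case is out of reach by these methods, exactly parallel to the restriction in Koll\'ar's \cite[Theorem 28]{Kol22}; and (ii) the relative MMP and inversion of adjunction you invoke are not established in the analytic category in the generality needed. One further point you gloss over: even in the non-uniruled case, your ``Step~1'' does not obviously produce a model whose \emph{central} fiber has only canonical singularities --- running MMP on a semistable model typically leaves a reducible central fiber, whereas condition~(1) as stated demands an irreducible normal fiber with canonical singularities. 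Bridging that gap (presumably by contracting extra components while controlling singularities) is itself nontrivial and is part of what makes the conjecture open.
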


 \section*{Acknowledgment}
{The authors would like to thank Professor Hsueh-Yung Lin for kindly providing Examples \ref{Hirze example} and \ref{exc not uniruled}, and for his helpful explanations of these examples.
We are grateful to Professor J. Koll\'ar for pointing out Corollary \ref{Kol-abso}.
We also thank Dr. Yi Li for bringing to our attention \cite[Chapter II, Section 1.d, Lemma 1.12]{Ny04}, which helped simplify the original argument for Theorem \ref{new-fiberbime-irredu-singular}. 
Our sincere thanks go to Professors Zhengyu Hu, Zhan Li, Ngaiming Mok, and Dr. Runze Zhang for valuable discussions and for answering our questions regarding the semi-stable reduction theorem, uniruledness, and canonical singularities.
We are also indebted to Professor Kang Zuo for his insightful comments and his interest in related topics. 
Finally, the first two authors would like to thank Professor Jih-Hsin Cheng for his kind assistance in arranging their visit to the Institute of Mathematics, Academia Sinica.}


\begin{thebibliography}{GP}

\bibitem[Ab56]{Abh56}
S. Abhyankar, \textit{On the valuations centered in a local domain}, Amer. J. Math., 78:321-348, 1956.
		
\bibitem[AHV18]{AHV18}
J. M. Aroca, H. Hironaka, J. L. Vicente, \textit{Complex analytic desingularization},
 With a foreword by Bernard Teissier. Springer, Tokyo, 2018.
		
	
\bibitem[Ba75]{B75}	D. Barlet,  \textit{Espace analytique r\'eduit des cycles analytiques complexes compacts d'un espace analytique complexe de dimension finie}, Fonctions de 			plusieurs variables complexes, II, Lecture Notes in Mathematics, Vol. 482, pp. 1-158. Berlin, 
		Heidelberg, New York: Springer, 1975. 

     
		
\bibitem[BM19]{BM19}	
D. Barlet, J. Magn\'usson, 	 \textit{Complex analytic cycles. I-- basic results on complex geometry and foundations for the study of cycles}, Translated from the French by Alan Huckleberry, volume 356 of Grundlehren 	Math. Wiss. Cham: Springer, 2019.

\bibitem[BM25]{BM25}	
D. Barlet, J. Magn\'usson, 	 \textit{Complex analytic cycles. II. The cycle space},
Grundlehren Math. Wiss., 363[Fundamental Principles of Mathematical Sciences]
Springer, Cham, 2025.
	
\bibitem[BBG16]{BBG16}
F. A. Bogomolov,  C. B\"ohning, H.-C. Graf von Bothmer,  \textit{Birationally isotrivial fiber
			spaces}, Europ. J. Math. 2 (2016), no.1, 45-54.	
		
		
\bibitem[CP94]{CP94}
F. Campana, T. Peternell, \textit{Cycle spaces}, Several complex variables VII, 319-349, Encyclopaedia of Mathematical Sciences volume 74, Springer-Verlag, Berlin 1994. 

\bibitem[Ct89]{Ct89}
F.  Catanese, \textit{Everywhere nonreduced moduli spaces},
Invent. Math. 98 (1989), no. 2, 293-310.

\bibitem[Ct07]{Ct07}
 F.  Catanese, \textit{Q.E.D. for algebraic varieties}, 
J. Differential Geom. 77 (2007), no. 1, 43-75.

        
\bibitem[Ct23]{Ct23}	
		F. Catanese, \textit{General birationality and hyperelliptic theta divisors}, Ann. Mat. Pura Appl. (4) 204 (2025), no. 1, 53–71.

\bibitem[CRT26]{CRT26}	
		Jian Chen, Sheng Rao, I-Hsun Tsai,	\textit{Characterization of fiberwise bimeromorphism and specialization of bimeromorphic types II: locally Fujiki case}, preprint, 2026.
		
\bibitem[CFL23]{CFL23}
		Yifei Chen, Baohua Fu, Qifeng Li,   \textit{Rigidity of projective symmetric manifolds of Picard number $1$ associated to composition algebras}, 
		\'{E}pijournal G\'{e}om. Alg\'{e}brique 2023, Art. 4, 18 pp.

		
        
\bibitem[Db01]{D01}
		O. Debarre,   \textit{Higher-dimensional algebraic geometry}, Universitext, Springer-Verlag, New York, 2001.
		
\bibitem[dFF13]{dFF13}
		T. de Fernex, D. Fusi, \textit{Rationality in families of threefolds},  Rend. Circ. Mat. Palermo (2)
		62(1) (2013), 127-135. 	
		

\bibitem[Fs76]{Fs76}
		G. Fischer, \textit{Complex analytic geometry}, Lecture Notes in Mathematics, Vol. 538. Springer-Verlag, Berlin-New 		York, 1976.
		
\bibitem[Fu78/79]{Fu78/79}
		A. Fujiki,  \textit{Closedness of the Douady spaces of compact K\"ahler spaces}, Publ. Res. Inst. Math. Sci. 14 (1978/79), no. 1, 1-52.
		
\bibitem[Fu82]{Fu82}	
		A. Fujiki,  \textit{On the Douady space of a compact complex space in the category  $\mathcal{C}$}, Nagoya Math. J. 85 (1982), 189-211.



\bibitem[Fu84]{Fu84}	
		A. Fujiki,  \textit{On the Douady space of a compact complex space in the category $\mathcal{C}$ II}, 
Publ. Res. Inst. Math. Sci. 20 (1984), no. 3, 461-489.


\bibitem[Fj23]{Fj23}
O.  Fujino, \textit{On deformations of terminal and canonical singularities}, 
\href{https://www.math.kyoto-u.ac.jp/~fujino/deformations.pdf}{O.  Fujino's note 2023}.
        
\bibitem[GR84]{GR84}
		H. Grauert, R. Remmert, \textit{Coherent analytic sheaves}, Grundlehren der Mathematischen Wissenschaften [Fundamental Principles of Mathematical Sciences], 265. Springer-Verlag, Berlin, 1984. 
		
\bibitem[GH78]{GH78}
		P. Griffiths, J. Harris, \textit{Principles of algebraic geometry}, Wiley, New York, 1978.


\bibitem[HK15]{HK15} C. Hacon, S. Kov\'acs,  \textit{Generic vanishing fails for singular varieties and in characteristic $p>0$}, Recent advances in algebraic geometry, 240–253.
London Math. Soc. Lecture Note Ser., 417
Cambridge University Press, Cambridge, 2015.

\bibitem[HLR26]{HLR26} C. Hacon, Yi Li, Sheng Rao, \textit{On pseudo-effectivity and volumes of adjoint classes in K\"ahler families with projective central fiber}, \href{https://arxiv.org/pdf/2602.04158}{arXiv:2602.04158}. 

		
\bibitem[HM07]{HM07} C. Hacon, J. McKernan,  \textit{On Shokurov's rational connectedness conjecture}, 	Duke Math. J. 138 (2007), no. 1, 119-136.

\bibitem[Ha77]{Ha77}
		R. Hartshorne, \textit{Algebraic geometry}, Graduate Texts in Mathematics, vol. 52, Springer, 1977. 

\bibitem[HPT18]{HPT18}
     B.   Hassett,  A. Pirutka, Y. Tschinkel, 
\textit{Stable rationality of quadric surface bundles over surfaces},
Acta Math. 220 (2018), no. 2, 341-365.

		
		\bibitem[Hi64]{Hi64}	
		H. Hironaka, \textit{Resolution of singularities of an algebraic variety over a field of characteristic
			zero: I}, Ann. of Math. (2) 79 (1964), 109-326.
		
		\bibitem[Hi75]{Hi75}
		H. Hironaka, \textit{Flattening theorem in complex-analytic geometry}, Amer. J. Math. 97 (1975), 503-547.

\bibitem[Ho08]{Hr08}
A. H\"oring,  \textit{The structure of uniruled manifolds with split tangent bundle}, Osaka J. Math. 45 (2008), no. 4, 1067-1084.

\bibitem[HM98]{HM98}
		J.-M. Hwang, N. Mok, \textit{Rigidity of irreducible Hermitian symmetric spaces of the compact
			type under K\"ahler deformation}, Invent. Math. 131 (1998), no. 2, 393-418.
		
		\bibitem[HM02]{HM02}
		J.-M. Hwang, N. Mok, \textit{Deformation rigidity of the rational homogeneous space associated
			to a long simple root}, Ann. Scient. Ec. Norm. Sup. 35 (2002), no. 2, 173-184.
		
		\bibitem[HM05]{HM05}
		J.-M. Hwang, N. Mok, \textit{Prolongations of infinitesimal linear automorphisms of projective
			varieties and rigidity of rational homogeneous spaces of Picard number $1$ under K\"ahler
			deformation}, Invent. Math. 160 (2005), no. 3, 591-645.
		
	
		\bibitem[Kw99]{Kw99a}
		Y.	Kawamata,  \textit{Deformations of canonical singularities}, J. Amer. Math. Soc. 12
		(1999) 85-92. 
		
		
		\bibitem[KKMS73]{KKMS73}
		G. Kempf, F. Knudsen, D. Mumford, B. Saint-Donat,  \textit{Toroidal embeddings I}, Lecture Notes in Math.,
		vol. 339, Springer-Verlag, New York, 1973. 
		
		\bibitem[Ko96]{Kol96}
		J.  Koll{\'a}r,    \textit{Rational curves on algebraic varieties}, Ergebnisse der Mathematik und
		ihrer Grenzgebiete. 3. Folge., vol. 32, Springer-Verlag, Berlin, 1996.	
		
		\bibitem[Ko21]{Kol21}
		J.  Koll{\'a}r,    \textit{Deformations of varieties of general type}, 	Milan J. Math. 89 (2021), no. 2, 345-354.
		
		\bibitem[Ko22]{Kol22}
		J.  Koll{\'a}r,    \textit{Moishezon morphisms}, Pure Appl. Math. Q. 18 (2022), no. 4, 1661-1687.
		
		\bibitem[Ko23]{Kol23}
		J.  Koll{\'a}r,    \textit{Families of varieties of general type}, Cambridge Tracts in Math., 231
		Cambridge University Press, Cambridge, 2023.
		
\bibitem[Ko25]{Ko25}	
J. Koll\'ar, \textit{Personal communication}, June 2025.

		
		\bibitem[KM92]{KMo92}	
		J. Koll\'ar, S. Mori, \textit{Classification of three dimensional flips}, J. Amer. Math. Soc. 5 (1992), no. 3, 533-702. 	
		
		
		\bibitem[KT19]{KT19}
		M. Kontsevich, Y. Tschinkel,
		\textit{Specialization of birational types},
		Invent. Math. 217 (2019), no. 2, 415-432.
		
		
		\bibitem[LL24]{LL24}	
		Mu-Lin Li, Xiao-Lei Liu, \textit{Deformation rigidity for projective manifolds and isotriviality of smooth families over curves},   \href{https://arxiv.org/pdf/2407.18491}{arXiv:2407.18491}. 
		
		
		
\bibitem[LRWW24]{LRWW24}	
		Mu-Lin Li, Sheng Rao, Kai Wang, Meng-jiao Wang, \textit{Smooth deformation limit of Moishezon manifolds is Moishezon}, \href{https://arxiv.org/pdf/2407.02022}{arXiv:2407.02022}. 
		
	
\bibitem[Li22]{L22}	
		Qifeng Li, \textit{Fano deformation rigidity of rational homogeneous spaces of submaximal Picard numbers}, 
		Math. Ann. 383 (2022), no. 1-2, 203-257.

\bibitem[Ln25]{Lin25}	
Hsueh-Yung Lin, \textit{Personal communication}, June 2025.

        
		\bibitem[MM07]{MM07}	
		Xiaonan Ma, G. Marinescu, \textit{Holomorphic Morse inequalities and Bergman kernels}, Progress in Mathematics, 254.
		Birkh$\ddot{\textrm{a}}$user Verlag, Basel, 2007. 


        \bibitem[Mn04]{Mn04}		
		J. Magn\'usson,	\textit{Lectures on Cycle Spaces}, 
		\href{https://www.ruhr-uni-bochum.de/imperia/md/content/mathematik/lehrstuhl-ii/cyclespace.pdf}{Magn\'usson's Lecture note}.
        
\bibitem[MM64]{mm64} T. Matsusaka, D. Mumford, \textit{Two fundamental theorems on deformations of polarized varieties}, Amer. J. Math. 86 (1964), 668–684.
		
		
		\bibitem[Ny04]{Ny04}
		N. Nakayama, \textit{Zariski-decomposition and abundance}, MSJ Memoirs, 14. Mathematical Society of Japan, Tokyo, 2004.
			
		\bibitem[NO21]{NO21}
		J.  Nicaise, J. C. Ottem, \textit{A refinement of the motivic volume, and specialization of birational types},  Rationality of varieties, 291–322.
		Progr. Math., 342
		Birkh\"auser/Springer, Cham, 2021.
		
		\bibitem[Ps77]{Ps77}
		U.	Persson,	\textit{On degenerations of algebraic surfaces},
		Mem. Amer. Math. Soc. 11 (1977), no. 189.
		
		\bibitem[Pt94a]{P94a}
		Th. Peternell,	\textit{Pseudoconvexity, the Levi problem and vanishing theorems}, Several complex variables VII, 221-259, Encyclopaedia of Mathematical Sciences volume 74, Springer-Verlag, Berlin 1994.
		
		\bibitem[Pt94b]{P94}
		Th. Peternell,  \textit{Modifications}, Several complex variables VII, 285-318, Encyclopaedia of Mathematical Sciences volume 74, Springer-Verlag, Berlin 1994.
		
		
		
		\bibitem[PR94]{PR94}
		Th. Peternell, R. Remmert, \textit{Differential calculus, holomorphic maps and 
			linear structures on complex spaces}, Several complex variables VII, 99-143, Encyclopaedia of Mathematical Sciences volume 74, Springer-Verlag, Berlin 1994.
		
		
		\bibitem[PS08]{PS08}
		C. A. M. Peters, J. H. M. Steenbrink, \textit{Mixed Hodge structures},
		Ergebnisse der Mathematik und ihrer Grenzgebiete. 3. Folge. A Series of Modern Surveys in Mathematics [Results in Mathematics and Related Areas. 3rd Series. A Series of Modern Surveys in Mathematics], 52. Springer-Verlag, Berlin, 2008.

\bibitem[RT21]{RT21}
		Sheng Rao, I-Hsun Tsai, \textit{Deformation limit and bimeromorphic embedding of Moishezon
			manifolds}, Commun. Contemp. Math. 23 (2021), no. 8, Paper No. 2050087, 50 pp.
				
\bibitem[RT22]{RT22} Sheng Rao, I-Hsun Tsai, \textit{Invariance of plurigenera and Chow-type lemma}, Asian J. Math.26 (2022), no.4, 507-554.
		
		
		\bibitem[Si89]{Si89}
		Y.-T. Siu, \textit{Nondeformability of the complex projective space}, J. Reine Angew. Math. 399 (1989), 208-219.
		
		\bibitem[Si92]{Si92}
		Y.-T. Siu, \textit{Errata:``Nondeformability of the complex projective space'' [J. Reine Angew. Math. 399 (1989), 208–219},  J. Reine Angew. Math. 431 (1992), 65-74.
		
		\bibitem[Si98]{Si98}
		Y.-T. Siu, \textit{Invariance of plurigenera}, Invent. Math. 134 (1998), no. 3, 661-673. 
		
		\bibitem[Si02]{Si02}	
		Y.-T. Siu, \textit{Extension of twisted pluricanonical sections with plurisubharmonic weight and invariance of semipositively twisted plurigenera for manifolds not necessarily of general type}, Complex Geometry (G\"ottingen,
		2000), Springer, Berlin, 2002, 223-277. 

\bibitem[Si02a]{Si02a}	
Y.-T. Siu, \textit{Some Recent Transcendental Techniques in Algebraic and Complex Geometry}, ICM 2002 Vol. III. 1-3.
		
		\bibitem[Stacks]{Stacks}		
		All the authors of The Stacks project, 
		\href{https://stacks.math.columbia.edu}{The Stacks project}. 
		
			
		\bibitem[Tk07]{Tk07}
		S. Takayama, \textit{On the invariance and lower semi-continuity of
			plurigenera of algebraic varieties}, J. Algebraic Geom. 16 (2007), no. 1, 1-18.
		
		\bibitem[Tm82]{T82}
		K.	Timmerscheidt, \textit{On deformations of three-dimensional rational manifolds},  Math. Ann.
		258(3) 1982,  267-275.
		
		\bibitem[To16]{T16}	
		B.	Totaro, \textit{Rationality does not specialise among terminal varieties}, Math. Proc. Cambridge 		Philos. Soc., 161 (2016), 13-15.	
		
		\bibitem[Ue75]{Ue75}
		K. Ueno, \textit{Classification theory of algebraic varieties and compact complex spaces}, Notes written in collaboration with P. Cherenack. Lecture Notes in Mathematics, Vol. 439. Springer-Verlag, Berlin-New York, 1975. 
		
		
		
		
	\bibitem[WZ23]{WZ23} Linsheng Wang, Shengxuan Zhou, \textit{Bergman kernels on degenerations}, Math. Z. 308 (2024), no. 3, Paper No. 44, 22 pp. 
	
	\end{thebibliography}
\end{document}